\numberwithin{equation}{section}
\title[Brick polytopes of spherical subword complexes]{Brick polytopes of spherical subword complexes and generalized associahedra}
\thanks{Vincent Pilaud was partially supported by grants MTM2008-04699-C03-02 and MTM2011-22792 of the spanish MICINN, by European Research Project ExploreMaps (ERC StG 208471), and by a postdoctoral grant of the Fields Institute of Toronto.\\
\indent Christian Stump was partially supported by a CRM-ISM postdoctoral fellowship, and by the DFG via the Research Group ``Methods for Discrete Structures'' and grant STU 563/2-1 ``Coxeter-Catalan combinatorics''.
}
\author{Vincent Pilaud}
\address{CNRS \& LIX, \'Ecole Polytechnique, Palaiseau}
\email{vincent.pilaud@lix.polytechnique.fr}
\urladdr{http://www.lix.polytechnique.fr/~pilaud/}
\author{Christian Stump}
\address{Institut f\"ur Mathematik, Freie Universit\"at Berlin, Germany}
\email{christian.stump@fu-berlin.de}
\urladdr{http://homepage.univie.ac.at/christian.stump/}
\newtheorem{theorem}{Theorem}[section]
\newtheorem{corollary}[theorem]{Corollary}
\newtheorem{proposition}[theorem]{Proposition}
\newtheorem{lemma}[theorem]{Lemma}
\newtheorem{definition}[theorem]{Definition}
\newtheorem{conjecture}[theorem]{Conjecture}
\theoremstyle{definition}
\newtheorem{example}[theorem]{Example}
\newtheorem{remark}[theorem]{Remark}
\newtheorem{typeA}{The classical situation}
\newcommand{\R}{\mathbb{R}} 
\newcommand{\N}{\mathbb{N}} 
\newcommand{\Z}{\mathbb{Z}} 
\newcommand{\cN}{\mathcal{N}} 
\newcommand{\cF}{\mathcal{F}} 
\newcommand{\fA}{\mathfrak{A}} 
\newcommand{\fS}{\mathfrak{S}} 
\newcommand{\sfr}{{\sf r}} 
\newcommand{\sfR}{{\sf R}} 
\newcommand{\sfw}{{\sf w}} 
\newcommand{\sfW}{{\sf W}} 
\newcommand{\sfB}{{\sf B}} 
\newcommand{\sfM}{{\sf M}} 
\newcommand{\sfD}{{\sf D}} 
\newcommand{\set}[2]{\left\{ #1 \;\middle|\; #2 \right\}} 
\newcommand{\bigset}[2]{\big\{ #1 \;|\; #2 \big\}} 
\newcommand{\bigmultiset}[2]{\big\{\!\!\big\{ #1 \;|\; #2 \big\}\!\!\big\}} 
\newcommand{\ssm}{\smallsetminus} 
\newcommand{\dotprod}[2]{\left\langle \, #1 \,\middle|\, #2 \, \right\rangle} 
\newcommand{\bigdotprod}[2]{\big\langle \, #1 \,\big|\, #2 \, \big\rangle} 
\newcommand{\smalldotprod}[2]{\langle \, #1 \,|\, #2 \, \rangle} 
\newcommand{\symdif}{\triangle} 
\newcommand{\one}{\mathbf{1}} 
\newcommand{\eqdef}{\mbox{\,\raisebox{0.2ex}{\scriptsize\ensuremath{\mathrm:}}\ensuremath{=}\,}} 
\newcommand{\eqfed}{\mbox{~\ensuremath{=}\raisebox{0.2ex}{\scriptsize\ensuremath{\mathrm:}} }} 
\newcommand{\polar}{^\diamond} 
\newcommand{\covers}[1]{\operatorname{cov}(#1)} 
\newcommand{\hvector}{h} 
\newcommand{\fundamentalChamber}{\mathcal{C}} 
\newcommand{\DemazureProduct}{\delta} 
\newcommand{\length}{\ell} 
\newcommand{\reflength}{\ell_R} 
\newcommandx{\Perm}[2][1={}, 2=W]{{\sf Perm}^{#1}(#2)} 
\newcommandx{\Asso}[3][1={}, 2=c, 3=W]{{\sf Asso}^{#1}_{#2}(#3)} 
\newcommand{\sizeQ}{m} 
\newcommand{\sq}[1]{{\rm #1}} 
\newcommand{\Q}{\sq{Q}} 
\newcommand{\q}{\sq{q}} 
\newcommand{\w}{\sq{w}} 
\newcommand{\rotate}[1]{#1_\circlearrowleft} 
\newcommand{\wordprod}[2]{\Pi{#1}_{#2}} 
\newcommand{\subwordComplex}{\mathcal{SC}} 
\newcommand{\brickVector}{\sfB} 
\newcommand{\brickPolytope}{\mathcal{B}} 
\newcommand{\Root}[2]{{\sfr}(#1,#2)} 
\newcommand{\Roots}[1]{{\sfR}(#1)} 
\newcommand{\RootsPlus}[1]{{\sfR}^+(#1)} 
\newcommand{\RootsMinus}[1]{{\sfR}^-(#1)} 
\newcommand{\Weight}[2]{{\sfw}(#1,#2)} 
\newcommand{\Weights}[1]{{\sfW}(#1)} 
\newcommand{\projectionMap}{\kappa} 
\newcommand{\rootCone}{{\sf C}} 
\newcommand{\normalCone}{{\sf C}\polar} 
\newcommand{\contact}{^\#} 
\newcommand{\mutmatrixold}[1]{\sfM(#1)} 
\newcommand{\mutmatrix}[1]{\sfM^{\star}(#1)} 
\newcommand{\diagonal}{\sfD} 
\newcommand{\sw}[2]{\sq{#1}(\sq{#2})} 
\newcommand{\cwo}[1]{\sw{w_\circ}{#1}} 
\newcommand{\cw}[1]{\sq{#1}\cwo{#1}} 
\newcommand{\translation}{\Omega} 
\newcommand{\terminal}{T} 
\newcommand{\position}{k} 
\newcommand{\Skips}[2]{\mathcal{C}_{#1}(#2)} 
\newcommand{\pidown}{\pi_\downarrow^c} 
\newcommand{\clustercomplex}{\subwordComplex(\cw{c})} 
\newcommand{\facet}[2]{\operatorname{\textsc{Fac}}(#2,#1)} 
\newcommand{\sortable}[2]{\operatorname{\textsc{Sort}}(#2,#1)} 
\newcommand{\cluster}[2]{\operatorname{\textsc{Clus}}(#2,#1)} 
\newcommand{\ncp}[2]{\operatorname{\textsc{NC}}(#2,#1)} 
\newcommand{\ncs}[2]{\operatorname{\textsc{NN}}(#2)} 
\newcommand{\sortableCluster}[1]{\mathsf{cl}_{#1}} 
\newcommand{\facetCluster}[1]{\mathsf{lr}_{#1}} 
\newcommand{\clusterNcp}[1]{\mathsf{rp}_{#1}} 
\newcommand{\facetSortable}[1]{\mathsf{so}_{#1}} 
\newcommand{\sortableNcp}[1]{\mathsf{nc}_{#1}} 
\newcommand{\ncpNcs}{\mathsf{AST}} 
\newcommand{\facetNcp}[1]{\mathsf{lp}_{#1}} 
\newcommand{\ncsFacet}[1]{\mathsf{sc}_{#1}} 
\newcommand{\mutation}[1]{\mu_{#1}} 
\newcommand{\Qdup}{\Q^{\textrm{dup}}} 
\newcommand{\Qexm}{\Q^{\textrm{ex}}} 
\newcommand{\cexm}{c^{\textrm{ex}}} 
\newcommand{\Fexm}{F^{\textrm{ex}}} 
\newcommand{\Xexm}{X^{\textrm{ex}}} 
\newcommand{\Lexm}{L^{\textrm{ex}}} 
\DeclareMathOperator{\conv}{conv} 
\DeclareMathOperator{\vect}{vect} 
\DeclareMathOperator{\cone}{cone} 
\DeclareMathOperator{\inv}{inv} 
\newcommand{\fref}[1]{Figure~\ref{#1}} 
\newcommand{\ie}{\textit{i.e.}~} 
\newcommand{\eg}{\textit{e.g.}~} 
\newcommand{\viceversa}{vice versa} 
\newcommand{\ordinal}{\textsuperscript{th}} 
\definecolor{darkblue}{rgb}{0,0,0.7} 
\newcommand{\darkblue}{\color{darkblue}} 
\newcommand{\defn}[1]{\emph{\darkblue #1}} 
\newcommand{\para}[1]{\medskip\paragraph{\textbf{#1}}} 
\begin{document}

\begin{abstract}
We generalize the brick polytope of V.~Pilaud and F.~Santos to spherical subword complexes for finite Coxeter groups.
This construction provides polytopal realizations for a certain class of subword complexes containing all cluster complexes of finite types.
For the latter, the brick polytopes turn out to coincide with the known realizations of generalized associahedra, thus opening new perspectives on these constructions.
This new approach yields in particular the vertex description of generalized associahedra, a Minkowski sum decomposition into Coxeter matroid polytopes, and a combinatorial description of the exchange matrix of any cluster in a finite type cluster algebra.

\medskip
\noindent
{\sc keywords.}
Coxeter--Catalan combinatorics, subword complexes, cluster complexes, generalized associahedra, Cambrian lattices, Cambrian fans
\end{abstract}

\vspace*{-.4cm}

\maketitle

\vspace{-.8cm}

\tableofcontents

\vspace{-1.1cm}

\section*{Introduction}
\label{sec:introduction}

The classical associahedron appears in various contexts in mathematics, in particular in discrete geometry and algebraic combinatorics.
As a geometric object, the associahedron provides a polytopal realization of the simplicial complex of the dissections of a convex polygon.
As such, it comes within the scope of S.~Fomin and A.~Zelevinsky's theory of finite type cluster algebras and their famous classification according to the celebrated Cartan-Killing classification~\cite{FominZelevinsky-ClusterAlgebrasI, FominZelevinsky-ClusterAlgebrasII}.
Indeed, the associahedron is a particular case of the generalized associahedra constructed by F.~Chapoton, S.~Fomin, and A.~Zelevinsky~\cite{ChapotonFominZelevinsky} to obtain polytopal realizations of the finite type cluster complexes~\cite{FominZelevinsky-ClusterAlgebrasII}.

Although the algebraic framework does not generalize, the combinatorial dynamics defining cluster algebras extend beyond crystallographic root systems and their Weyl groups, to all finite root systems and their Coxeter groups.
This generalization appears in the work of N.~Reading and his collaborations with D.~Speyer on Coxeter sortable elements and Cambrian lattices and fans~\cite{Reading-latticeCongruences, Reading-CambrianLattices, Reading-sortableElements, ReadingSpeyer}.
Using this framework, C.~Hohlweg, C.~Lange, and H.~Thomas constructed different realizations of the generalized associahedra~\cite{HohlwegLangeThomas} extending ideas from~\cite{HohlwegLange}.
In~\cite{Stella}, S.~Stella generalizes the approach of~\cite{ChapotonFominZelevinsky} and shows that the resulting realization of the cluster complex coincides with that of~\cite{HohlwegLangeThomas}.

\medskip

In~\cite{PilaudPocchiola}, the first author and M.~Pocchiola studied pseudoline arrangements on sorting networks as a combinatorial framework for (generalizations of) triangulations of convex polygons.
Together with F.~Santos, he defined the brick polytope of a sorting network~\cite{PilaudSantos-multitriangulations,PilaudSantos-brickPolytope}.
This construction provides polytopal realizations of the simplicial complexes associated to a certain class of sorting networks.
In particular, it turns out that all type~$A$ associahedra of C.~Hohlweg and C.~Lange~\cite{HohlwegLange} appear as brick polytopes for well-chosen sorting networks.

Independently in~\cite{Stump}, the second author connected (multi-)triangulations of a convex polygon with subword complexes of type~$A$ (see also a generalization developed in~\cite{SerranoStump}).
Such subword complexes were introduced by A.~Knutson and E.~Miller in the context of Gr\"obner geometry of Schubert varieties~\cite{KnutsonMiller-GroebnerGeometry}.
The definition was then extended to all Coxeter groups in~\cite{KnutsonMiller-subwordComplex}.
They proved that these simplicial complexes are either topological balls or spheres, and raised the question of realizing spherical subword complexes as boundary complexes of convex polytopes.

Recently, C.~Ceballos, J.-P.~Labb\'e, and the second author~\cite{CeballosLabbeStump} extended the subword complex interpretations of~\cite{Stump} and~\cite{PilaudPocchiola} for triangulations and multitriangulations of convex polygons to general finite types.
They proved in particular that all finite type cluster complexes are isomorphic to certain well-chosen subword complexes.

\medskip

In this paper, we define the brick polytope of any subword complex for a finite Coxeter group, and study the class of those subword complexes that are realized by their brick polytope.
We provide a simple combinatorial characterization of this class, and deduce that it contains all finite type cluster complexes.
For the latter, we show that the brick polytopes are indeed translates of the generalized associahedra of~\cite{HohlwegLangeThomas}.
We then extend many relevant combinatorial properties of cluster complexes and generalized associahedra to the general situation. In particular, we relate the normal fan of the brick polytope to the Coxeter fan, and the graph of the brick polytope to a quotient of the Hasse diagram of the weak order.

\medskip

This broader approach to finite type cluster complexes bypasses many difficulties encountered in the previously known approaches.
In particular, we can derive directly that the brick polytope indeed realizes the cluster complex, without relying on Cambrian lattices and fans.
Besides that, this approach with subword complexes can be used as a central hub sitting between various combinatorial constructions (cluster complexes, Coxeter sortable elements, and noncrossing partitions), explaining their combinatorial and geometric connections.
More concretely, it provides
\begin{itemize}
\item an explicit vertex description of generalized associahedra,
\item a previously unknown decomposition of generalized associahedra into Min\-kowski sums of Coxeter matroid polytopes,
\item a complementary geometric description of the Cambrian lattices and fans,
\item a simple way to understand the connection between Coxeter sortable elements and noncrossing partitions, as well as the connection between Cambrian lattices and noncrossing partition lattices,
\item an interpretation of the mutation matrix associated to a cluster in a cluster complex purely in terms of subword complexes.
\end{itemize}

Along the paper, we rephrase our main definitions in the classical situation of type~$A$ to show that they match the definitions in~\cite{PilaudSantos-brickPolytope}.
More details on the type~$A$ interpretation of our construction can be found therein.
The reader may as well skip this particular interpretation in type~$A$ for a more compact presentation.

\medskip

To close the introduction, we want to point out several subsequent constructions based on the results in previous versions of the present paper.
Namely,
\begin{itemize}
\item in~\cite{PilaudStump-ELlabeling}, the authors further study EL-labelings and canonical spanning trees for subword complexes, with applications to generation and order theoretic properties of the subword complex, and discuss their relation to alternative EL labelings of Cambrian lattices in~\cite{KallipolitiMuhle},
\item in~\cite{PilaudStump-barycenter}, the authors show that the vertex barycenter of generalized associahedra coincide with that of their corresponding permutahedra, using the vertex description of generalized associahedra presented here,
\item in~\cite{CeballosPilaud}, the first author and C.~Ceballos provide a combinatorial description of the denominator vectors based on the subword complex approach,
\item in his recent dissertation~\cite{Williams}, N.~Williams provides an amazing conjecture together with a huge amount of computational evidence that the present approach to cluster complexes and generalized associahedra as well yields a type-independent and explicit bijection to nonnesting partitions (see also~\cite{AST2010} for further background on this connection).
\end{itemize}


\section{Main results}
\label{sec:mainResults}

\enlargethispage{.1cm}
In this section, we summarize the main results of the paper for the convenience of the reader.
Precise definitions and detailed proofs appear in further sections.


\subsection{Main definitions}

Consider a finite Coxeter system~$(W,S)$, with simple roots $\Delta \eqdef \set{\alpha_s}{s \in S}$, with simple coroots~$\Delta^\vee \eqdef \set{\alpha_s^\vee}{s \in S}$, and with fundamental weights $\nabla \eqdef \set{\omega_s}{s \in S}$.
The group~$W$ acts on the vector space~$V$ with basis~$\Delta$.
For a word~$\Q \eqdef \q_1 \cdots \q_\sizeQ$ on~$S$ and an~element~$\rho \in W$, A.~Knutson and E.~Miller~\cite{KnutsonMiller-subwordComplex} define the subword complex~$\subwordComplex(\Q,\rho)$ to be the simplicial complex of subwords of~$\Q$ whose complement contains a reduced expression of~$\rho$.
As explained in Section~\ref{subsec:subwordComplex}, we assume without loss of generality that~$\rho = w_\circ = \delta(\Q)$, where $w_\circ$ denotes the longest element of~$W$ and $\delta(\Q)$ denotes the Demazure product of~$\Q$.
The subword complex~$\subwordComplex(\Q) \eqdef \subwordComplex(\Q, w_\circ)$ is then a simplicial sphere~\cite{KnutsonMiller-subwordComplex}.

After recalling some background on Coxeter groups and subword complexes in Section~\ref{sec:coxeterGroupsSubwordComplex}, we study in Sections~\ref{sec:rootConfiguration} and~\ref{sec:brickPolytope} the root and the weight functions.
Associate to any facet~$I$ of the subword complex~$\subwordComplex(\Q)$ a root function~$\Root{I}{\cdot} : [\sizeQ] \to W(\Delta)$ and a weight function~$\Weight{I}{\cdot} : [\sizeQ] \to W(\nabla)$ defined by
$$\Root{I}{k} \eqdef \wordprod{\Q}{[k-1] \ssm I}(\alpha_{q_k}) \quad \text{and} \quad \Weight{I}{k} \eqdef \wordprod{\Q}{[k-1] \ssm I}(\omega_{q_k}),$$
where~$\wordprod{\Q}{X}$ denotes the product of the reflections~$q_x \in \Q$, for~$x \in X$, in the order given by~$\Q$.
The root function locally encodes the flip property in the subword complex: each facet adjacent to~$I$ in~$\subwordComplex(\Q)$ is obtained by exchanging an element~${i \in I}$ with the unique element~$j \notin I$ such that~$\Root{I}{j} \in \{ \pm \Root{I}{i} \}$.
After this exchange, the root function is updated by a simple application of~$s_{\Root{I}{i}}$.

We use the root function to define the \defn{root configuration} of~$I$ as the multiset
$$\Roots{I} \eqdef \bigmultiset{\Root{I}{i}}{i \in I}.$$
On the other hand, we use the weight function to define the \defn{brick vector} of~$I$ as
$$\brickVector(I) \eqdef \sum_{k \in [\sizeQ]} \Weight{I}{k},$$
and the \defn{brick polytope} of~$\Q$ as the convex hull of the brick vectors of all facets of the subword complex~$\subwordComplex(\Q)$,
$$\brickPolytope(\Q) \eqdef \conv \bigset{\brickVector(I)}{I \text{ facet of } \subwordComplex(\Q)}.$$
In the particular situation of type~$A$ Coxeter groups, these definitions match that of~\cite{PilaudSantos-brickPolytope} for brick polytopes of sorting networks.


\subsection{Polytopal realizations of root independent subword complexes}

In this paper, we focus on \defn{root independent} subword complexes, for which the root configuration~$\Roots{I}$ of a facet~$I$ (or equivalently, of all facets) is linearly independent.
For these subword complexes, we obtain the main results of this paper in Section~\ref{sec:brickPolytope}.

\begin{theorem}
A root independent subword complex $\subwordComplex(\Q)$ is realized by the polar of its brick polytope~$\brickPolytope(\Q)$.
\end{theorem}

This theorem relies on elementary lemmas which relate the root configuration of a facet~$I$ of~$\subwordComplex(\Q)$ with the edges of~$\brickPolytope(\Q)$ incident to~$\brickVector(I)$, as follows.

\begin{proposition}
The cone of the brick polytope~$\brickPolytope(\Q)$ at the brick vector~$\brickVector(I)$ of a facet~$I$ of~$\subwordComplex(\Q)$ is generated by the negative of the root configuration~$\Roots{I}$, \ie
$$\cone \bigset{\brickVector(J) - \brickVector(I)}{J \text{ facet of } \subwordComplex(\Q)} = \cone \bigset{-\Root{I}{i}}{i \in I}.$$
\end{proposition}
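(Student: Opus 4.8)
The plan is to establish the claimed cone equality by proving two containments, using the flip structure encoded by the root function to control the edges of $\brickPolytope(\sq{Q})$ emanating from $\brickVector(I)$. The essential local input is the flip description quoted above: each facet $J$ adjacent to $I$ is obtained by exchanging some $i \in I$ with the unique $j \notin I$ satisfying $\Root{I}{j} \in \{\pm\Root{I}{i}\}$. First I would compute the difference $\brickVector(J) - \brickVector(I)$ for such an adjacent facet $J = (I \ssm i) \cup j$. Since $\brickVector(I) = \sum_{k} \Weight{I}{k}$ and the weight functions $\Weight{I}{k}$ and $\Weight{J}{k}$ coincide for all positions $k$ outside the interval strictly between $i$ and $j$ (the only reflections that change in the product $\sigma^{[1,k-1]}$ are those in that interval), the difference should collapse to a positive multiple of a single root, and the expected identity is
$$\brickVector(J) - \brickVector(I) = c \cdot \bigl(-\Root{I}{i}\bigr)$$
for some positive scalar $c > 0$. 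Verifying this clean formula is where I expect the bulk of the bookkeeping to lie, and it is the step I would flag as the main obstacle: one must carefully track how applying or omitting the single reflection $q_i$ rescales the partial products $\sigma_I^{[1,k-1]}$ for the intermediate positions, and then show that the telescoping sum of weight differences aligns along $\Root{I}{i}$ with the correct sign. The cleanest route is to use that a reflection $t$ satisfies $t(\omega) = \omega - \dotprod{\alpha}{\omega}\alpha$ (up to normalization) for its associated root $\alpha$, so each intermediate weight shifts by a multiple of $\Root{I}{i}$, and the multiples accumulate with a definite sign because all the positions between $i$ and $j$ contribute coherently.

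Granting the edge formula, the containment $\cone\bigset{\brickVector(J) - \brickVector(I)}{J} \subseteq \cone\bigset{-\Root{I}{i}}{i \in I}$ follows directly: every vertex $\brickVector(J)$ of $\brickPolytope(\sq{Q})$ is reachable from $\brickVector(I)$ by a sequence of flips, so $\brickVector(J) - \brickVector(I)$ is a nonnegative combination of the edge vectors $\brickVector(J') - \brickVector(I)$ at $\brickVector(I)$, each of which is a positive multiple of some $-\Root{I}{i}$. Since the cone on the left is generated by the vectors $\brickVector(J) - \brickVector(I)$ ranging over all facets $J$, and each such generator lies in the cone on the right, the inclusion holds. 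The reverse containment is immediate from the edge formula as well, since each generator $-\Root{I}{i}$ of the right-hand cone is, up to the positive scalar $c$, exactly the edge vector $\brickVector(J) - \brickVector(I)$ for the facet $J$ obtained by flipping $i$, and this edge vector lies in the left-hand cone by definition.

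Thus the two cones coincide, which is the assertion of the proposition. I would emphasize that this statement is purely local and combinatorial and does not require $\sq{Q}$ to be a realizing word: the root configuration need not be linearly independent for the cone equality to hold. The only genuine content is the single-edge computation, and the rest is the standard argument that the tangent cone of a polytope at a vertex is generated by the edge directions at that vertex.
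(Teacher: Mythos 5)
Your first step --- the edge formula $\brickVector(J)-\brickVector(I) = c\,\bigl(-\Root{I}{i}\bigr)$ with $c>0$ for an adjacent facet $J=(I\ssm i)\cup j$ --- is correct and is exactly the paper's Lemma~\ref{lem:bricks&flips}; the sign coherence you flag as the main obstacle is handled there via Lemma~\ref{lem:weights&flips}, which shows $\dotprod{\Root{I}{j}\,}{\,\Weight{I}{k}}\ge 0$ for all intermediate positions $k$, with at least one strict inequality. The containment $\cone\set{-\Root{I}{i}}{i\in I}\subseteq\cone\bigset{\brickVector(J)-\brickVector(I)}{J}$ then follows as you say.

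The gap is in the reverse containment. You argue that since every facet $J$ is reachable from $I$ by a sequence of flips, $\brickVector(J)-\brickVector(I)$ is a nonnegative combination of the edge vectors at $\brickVector(I)$. This is a non sequitur: writing $\brickVector(J)-\brickVector(I)=\sum_t\bigl(\brickVector(I_{t+1})-\brickVector(I_t)\bigr)$ along a flip path $I=I_0,\dots,I_k=J$, each summand is a positive multiple of $-\Root{I_t}{i_t}$ for the \emph{intermediate} facet $I_t$, and these roots need not lie in $\cone\bigl(-\Roots{I}\bigr)$. Your fallback, that ``the tangent cone of a polytope at a vertex is generated by its edge directions,'' is circular here: it presupposes that $\brickVector(I)$ is a vertex of $\brickPolytope(\sq{Q})$ and that the polytope's edges at $\brickVector(I)$ are exactly the flip directions of $I$, which is essentially what the proposition asserts and what Theorem~\ref{theo:realization} later deduces from it. The paper closes this gap with Lemma~\ref{lem:faces}: for any linear functional $f$ with $f(\Root{I}{i})\ge 0$ for all $i\in I$ (i.e.\ $f$ nonpositive on the candidate cone), the facet $I$ lies in $\subwordComplex_f(\sq{Q})$, and by the connectedness of $\subwordComplex_f(\sq{Q})$ under $f$-preserving flips (Proposition~\ref{prop:preservingFlips}) the point $\brickVector(I)$ actually maximizes $f$ over the \emph{whole} polytope; intersecting over the facet-defining functionals of $\cone\bigl(-\Roots{I}\bigr)$ yields the containment. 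That connectedness argument is precisely where the hypothesis that $\sq{Q}$ is realizing enters (the proof of Proposition~\ref{prop:preservingFlips} uses that $\Roots{I}$ is a basis to build an auxiliary functional), so your closing claim that the statement requires no assumption on $\sq{Q}$ is unsupported: the paper asserts and proves it only for realizing words, and its conclusion is deliberately noncommittal about the non-realizing case.
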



\subsection{Further properties}

Further combinatorial and geometric properties of our construction are developed in Section~\ref{sec:properties}.
We first define a map~$\projectionMap$ from the Coxeter group~$W$ to the facets of the subword complex~$\subwordComplex(\Q)$ which associates to an element~$w \in W$ the unique facet~$\projectionMap(w)$ of~$\subwordComplex(\Q)$ whose root configuration~$\Roots{\projectionMap(w)}$ is a subset of~$w(\Phi^+)$.
In other words, $\projectionMap(w)$ is the facet of~$\subwordComplex(\Q)$ whose brick vector~$\brickVector(\projectionMap(w))$ maximizes the functional $x \mapsto \dotprod{w(q)}{x}$, where~$q$ is any point in the interior of the fundamental chamber~$\fundamentalChamber$ of~$W$.
The map $\projectionMap$ thus enables us to describe the normal fan of the brick polytope.
It is obtained by gluing the chambers of the Coxeter fan according to the fibers of~$\projectionMap$ as follows.

\begin{proposition}
The normal cone of~$\brickVector(I)$ in~$\brickPolytope(\Q)$ is the union of the chambers~$w(\fundamentalChamber)$ of the Coxeter fan of~$W$ given by the elements~$w \in W$ with~$\projectionMap(w) = I$.
\end{proposition}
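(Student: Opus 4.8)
The plan is to compute the normal cone of $\brickVector(I)$ explicitly and then match it chamber by chamber with the Coxeter fan through the map $\projectionMap$. Write $\cN(I)$ for the normal cone of $\brickVector(I)$ in $\brickPolytope(\sq{Q})$, that is,
$$\cN(I) = \set{c \in V}{\dotprod{c}{\brickVector(I)} \ge \dotprod{c}{\brickVector(J)} \text{ for all facets } J \text{ of } \subwordComplex(\sq{Q})}.$$
This is the polar of the cone of $\brickPolytope(\sq{Q})$ at $\brickVector(I)$, so the proposition above expressing that cone as $\cone\set{-\Root{I}{i}}{i \in I}$ gives
$$\cN(I) = \set{c \in V}{\dotprod{c}{-\Root{I}{i}} \le 0 \text{ for all } i \in I} = \set{c \in V}{\dotprod{c}{\Root{I}{i}} \ge 0 \text{ for all } i \in I}.$$
For a realizing word the root configuration $\Roots{I}$ is a linear basis of $V$, so $\cN(I)$ is a full-dimensional simplicial cone with interior $\set{c \in V}{\dotprod{c}{\Root{I}{i}} > 0 \text{ for all } i \in I}$, and in particular $\brickVector(I)$ is a vertex of $\brickPolytope(\sq{Q})$.

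Next I would translate membership in $\cN(I)$ into the condition defining $\projectionMap$. Fix a point $q$ in the interior of the fundamental chamber $\fundamentalChamber$ and let $w \in W$. Since $W$ acts by isometries of the bilinear form, $\dotprod{w(q)}{\Root{I}{i}} = \dotprod{q}{w^{-1}(\Root{I}{i})}$, and because $q$ lies in the open fundamental chamber this pairing is positive if and only if $w^{-1}(\Root{I}{i}) \in \Phi^+$, i.e. if and only if $\Root{I}{i} \in w(\Phi^+)$. Hence $w(q)$ lies in the interior of $\cN(I)$ if and only if $\Roots{I} \subseteq w(\Phi^+)$, which by definition of $\projectionMap$ means $\projectionMap(w) = I$; this is exactly the dual characterization of $\projectionMap$ recalled above. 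As $w$ carries $\operatorname{int}\fundamentalChamber$ onto $\set{w(q)}{q \in \operatorname{int}\fundamentalChamber}$, it follows that $w(\operatorname{int}\fundamentalChamber)$ lies in the interior of $\cN(I)$ when $\projectionMap(w) = I$, and is disjoint from $\cN(I)$ otherwise.

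Finally I would upgrade this statement about open cells to the desired equality of closed cones. Both the Coxeter fan $\set{w(\fundamentalChamber)}{w \in W}$ and the normal fan of $\brickPolytope(\sq{Q})$ are complete fans of $V$, the latter because every $\brickVector(I)$ is a vertex with full-dimensional normal cone. The open chambers cover $V$ outside the finite union of reflection hyperplanes, so the interior of $\cN(I)$ is covered, up to this nowhere dense set, by the open chambers $w(\operatorname{int}\fundamentalChamber)$ with $\projectionMap(w) = I$. Since $\cN(I)$ equals the closure of its interior (being full-dimensional), each chamber equals the closure of its interior, and the union is finite, taking closures yields
$$\cN(I) = \bigcup_{\substack{w \in W \\ \projectionMap(w) = I}} w(\fundamentalChamber),$$
as claimed. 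The main obstacle is precisely this passage from open cells to closed cones: the equivalence $w(q) \in \operatorname{int}\cN(I) \Leftrightarrow \projectionMap(w) = I$ is the heart of the matter, but one must invoke the completeness of both fans together with the full-dimensionality of $\cN(I)$ (guaranteed by the realizing hypothesis) to be certain that no open chamber is split between two distinct normal cones and that the closures reconstruct the cones exactly.
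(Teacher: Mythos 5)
Your proof is correct, but it takes a genuinely different route from the paper's. The paper's argument is a single chain of cone--polarity identities: writing $X\polar$ for the (dual) polar cone, it computes $\normalCone(I) = \rootCone(I)\polar = \cone(-\Roots{I})\polar = \big(\bigcap_{w \in \projectionMap^{-1}(I)} w(\fundamentalChamber\polar)\big)\polar = \bigcup_{w \in \projectionMap^{-1}(I)} w(\fundamentalChamber)$, using only that $\fundamentalChamber\polar$ is the cone generated by~$\Phi^+$. You instead extract from Proposition~\ref{prop:cones} the inequality description $\set{c}{\dotprod{c}{\Root{I}{i}} \ge 0 \text{ for } i \in I}$ of the normal cone, test each open Coxeter chamber for membership via the equivalence $\dotprod{w(q)}{\Root{I}{i}} > 0 \iff \Root{I}{i} \in w(\Phi^+)$, and then recover the closed cones by a density-and-closure argument. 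Both proofs hinge on the same two inputs --- the vertex cone being $\cone(-\Roots{I})$ and the definition of $\projectionMap(w)$ as the unique facet with $\Roots{I} \subset w(\Phi^+)$ --- but they package the final step differently. The paper's version is shorter, though it silently uses that the polar of an intersection of the cones $w(\fundamentalChamber\polar)$ is the plain union (rather than the convex hull of the union) of their polars, which is essentially the convexity one is trying to establish; your version is longer but each step is elementary, and the passage from open chambers to closed cones, which you correctly flag as the delicate point, is made fully explicit using the full-dimensionality of the normal cone (the realizing hypothesis) and the finiteness and completeness of the Coxeter fan.
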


We then relate the graph of the brick polytope~$\brickPolytope(\Q)$ to the Hasse diagram of the weak order on~$W$.
Let~$I$ and~$J$ be two adjacent facets of~$\subwordComplex(\Q)$ with~$I \ssm i = J \ssm j$.
We say that the flip from~$I$ to~$J$ is \defn{increasing} if~$i < j$.
We call \defn{increasing flip order} the transitive closure of the graph of increasing flips.

\begin{proposition}
A facet~$I$ is covered by a facet~$J$ in increasing flip order if and only if there exist $w_I \in \projectionMap^{-1}(I)$~and~$w_J \in \projectionMap^{-1}(J)$ such that~$w_I$ is covered by~$w_J$ in weak~order.
\end{proposition}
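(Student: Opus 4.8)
The plan is to transport the statement into the geometry of the Coxeter fan, using the description of the normal fan of $\brickPolytope(\sq{Q})$ given by the two preceding propositions. Write $\normalCone(I)$ for the normal cone of $\brickVector(I)$ in $\brickPolytope(\sq{Q})$. Since $\sq{Q}$ is realizing, the cone proposition (after passing to polars) shows that $\normalCone(I) = \set{y}{\dotprod{\Root{I}{i}}{y} \ge 0 \text{ for all } i \in I}$ is a full-dimensional simplicial cone whose facet normals are the roots $\Root{I}{i}$, and the normal-cone proposition identifies $\normalCone(I)$ with the union of the chambers $w(\fundamentalChamber)$ over $w \in \projectionMap^{-1}(I)$. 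I will combine this with two standard facts about the weak order: two chambers $w(\fundamentalChamber)$ and $w'(\fundamentalChamber)$ share a wall if and only if $w' = ws$ for some $s \in S$, and in that case $w \lessdot w'$ in weak order if and only if the root $w(\alpha_s)$, which is the normal of the separating wall, lies in $\Phi^+$.

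First I would settle the local correspondence between single increasing flips and weak-order covers between distinct fibers. Let $I$ and $J$ be adjacent facets with $I \ssm i = J \ssm j$. Because $\sq{Q}$ is realizing, the main theorem turns the flip into an edge of $\brickPolytope(\sq{Q})$, whose direction is $-\Root{I}{i}$ by the cone proposition; hence the shared wall $\normalCone(I) \cap \normalCone(J)$ spans the reflection hyperplane $\Root{I}{i}^\perp$. The key input here is the sign rule from the analysis of the root function in Section~\ref{sec:rootConfiguration}, namely that the flip is increasing ($i < j$) if and only if $\Root{I}{i} \in \Phi^+$. Choosing a wall of the Coxeter fan in the relative interior of $\normalCone(I) \cap \normalCone(J)$ produces adjacent chambers $w_I(\fundamentalChamber) \subseteq \normalCone(I)$ and $w_J(\fundamentalChamber) \subseteq \normalCone(J)$ with $w_J = w_I s$ and $w_I(\alpha_s) = \Root{I}{i}$; thus $w_I \lessdot w_J$ precisely when $\Root{I}{i} \in \Phi^+$, that is, precisely when the flip is increasing. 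Conversely, any weak-order cover $w_I \lessdot w_J$ with $\projectionMap(w_I) = I \ne J = \projectionMap(w_J)$ crosses a single wall whose positive normal $w_I(\alpha_s)$ must be a facet normal $\Root{I}{i}$ of $\normalCone(I)$, exhibiting $J$ as the increasing flip of $I$ at $i$. In summary, single increasing flips $I \to J$ correspond exactly to the existence of a weak-order cover between $\projectionMap^{-1}(I)$ and $\projectionMap^{-1}(J)$.

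It then remains to match the two notions of \emph{cover}. For the forward implication this is immediate: if $I$ is covered by $J$ in increasing flip order, then since this order is the transitive closure of the increasing flip graph, the covering relation cannot factor through a strictly intermediate facet and must itself be a single increasing flip, to which the local correspondence applies. The delicate direction is the converse: given a weak-order cover $w_I \lessdot w_J$ projecting to distinct facets $I, J$, the local correspondence yields a single increasing flip $I \to J$, but I must still exclude the existence of a facet $K$ with $I < K < J$ in increasing flip order. I expect this to be the main obstacle, and I would resolve it through the global order-theoretic structure of $\projectionMap$: that it is order-preserving from the weak order onto the increasing flip order and that each fiber $\projectionMap^{-1}(I)$ is an interval of the weak order. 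Granting these, an intermediate facet $K$ would force an element of $\projectionMap^{-1}(K)$ to lie strictly between $w_I$ and $w_J$ in weak order, contradicting that $w_I \lessdot w_J$ is a cover, while $K \notin \{I, J\}$ prevents any element of $\projectionMap^{-1}(K)$ from coinciding with $w_I$ or $w_J$. Establishing that the fibers are weak-order intervals — equivalently, that the fiber partition of $\projectionMap$ realizes the increasing flip order as a lattice quotient of the weak order — is the technical heart of the argument, and is precisely the ingredient that fails for non-realizing words, where a single increasing flip need not be a cover.
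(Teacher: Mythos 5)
Your first two paragraphs are correct and coincide in substance with the paper's own proof of Proposition~\ref{prop:quotient}: the direction from weak-order covers to increasing flips is Lemma~\ref{lem:flipProjectiveMap} (a cover $w < ws$ either stays inside a fiber or performs the flip at the position $i$ with $\Root{\projectionMap(w)}{i} = w(\alpha_s)$, which is increasing since $w(\alpha_s) \in \Phi^+$), and the converse is exactly the paper's choice of a chamber $\fundamentalChamber_I \subseteq \normalCone(I)$ adjacent to the wall $\Root{I}{i}^\perp$ together with its reflection $\fundamentalChamber_J \eqdef s_{\Root{I}{i}}(\fundamentalChamber_I)$. At that point you have established: $J$ is obtained from $I$ by a single increasing flip if and only if some $w_I \in \projectionMap^{-1}(I)$ is covered by some $w_J \in \projectionMap^{-1}(J)$ in weak order. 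This is precisely what the paper proves; its statement reads ``covered in increasing flip order'' at the level of single increasing flips.

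The genuine gap is in your final paragraph. To upgrade ``single increasing flip'' to ``cover of the transitive closure $\prec$'' you invoke two global properties of $\projectionMap$, and one of them is false: the fibers of $\projectionMap$ are \emph{not} weak-order intervals in general. The paper says so explicitly in Sections~\ref{subsec:weakOrder} and~\ref{subsec:fibers} (the fibers are only closed under taking intervals between two of their elements), and Example~\ref{exm:recurrent7} exhibits the fiber $\projectionMap^{-1}(\{2,5,6\}) = \{2314, 3124, 3214\}$, which has no meet and hence is not an interval. Moreover, even granting intervals, your deduction that an intermediate facet $K$ with $I \prec K \prec J$ would force an element of $\projectionMap^{-1}(K)$ strictly between $w_I$ and $w_J$ requires passing from a comparison $I \prec K$ in the increasing flip order to a weak-order comparison between elements of the fibers; the paper proves only the reverse implication and explicitly leaves this converse as a conjecture. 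So the bridge you call the ``technical heart'' cannot be built from the available ingredients. It is also not needed: the content of the proposition, and all that its later applications use, is the equivalence between single increasing flips and weak-order covers between fibers, which your first two paragraphs already establish.
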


We study some properties of the fibers of the map~$\projectionMap$ with respect to the weak order: we show that the fibers are closed by intervals and we characterize the fibers which admit a meet or a join.
Surprisingly, we provide an example of root independent subword complexes for which the increasing flip order is not a lattice (contrarily to the Cambrian lattices which occur as increasing flip orders of specific subword complexes).
More combinatorial properties of increasing flip graphs and orders for arbitrary subword complexes, with applications to generation and order theoretic properties of subword complexes, can be found in~\cite{PilaudStump-ELlabeling}.

\medskip
Finally, to close our study of brick polytopes of root independent subword complexes, we provide a Minkowski sum decomposition of these realizations into Coxeter matroid polytopes~\cite{BorovikGelfandWhite2}.

\begin{proposition}
The brick polytope~$\brickPolytope(\Q)$ is the Minkowski sum of the polytopes
$$\brickPolytope(\Q,k) \eqdef \conv \bigset{\Weight{I}{k}}{I \text{ facet of } \subwordComplex(\Q)}$$
over all positions~${k \in [\sizeQ]}$.
Moreover, each summand~$\brickPolytope(\Q,k)$ is a Coxeter matroid polytope as defined in~\cite{BorovikGelfandWhite2}.
\end{proposition}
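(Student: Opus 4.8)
The plan is to reduce the whole statement to a single \emph{common maximizer lemma}. I will use two inputs from the previous sections: the flip--weight formula, which asserts that for adjacent facets $I,J$ with $I\ssm i=J\ssm j$ and $i<j$ one has $\Weight{J}{k}=\Weight{I}{k}$ unless $i<k\le j$, in which case $\Weight{J}{k}$ is the image of $\Weight{I}{k}$ under the reflection $s_{\Root{I}{i}}$ associated with the root $\Root{I}{i}$; and the cone proposition, giving $\brickVector(J)-\brickVector(I)\in\cone\{-\Root{I}{i}\}$. For the Minkowski decomposition, the inclusion $\brickPolytope(\sq{Q})\subseteq\sum_k\brickPolytope(\sq{Q},k)$ is immediate since $\brickVector(I)=\sum_k\Weight{I}{k}$ with each $\Weight{I}{k}\in\brickPolytope(\sq{Q},k)$. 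For the converse I compare support functions: since $\max_I\sum_k\dotprod{c}{\Weight{I}{k}}\le\sum_k\max_J\dotprod{c}{\Weight{J}{k}}$, the two polytopes coincide as soon as, for every generic direction~$c$, some single facet~$I$ realizes every inner maximum simultaneously, the corresponding vertex of the sum being then $\brickVector(I)$.

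Writing $c=w(q)$ with $q$ interior to $\fundamentalChamber$, the natural common maximizer is $I\eqdef\projectionMap(w)$, characterized by $\Roots{I}\subseteq w(\Phi^+)$, \ie $\dotprod{c}{\Root{I}{i}}>0$ for $i\in I$. The crux --- and the step I expect to be the main obstacle --- is then the \emph{common maximizer lemma}: for $I=\projectionMap(w)$ and every position~$k$, the weight $\Weight{I}{k}$ is the unique maximizer of $\dotprod{c}{\cdot}$ among the facet weights at position~$k$. The difficulty is that the individual quantity $\dotprod{c}{\Weight{\cdot}{k}}$ is \emph{not} monotone along $c$-increasing flips: by the flip--weight formula its variation across a flip is an \emph{a priori} unsigned multiple of $\dotprod{c}{\Root{I}{i}}$, whereas only the total $\sum_k\dotprod{c}{\Weight{\cdot}{k}}=\dotprod{c}{\brickVector(\cdot)}$ has controlled sign (by the cone proposition). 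I would therefore prove the lemma by a prefix exchange argument based on the greedy description of $\projectionMap(w)$: scanning $z=1,\dots,\sizeQ$, one has $z\in I$ precisely when $\Root{I}{z}$ is $c$-positive, and in the complementary case applying $q_z$ to the running product $\sigma_I^{[1,z-1]}$ can only increase $\dotprod{c}{\sigma_I^{[1,z-1]}(\mu)}$ for a dominant weight~$\mu$. Given a facet~$J$ with $\dotprod{c}{\Weight{J}{k}}\ge\dotprod{c}{\Weight{I}{k}}$, I would choose such a~$J$ agreeing with~$I$ on a maximal prefix, locate the first disagreement $z\le k-1$, and flip~$J$ towards~$I$ there; the greedy sign condition forces this flip not to decrease $\dotprod{c}{\Weight{\cdot}{k}}$, contradicting maximality of the agreement length unless $\Weight{J}{k}=\Weight{I}{k}$, which also yields uniqueness.

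Finally, Part~2 is a direct consequence of the same lemma. The weights $\Weight{J}{k}$ all lie in the single orbit $W(\omega_{q_k})$ of the fundamental weight $\omega_{q_k}$, whose stabilizer is the maximal parabolic subgroup generated by $S\ssm\{q_k\}$; thus, under the standard identification of this orbit with cosets of that parabolic, the vertex set of $\brickPolytope(\sq{Q},k)$ is a collection of such cosets. By the characterization of A.~Borovik, I.~Gelfand and N.~White~\cite{BorovikGelfandWhite2} (the maximality principle of Gelfand--Serganova), such a collection defines a Coxeter matroid, and its convex hull a Coxeter matroid polytope, exactly when for every $w\in W$ it admits a unique maximal element in the $w$-twisted order --- equivalently, when for every generic~$c$ the functional $\dotprod{c}{\cdot}$ is maximized at a unique vertex. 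The common maximizer lemma provides precisely this unique maximizer, namely $\Weight{\projectionMap(w)}{k}$ for $c=w(q)$, so each summand $\brickPolytope(\sq{Q},k)$ is a Coxeter matroid polytope. Note that one cannot shortcut this last point by the mere fact that the vertices lie in one orbit: a subpolytope of the $\omega_{q_k}$-permutohedron need not have all edges parallel to roots, which is exactly why the combinatorial maximality supplied by the lemma is essential.
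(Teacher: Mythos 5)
Your overall strategy for the Minkowski decomposition is the paper's (trivial inclusion one way; for the other, exhibit for each generic functional a single facet that simultaneously maximizes the functional on every summand, so that every vertex of the sum is a brick vector). But the step you single out as ``the main obstacle'' is not an obstacle, and your workaround for it does not stand. You assert that the variation of $\dotprod{c}{\Weight{\cdot}{k}}$ across a flip is an a priori unsigned multiple of $\dotprod{c}{\Root{I}{i}}$. It is not: Lemma~\ref{lem:weights&flips}(\ref{lem:weights&flips:enum:scalarProducts}) gives $\dotprod{\Root{I}{j}}{\Weight{I}{k}}\ge 0$ for $j\notin I$ with $j\ge k$ (and $\le 0$ for $j<k$), so for a flip $I\ssm i=J\ssm j$ with $i<j$ the difference $\Weight{I}{k}-\Weight{J}{k}$ is a \emph{non-negative} multiple of $\Root{I}{i}$ for \emph{every}~$k$, while $\brickVector(I)-\brickVector(J)$ is a positive multiple of the same root by Lemma~\ref{lem:bricks&flips}. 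Hence each $f(\Weight{\cdot}{k})$ is weakly monotone along every $f(\brickVector(\cdot))$-monotone flip, and an $f$-monotone path in the graph of~$\brickPolytope(\sq{Q})$ (which is the flip graph, since $\sq{Q}$ is realizing) from an arbitrary facet to the $f$-maximizing one yields the common-maximizer statement immediately; this is exactly the paper's argument. Your substitute ``prefix exchange argument'' rests on the claim that for $I=\projectionMap(w)$ one has $z\in I$ precisely when $\Root{I}{z}$ is $c$-positive. That is false: for $z\notin I$ one has $\Root{I}{z}\in\Phi^+$ by Lemma~\ref{lem:roots&flips}(\ref{lem:roots&flips:enum:inversions}), and $\Phi^+\cap w(\Phi^+)$ is large in general. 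Already for $W=A_1$, $\sq{Q}=\squ{s}\squ{s}$ and $w=e$ one gets $\projectionMap(e)=\{1\}$ with $\Root{\{1\}}{2}=\alpha_s$ positive although $2\notin\{1\}$. So the ``greedy sign condition'' you invoke is unavailable and the exchange step is unjustified.

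For the matroid statement your route also has a gap. The maximality property of Borovik--Gelfand--White asks, for each $w\in W$, for a $\le_w$-\emph{greatest} element of the collection of cosets; this is not equivalent to ``every generic linear functional is maximized at a unique vertex'', which holds for any finite point set whatsoever. A unique linear maximizer only shows that no other coset dominates it in the $w$-shifted order, not that it dominates all others, so your ``equivalently'' step proves nothing as written. The paper sidesteps this entirely via the geometric characterization: a $W$-matroid polytope is one whose edges are directed by roots and whose vertices lie on a sphere centered at the origin; the edges of each summand $\brickPolytope(\sq{Q},k)$ are parallel to edges of the Minkowski sum $\brickPolytope(\sq{Q})$, hence directed by roots by Lemma~\ref{lem:bricks&flips}, and its vertices lie in the single orbit $W(\omega_{q_k})$, hence on a sphere. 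If you wish to keep the maximality-property route, you must additionally relate the functional $\dotprod{w(q)}{\cdot}$ to the $w$-shifted Bruhat order on $W/W_{\langle q_k\rangle}$ and prove actual domination, which is extra work the paper does not need.
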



\subsection{Generalized associahedra}

Section~\ref{sec:generalizedAssociahedra} is devoted to the main motivation and example of brick polytopes.
Fix a Coxeter element~$c$ of~$W$, and a reduced expression~$\sq{c}$ of~$c$.
We denote by~$\cwo{c}$ the $\sq{c}$-sorting word of~$w_\circ$.
As recently proven by~\cite{CeballosLabbeStump}, the subword complex~$\clustercomplex$ is then isomorphic to the $c$-cluster complex of~$W$ as defined by N.~Reading in~\cite{Reading-coxeterSortable}.
Since the subword complex $\subwordComplex(\cw{c})$ is root independent, we can apply our brick polytope construction to obtain polytopal realizations of the cluster complex.

\begin{theorem}
For any reduced expression~$\sq{c}$ of any Coxeter element~$c$ of~$W$, the polar of the brick polytope~$\brickPolytope(\cw{c})$ realizes the subword complex~$\clustercomplex$, and thus the $c$-cluster complex of type~$W$.
\end{theorem}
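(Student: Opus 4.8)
The plan is to derive the statement entirely from two results already available: the theorem on realizing words stated above, and the identification of $\clustercomplex$ with the $c$-cluster complex due to Ceballos, Labb\'e and the second author~\cite{CeballosLabbeStump}. The latter gives the ``and thus'' part for free, while the former says that the polar of $\brickPolytope(\sq{Q})$ realizes $\subwordComplex(\sq{Q})$ whenever $\sq{Q}$ is realizing. Thus the whole proof reduces to a single verification: \emph{the word $\cw{c}$ is realizing}, that is, the root configuration of (at least) one of its facets is a linear basis of $V$.

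For this verification, the decisive idea is to test the facet whose root configuration is the most transparent, namely the prefix $I_0 \eqdef [n]$ formed by the $n \eqdef |S|$ first positions, which spell the reduced word $\sq{c}$ of the Coxeter element $c$. First I would confirm that $I_0$ is a facet of $\subwordComplex(\cw{c})$: its complement is exactly the set of positions of the suffix $\cwo{c}$, and since the $\sq{c}$-sorting word $\cwo{c}$ is a reduced expression for $w_\circ$, this complement does spell a reduced word for $w_\circ$. (In passing this also shows that $w_\circ$ is the Demazure product of $\cw{c}$, so we are indeed in the spherical setting.) Then I would read off $\Roots{I_0}$ from the definition of the root function: for $i \in I_0 = [n]$ the relevant product ranges over $[1,i-1] \ssm I_0 = \emptyset$, hence is the identity, so that $\Root{I_0}{i} = \alpha_{q_i}$. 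As $\sq{c}$ uses every simple reflection exactly once, this yields
$$\Roots{I_0} = \bigmultiset{\alpha_{q_i}}{i \in [n]} = \Delta,$$
which is a basis of $V$ by construction. Hence $\cw{c}$ is realizing, and the theorem follows.

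The point I want to stress is that, with the brick polytope machinery in place, this theorem presents essentially no genuine obstacle: the substance has been front-loaded into the theorem on realizing words and into the cluster complex isomorphism of~\cite{CeballosLabbeStump}. The only delicate choice is \emph{which} facet to test, and the prefix facet $I_0$ is tailored so that no reflection has yet acted at its positions; the root function therefore collapses to the simple roots, and the root configuration is forced to be $\Delta$. The sole external input needed along the way is the classical fact that $\sq{c}$-sorting words are reduced expressions, which guarantees both that $I_0$ is a facet and that $\subwordComplex(\cw{c})$ is a sphere.
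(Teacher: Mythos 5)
Your argument is correct and is essentially identical to the paper's own proof: both verify that the prefix $[n]$ is a facet of $\clustercomplex$ (its complement being the reduced expression $\cwo{c}$ of $w_\circ$), observe that its root configuration is $\Delta$ and hence a basis, and then invoke Theorem~\ref{theo:realization} together with the identification of $\clustercomplex$ with the $c$-cluster complex from~\cite{CeballosLabbeStump}. No gaps.
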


We prove that brick polytopes are in fact translates of the known realizations of the generalized associahedra~\cite{ChapotonFominZelevinsky, HohlwegLangeThomas, Stella}.
We thus obtain the vertex description of these realizations, and a Minkowski sum decomposition into Coxeter matroid polytopes.
Furthermore, we obtain independent elementary proofs of the following two results, which are the heart of the construction of~\cite{HohlwegLangeThomas}.
We say that an element $w \in W$ is a \defn{singleton} if $\projectionMap^{-1}(\projectionMap(w)) = \{w\}$.
We denote here by~$q \eqdef \sum_s \omega_s$ the sum of all weights of~$\nabla$.

\begin{proposition}
\label{prop:mainAssociahedron}
Up to translation by a vector~$\translation$, the brick polytope~$\brickPolytope(\cw{c})$ is obtained from the balanced $W$-permutahedron~$\Perm$ removing all facets which do not intersect the set $\bigset{w(q)}{w \in W \text{singleton} }.$
\end{proposition}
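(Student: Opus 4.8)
The plan is to compare $\brickPolytope(\cw{c})$ with the balanced permutahedron through their normal fans and support functions, exploiting the normal-fan description established above. Set $q \eqdef \sum_{s \in S} \omega_s$, so that the vertices of $\perm(W)$ are exactly the points $w(q)$, with normal cone $w(\fundamentalChamber)$. By the proposition describing the normal cone of a brick vector, the normal fan of $\brickPolytope(\cw{c})$ is the coarsening of the Coxeter fan obtained by gluing the chambers $w(\fundamentalChamber)$ along the fibers of $\projectionMap$; in particular the chamber $w(\fundamentalChamber)$ survives as a maximal cone of this fan precisely when $w$ is a singleton. I would phrase the statement as a relation between support functions: writing $h_P$ for the support function of $P$, both $h_{\perm(W)}$ and $h_{\brickPolytope(\cw{c})}$ are linear on each chamber $w(\fundamentalChamber)$, with $h_{\perm(W)}(x) = \dotprod{x}{w(q)}$ and $h_{\brickPolytope(\cw{c})}(x) = \dotprod{x}{\brickVector(\projectionMap(w))}$ for $x \in w(\fundamentalChamber)$.

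The technical heart is an \emph{alignment lemma}: there is a translation vector $\translation$ with $\brickVector(\projectionMap(w)) + \translation = w(q)$ for every singleton $w$. I would first check that a singleton facet has root configuration $\Roots{\projectionMap(w)} = w(\Delta)$: since $w$ is a singleton, the normal cone of $\brickVector(\projectionMap(w))$ is exactly $w(\fundamentalChamber)$, so by the proposition computing the cone of the brick polytope the edge directions $-\Root{\projectionMap(w)}{i}$ at this vertex generate the same simplicial cone as the edges $-w(\alpha_s)$ of $\perm(W)$ at $w(q)$, forcing these two bases of roots to coincide. This matches the local geometry of the two polytopes at every singleton vertex. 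To upgrade the matching of directions to an equality of points, I would fix one base vertex — the source of the increasing flip order, whose facet and brick vector can be computed directly — to define $\translation$, and then propagate along edges: an edge of $\brickPolytope(\cw{c})$ joining two singletons $\projectionMap(w)$ and $\projectionMap(w')$ runs in direction $-w(\alpha_s)$ with unit coefficient, exactly as the edge $w(q)\to w'(q)$ of $\perm(W)$, so that $\brickVector(\projectionMap(w)) + \translation - w(q)$ is constant along the singleton subgraph. Alternatively, one can prove the lemma by directly evaluating $\brickVector(\projectionMap(w)) = \sum_{k \in [\sizeQ]} \Weight{\projectionMap(w)}{k}$, rewriting it as $q$ minus a sum of roots and comparing with the inversion formula $w(q) = q - \sum_{\alpha \in \inv(w)} \alpha$.

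Granting the alignment lemma, the translated support function $x \mapsto h_{\brickPolytope(\cw{c})}(x) + \dotprod{x}{\translation}$ dominates $h_{\perm(W)}$ everywhere, with equality exactly on the singleton chambers; hence $\brickPolytope(\cw{c}) + \translation \supseteq \perm(W)$, the two polytopes touching precisely in the singleton vertices $w(q) = \brickVector(\projectionMap(w)) + \translation$. It then remains to identify which facets of $\perm(W)$ survive. A facet of $\brickPolytope(\cw{c})$ has as outer normal a ray $r$ of the coarsened fan, which is also a ray $r = w(\omega_s)$ of the Coxeter fan and thus carries a facet $F_r$ of $\perm(W)$. Whenever $r$ borders a singleton chamber $w(\fundamentalChamber)$, the two support functions agree along $r$, so this facet of $\brickPolytope(\cw{c}) + \translation$ lies on the supporting hyperplane of $F_r$, and $F_r$ contains the singleton vertex $w(q)$ and is kept; conversely every kept facet arises this way. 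Therefore $\brickPolytope(\cw{c}) + \translation$ is cut out exactly by the kept inequalities of $\perm(W)$, which is the assertion.

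The main obstacle is exactly the survival statement feeding the last step: one must show that \emph{every} ray of the coarsened (Cambrian) fan borders a singleton chamber, equivalently that every facet of $\brickPolytope(\cw{c})$ contains a singleton vertex. This does not follow formally from the fan being a coarsening — a priori a ray could survive while all chambers surrounding it are merged into non-singleton cones — and it is here that the specific combinatorics of the word $\cw{c}$ and of the fibers of $\projectionMap$, through the structure of the underlying $c$-sortable elements, must be invoked. The second delicate point is the position-pinning half of the alignment lemma: matching the local edge cones at singletons is immediate from the cone proposition, but controlling the edge lengths so that the global translation $\translation$ is uniform requires either the explicit base-vertex computation or the weight-sum and inversion identity indicated above.
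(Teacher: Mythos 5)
Your overall strategy --- align the singleton vertices of $\brickPolytope(\cw{c})$ with the corresponding vertices of $\perm(W)$ by a uniform translation, then argue that every facet of the brick polytope is supported by a facet-defining hyperplane of $\perm(W)$ through a singleton vertex --- is exactly the skeleton of the paper's proof (Theorem~\ref{theo:removingFacets}). But the two points you flag as ``obstacles'' are precisely the substance of that proof, and your proposal closes neither. For the survival statement, the paper shows that every facet normal of $\brickPolytope(\cw{c})$ is of the form $\omega_s$ for $s \in S$ or $\omega_\position = \rho_{\position-1}(\omega_{w_\position})$, hence is a ray of a singleton chamber; this is Proposition~\ref{prop:normalVectorsAssociahedra} and Corollary~\ref{coro:normalVectorsAssociahedra}, and it rests on input specific to the word $\cw{c}$ (the rotation and parabolic-restriction lemmas of~\cite{CeballosLabbeStump}, which give that $\Weight{I}{i}$ is orthogonal to $\Roots{I} \ssm \Root{I}{i}$ for every facet $I$ of $\clustercomplex$ and every $i \in I$). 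As you correctly suspect, no formal coarsening argument can substitute for this: the paper remarks after Proposition~\ref{prop:singletons} that for general realizing words the weight configuration need not generate the normal cone and no singleton need exist at all.

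For the alignment lemma, the paper's Lemma~\ref{lem:structurePrefix} determines the facet $\projectionMap(\rho_\position)$ explicitly by induction on $\position$ and evaluates $\brickVector(\projectionMap(\rho_\position)) = \translation + \rho_\position(q)$ by splitting the weight sum into the contribution of the complement (which is the same reduced word $\cwo{c}$ of $w_\circ$ for every $\position$, contributing the constant $\translation$) and the contribution of the positions in the facet (whose weights are exactly $\rho_\position(\nabla)$, summing to $\rho_\position(q)$). Your edge-propagation alternative asserts that a singleton-to-singleton edge of $\brickPolytope(\cw{c})$ has ``unit coefficient,'' but Lemma~\ref{lem:bricks&flips} only yields a positive multiple of $\Root{I}{i}$, namely $\sum 2\dotprod{\Root{I}{j}}{\Weight{I}{k}}$, and pinning that multiple to $1$ requires essentially the same explicit description of the facet and its weights; the inversion-formula route has the same need. (You would also have to check that the singleton vertices are connected in the graph of the brick polytope before propagating.) So the proposal is a correct roadmap that matches the paper's route, but the two lemmas that turn it into a proof are identified rather than proved.
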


\begin{proposition}
The following properties are equivalent for an element~$w \in W$:
\begin{enumerate}[(i)]
\item The element $w$ is a singleton.
\item The root configuration~$\Roots{\projectionMap(w)} \eqdef \set{\Root{\projectionMap(w)}{i}}{i \in \projectionMap(w)}$ equals~$w(\Delta)$.
\item The weight configuration~$\Weights{\projectionMap(w)} \eqdef \set{\Weight{\projectionMap(w)}{i}}{i \in \projectionMap(w)}$ equals~$w(\nabla)$.
\item The vertices~$\brickVector(\projectionMap(w))$ of the brick polytope~$\brickPolytope(\cw{c})$ and~$w(q)$ of the balanced $W$-permutahedron~$\Perm$ coincide up to~$\translation$, \ie~${\brickVector(\projectionMap(w)) = \translation + w(q)}$.
\item There exist reduced expressions~$\w$ of~$w$ and~$\sq{c}$ of~$c$ such that~$\w$ is a prefix of~$\cwo{c}$.
\item The complement of~$\projectionMap(w)$ in~$\cw{c}$ is~$\cwo{c}$.
\end{enumerate}
\end{proposition}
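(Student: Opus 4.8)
The plan is to prove the six equivalences by establishing a cycle of implications together with a few direct bridges, exploiting the fact that all relevant objects ($\Roots{\projectionMap(w)}$, $\Weights{\projectionMap(w)}$, $w(\Delta)$, $w(\nabla)$) are sets of the same cardinality $|S|$, so that containments automatically upgrade to equalities. The natural order is to go (v) $\Rightarrow$ (vi) $\Rightarrow$ (ii) $\Rightarrow$ (iii) $\Rightarrow$ (iv) $\Rightarrow$ (i) $\Rightarrow$ (v), with the singleton conditions (i) and (iv) sitting close together since the earlier Proposition describing the normal fan already identifies the normal cone of $\brickVector(\projectionMap(w))$ as the union of chambers $w'(\fundamentalChamber)$ with $\projectionMap(w') = \projectionMap(w)$.

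First I would unwind definitions to connect the prefix condition (v) with the complement condition (vi). By definition of the $\sq{c}$-sorting word $\cwo{c}$, if a reduced expression $\sq{w}$ of $w$ is a prefix of $\cwo{c}$, then the remaining letters of $\cw{c}$ after deleting this prefix spell out $\cwo{c}$ again (this is precisely how the sorting word is built from repeated copies of $\sq{c}$), and conversely; this gives (v) $\Leftrightarrow$ (vi) essentially by bookkeeping on the word $\cw{c}$. Next, starting from (vi), the complement of $\projectionMap(w)$ being $\cwo{c}$ means the positions in $\projectionMap(w)$ together with the letters of $\cwo{c}$ partition $\cw{c}$ in a controlled way. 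Using the definition $\Root{I}{k} = \sigma_I^{[1,k-1]}(\alpha_{q_k})$, I would compute the root function at the positions $i \in \projectionMap(w)$: the reflections applied are exactly those reading off a reduced expression $\sq{w}$ of $w$, so each $\Root{\projectionMap(w)}{i}$ is of the form $w'(\alpha_s)$ for the appropriate prefix $w'$ of $\sq{w}$, and collecting these over all $i \in \projectionMap(w)$ yields precisely $w(\Delta)$, giving (vi) $\Rightarrow$ (ii). The same computation with $\omega_{q_k}$ in place of $\alpha_{q_k}$ gives the weight version, so (ii) $\Leftrightarrow$ (iii) follows from the parallel structure of the root and weight functions (both are obtained by applying the identical sequence of reflections, differing only in whether one starts from $\alpha_{q_k}$ or $\omega_{q_k}$).

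For (iii) $\Rightarrow$ (iv), I would recall that $\brickVector(\projectionMap(w)) = \sum_{k \in [\sizeQ]} \Weight{\projectionMap(w)}{k}$ and that the balanced permutahedron vertex is $w(q)$ with $q = \sum_s \omega_s$; the Minkowski summand structure plus the assumption $\Weights{\projectionMap(w)} = w(\nabla)$ forces the weight contributions to align with $w$ applied to the fundamental weights, so the brick vector and the permutahedron vertex differ by the global translation $\translation$. The implication (iv) $\Rightarrow$ (i) is where the normal-fan Proposition does the work: if the vertex $\brickVector(\projectionMap(w))$ coincides (up to $\translation$) with the permutahedron vertex $w(q)$, whose normal cone is the single chamber $w(\fundamentalChamber)$, then by the description of the normal cone of $\brickVector(\projectionMap(w))$ as a union of chambers indexed by $\projectionMap^{-1}(\projectionMap(w))$, that fiber must be the singleton $\{w\}$. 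Finally (i) $\Rightarrow$ (v): if $w$ is a singleton, I would argue that the greedy/sorting construction of the facet $\projectionMap(w)$ forces the complement to take the sorting-word form, producing the required prefix.

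I expect the main obstacle to be the careful word-combinatorics underlying (v) $\Leftrightarrow$ (vi) and the computation identifying $\Roots{\projectionMap(w)}$ with $w(\Delta)$ in (vi) $\Rightarrow$ (ii), since these require tracking exactly which reflections $\sigma_I^{[1,k-1]}$ act at each position $i \in \projectionMap(w)$ and verifying that the accumulated products reproduce a genuine reduced expression of $w$ rather than merely some element with the same image. The subtlety is that deleting the facet positions must leave $\cwo{c}$ as a \emph{reduced} complement and that the running product of skipped reflections indeed sweeps out all of $w(\Delta)$ without collision; this is where the hypothesis that $\cw{c}$ is realizing (so that the root configuration is a basis, in particular has $|S|$ distinct roots) is essential to rule out degeneracies. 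Once this dictionary between the sorting word, the root function, and the action of $w$ on $\Delta$ is firmly established, the remaining implications are short consequences of the brick-vector formula and the normal-fan description already proved.
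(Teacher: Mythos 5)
Your skeleton (a cycle of implications) is reasonable, but several links rest on arguments that do not work, and the three results the paper actually leans on --- Proposition~\ref{prop:normalVectorsAssociahedra} (for the word $\cw{c}$, the weight configuration generates the normal cone), Lemma~\ref{lem:structurePrefix} (the explicit structure of the facets $\projectionMap(\rho_\position)$ for prefixes of $\cwo{c}$), and Theorem~\ref{theo:removingFacets} (the brick polytope is the permutahedron with facets removed) --- are absent from your plan. The clearest failure is the claim that (ii)~$\Leftrightarrow$~(iii) follows from the ``parallel structure'' of the root and weight functions: an orthogonal transformation sending a simple root $\alpha_t$ to a simple root $\alpha_s$ need not send $\omega_t$ to $\omega_s$ (already in $A_2$, $c=s_1s_2$ satisfies $c(\alpha_1)=\alpha_2$ but $c(\omega_1)=\omega_2-\omega_1\ne\omega_2$), so $\Roots{\projectionMap(w)}=w(\Delta)$ does not transfer to $\Weights{\projectionMap(w)}=w(\nabla)$ by ``applying the same reflections to $\omega_{q_k}$ instead of $\alpha_{q_k}$.'' The paper obtains this equivalence only via the special fact that $\Weight{I}{i}$ is normal to the face of $\rootCone(I)$ spanned by the other roots, and it explicitly remarks that this fails for general realizing words.

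Two further links would not close as written. For (iii)~$\Rightarrow$~(iv), the brick vector is $\sum_{k\in[\sizeQ]}\Weight{I}{k}$ over \emph{all} positions, so (iii) controls only the $n$ summands indexed by $I$; the remaining sum $\sum_{k\notin I}\Weight{I}{k}$ depends on which reduced word of $w_\circ$ the complement spells (in $A_2$ the two reduced words of $w_\circ$ give $2\omega_2$ versus $2\omega_1$) and equals $\translation$ precisely because the complement is $\cwo{c}$, i.e.\ condition (vi), which is not available at that point of your cycle. For (iv)~$\Rightarrow$~(i), the coincidence $\brickVector(\projectionMap(w))=\translation+w(q)$ does not by itself force the normal cone in $\brickPolytope(\cw{c})$ to be the single chamber $w(\fundamentalChamber)$; one needs that $\brickPolytope(\cw{c})-\translation$ is obtained from the \emph{simple} polytope $\perm(W)$ by deleting facets, so that a surviving vertex keeps exactly its permutahedral normal cone. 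Similarly, (vi)~$\Rightarrow$~(ii) is not the direct computation you describe: the products $\sigma_I^{[1,i-1]}$ for $i\in\projectionMap(w)$ are various prefixes of $w_\circ$ along $\cwo{c}$, not copies of $w$, and identifying the resulting roots with $w(\Delta)$ requires the terminal-position analysis of Lemma~\ref{lem:structurePrefix} (the paper in fact recovers the root configuration indirectly from the weight configuration). Finally, (i)~$\Rightarrow$~(v) is asserted without an argument. The word-combinatorial core you flag as ``the main obstacle'' is indeed where the work lies, but the proposal does not supply it, and one of its bridges rests on a false general principle.
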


With some additional efforts, our vertex description of the brick polytope can be exploited to prove that the vertex barycenter of the generalized associahedron ${\brickPolytope(\cw{c}) - \translation}$ coincides with that of the $W$-permutahedron~$\Perm$.
This property was conjectured in~\cite{HohlwegLangeThomas} and proven in type~$A$ and~$B$ in~\cite{HohlwegLortieRaymond}.
The technical details needed for this proof will appear in a forthcoming paper~\cite{PilaudStump-barycenter}.

\medskip
We then revisit the connection between these polytopal realizations of the cluster complex and the Cambrian lattices and fans defined by N.~Reading and D.~Speyer in~\cite{Reading-CambrianLattices, Reading-sortableElements, ReadingSpeyer}.
We first describe in detail the bijective connections between the facets of the subword complex~$\clustercomplex$ and different Coxeter-Catalan families studied in~\cite{Reading-sortableElements}: $c$-clusters, $c$-sortable elements, and $c$-noncrossing partitions and subspaces.
In particular, combining the bijection of~\cite{CeballosLabbeStump} from the subword complex~$\clustercomplex$ to the $c$-cluster complex with the bijection of~\cite{Reading-sortableElements} from $c$-sortable elements in~$W$ to the cluster complex, we describe a bijection~$\facetSortable{c}$ from facets of~$\clustercomplex$ to $c$-sortable elements.
We then prove that the set~$\Skips{c}{w}$ defined in~\cite{ReadingSpeyer} coincides with the root configuration~$\Roots{\facetSortable{c}^{-1}(w)}$, and that the map $\facetSortable{c}^{-1}$ is the restriction of~$\projectionMap$ to $c$-sortable elements.
This observation yields the following results.

\begin{proposition}
If~$v,w \in W$ and~$v$ is $c$-sortable, then $\projectionMap(w) = \projectionMap(v)$ if and only if~$v$ is the maximal $c$-sortable element below~$w$ in weak order.
\end{proposition}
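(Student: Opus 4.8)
The plan is to exploit the two facts the excerpt attributes to the identification $\facetSortable{c}^{-1} = \projectionMap|_{\text{$c$-sortable}}$: first, that $\facetSortable{c}$ is a bijection from facets of $\clustercomplex$ to $c$-sortable elements, and second, that $\Skips{c}{w} = \Roots{\facetSortable{c}^{-1}(w)}$. Combining these, a $c$-sortable element $v$ satisfies $\projectionMap(v) = \facetSortable{c}^{-1}(v)$, so $\Roots{\projectionMap(v)} = \Roots{\facetSortable{c}^{-1}(v)} = \Skips{c}{v}$. The statement to prove is an equivalence: for arbitrary $w$ and $c$-sortable $v$, we have $\projectionMap(w) = \projectionMap(v)$ iff $v$ is the maximal $c$-sortable element weakly below $w$.

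First I would recall the defining property of $\projectionMap$ from the earlier proposition: $\projectionMap(w)$ is the unique facet whose root configuration lies in $w(\Phi^+)$, equivalently the facet maximizing $x \mapsto \dotprod{w(q)}{x}$ over the brick polytope. In particular, $\projectionMap(w) = \projectionMap(v)$ means $w$ lies in the normal cone of the vertex $\brickVector(\projectionMap(v))$, which (by the normal-fan proposition) is the union of chambers $u(\fundamentalChamber)$ with $\projectionMap(u) = \projectionMap(v)$. So the fiber $\projectionMap^{-1}(\projectionMap(v))$ is exactly the set of $w$ I must characterize, and the claim becomes: this fiber has a unique $c$-sortable element, namely $v$, and $v$ is precisely the weak-order-maximal $c$-sortable element below every $w$ in the fiber.

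The core of the argument is the characterization $\projectionMap(w) = \projectionMap(v) \iff \Roots{\projectionMap(v)} \subseteq w(\Phi^+)$. Since $\Roots{\projectionMap(v)} = \Skips{c}{v}$, this reads $\Skips{c}{v} \subseteq w(\Phi^+)$. Here I would invoke the description of $\Skips{c}{v}$ from~\cite{ReadingSpeyer}: the cone generated by $\Skips{c}{v}$ is the $c$-Cambrian cone of $v$, and the condition $\Skips{c}{v} \subseteq w(\Phi^+)$ says exactly that the chamber $w(\fundamentalChamber)$ lies in this Cambrian cone. By Reading--Speyer's theory of sortable elements and the Cambrian fan, the elements $w$ whose chamber lies in the Cambrian cone of $v$ are precisely those for which $v$ is the maximal $c$-sortable element below $w$ in weak order; this is the content of the downward projection $\pidown$ onto $c$-sortable elements, which sends $w$ to the largest $c$-sortable element $\leq w$. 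So $\projectionMap(w) = \projectionMap(v) \iff \pidown(w) = v \iff v$ is the maximal $c$-sortable element below $w$, which is the desired equivalence.

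The main obstacle is establishing the bridge between the combinatorial cone condition $\Skips{c}{v} \subseteq w(\Phi^+)$ and the weak-order statement about $\pidown$. Concretely, one direction is immediate once the Cambrian fan is identified with the fibers of $\projectionMap$ restricted through $\facetSortable{c}$: if $v$ is $c$-sortable then its fiber under $\projectionMap$ is a Cambrian cone, and membership in that cone is by definition governed by $\pidown$. The delicate point is that $v$ must actually be $c$-sortable for $\facetSortable{c}^{-1}(v)$ to be defined, and one must check that every $w$ in the fiber indeed projects down to the \emph{same} $c$-sortable element—this requires that distinct $c$-sortable elements have disjoint (relatively open) Cambrian cones, which is exactly the fact that $\facetSortable{c}$ is a bijection and that the Cambrian cones tile $V$. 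I expect the proof to reduce essentially to quoting the Cambrian-fan structure together with the two identifications $\facetSortable{c}^{-1} = \projectionMap|_{\text{$c$-sortable}}$ and $\Skips{c}{w} = \Roots{\facetSortable{c}^{-1}(w)}$ already granted in the excerpt, so the real work is making these compatibilities precise rather than any new computation.
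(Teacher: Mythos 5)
Your proposal is correct and follows essentially the same route as the paper: the equivalence $\projectionMap(w)=\projectionMap(v)\iff\Roots{\projectionMap(v)}\subseteq w(\Phi^+)$ from the definition of $\projectionMap$, the identification $\Roots{\projectionMap(v)}=\Skips{c}{v}$ via Proposition~\ref{prop:thesetC} and $\facetSortable{c}^{-1}=\projectionMap|_{\sortable{c}{W}}$, and the Reading--Speyer characterization $\Skips{c}{v}\subseteq w(\Phi^+)\iff\pidown(w)=v$. The extra framing via normal cones and the Cambrian fan is consistent but not needed; the three quoted facts already close the argument exactly as in the paper's proof.
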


\begin{corollary}
\label{coro:mainCambrianLattice}
For the word~$\cw{c}$, the bijection~$\facetSortable{c}$ is an isomorphism between the Cambrian lattice on $c$-sortable elements in~$W$ and the increasing flip order on the facets of~$\clustercomplex$.
\end{corollary}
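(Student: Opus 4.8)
The plan is to prove Corollary~\ref{coro:mainCambrianLattice} by assembling the two ingredients established just before it: the identification of $\facetSortable{c}^{-1}$ with the restriction of $\projectionMap$ to $c$-sortable elements, and the general relationship between the increasing flip order and the weak order given by the third Proposition of Section~\ref{sec:properties}. Since $\facetSortable{c}$ is already known to be a bijection between $c$-sortable elements and the facets of $\clustercomplex$, the only thing left to verify is that it is an \emph{order} isomorphism, \ie that it and its inverse are both order-preserving for the Cambrian lattice on one side and the increasing flip order on the other.

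First I would recall that, by the preceding Proposition, whenever $v$ is $c$-sortable and $w \in W$, the condition $\projectionMap(w) = \projectionMap(v)$ holds exactly when $v$ is the maximal $c$-sortable element below $w$ in weak order. In particular, each fiber $\projectionMap^{-1}(I)$ over a facet $I$ contains a unique $c$-sortable element, namely $\facetSortable{c}(I)$, and this element is the minimum of the fiber in weak order. This is the key structural fact: it says that $\facetSortable{c}$ is a section of $\projectionMap$ that selects the weak-order-minimal representative from each fiber.

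Next I would translate a cover relation in the increasing flip order into one in the Cambrian lattice. Suppose $I$ is covered by $J$ in increasing flip order. By the Proposition relating flips to weak order, there exist $w_I \in \projectionMap^{-1}(I)$ and $w_J \in \projectionMap^{-1}(J)$ with $w_I$ covered by $w_J$ in weak order. I would then argue that one may take these representatives to be the $c$-sortable ones $\facetSortable{c}(I)$ and $\facetSortable{c}(J)$: using that $c$-sortable elements form a sublattice (the Cambrian lattice) of the weak order and that they are the fiber minima, a cover of facets corresponds precisely to a cover of the associated $c$-sortable elements. Conversely, a cover $v \lessdot v'$ of $c$-sortable elements in the Cambrian lattice forces $\projectionMap(v)$ to be covered by $\projectionMap(v')$ in increasing flip order, again via the same Proposition. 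Combining both directions shows that $\facetSortable{c}$ carries cover relations to cover relations in both directions, hence is a poset isomorphism; since the Cambrian lattice is a lattice, the increasing flip order inherits the lattice structure through this isomorphism.

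The main obstacle I anticipate is the careful bookkeeping in passing from \emph{arbitrary} fiber representatives $w_I, w_J$ to the canonical $c$-sortable ones while preserving the covering relation. The Proposition guarantees a cover among \emph{some} representatives, but to conclude a cover among the specific $c$-sortable representatives one needs that $\facetSortable{c}(I)$ and $\facetSortable{c}(J)$ are themselves comparable and adjacent in the Cambrian lattice. This requires exploiting that the $c$-sortable element in each fiber is the weak-order minimum of that fiber, together with the fact that the Cambrian lattice is the sublattice induced on $c$-sortable elements: a weak-order cover between fibers descends to a Cambrian cover between their minima because the projection $\pidown$ onto $c$-sortable elements is order-preserving and behaves well with respect to cover relations. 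Making this descent precise, and ruling out the possibility that the $c$-sortable minima jump over intermediate elements, is the delicate point; everything else is a direct concatenation of the two cited Propositions.
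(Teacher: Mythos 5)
Your overall strategy is the paper's: Theorem~\ref{theo:pi=kappa} identifies the fibers of~$\projectionMap$ with the fibers of Reading's projection~$\pidown$, Proposition~\ref{prop:quotient} exhibits the increasing flip order as the quotient of the weak order by the fibers of~$\projectionMap$, and Reading's results exhibit the Cambrian lattice as the quotient of the weak order by the fibers of~$\pidown$; since the two partitions of~$W$ coincide and are matched by~$\facetSortable{c}$, the two quotient posets are isomorphic. The paper treats the corollary as an immediate consequence of exactly these ingredients.

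One step would fail as you literally state it: you cannot in general take the representatives $w_I, w_J$ witnessing the weak-order cover to be the $c$-sortable fiber minima $\facetSortable{c}(I)$ and $\facetSortable{c}(J)$. Already in type~$A_2$ with $c = \tau_1\tau_2$, the fiber containing~$\tau_2$ is $\{\tau_2, \tau_2\tau_1\}$ and the fiber of~$w_\circ$ is $\{w_\circ\}$; the corresponding facets are related by a single increasing flip, witnessed by the weak-order cover $\tau_2\tau_1 \lessdot w_\circ$, but the sortable minima satisfy $\tau_2 < \tau_2\tau_1 < w_\circ$, so they do \emph{not} form a weak-order cover. A cover in the Cambrian lattice is a cover in the quotient (equivalently, in the subposet induced on sortable elements), not a weak-order cover, so "replacing the representatives by the sortable ones" is not available. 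The correct bookkeeping, which your final paragraph essentially points to, is to compare the two orders fiberwise rather than via the minima: a weak-order cover between elements of two distinct fibers yields a cover of those fibers in either quotient (Proposition~\ref{prop:quotient} on the subword-complex side; the fact that the fibers of~$\pidown$ form a lattice congruence on the Cambrian side), and conversely every cover in either quotient is witnessed by \emph{some} weak-order cover between representatives. Phrased this way there is no descent to the minima to justify, and the delicate point you flag disappears.
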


\begin{corollary}
\label{coro:mainCambrianFan}
The normal fan of the brick polytope~$\brickPolytope(\cw{c})$ coincides with the $c$-Cambrian fan.
\end{corollary}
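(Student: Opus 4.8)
The plan is to show that the normal fan of~$\brickPolytope(\cw{c})$ and the $c$-Cambrian fan are the very same coarsening of the Coxeter fan of~$W$. Both are unions of chambers~$w(\fundamentalChamber)$, so it suffices to check that they group these chambers according to the same partition of~$W$. First I would recall from~\cite{ReadingSpeyer} that the maximal cones of the $c$-Cambrian fan are indexed by the $c$-sortable elements of~$W$, the cone attached to a $c$-sortable element~$v$ being the union of the chambers~$w(\fundamentalChamber)$ over all~$w \in W$ with~$\pidown(w) = v$, that is, over those~$w$ for which~$v$ is the maximal $c$-sortable element below~$w$ in weak order. Thus the $c$-Cambrian fan is obtained by gluing the chambers of the Coxeter fan along the fibers of the Cambrian projection~$\pidown$.

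Next I would invoke the proposition describing the normal fan of the brick polytope, which states that the normal cone of~$\brickVector(I)$ in~$\brickPolytope(\cw{c})$ is the union of the chambers~$w(\fundamentalChamber)$ for~$w$ ranging over the fiber~$\set{w \in W}{\projectionMap(w) = I}$. It then remains to identify these fibers with the fibers of~$\pidown$. Given a facet~$I$ of~$\clustercomplex$, set~$v \eqdef \facetSortable{c}(I)$; this is the $c$-sortable element with~$\projectionMap(v) = I$, since~$\facetSortable{c}^{-1}$ is the restriction of~$\projectionMap$ to the $c$-sortable elements. The proposition relating~$\projectionMap$ to the Cambrian projection then says that an element~$w \in W$ satisfies~$\projectionMap(w) = I = \projectionMap(v)$ if and only if~$v$ is the maximal $c$-sortable element below~$w$, that is~$\pidown(w) = v$. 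Hence~$\set{w \in W}{\projectionMap(w) = I} = \set{w \in W}{\pidown(w) = v}$, so the normal cone of~$\brickVector(I)$ coincides with the Cambrian cone of~$v = \facetSortable{c}(I)$. As~$\facetSortable{c}$ is a bijection from the facets of~$\clustercomplex$ to the $c$-sortable elements, the two fans have exactly the same maximal cones, and therefore coincide.

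Since all the needed ingredients are already established, the argument is an assembly with no serious obstacle; the only point requiring a word of care is that both objects are honestly coarsenings of the Coxeter fan, so that matching their collections of maximal cones (equivalently, the induced partitions of~$W$) is enough to conclude that the fans are equal. This is ensured on one side by the normal fan proposition and on the other by the construction of the Cambrian fan in~\cite{ReadingSpeyer}.
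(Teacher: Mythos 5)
Your proposal is correct and follows exactly the route the paper intends: combine Proposition~\ref{prop:normalCone} (normal cones are unions of Coxeter chambers over fibers of~$\projectionMap$) with Theorem~\ref{theo:pi=kappa} (these fibers agree with the fibers of~$\pidown$) and the Reading--Speyer description of the Cambrian cones as unions of chambers over fibers of~$\pidown$. The paper states the corollary as an immediate consequence of Theorem~\ref{theo:pi=kappa} without writing out these steps, so your write-up is simply a more explicit version of the same argument.
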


Although Corollary~\ref{coro:mainCambrianLattice} follows from~\cite{IgusaSchiffler,IngallsThomas} for simply laced types and Corollary~\ref{coro:mainCambrianFan} could be deduced from the connection between the brick polytope~$\brickPolytope(\cw{c})$ and the $c$-associahedron of~\cite{HohlwegLangeThomas}, the subword complex approach provides new independent proofs of these results bypassing most technicalities of the previous approaches.

\medskip
Finally, to complete the interpretation of cluster complexes as subword complexes, we provide the following description of the mutation matrix associated to a cluster in the $c$-cluster complex, or equivalently, to a facet of the subword complex~$\clustercomplex$.

\begin{theorem}
Let~$I$ be a facet of the cluster complex~$\clustercomplex$.
The exchange matrix~$\mutmatrixold{I}$ indexed by positions in~$I$ is given by
\begin{align*}
  \mutmatrixold{I}_{uv} = 
  \begin{cases}
    -\bigdotprod{\alpha_{q_u}^\vee}{\wordprod{\Q}{[u,v] \ssm I}(\alpha_{q_v})} & \text{ if } u < v \\[1pt]
    \phantom{-}\bigdotprod{\alpha_{q_v}}{\wordprod{\Q}{[v,u] \ssm I}(\alpha_{q_u}^\vee)} & \text{ if } u > v \\[1pt]
    \phantom{-}0 & \text{ if } u = v
  \end{cases}
\end{align*}
where~$\Q \eqdef \q_1 \cdots \q_\sizeQ = \cw{c}$.
\end{theorem}


\section{Background on Coxeter groups and subword complexes}
\label{sec:coxeterGroupsSubwordComplex}


\subsection{Finite Coxeter groups}
\label{subsec:coxeterGroups}

We recall here classical notions on finite Coxeter groups.
See~\cite{Humphreys, Humphreys1978} for more details.

\subsubsection{Coxeter systems}
Let~$(V,\dotprod{\cdot}{\cdot})$ be an $n$-dimensional Euclidean vector space.
For any vector~$v \in V\ssm 0$, we denote by~$s_v$ the reflection interchanging $v$ and $-v$ while fixing the orthogonal hyperplane pointwise.
Remember that~$ws_v = s_{w(v)}w$ for any non-zero vector~$v \in V$ and any orthogonal transformation~$w$ of~$V$.

We consider a \defn{finite Coxeter group}~$W$ acting on~$V$, that is, a finite group generated by reflections.
The set of all \defn{reflections} in $W$ is denoted by $R$.
The \defn{Coxeter arrangement} of~$W$ is the collection of all reflecting hyperplanes.
Its complement in~$V$ is a union of open polyhedral cones.
Their closures are called \defn{chambers}.
The \defn{Coxeter fan} is the polyhedral fan formed by the chambers together with all their faces.
This fan is \defn{complete} (its cones cover~$V$) and \defn{simplicial} (all cones are simplicial), and we can assume without loss of generality that it is \defn{essential} (the intersection of all chambers is reduced to the origin).

We fix an arbitrary chamber $\fundamentalChamber$ which we call the \defn{fundamental chamber}.
The \defn{simple reflections} of~$W$ are the $n$ reflections orthogonal to the facet defining hyperplanes of~$\fundamentalChamber$.
The set~$S \subseteq R$ of simple reflections generates~$W$.
In particular, $R = \set{wsw^{-1}}{w \in W, s \in S}$.
The pair~$(W,S)$ forms a \defn{Coxeter system}.

For inductive procedures, set~$W_{\langle s \rangle}$ for~$s \in S$ to be the \defn{parabolic subgroup} of~$W$ generated by~$S \ssm s$.
It is a Coxeter group with Coxeter system $(W_{\langle s \rangle},S \ssm s)$.

\subsubsection{Roots and weights}
\label{sec:rootsandweights}
For simple reflections~$s,t \in S$, denote by~$m_{st}$ the order of the product~${st \in W}$.
We fix a \defn{generalized Cartan matrix} for~$(W,S)$, \ie a matrix~$(a_{st})_{s,t \in S}$ such that $a_{ss} = 2$, $a_{st} \le 0$, $a_{st}a_{ts} = 4 \cos^2(\frac{\pi}{m_{st}})$ and ${a_{st} = 0 \Leftrightarrow a_{ts} = 0}$ for all~$s \ne t \in S$.
We can associate to each simple reflection~$s$ a \defn{simple root}~$\alpha_s \in V$, orthogonal to the reflecting hyperplane of~$s$ and pointing toward the half-space containing~$\fundamentalChamber$, in such a way that $s(\alpha_t) = \alpha_t - a_{st}\alpha_s$ for all~$s,t \in S$.
The set of all simple roots is denoted by~$\Delta \eqdef \set{\alpha_s}{s \in S}$.
The orbit $\Phi \eqdef \set{w(\alpha_s)}{w \in W, s \in S}$ of~$\Delta$ under~$W$ is a \defn{root system} for~$W$.
It is invariant under the action of~$W$ and contains precisely two opposite roots orthogonal to each reflecting hyperplane of~$W$.

The set $\Delta$ of simple roots forms a linear basis of~$V$ (since we assumed $W$ to act essentially on~$V$).
The root system~$\Phi$ is the disjoint union of the \defn{positive roots}~$\Phi^+ \eqdef \Phi \cap \R_{\ge 0}[\Delta]$ (non-negative linear combinations of the simple roots) and the \defn{negative roots} $\Phi^- \eqdef -\Phi^+$.
In other words, the positive roots are the roots whose scalar product with any vector of the interior of the fundamental chamber~$\fundamentalChamber$ is positive, and the simple roots form the basis of the cone generated by~$\Phi^+$.
Each reflection hyperplane is orthogonal to one positive and one negative root.
For a reflection $s \in R$, we set $\alpha_s$ to be the unique positive root orthogonal to the reflection hyperplane of~$s$, \ie such that $s = s_{\alpha_s}$.
For $\alpha \in \Delta$, we also denote by $\Phi_{\langle \alpha \rangle} \eqdef \Phi \cap \vect (\Delta \ssm \alpha)$
the \defn{parabolic subroot system} of $\Phi$ generated by $\Delta \ssm\{ \alpha \}$ with corresponding parabolic subgroup~$W_{\langle s_\alpha \rangle}$.

We denote by~$\alpha_s^\vee \eqdef 2 \alpha_s / \dotprod{\alpha_s}{\alpha_s}$ the \defn{coroot} corresponding to~$\alpha_s \in \Delta$, and we let~$\Delta^\vee \eqdef \set{\alpha^\vee_s}{s \in S}$ denote the coroot basis.
The vectors of its dual basis~$\nabla \eqdef \set{\omega_s}{s \in S}$ are called \defn{fundamental weights}.
In other words, the weights of~$(W,S)$ are defined by~${\dotprod{\alpha_s}{\omega_t}=\delta_{s=t}\dotprod{\alpha_s}{\alpha_s}/2}$ for all~$s,t \in S$.
Thus, the Coxeter group~$W$ acts on the weight basis by~$s(\omega_t) = \omega_t-\delta_{s=t}\alpha_s$.
Note that the transposed Cartan matrix transforms the weight basis into the root basis, \ie ${\alpha_s = \sum_{t \in S} a_{ts}w_t}$ for any~$s \in S$.
Geometrically, the weight~$\omega_s$ gives the direction of the ray of the fundamental chamber~$\fundamentalChamber$ not contained in the reflecting hyperplane~of~$s$.

The Coxeter group~$W$ is said to be \defn{crystallographic} if it stabilizes a lattice of~$V$.
This can only happen if all entries of the Cartan matrix are integers.
Reciprocally, if all entries of the Cartan matrix are integers, then the lattice generated by the simple roots~$\Delta$ is fixed by the Coxeter group~$W$.

\subsubsection{Coxeter permutahedra}
The \defn{$W$-permutahedron}~$\Perm[q][W]$ is the convex hull of the orbit under~$W$ of a point~$q$ in the interior of the fundamental chamber~$\fundamentalChamber$ of~$W$.
Although its geometry depends on the choice of the basepoint~$q$, its combinatorial structure does not.
Its normal fan is the \defn{Coxeter fan} of~$W$.
The facet of~$\Perm[q][W]$ orthogonal to~$w(\omega_s)$ is defined by the inequality $\dotprod{w(\omega_s)}{x} \le \dotprod{\omega_s}{q}$ and is supported by the hyperplane~$w(q + \vect(\Delta \ssm \alpha_s))$.
If~$q = \sum_{s \in S} \omega_s$, then $\Perm[q][W]$ is (a translate of) the Minkowski sum of all positive roots (each considered as a one-dimensional polytope).
We then call it the \defn{balanced $W$-permutahedron}, and denote it by~$\Perm$.
We refer to~\cite{Hohlweg} for further properties of $W$-permutahedra.

\enlargethispage{.1cm}
\subsubsection{Words on~$S$}
The \defn{length}~$\length(w)$ of an element~$w \in W$ is the length of the smallest expression of~$w$ as a product of the generators in~$S$.
An expression~${w = w_1 \cdots w_\ell}$ with~$w_1,\dots,w_\ell \in S$ is called \defn{reduced} if $\ell = \length(w)$.
Geometrically, the length of~$w$ is the cardinality of the \defn{inversion set} of~$w$, defined as the set~$\inv(w) \eqdef \Phi^+ \, \cap \, w(\Phi^-)$ of positive roots sent to negative roots by~$w^{-1}$.
Indeed, the inversion set of~$w$ can be written as $\inv(w) = \big\{\alpha_{w_1}, w_1(\alpha_{w_2}), \dots, w_1w_2 \cdots w_{p-1}(\alpha_{w_\ell})\big\}$ for any reduced expression~$w = w_1 \cdots w_\ell$ of~$w$.
Note that for any~$w \in W$ and~$s \in S$, we have~$\length(ws) = \length(w) + 1$ if~$w(\alpha_s) \in \Phi^+$ and~$\length(ws) = \length(w) - 1$ if~$w(\alpha_s) \in \Phi^-$.
Let~$w_\circ$ denote the unique \defn{longest element} in~$W$.
It sends all positive roots to negative ones.

The \defn{(right) weak order} on~$W$ is the partial order~$\le$ defined by~$u \le w$ if there exists~$v \in W$ such that~$uv = w$ and~$\length(u)+\length(v)=\length(w)$.
Equivalently, the weak order corresponds to the inclusion order on inversion sets: $u \le w$ if and only if ${\inv(u) \subseteq \inv(w)}$.
It defines a lattice structure on the elements of~$W$ with minimal element being the identity~$e \in W$ and with maximal element being~$w_\circ$.
Its Hasse diagram is the graph of the $W$-permutahedron, oriented by a linear function from~$e$~to~$w_\circ$.
The elements of~$W$ which cover a given element~$w \in W$ in weak order are precisely the products~$ws$ for which~$w(\alpha_s) \in \Phi^+$.

We denote by~$S^*$ the set of words on the alphabet~$S$.
To avoid confusion, we denote with a roman letter~$\sq{s}$ the letter of the alphabet~$S$ corresponding to the single reflection~$s \in S$.
Similarly, we use a roman letter like~$\w$ to denote a word of~$S^*$, and an italic letter like~$w$ to denote its corresponding group element in~$W$.
For example, we write~$\w \eqdef \w_1 \cdots \w_\ell$ meaning that the word~$\w \in S^*$ is formed by the letters~$\w_1,\dots,\w_\ell$, while we write~$w \eqdef w_1 \cdots w_\ell$ meaning that the element~$w \in W$ is the product of the simple reflections~$w_1,\dots,w_\ell$.
The \defn{Demazure product} on the Coxeter system~$(W,S)$ is the function~$\DemazureProduct : S^* \to W$ defined inductively~by
$$\DemazureProduct(\sq{\varepsilon}) = e \quad\text{and}\quad \DemazureProduct(\Q\sq{s}) = \begin{cases} ws & \text{if } \length(ws) = \length(w) + 1, \\ w & \text{if } \length(ws) = \length(w) - 1, \end{cases}$$
where $w = \DemazureProduct(\Q)$ denotes the Demazure product of $\Q$.
As its name suggests, $\DemazureProduct(\Q)$ corresponds to the product of the letters of~$\Q$ in the Demazure algebra of~$(W,S)$.

\subsubsection{Examples}
We complete this section with classical examples of Coxeter groups.

\begin{example}[Type~$I_2(m)$ --- Dihedral groups]
The \defn{dihedral group} of isometries of a regular $m$-gon is a Coxeter group denoted~$I_2(m)$.
See \fref{fig:dihedral} for illustrations.

\begin{figure}[h]
  \capstart
  \centerline{\includegraphics[width=.9\textwidth]{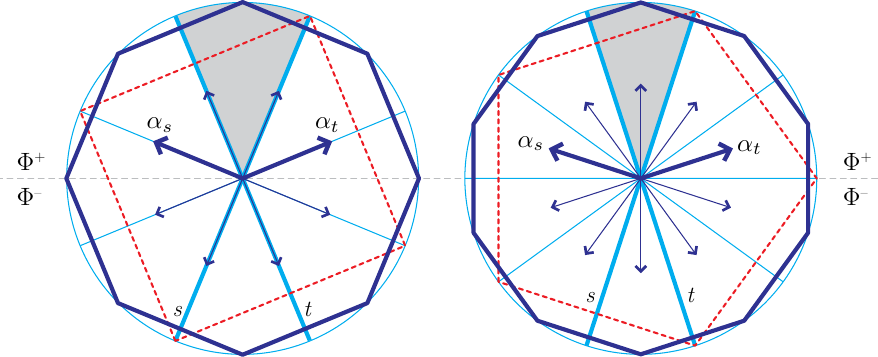}}
  \caption{The dihedral groups~$I_2(4)$ and~$I_2(5)$ and their permutahedra.
  The fundamental chamber is the topmost one (colored), the simple roots are those just above the horizontal axis (in bold), and the positive roots are all roots above the horizontal axis.}
  \label{fig:dihedral}
\end{figure}
\end{example}

\begin{figure}[p]
  \capstart
  \centerline{\includegraphics[scale=.63]{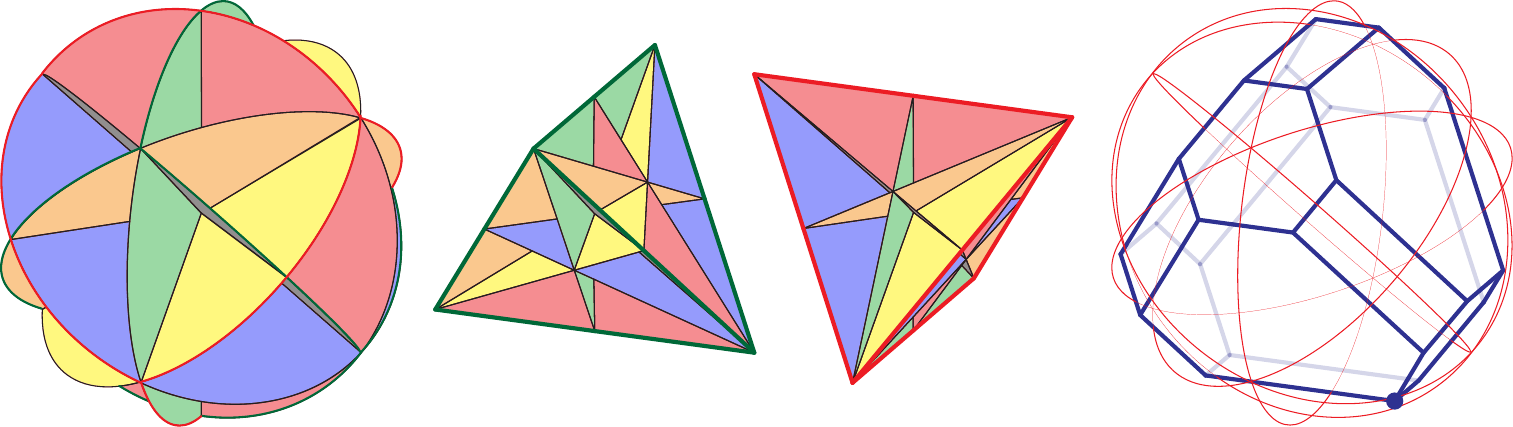}}
  \caption{The $A_3$-arrangement, intersected with the ball (left) and with the corresponding two polar regular tetrahedra (middle).
  The $A_3$-permutahedron is a \defn{truncated octahedron} (right).}
  \label{fig:typeAarrangement}
  \vspace{.2cm}
\end{figure}
\begin{figure}[p]
  \capstart
  \centerline{\includegraphics[scale=.63]{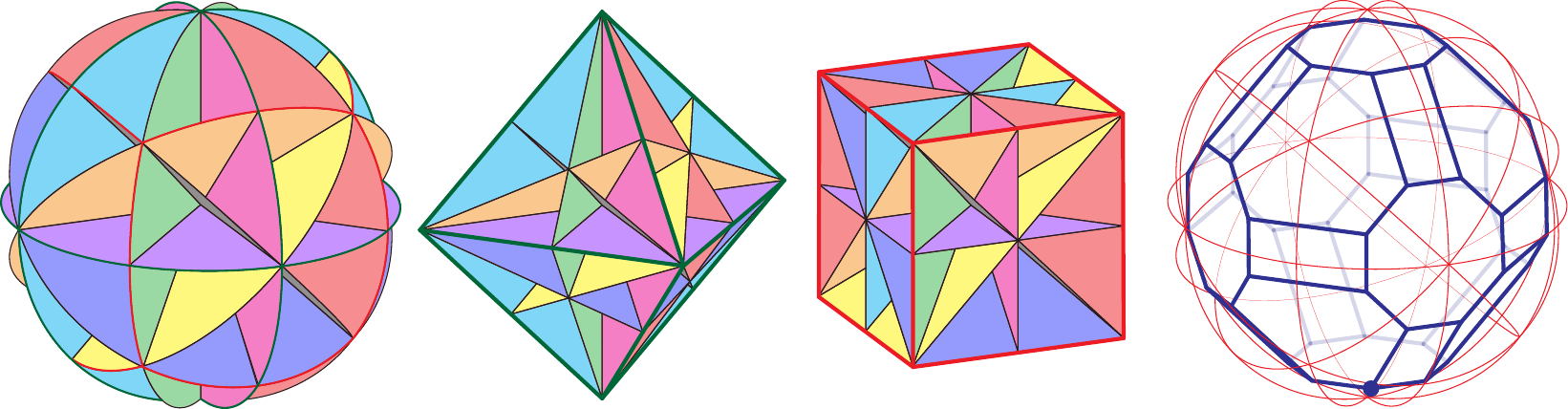}}
  \caption{The $B_3$-arrangement, intersected with the ball (left) and with the regular cube and the regular octahedron (middle).
  The $B_3$-permutahedron is a \defn{great rhombicuboctahedron} (right).}
  \label{fig:typeBarrangement}
  \vspace{.2cm}
\end{figure}
\begin{figure}[p]
  \capstart
  \centerline{\includegraphics[scale=.63]{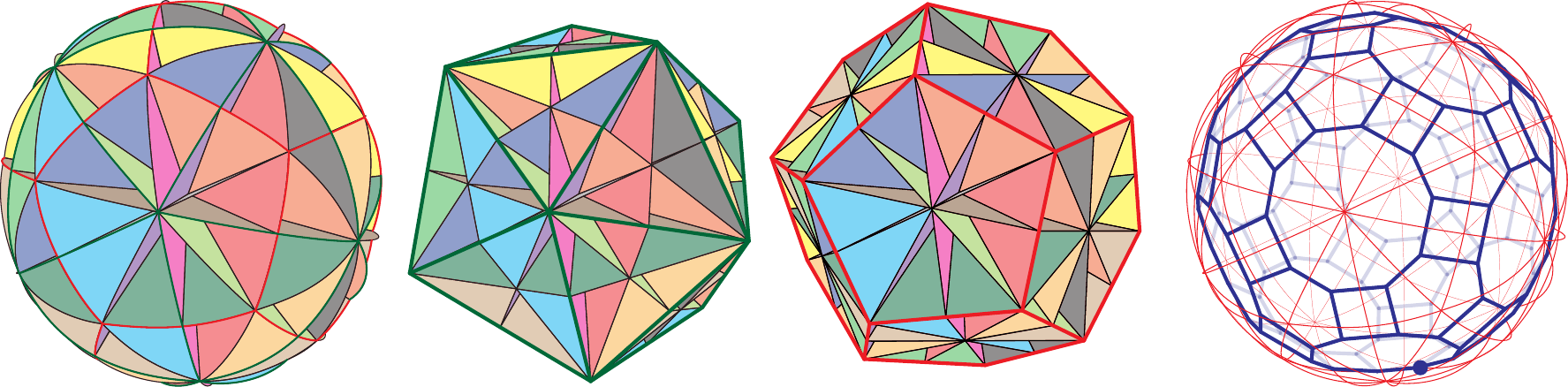}}
  \caption{The $H_3$-arrangement, intersected with the ball (left) and with the regular icosahedron and the regular dodecahedron (middle). The $H_3$-permutahedron is a \defn{great rhombicosidodecahe}\-\defn{dron} (right).}
  \label{fig:typeHarrangement}
\end{figure}

\vspace{-.3cm}

\begin{example}[Type~$A$ --- Symmetric groups]
\label{exm:typeASymmetryGroups}
The symmetric group~$\fS_{n+1}$, acting on the hyperplane $\one^\perp \eqdef \set{x \in \R^{n+1}}{\dotprod{\one}{x} = 0}$ by permutation of the coordinates, is the reflection group of \defn{type}~$A_n$.
It is the group of isometries of the standard $n$-dimensional regular simplex $\conv \{e_1,\dots,e_{n+1}\}$.
Its reflections are the transpositions of~$\fS_{n+1}$, and we can choose the adjacent transpositions~${\tau_p \eqdef (p,p+1)}$ to be the simple reflections.
A root system is given by~$\Phi \eqdef \set{e_p-e_q}{p \neq q \in [n+1]}$.
Its set of simple roots is ${\Delta \eqdef \set{e_{p+1}-e_p}{p \in [n]}}$, and its set of fundamental weights is~$\nabla \eqdef \set{\sum_{q > p} e_q}{p \in [n]}$.
Note that, even if they do not live in the hyperplane~$\one^\perp$, we have chosen these weights to match usual conventions.
For example, $\sum \nabla = \sum_{q \in [n+1]} (q-1)e_q$ and the balanced $A_n$-permutahedron is the classical permutahedron obtained as the convex hull of all permutations of~$\{0,\dots,n\}$, regarded as vectors in~$\R^{n+1}$.
Moreover, it enables us to match the presentation of the type~$A$ brick polytope~\cite{PilaudSantos-brickPolytope}.
The interested reader might prefer to project the weights to the hyperplane~$\one^\perp$, and adapt the examples in type~$A$ accordingly.
\fref{fig:typeAarrangement} presents the $A_3$-arrangement and an $A_3$-permutahe\-dron, where we marked the bottommost vertex with a dot to distinguished which chamber will be considered to be the fundamental chamber in further pictures.
\end{example}

\begin{example}[Type~$B_n$ --- Hyperoctahedral groups]
Consider the symmetry group of~$\fS_n$ acting on~$\R^n$ by permutation of the coordinates together with the group~$(\Z_2)^n$ acting on~$\R^n$ by sign change.
The semidirect product~$B_n \eqdef \fS_n \rtimes (\Z_2)^n$ given by this action is a reflection group.
It is the isometry group of the \mbox{$n$-dimen}\-sional regular cross-polytope~$\conv \{\pm e_1,\dots,\pm e_n\}$ and of its polar $n$-dimensional regular cube~$[-1,1]^n$.
Its reflections are the transpositions of~$\fS_n$ and the changes of one sign.
A root system is given by~$\Phi \eqdef \set{\pm e_p \pm e_q}{p<q \in [n]} \cup \set{\pm e_p}{p \in [n]}$, and we can choose~$\Delta \eqdef \{e_1\} \cup \set{e_{p+1}-e_p}{p \in [n-1]}$ for the set of simple roots.
\fref{fig:typeBarrangement} shows the $B_3$-arrangement and a $B_3$-permutahedron, where we marked the bottommost vertex with a dot to distinguished the fundamental~chamber.
\end{example}

\begin{example}[Type~$H_3$ --- Icosahedral group]
The isometry group of the regular icosahedron (and of its polar regular dodecahedron) is a reflection group.
It is isomorphic to the direct product of~$\Z_2$ by the alternating group~$\fA_5$ of even permutations of~$\{1,\dots,5\}$.
\fref{fig:typeHarrangement} presents the $H_3$-arrangement and a $H_3$-permutahedron, where the fundamental chamber is marked with a dot.
\end{example}


\subsection{Subword complexes}
\label{subsec:subwordComplex}

Let~$(W,S)$ be a Coxeter system, let~${\Q \eqdef \q_1 \cdots \q_\sizeQ \in S^*}$ and let~$\rho \in W$.
In \cite{KnutsonMiller-subwordComplex, KnutsonMiller-GroebnerGeometry}, A.~Knutson and E.~Miller define the \defn{subword complex}~$\subwordComplex(\Q,\rho)$ as the pure simplicial complex of subwords of~$\Q$ whose complements contain a reduced expression of~$\rho$.
The vertices of this simplicial complex are labeled by (positions of) the letters in the word~$\Q$.
Note that two positions are different even if the letters of~$\Q$ at these positions coincide.
We denote by~$[\sizeQ] \eqdef \{1,\dots,\sizeQ\}$ the set of positions in~$\Q$.
The facets of the subword complex~$\subwordComplex(\Q,\rho)$ are the complements of the reduced expressions of~$\rho$ in the word~$\Q$.

Using that the subword complex~$\subwordComplex(\Q,\rho)$ is vertex-decomposable~\cite[Theorem~2.5]{KnutsonMiller-subwordComplex}, it is proven in~\cite[Corollary~3.8]{KnutsonMiller-subwordComplex} that it is a topological sphere if~$\rho = \DemazureProduct(\Q)$, and a ball otherwise.
In this paper we only consider spherical subword complexes.
By appending to the word~$\Q$ any reduced expression of~$\DemazureProduct(\Q)^{-1}w_\circ$, we obtain a word~$\Q'$ whose Demazure product is~$\DemazureProduct(\Q') = w_\circ$ and such that the spherical subword complexes~$\subwordComplex(\Q,\DemazureProduct(\Q))$ and~$\subwordComplex(\Q',w_\circ)$ are isomorphic (see~\cite[Theorem~3.7]{CeballosLabbeStump} for details).
Thus, we assume that~$\rho = \DemazureProduct(\Q) = w_\circ$ to simplify the presentation without loss of generality, and we write~$\subwordComplex(\Q)$ instead of~$\subwordComplex(\Q, w_\circ)$ to simplify notations.

For any facet~$I$ of~$\subwordComplex(\Q)$ and any element~$i \in I$, there is a unique facet~$J$ of~$\subwordComplex(\Q)$ and a unique element~$j \in J$ such that~$I \ssm i = J \ssm j$.
We say that~$I$~and~$J$ are \defn{adjacent} facets, and that~$J$ is obtained from~$I$ by~\defn{flipping}~$i$.

\begin{example}[A toy example]
\label{exm:recurrent1}
To illustrate definitions and results throughout the paper, we will follow all details of the example of the word~$\Qexm \eqdef \sq{\tau}_2\sq{\tau}_3\sq{\tau}_1\sq{\tau}_3\sq{\tau}_2\sq{\tau}_1\sq{\tau}_2\sq{\tau}_3\sq{\tau}_1$ whose letters are adjacent transpositions of~$\fS_4$.
The facets of the subword complex~$\subwordComplex(\Qexm)$ are $\{2,3,5\}$, $\{2,3,9\}$, $\{2,5,6\}$, $\{2,6,7\}$, $\{2,7,9\}$, $\{3,4,5\}$, $\{3,4,9\}$, $\{4,5,6\}$, $\{4,6,7\}$, and $\{4,7,9\}$.
This subword complex is thus spherical.
\end{example}

Before looking at various previously considered spherical subword complexes for the symmetric group, we recall two basic and very helpful isomorphisms exhibited in~\cite{CeballosLabbeStump}.
We will later use these isomorphisms in Section~\ref{sec:generalizedAssociahedra} when studying cluster complexes as subword complexes.

\begin{lemma}[{\cite[Proposition~3.8]{CeballosLabbeStump}}]
\label{lem:commutationlemma}
If two words $\Q$ and $\Q'$ coincide up to commutation of consecutive commuting letters, then the subword complexes~$\subwordComplex(\Q)$ and~$\subwordComplex(\Q')$ are isomorphic.
\end{lemma}

For the second property, define the \defn{rotated word} of the word $\Q \eqdef \q_1 \q_2 \cdots \q_\sizeQ$ to be the word~$\rotate{\Q} \eqdef \q_2 \cdots \q_\sizeQ \psi(q_1)$, where $\psi$ denotes the automorphism on the simple reflections~$S$ given by~$\psi(s) \eqdef w_\circ^{-1} s w_\circ$.
Define the \defn{rotation operator} to be the shift $i \mapsto i-1$ where~$0$ and~$\sizeQ$ are identified.

\begin{lemma}[{\cite[Proposition~3.9]{CeballosLabbeStump}}]
\label{lem:rotationlemma}
The rotation operator induces an isomorphism between the subword complexes $\subwordComplex(\Q)$ and $\subwordComplex(\rotate{\Q})$.
\end{lemma}

These two lemmas provide a way to think about spherical subword complexes as
\begin{itemize}
\item being attached to an acyclic directed graph with vertices being the letters of $\Q$, and with oriented edges being obtained as the transitive reduction of the oriented edges between noncommuting letters in $\Q$, and as
\item ``living on a M\"obius strip'' in the sense that $\subwordComplex(\Q)$ can be thought of being attached to the biinfinite word $\cdots \Q \, \psi(\Q) \, \Q \, \psi(\Q) \, \cdots$, and where a combinatorial model for this complex is obtained by looking at any window of size~$\sizeQ$ in this biinfinite word.
\end{itemize}
We refer the reader to~\cite{PilaudPocchiola, CeballosLabbeStump} for a precise presentation of these interpretations.
We now survey various examples of type~$A$ and~$B$ subword complexes previously considered in the literature.

\begin{typeA}[Sorting networks]
\label{rem:typeASortingNetworks}
Let~$W \eqdef \fS_{n+1}$ act on~$\one^\perp$ and $S \eqdef \set{\tau_p}{p \in [n]}$, where~$\tau_p$ denotes the adjacent transposition~$(p,p+1)$.
We can represent the word~$\Q \eqdef \q_1\q_2 \cdots \q_\sizeQ \in S^*$ by a \defn{sorting network}~$\cN_{\Q}$ as illustrated in \fref{fig:network}~(left).
The network~$\cN_\Q$ is formed by~$n+1$ horizontal lines (its \defn{levels}, labeled from bottom to top) together with~$\sizeQ$ vertical segments (its \defn{commutators}, labeled from left to right) corresponding to the letters of~$\Q$.
If $q_k = \tau_p$, the $k$\ordinal{} commutator of~$\cN_{\Q}$ lies between the $p$\ordinal{} and $(p+1)$\ordinal{} levels of~$\cN_{\Q}$.

\begin{figure}[ht]
  \capstart
  \centerline{\includegraphics[width=\textwidth]{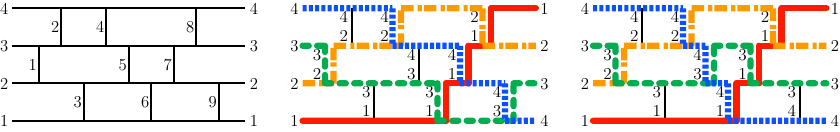}}
  \caption{The sorting network~$\cN_{\Qexm}$ corresponding to the word $\Qexm$ given in Example~\ref{exm:recurrent1} (left) and the pseudoline arrangements supported by the network~$\cN_{\Qexm}$ corresponding to the adjacent facets~$\{2,3,5\}$ (middle) and~$\{2,3,9\}$ (right) of~$\subwordComplex(\Qexm)$.}
  \label{fig:network}
\end{figure}

A \defn{pseudoline} supported by~$\cN_\Q$ is an abscissa monotone path on the network~$\cN_\Q$.
A commutator of~$\cN_\Q$ is a \defn{crossing} between two pseudolines if it is traversed by both pseudolines, and a \defn{contact} if its endpoints are contained one in each pseudoline.
A \defn{pseudoline arrangement}~$\Lambda$ (with contacts) is a set of~$n+1$ pseudolines on~$\cN_\Q$, any two of which have precisely one crossing, possibly some contacts, and no other intersection.
As a consequence of the definition, the pseudoline of~$\Lambda$ which starts at level~$p$ ends at level~$n-p+2$, and is called the $p$\ordinal{} pseudoline of~$\Lambda$.
As illustrated in \fref{fig:network} (middle and right), a facet~$I$ of~$\subwordComplex(\Q)$ is represented by a pseudoline arrangement~$\Lambda_I$ supported by~$\cN_{\Q}$.
Its contacts (resp.~crossings) are the commutators of~$\cN_\Q$ corresponding to the letters of~$I$ (resp.~of the complement of~$I$).

Let~$I$ and~$J$ be two adjacent facets of~$\subwordComplex(\Q)$, with~$I \ssm i = J \ssm j$.
Then~$j$ is the position of the crossing between the two pseudolines of~$\Lambda_I$ which are in contact at position~$i$, and the pseudoline arrangement~$\Lambda_J$ is obtained from the pseudoline arrangement~$\Lambda_I$ by exchanging the contact at~$i$ with the crossing at~$j$.
The flip between the pseudoline arrangements associated to two adjacent facets of~$\subwordComplex(\Qexm)$ is illustrated in \fref{fig:network} (middle and right).

A \defn{brick} of~$\cN_{\Q}$ is a connected component of its complement, bounded on the right by a commutator of~$\cN_{\Q}$.
For $k \in [\sizeQ]$, the $k$\ordinal{} brick is that immediately to the left of the $k$\ordinal{} commutator of~$\cN_{\Q}$.
\end{typeA}

\begin{typeA}[Combinatorial model for families of geometric graphs]
\label{rem:combinatorialModelsGeometricGraphs}
As pointed out in~\cite{Stump, PilaudPocchiola}, type~$A$ subword complexes can be used to provide a combinatorial model for relevant families of geometric graphs.
It relies on the interpretation of these geometric graphs in the line space of the plane.
We do not present this interpretation and refer to~\cite{PilaudPocchiola} for details.
As motivation, we just recall the following four families of geometric graphs which can be interpreted in terms of sorting networks (see \fref{fig:geometricGraphs} for illustrations).

\begin{figure}[b]
  \capstart
  \centerline{\includegraphics[width=1\textwidth]{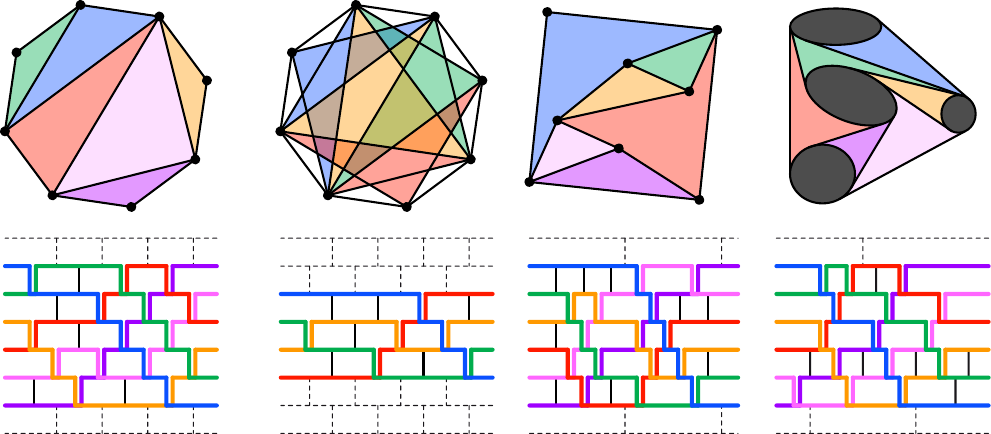}}
  \caption{Sorting networks interpretations of certain geometric graphs: a triangulation of the convex octagon, a $2$-triangulation of the convex octagon, a pseudotriangulation of a point set, and a pseudotriangulation of a set of disjoint convex bodies.}
  \label{fig:geometricGraphs}
\end{figure}

\begin{enumerate}[(i)]
\item Let~$\sq{c}$ be a reduced expression of a Coxeter element~$c$ and $\cwo{c}$ denotes the $\sq{c}$-sorting word of~$w_\circ$ (see Section~\ref{sec:generalizedAssociahedra} for definitions).
Then the subword complex~$\clustercomplex$ is isomorphic to the simplicial complex of crossing-free sets of internal diagonals of a convex $(n+3)$-gon.
Its facets correspond to triangulations of the $(n+3)$-gon, its ridges correspond to flips between them, and its vertices correspond to internal diagonals of the $(n+3)$-gon.
It is the boundary complex of the polar of the \defn{associahedron}.
See \eg~\cite{Lee, Loday, HohlwegLange, CeballosSantosZiegler, TamariFestschrift, LangePilaud}.

\item The subword complex~$\subwordComplex(\sq{c}^k\cwo{c})$ is isomorphic to the simplicial complex of $(k+1)$-crossing free sets of $k$-relevant diagonals of the $(n+2k+1)$-gon, whose facets are $k$-triangulations of the $(n+2k+1)$-gon~\cite{PilaudSantos-multitriangulations, PilaudPocchiola, Stump}.
It is not known whether this simplicial complex can be realized as the boundary complex of a convex polytope.

\item Consider the simplicial complex of pointed crossing-free sets of internal edges of a set~$P$ of~$n+3$ points in general position in the Euclidean plane.
Its facets are pseudotriangulations of~$P$~\cite{PocchiolaVegter, RoteSantosStreinu-survey}, and its ridges are flips between them.
It is isomorphic to a type~$A_n$ subword complex~$\subwordComplex(\Q)$, where~$\Q$ is obtained from the dual pseudoline arrangement of~$P$ by removing its first and last levels~\cite{PilaudPocchiola}.
This simplicial complex is known to be the boundary complex of the polar of the \defn{pseudotriangulation polytope} of~\cite{RoteSantosStreinu-polytope}.

\item For odd~$n$, consider the simplicial complex of crossing-free sets of internal free bitangents of a set~$X$ of~$(n+3)/2$ disjoint convex bodies of the Euclidean plane.
Its facets are pseudotriangulations of~$X$~\cite{PocchiolaVegter}, and its ridges are flips between them.
It is isomorphic to a type~$A_n$ subword complex~$\subwordComplex(\Q)$, where~$\Q$ is obtained from the dual double pseudoline arrangement of~$X$ by removing its first and last levels~\cite{PilaudPocchiola}.
\end{enumerate}
\end{typeA}

\begin{typeA}[Symmetric sorting networks]
Let~$W \eqdef B_n$ and let $S \eqdef \{\chi\} \cup \set{\tau_p}{p \in [n-1]}$, where $\chi$~denotes the operator which changes the sign of the first coordinate and $\tau_p$ denotes the adjacent transposition~$(p,p+1)$.
If the word~$\Q \in S^*$ has $x$~occurrences of~$\chi$, then it can be interpreted as a horizontally symmetric network~$\cN_\Q$ with~$2n+2$ levels and~$2m-x$ commutators: replace each appearance of~$\chi$ by a commutator between the first levels above and below the axis, and each appearance of~$\tau_p$ by a symmetric pair of commutators between the $p$\ordinal{} and $(p+1)$\ordinal{} levels both above and below the axis.
The reduced expressions of~$w_\circ$ in~$\Q$ correspond to the horizontally symmetric pseudoline arrangements supported by~$\cN_\Q$.
Since the horizontal symmetry in the dual space corresponds to the central symmetry in the primal space, the centrally symmetric versions of the geometric objects discussed in the classical situation~\ref{rem:combinatorialModelsGeometricGraphs} can be interpreted combinatorially by well-chosen type~$B$ subword complexes.
\end{typeA}


\section{Root configurations}
\label{sec:rootConfiguration}


\subsection{Definition}

In this section, we describe fundamental properties of the root function and the root configuration of a facet of a subword complex.
The root function was already defined and studied in the work of C.~Ceballos, J.-P.~Labb\'e and the second author~\cite{CeballosLabbeStump}.
Although some statements presented below can already be found in their work, we provide brief proofs for the convenience of the reader.

\begin{definition}
\label{def:roots}
To a facet~$I$ of~$\subwordComplex(\Q)$ and a position~$k \in [\sizeQ]$, associate the~root
$${\Root{I}{k} \eqdef \wordprod{\Q}{[k-1] \ssm I}(\alpha_{q_k})},$$
where~$\wordprod{\Q}{X}$ denotes the product of the reflections~$q_x \in \Q$, for~$x \in X$, in the order given by~$\Q$.
The \defn{root configuration} of the facet~$I$ is the multiset
$$\Roots{I} \eqdef \bigmultiset{\Root{I}{i}}{i \in I}$$
of all roots associated to the elements of~$I$.
\end{definition}

\begin{example}
\label{exm:recurrent2}
In the subword complex~$\subwordComplex(\Qexm)$ with $\Qexm = \sq{\tau}_2\sq{\tau}_3\sq{\tau}_1\sq{\tau}_3\sq{\tau}_2\sq{\tau}_1\sq{\tau}_2\sq{\tau}_3\sq{\tau}_1$ of Example~\ref{exm:recurrent1}, we have for example $\Root{\{2,3,9\}}{2} = \tau_2(e_4-e_3) = e_4-e_2$ and $\Root{\{2,3,9\}}{7} = \tau_2\tau_3\tau_2\tau_1(e_3-e_2) = e_3-e_1$.
Moreover,
$$\Roots{\{2,3,9\}} = \{e_4-e_2, e_3-e_1, e_3-e_4\}.$$
\end{example}

\begin{typeA}[Incident pseudolines]
\label{rem:typeARoots}
Keep the notations of the classical situation~\ref{rem:typeASortingNetworks}.
For any~$k \in [\sizeQ]$, we have $\Root{I}{k} = e_t-e_b$, where $t$~and~$b$ are such that the $t$\ordinal{} and $b$\ordinal{} pseudolines of~$\Lambda_I$ arrive respectively on top and bottom of the $k$\ordinal{} commutator of~$\cN_{\Q}$.
For example, in the subword complex of Example~\ref{exm:recurrent1}, the root $\Root{\{2,3,9\}}{7} = e_3-e_1$ can be read in the $7$\ordinal{} commutator of \fref{fig:network} (right).

In~\cite{PilaudSantos-brickPolytope}, the root configuration~$\Roots{I}$ is studied as the incidence configuration of the contact graph~$\Lambda_I\contact$ of the pseudoline arrangement~$\Lambda_I$.
This oriented graph has one node for each pseudoline of~$\Lambda_I$ and one arc for each contact of~$\Lambda_I$, oriented from the pseudoline passing above to the pseudoline passing below.
We avoid this notion here since it is not needed for the construction of this paper, and only provides a graphical description in type~$A$ which cannot be extended to general finite types.
\end{typeA}


\subsection{Roots and flips}

Throughout this paper, we show that the combinatorial and geometric properties of the root configuration~$\Roots{I}$ encode many relevant properties of the facet~$I$.
We first note that, for a given facet~$I$ of~$\subwordComplex(\Q)$, the map~${\Root{I}{\cdot}:[\sizeQ] \to \Phi}$ can be used to understand the flips in~$I$.
We refer to \fref{fig:network} for an illustration of the properties of the next lemma in type~$A$.
This lemma can also be found in~\cite[Section~3.1]{CeballosLabbeStump}.

\begin{lemma}
\label{lem:roots&flips}
Let~$I$ be any facet of the subword complex~$\subwordComplex(\Q)$.
\begin{enumerate}[(1)]
\item
\label{lem:roots&flips:enum:inversions}
The map~$\Root{I}{\cdot} : k \mapsto \Root{I}{k}$ is a bijection between the complement of~$I$ and~$\Phi^+$.

\item
\label{lem:roots&flips:enum:flip}
If $I$~and~$J$ are two adjacent facets of~$\subwordComplex(\Q)$ with~$I \ssm i = J \ssm j$, the position~$j$ is the unique position in the complement of~$I$ for which~$\Root{I}{j} \in \{\pm\Root{I}{i}\}$.
Moreover, $\Root{I}{j} = \Root{I}{i} \in \Phi^+$ if $i < j$, while $\Root{I}{j} = -\Root{I}{i} \in \Phi^-$ if $j < i$.

\item
\label{lem:roots&flips:enum:update}
In the situation of (\ref{lem:roots&flips:enum:flip}), the map~$\Root{J}{\cdot}$ is obtained from the map~$\Root{I}{\cdot}$ by:
$$\Root{J}{k} = \begin{cases} s_{\Root{I}{i}}(\Root{I}{k}) & \text{if } \min(i,j) < k \le \max(i,j), \\ \Root{I}{k} & \text{otherwise}. \end{cases}$$
\end{enumerate}
\end{lemma}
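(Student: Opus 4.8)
The plan is to treat the three statements in order, extracting everything from the single identity $\sigma_I^{[1,\sizeQ]} = w_\circ$ valid for every facet~$I$ (its complement spells a reduced word of~$w_\circ$), together with the length--root correspondence ${\length(ws) = \length(w)+1 \Leftrightarrow w(\alpha_s) \in \Phi^+}$ recalled above. For part~(1), I would first note that for $k$ in the complement of~$I$ the product $\sigma_I^{[1,k]} = \sigma_I^{[1,k-1]}q_k$ is a prefix of the reduced word spelled by the complement of~$I$, hence itself reduced; thus $\length(\sigma_I^{[1,k]}) = \length(\sigma_I^{[1,k-1]})+1$ and the length--root correspondence gives $\Root{I}{k} = \sigma_I^{[1,k-1]}(\alpha_{q_k}) \in \Phi^+$. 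These roots are precisely the inversion sequence of the reduced word spelling~$w_\circ$, so by the standard theory of reduced words they are pairwise distinct. Since the complement of~$I$ has $\length(w_\circ) = |\Phi^+|$ elements, the map is a bijection onto~$\Phi^+$.

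The heart of the argument is a single computation yielding both the exchanged-root relation of part~(2) and the update formula of part~(3). Assume $i<j$, the case $j<i$ being symmetric. Splitting the products defining $\sigma_I^{[1,\sizeQ]}$ and $\sigma_J^{[1,\sizeQ]}$ at positions~$i$ and~$j$, and using that the complements of~$I$ and~$J$ agree away from~$\{i,j\}$, the equality $\sigma_I^{[1,\sizeQ]} = w_\circ = \sigma_J^{[1,\sizeQ]}$ reduces, after cancelling the common prefix $\sigma_I^{[1,i-1]}$ and suffix $\sigma_I^{[j+1,\sizeQ]}$, to the local identity $q_i\,\sigma_I^{[i+1,j-1]} = \sigma_I^{[i+1,j-1]}\,q_j$. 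Rewriting this as $q_i = s_{\sigma_I^{[i+1,j-1]}(\alpha_{q_j})}$ through $w s_v w^{-1} = s_{w(v)}$, and then applying the invertible map $\sigma_I^{[1,i-1]}$, gives $\Root{I}{i} = \pm\Root{I}{j}$; uniqueness of such a $j$ in the complement of~$I$ is immediate from the injectivity established in part~(1). For part~(3), the same local identity shows that $\sigma_J^{[1,k-1]}$ equals $\sigma_I^{[1,k-1]}$ when $k \le i$ or $k > j$, and equals $s_{\Root{I}{i}}\,\sigma_I^{[1,k-1]}$ when $i < k \le j$ (the extra factor being $\sigma_I^{[1,i-1]}\,q_i\,(\sigma_I^{[1,i-1]})^{-1} = s_{\Root{I}{i}}$); applying these to $\alpha_{q_k}$ yields the claimed update formula.

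It remains to pin down the signs in part~(2), which I regard as the one genuinely delicate point. The clean device is to apply part~(1) to the facet~$J$ rather than~$I$: since $i$ lies in the complement of~$J$, we have $\Root{J}{i} \in \Phi^+$. Feeding $k=i$ into the update formula of part~(3) gives $\Root{J}{i} = \Root{I}{i}$ when $i<j$, and $\Root{J}{i} = s_{\Root{I}{i}}(\Root{I}{i}) = -\Root{I}{i}$ when $j<i$. Hence $\Root{I}{i} \in \Phi^+$ exactly when $i<j$; and since $\Root{I}{j} \in \Phi^+$ by part~(1), the positive representative of $\{\pm\Root{I}{i}\}$ is $\Root{I}{j}$, giving $\Root{I}{j} = \Root{I}{i} \in \Phi^+$ for $i<j$ and $\Root{I}{j} = -\Root{I}{i} \in \Phi^-$ for $j<i$, as asserted.

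Apart from that sign step, the only care needed throughout is the symmetric bookkeeping of the two cases $i<j$ and $j<i$ and the consistent splitting of the partial products $\sigma_I^{[x,y]}$ at the flip positions; the algebra itself is routine once the local identity $q_i\,\sigma_I^{[i+1,j-1]} = \sigma_I^{[i+1,j-1]}\,q_j$ is isolated.
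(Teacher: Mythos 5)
Your proposal is correct and follows essentially the same route as the paper: everything is extracted from the fact that the complement of a facet spells a reduced expression of~$w_\circ$, the update on the window between $i$ and $j$ is obtained by the same conjugation rule $ws_v = s_{w(v)}w$, and part~(2) is derived from parts~(1) and~(3). The only cosmetic differences are that you prove~(1) via positivity and distinctness of the inversion sequence where the paper argues surjectivity by a sign-change argument, and that you spell out the sign determination in~(2) (evaluating the update formula at $k=i$ and applying~(1) to~$J$), a step the paper compresses into ``a direct consequence of (1) and (3)''.
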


\begin{proof}
Choose a positive root~$\beta \in \Phi^+$.
Since $\wordprod{Q}{[\sizeQ] \ssm I}$ is a reduced expression for~$w_\circ$, we know that~$(\wordprod{Q}{[\sizeQ] \ssm I})^{-1}(\beta) = w_\circ(\beta) \in \Phi^-$.
Consequently, there exists ${k \in [\sizeQ] \ssm I}$ such that ${(\wordprod{Q}{[k-1] \ssm I})^{-1}(\beta) \in \Phi^+}$ while $q_k(\wordprod{Q}{[k-1] \ssm I})^{-1}(\beta) \in \Phi^-$.
Therefore, we have that $\alpha_{q_k} = (\wordprod{Q}{[k-1] \ssm I})^{-1}(\beta)$, and thus $\beta = \Root{I}{k}$.
This proves that~$\Root{I}{\cdot}$ is surjective, and even bijective from the complement of~$I$ to~$\Phi^+$, since these two sets have the same cardinality.

For~(\ref{lem:roots&flips:enum:update}), assume that~$i < j$.
Since
$$\Root{I}{k} = \wordprod{Q}{[k-1] \ssm I}(\alpha_{q_k}) = w_\circ\big(\wordprod{Q}{[k,\sizeQ] \ssm I}]\big)^{-1}(\alpha_{q_k}),$$
the roots $\Root{I}{k}$~and $\Root{J}{k}$ are equal as soon as $I$~and~$J$ coincide either on the set ${\{1,\dots,k-1\}}$ or on the set $\{k,\dots,\sizeQ\}$.
This proves~(\ref{lem:roots&flips:enum:update}) when $k \le i$ or $j < k$.
In the case where $i < k \le j$, we have
\begin{align*}
  \Root{J}{k}
    &= \wordprod{Q}{[i-1] \ssm I} \cdot q_i\cdot\wordprod{Q}{[i,k-1] \ssm I}(\alpha_{q_k}) \\
    &= s_{\wordprod{Q}{[i-1] \ssm I}(\alpha_{q_i})} \cdot \wordprod{Q}{[i-1] \ssm I} \cdot \wordprod{Q}{[i,k-1] \ssm I}(\alpha_{q_k}) \\
    &= s_{\Root{I}{i}}(\Root{I}{k}),
\end{align*}
where the second equality comes from the commutation rule $ws_v = s_{w(v)}w$.
The proof is similar~if~$j < i$.

Finally, (\ref{lem:roots&flips:enum:flip}) is a direct consequence of (\ref{lem:roots&flips:enum:inversions})~and~(\ref{lem:roots&flips:enum:update}): when we remove~$i$ from~$I$, we have to add a position~$j \notin I$ such that the root function keeps mapping the complement of~$(I \ssm i) \cup j$ to~$\Phi^+$.
\end{proof}

In view of Lemma~\ref{lem:roots&flips}, we say that $\Root{I}{i} = -\Root{J}{j} \in V$ is the \defn{direction} of the flip from facet~$I$ to facet~$J$ of~$\subwordComplex(\Q)$.


\subsection{Reconstructing facets}

We now observe that a facet~$I \in \subwordComplex(\Q)$ can be reconstructed from its root configuration~$\Roots{I}$, using a sweeping procedure.

\begin{lemma}
\label{le:rootcharacterization}
The multiset~$\Roots{I}$ characterizes~$I$ among all facets of~$\subwordComplex(\Q)$.
\end{lemma}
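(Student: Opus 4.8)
The plan is to prove that the map $I \mapsto \Roots{I}$ is injective by reconstructing a facet $I$ from the sole data of its root configuration. Fixing the word~$\sq{Q}$, I would run the following deterministic procedure on an input multiset: sweeping the positions $k = 1, \dots, \sizeQ$ from left to right, maintain a group element~$\sigma$ (initially the identity) and a working multiset~$C$ (initially the input), and at each step compute $r \eqdef \sigma(\alpha_{q_k})$; if $r \in C$, record $k$ as a \emph{contact}, delete one copy of~$r$ from~$C$, and leave~$\sigma$ unchanged; otherwise record $k$ as a \emph{crossing} and replace~$\sigma$ by~$\sigma q_k$. Since this procedure only reads the input multiset, it suffices to prove that, fed with~$\Roots{I}$, it outputs precisely the set of contacts~$I$: two facets with the same root configuration would then coincide.

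To establish this, I would show by induction on~$k$ that, just before treating position~$k$, the state of the procedure satisfies $\sigma = \sigma_I^{[1,k-1]}$, the recorded contacts are exactly $I \cap [1,k-1]$, and $C = \multiset{\Root{I}{\ell}}{\ell \in I, \, \ell \geq k}$. The invariant gives $r = \sigma_I^{[1,k-1]}(\alpha_{q_k}) = \Root{I}{k}$. If $k \in I$, then $\Root{I}{k}$ is one of the configuration roots still listed in~$C$, so the procedure correctly records a contact and updates~$C$ accordingly. The delicate case is $k \notin I$: here $\Root{I}{k} \in \Phi^+$ by part~(\ref{lem:roots&flips:enum:inversions}) of Lemma~\ref{lem:roots&flips}, and I must guarantee that $\Root{I}{k} \notin C$ so that the procedure records a crossing and sets $\sigma = \sigma_I^{[1,k]}$.

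The crux --- and the step I expect to be the main obstacle --- is the following repetition-free property: \emph{if $c \notin I$ is a crossing, then the positive root $\beta \eqdef \Root{I}{c}$ satisfies $\Root{I}{k} \neq \beta$ for every $k > c$.} I would prove it using the monotonicity of inversion sets under reduced right multiplication. Write $L(w) \eqdef \set{\gamma \in \Phi^+}{w^{-1}(\gamma) \in \Phi^-}$; a standard computation gives $L(w_1 w_2) \supseteq L(w_1)$ whenever $\length(w_1 w_2) = \length(w_1) + \length(w_2)$. Since $c \notin I$, we have $\sigma_I^{[1,c]} = \sigma_I^{[1,c-1]} q_c$ and $(\sigma_I^{[1,c]})^{-1}(\beta) = q_c(\alpha_{q_c}) = -\alpha_{q_c} \in \Phi^-$, so $\beta \in L(\sigma_I^{[1,c]})$. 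For $k > c$, the complement of~$I$ being a reduced word for~$w_\circ$ forces $\sigma_I^{[1,k-1]} = \sigma_I^{[1,c]} \, \sigma_I^{[c+1,k-1]}$ to be a reduced factorization, whence $\beta \in L(\sigma_I^{[1,k-1]})$, that is $(\sigma_I^{[1,k-1]})^{-1}(\beta) \in \Phi^-$. This rules out $\Root{I}{k} = \sigma_I^{[1,k-1]}(\alpha_{q_k}) = \beta$, as it would force the simple root $\alpha_{q_k} = (\sigma_I^{[1,k-1]})^{-1}(\beta)$ to be negative.

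With this property in hand the crossing case closes immediately: by part~(\ref{lem:roots&flips:enum:inversions}) of Lemma~\ref{lem:roots&flips}, $c = k$ is the \emph{only} crossing whose root is~$\beta$, so any occurrence of~$\beta$ still listed in $C = \multiset{\Root{I}{\ell}}{\ell \in I, \, \ell \geq k}$ would come from a contact $\ell \in I$ with $\ell > c$, contradicting the repetition-free property; hence $\beta \notin C$ and the invariant is preserved. After the last position the recorded contacts are exactly~$I$, which completes the reconstruction and the proof. Apart from the inversion-set monotonicity underlying the repetition-free property, every step is routine bookkeeping driven by Lemma~\ref{lem:roots&flips}.
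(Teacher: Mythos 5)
Your proof is correct, but it is organized differently from the paper's. The paper argues by direct comparison: given two distinct facets $I$ and $J$, it takes the first position $i$ of $I\symdif J$ (say $i\in I\ssm J$), notes that the root functions agree up to $i$, and then shows that the positive root $\alpha=\Root{I}{i}=\Root{J}{i}$ occurs strictly more often in $\Roots{I}$ than in $\Roots{J}$, the key point being that all occurrences of $\alpha$ in $\Roots{J}$ lie before $i$ --- which the paper deduces from the flip description in Lemma~\ref{lem:roots&flips}(\ref{lem:roots&flips:enum:flip}). You instead prove injectivity by validating the left-to-right reconstruction procedure (which the paper only records afterwards, in Remark~\ref{rem:reconstructFromRoots}, as a consequence of the lemma), and your crux --- the ``repetition-free property'' that $\Root{I}{k}\neq\Root{I}{c}$ for $k>c\notin I$ --- is exactly the same combinatorial fact as the paper's key point, but you derive it from the monotonicity of inversion sets along the reduced factorization $\sigma_I^{[1,k-1]}=\sigma_I^{[1,c]}\,\sigma_I^{[c+1,k-1]}$ rather than from the flip lemma. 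The paper's route is shorter; yours is more self-contained (it does not lean on the uniqueness-of-flip statement) and yields the reconstruction algorithm, including the fact that it never stalls, as an explicit byproduct rather than an afterthought. Both the invariant bookkeeping and the inversion-set computation $(\sigma_I^{[1,c]})^{-1}(\beta)=-\alpha_{q_c}\in\Phi^-$ check out.
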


\begin{proof}
Let $I$~and~$J$ be two distinct facets of~$\subwordComplex(\Q)$.
Assume without loss of generality that the first element~$i$ of the symmetric difference~$I \symdif J$ is in~$I \ssm J$.
Since~$\Root{I}{i}$ only depends on~$I \cap [i-1]$, the roots $\Root{I}{k}$~and~$\Root{J}{k}$ coincide for all~$1 \le k \le i$.
Let $\alpha \eqdef \wordprod{Q}{[i-1] \ssm I}(\alpha_{q_i}) = \wordprod{Q}{[i-1] \ssm J}(\alpha_{q_i}) \in \Phi^+$.
According to Lemma~\ref{lem:roots&flips}(\ref{lem:roots&flips:enum:flip}), all occurrences of~$\alpha$ in~$\Roots{J}$ appear before~$i$.
Consequently, $\alpha$~appears at least once more in~$\Roots{I}$ than in~$\Roots{J}$.
\end{proof}

\begin{remark}
\label{rem:reconstructFromRoots}
Lemma~\ref{le:rootcharacterization} says that we can reconstruct the facet~$I$ from the root configuration~$\Roots{I}$ scanning the word~$\Q$ from left to right as follows.
We start from position~$0$ in~$\Q$ and we define~$I_0$ to be the empty word and $\sfR_0$ to be the multiset~$\Roots{I}$.
At step~$k$, we construct a new subword~$I_k$ from~$I_{k-1}$ and a new multiset $\sfR_k$~from~$\sfR_{k-1}$ as follows:
\begin{enumerate}[$\bullet$]
\item if $\beta_{k-1} \eqdef \wordprod{Q}{[k-1] \ssm I_{k-1}}(\alpha_{q_k}) \in \sfR_{k-1}$, then $I_k \eqdef I_{k-1} \cup k$ and $\sfR_k \eqdef \sfR_{k-1} \ssm \beta_{k-1}$,
\item otherwise,~$I_k \eqdef I_{k-1}$ and~$\sfR_k \eqdef \sfR_{k-1}$.
\end{enumerate}
The facet~$I$ is the subword~$I_\sizeQ$ we obtained at the end of this procedure.
Observe that we can even reconstruct~$I$ knowing only its positive roots~${\Roots{I} \cap \Phi^+}$.
Indeed, in the previous sweeping procedure, the positions~$k$ for which~$\wordprod{Q}{[k-1] \ssm I_{k-1}}(\alpha_{q_k})$ is a negative root are forced to belong to the facet~$I$ according to Lemma~\ref{lem:roots&flips}(\ref{lem:roots&flips:enum:inversions}).
Similarly, we can reconstruct the facet~$I$ knowing only its negative roots~$\Roots{I} \cap \Phi^-$, sweeping the word~$\Q$ from right to left.
\end{remark}


\subsection{Restriction to parabolic subgroups}
\label{sec:parabolicrestriction}

The root function~$\Root{I}{\cdot}$ is also useful to restrict subword complexes to parabolic subgroups of~$W$.

\begin{proposition}
\label{prop:restriction}
Let~$\subwordComplex(\Q)$ be a subword complex for a Coxeter system $(W,S)$ acting on~$V$, and let $V' \subseteq V$ be a subspace of $V$.
The simplicial complex given by all facets of $\subwordComplex(\Q)$ reachable from an initial facet of~$\subwordComplex(\Q)$ by flips whose directions are contained in $V'$ is isomorphic to a subword complex $\subwordComplex(\Q')$ for the restriction of $(W,S)$ to $V'$.
\end{proposition}

\begin{proof}
We consider the restriction~$(W',S')$ of the Coxeter system~$(W,S)$ to the subspace~$V'$.
We construct the word~$\Q'$ and the facet~$I'$ of $\subwordComplex(\Q')$ corresponding to the chosen initial facet $I$ of $\subwordComplex(\Q)$.
For this, let $X \eqdef \{ x_1,\ldots,x_p\}$ be the set of positions~$k \in [m]$ such that $\Root{I}{k} \in V'$.
The word~$\Q'$ has~$p$ letters corresponding to the positions in~$X$, and the facet~$I'$ contains precisely the positions~$k \in [p]$ such that the position~$x_k$ is in~$I$.
To construct the word~$\Q'$, we scan~$\Q$ from left to right as follows.
We initialize~$\Q'$ to the empty word, and for each $1 \leq k \leq p$, we add a letter $\q'_k \in S'$ to~$\Q'$ in such a way that $\Root{I'}{k} = \Root{I}{x_k}$.
To see that such a letter exists, we distinguish two cases.
Assume first that~$\Root{I}{x_k}$ is a positive root.
Let~$\mathcal{I}$ be the inversion set of~$w \eqdef \wordprod{\Q}{[x_k-1] \ssm I}$ and~$\mathcal{I}' = \mathcal{I} \cap V'$ be the inversion set of~$w' \eqdef \wordprod{\Q'}{[k-1] \ssm I'}$.
Then the set~$\mathcal{I}' \cup \{\Root{I}{x_k}\}$ is again an inversion set (as the intersection of~$V'$ with the inversion set~$\mathcal{I} \cup \{\Root{I}{x_k}\}$ of~$w q_{x_k}$) which contains the inversion set~$\mathcal{I'}$ of~$w'$ together with a unique additional root.
Therefore, the corresponding element of~$W'$ can be written as~$w'q'_k$ for some simple reflection~$q'_k \in S'$.
Assume now that~$\Root{I}{x_k}$ is a negative root.
Then~$x_k \in I$, so that we can flip it with a position~$x_{k'} < x_k$, and we can then argue on the resulting facet.
By the procedure described above, we eventually obtain the subword complex $\subwordComplex(\Q')$ and its facet $I'$ corresponding to the facet $I$.
Finally observe that sequences of flips in $\subwordComplex(\Q)$ starting at the facet $I$, and whose directions are contained in $V'$, correspond bijectively to sequences of flips in $\subwordComplex(\Q')$ starting at the facet $I'$.
In particular, let $J$ and $J'$ be two facets reached from $I$ and from $I'$, respectively, by such a sequence.
We then have that the root configuration of $J'$ is exactly the root configuration of $J$ intersected with $V'$.
This completes the proof.
\end{proof}

\begin{example}
\begin{figure}[b]
  \capstart
  \centerline{\includegraphics[width=\textwidth]{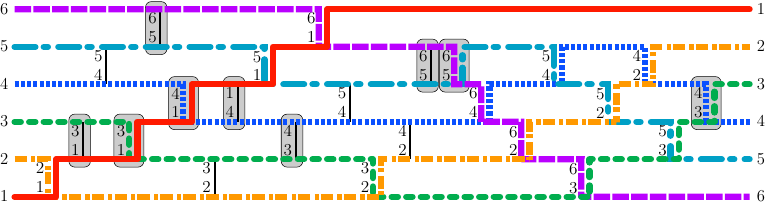}}
  \vspace{.3cm}
  \centerline{$\Downarrow$ \quad restriction to the space~$V' = \vect \langle e_3-e_1, e_4-e_3, e_6-e_5 \rangle$ \quad $\Downarrow$}
  \vspace{.3cm}
  \centerline{\includegraphics[width=\textwidth]{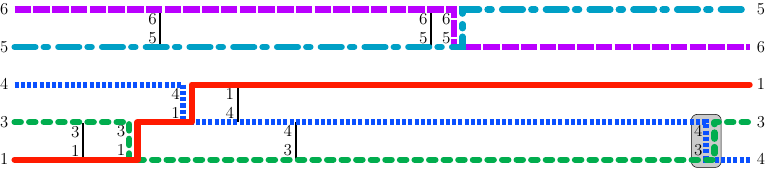}}
  \caption{Restricting subword complexes.}
  \label{fig:restriction}
\end{figure}

To illustrate different possible situations happening in this restriction, we consider the subword complex~$\subwordComplex(\Q)$ for the word
\[
\Q \eqdef \tau_1 \tau_2 \tau_4 \tau_2 \tau_5 \tau_3 \tau_1 \tau_3 \tau_4 \tau_2 \tau_5 \tau_3 \tau_1 \tau_2 \tau_4 \tau_4 \tau_3 \tau_2 \tau_4 \tau_1 \tau_3 \tau_4 \tau_2 \tau_3
\]
on the Coxeter group~$A_5 = \fS_6$ generated by~$S = \{\tau_1, \dots, \tau_5\}$.
\fref{fig:restriction}\,(top) shows the sorting network representing the subword complex~$\subwordComplex(\Q)$ and the pseudoline arrangement representing the facet~$I \eqdef \{2,3,5,7,8,10,12,14,15\}$ of~$\subwordComplex(\Q)$.
Let~$V'$ be the subspace of~$V$ spanned by the roots $e_3-e_1$, $e_4-e_3$ and~${e_6-e_5}$.
Let~$X = \{x_1,\dots,x_8\} = \{2,4,5,6,8,10,15,16\}$ denote the set of positions~$k \in [24]$ for which~$\Root{I}{k} \in V'$.
These positions are circled in \fref{fig:restriction}\,(top).

We can now directly read off the subword complex~$\subwordComplex(\Q')$ corresponding to the restriction of~$\subwordComplex(\Q)$ to all facets reachable from~$I$ by flips with directions in~$V'$.
Namely, the restriction of~$(W,S)$ to~$V'$ is the Coxeter system~$(W',S')$ where~$W'$ is generated by $S' = \{\tau'_1,\tau'_2,\tau'_3\} = \{ (1\;3), (3\;4), (5\;6)\}$, and thus of type~$A_2 \times A_1$.
Moreover, $\Q' = \tau'_1 \tau'_1 \tau'_3 \tau'_2 \tau'_2 \tau'_1 \tau'_3 \tau'_3 \tau'_1$ corresponds to the roots at positions in~$X$.
Finally, the facet~$I$ of~$\subwordComplex(\Q)$ corresponds to the facet~$I' = \{1,3,5,6,7\}$ of~$\subwordComplex(\Q')$.
\fref{fig:restriction}\,(bottom) shows the sorting network representing the restricted subword complex~$\subwordComplex(\Q')$ and the pseudoline arrangement representing the facet~$I'$ of~$\subwordComplex(\Q')$.
\end{example}


\subsection{Root independent subword complexes}

The geometry of the root configuration~$\Roots{I}$ encodes many combinatorial properties of~$I$.
In fact, the facet~$I$ is relevant for the brick polytope only when its root configuration is pointed.
This is in particular the case when~$\Roots{I}$ forms a basis of~$V$.
From Lemma~\ref{lem:roots&flips} and the connectedness of the flip graph, we derive that this property depends only on~$\Q$, not on the particular facet~$I$ of~$\subwordComplex(\Q)$.

\begin{lemma}
Either all the root configurations~$\Roots{I}$ for facets~$I$ of~$\subwordComplex(\Q)$ are simultaneously linear bases of the vector space~$V$, or none of them is.
\end{lemma}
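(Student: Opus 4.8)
The plan is to reduce everything to the behaviour of the root configuration under a single flip, exploiting the connectedness of the flip graph of~$\subwordComplex(\sq{Q})$ invoked just above. Concretely, I would show that the linear span $\vect(\Roots{I})$ is left unchanged by any flip; by connectedness this span is then one fixed subspace $U \subseteq V$ shared by all facets. Combined with the fact that the cardinality $|I|$ is constant over all facets (the subword complex is pure), the two conditions characterizing a basis, namely $\vect(\Roots{I}) = V$ and $|I| = \dim V$, become facet-independent, which is exactly the claimed dichotomy.

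For the key step, let $I$ and $J$ be adjacent facets with $I \ssm i = J \ssm j$, say $i < j$, and set $\beta \eqdef \Root{I}{i}$, so that $\Root{I}{j} = \beta$ by Lemma~\ref{lem:roots&flips}(\ref{lem:roots&flips:enum:flip}). I would read off from Lemma~\ref{lem:roots&flips}(\ref{lem:roots&flips:enum:update}) how $\Roots{J}$ is built from $\Roots{I}$: the new position $j$ carries $\Root{J}{j} = s_\beta(\beta) = -\beta$, each position $k \in I$ with $i < k < j$ carries $\Root{J}{k} = s_\beta(\Root{I}{k})$, and every remaining position of $J$ carries the same root as in $I$. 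Since $s_\beta(v) = v - 2\tfrac{\dotprod{v}{\beta}}{\dotprod{\beta}{\beta}}\beta$ lies in $v + \R\beta$ and $\beta = \Root{I}{i} \in \Roots{I}$, each root of $\Roots{J}$ then lies in $\vect(\Roots{I})$, giving $\vect(\Roots{J}) \subseteq \vect(\Roots{I})$. As a flip is involutive, exchanging the roles of $I$ and $J$ yields the reverse inclusion and hence $\vect(\Roots{I}) = \vect(\Roots{J})$.

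To finish, I would propagate this equality along the connected flip graph to conclude that $\vect(\Roots{I})$ equals a common subspace $U$ for every facet $I$, while $|I|$ takes a common value $d$. If $\Roots{I}$ is a basis for some facet, then $U = V$ and $d = \dim V$; for any other facet $J$ the $d = \dim V$ roots of $\Roots{J}$ span $\vect(\Roots{J}) = U = V$, so they are automatically independent and form a basis. The main obstacle is the flip computation in the middle paragraph: it hinges entirely on Lemma~\ref{lem:roots&flips}(\ref{lem:roots&flips:enum:update}) together with the elementary observation that a reflection displaces a vector only along its reflecting root, so that applying $s_\beta$ to some of the roots and negating $\beta$ can neither enlarge nor shrink their span.
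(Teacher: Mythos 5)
Your proof is correct and follows exactly the route the paper indicates for this lemma (which it states without a written proof, citing only Lemma~\ref{lem:roots&flips} and the connectedness of the flip graph): the update rule of Lemma~\ref{lem:roots&flips}(\ref{lem:roots&flips:enum:update}) shows a flip preserves $\vect(\Roots{I})$, and purity plus connectedness then make both the span and the cardinality facet-independent. No gaps.
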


In this paper, we only consider spherical subword complexes $\subwordComplex(\Q)$ for which the root configuration~$\Roots{I}$ of a facet~$I$ (or equivalently, of all facets) is linearly independent.
We say that $\subwordComplex(\Q)$ is \defn{root independent}.
Proposition~\ref{prop:restriction} ensures that if the root configuration is linearly independent but does not span~$V$, then we can always find an isomorphic subword complex for a parabolic subgroup of~$W$ for which the root configuration is indeed a basis.
We will thus assume that the root configurations of root independent subword complexes are indeed bases of~$V$.

\begin{typeA}[Contact tree]
\label{rem:typeAContactTree}
In type~$A$, the root configuration~$\Roots{I}$ forms a basis of~$\one^\perp$ if and only if the contact graph of~$\Lambda_I$ is a tree.
It is independent if and only if the contact graph of~$\Lambda_I$ is a forest.
For example, the subword complex $\subwordComplex(\Qexm)$ of Example~\ref{exm:recurrent1} is root independent.
\end{typeA}

\begin{example}[Duplicated word]
\label{exm:duplicatedRoots}
Let~$\w_\circ \eqdef \w_1 \cdots \w_N$ be a reduced expression of the longest element~$w_\circ$ of~$W$ (thus~$N = \length(w_\circ) = |\Phi^+|$).
For~$k \in [N]$, we define a root~$\alpha_k \eqdef w_1 \cdots w_{k-1}(\alpha_{w_k})$.
Let~$P$ be a set of $n$~positions in~$\w_\circ$ such that the corresponding roots~$\set{\alpha_p}{p \in P}$ form a linear basis of~$V$.
Define a new word $\Qdup \in S^*$ obtained by duplicating the letters of~$\w_\circ$ at positions in~$P$.
For~$k \in [N]$, let~${k^* \eqdef k + |P \cap [k-1]|}$ be the new position in~$\Qdup$ of the $k$\ordinal{} letter of~$\w_\circ$.
Thus, we have $[N+n] = \set{k^*}{k \in [N]} \sqcup \set{p^* + 1}{p \in P}$.
The facets of~$\subwordComplex(\Qdup)$ are precisely the sets~$I_\varepsilon \eqdef \set{p^* + \varepsilon_p}{p \in P}$ where~${\varepsilon \eqdef (\varepsilon_1,\dots,\varepsilon_n) \in \{0,1\}^P}$.
Thus, the subword complex~$\subwordComplex(\Qdup)$ is the boundary complex of the $n$-dimensional cross polytope.
Moreover, the roots of a facet~$I_\varepsilon$ of~$\subwordComplex(\Qdup)$ are given by~$\Root{I_\varepsilon}{k^*} = \alpha_k$ for~$k \in [N]$ and~$\Root{I_\varepsilon}{p^*+1} = (-1)^{\varepsilon_p}\alpha_p$ for~${p \in P}$.
Thus, the root configuration of~$I_\varepsilon$ is given by~$\Roots{I_\varepsilon} = \set{(-1)^{\varepsilon_p}\alpha_p}{p \in P}$, and the subword complex $\subwordComplex(\Qdup)$ is root independent.
To illustrate the results of this paper, we discuss further the properties of duplicated words in Examples \ref{exm:duplicatedWeights}, \ref{exm:duplicatedProjectionMap}, \ref{exm:duplicatedNormalFan}, \ref{exm:duplicatedLattice}, and~\ref{exm:duplicatedMinkowski}.
\end{example}


\subsection{Root configuration and linear functionals}

\enlargethispage{-.1cm}
We now consider the geometry of the root configuration~$\Roots{I}$ of a facet~$I$ of~$\subwordComplex(\Q)$ with respect to a linear functional~$f:V\to\R$.
We say that the flip of an element~$i$ in~$I$ is \defn{$f$-preserving} if~$f(\Root{I}{i}) = 0$.
We denote by
$$\subwordComplex_f(\Q) \eqdef \bigset{I \text{ facet of } \subwordComplex(\Q)}{\forall i \in I, f(\Root{I}{i}) \ge 0}$$
the set of facets of~$\subwordComplex(\Q)$ whose root configuration is contained in the closed positive halfspace defined by~$f$.

\begin{proposition}
\label{prop:preservingFlips}
If~$\subwordComplex(\Q)$ is root independent, the set~$\subwordComplex_f(\Q)$ forms a connected component of the graph of $f$-preserving flips on~$\subwordComplex(\Q)$.
\end{proposition}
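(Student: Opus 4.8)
The statement splits into two independent assertions: that $\subwordComplex_f(\sq{Q})$ is a union of connected components of the $f$-preserving flip graph (a \emph{closure} property), and that it is actually a single such component (\emph{connectivity}). The plan is to dispatch the first quickly from the flip update rule, and to treat the second as a terminating confluent rewriting system.

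For the closure, the key observation is that a reflection $s_\beta$ with $\beta \in \ker f$ fixes the functional $f$: from $s_\beta(v) = v - \frac{2\dotprod{v}{\beta}}{\dotprod{\beta}{\beta}}\beta$ we get $f(s_\beta(v)) = f(v) - \frac{2\dotprod{v}{\beta}}{\dotprod{\beta}{\beta}}f(\beta) = f(v)$ for all $v$ as soon as $f(\beta) = 0$. Now take an $f$-preserving flip from a facet $I$ to a facet $J$ with $I \ssm i = J \ssm j$, so that $\beta \eqdef \Root{I}{i} \in \ker f$. Lemma~\ref{lem:roots&flips}(\ref{lem:roots&flips:enum:update}) writes each $\Root{J}{k}$ as either $\Root{I}{k}$ or $s_\beta(\Root{I}{k})$, so $f(\Root{J}{k}) = f(\Root{I}{k})$ for every $k$ by the observation above; moreover one checks from Lemma~\ref{lem:roots&flips}(\ref{lem:roots&flips:enum:flip})--(\ref{lem:roots&flips:enum:update}) that $\Root{J}{j} = -\beta$, whence $f(\Root{J}{j}) = 0$. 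Thus an $f$-preserving flip leaves the multiset $\bigmultiset{f(\Root{I}{i})}{i \in I}$ unchanged. In particular, if all these values are nonnegative for $I$ then the same holds for $J$, so $\subwordComplex_f(\sq{Q})$ is stable under $f$-preserving flips; since the flip relation is symmetric, no $f$-preserving flip crosses its boundary, and $\subwordComplex_f(\sq{Q})$ is a union of connected components.

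For connectivity I would use the bounded monovariant $\mu(I) \eqdef \sum_{i \in I} i$. By Lemma~\ref{lem:roots&flips}(\ref{lem:roots&flips:enum:flip}), an $f$-preserving flip at $i \in I$ is \emph{increasing} (that is, the exchanged position $j$ satisfies $i < j$) precisely when $\Root{I}{i} \in \Phi^+ \cap \ker f$, and such a flip strictly increases $\mu$. This makes ``increasing $f$-preserving flip'' a terminating rewriting rule on $\subwordComplex_f(\sq{Q})$ (termination because $\mu$ is bounded above and strictly increases), and a facet is a normal form exactly when all its $\ker f$-roots are negative, i.e. $\Roots{I} \cap \ker f \subseteq \Phi^-$. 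The strategy is then to prove \emph{local confluence}: if $I$ admits two distinct increasing $f$-preserving flips, toward $J_1$ and $J_2$, then both can be completed by a further increasing $f$-preserving flip to a common facet $K$ (the two flips ``commute''), which I would verify by tracking the root function through the update rule of Lemma~\ref{lem:roots&flips}(\ref{lem:roots&flips:enum:update}) and checking that each flip remains available and $f$-preserving after the other. Termination together with local confluence yields, by Newman's lemma, a \emph{unique} normal form $I^\top$ to which every facet of $\subwordComplex_f(\sq{Q})$ reduces by increasing $f$-preserving flips; consequently any two facets are joined through $I^\top$, and $\subwordComplex_f(\sq{Q})$ is connected. (Uniqueness of $I^\top$ can alternatively be read off from the right-to-left reconstruction of Remark~\ref{rem:reconstructFromRoots}, since it is characterized by having all its $\ker f$-roots in $\Phi^-$.) The hard part will be the local confluence step: verifying the diamond property for two overlapping increasing $f$-preserving flips is the one place where the interaction of the positions $i_1, i_2$ and the intermediate reflections $s_{\Root{I}{i_1}}, s_{\Root{I}{i_2}} \in \langle s_\beta \mid \beta \in \Phi \cap \ker f\rangle$ must be controlled carefully, and it is here that the finiteness of this reflection subgroup is used.
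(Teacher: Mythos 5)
Your first half --- that $\subwordComplex_f(\sq{Q})$ is closed under $f$-preserving flips --- is correct and is exactly the paper's argument: Lemma~\ref{lem:roots&flips}(\ref{lem:roots&flips:enum:update}) gives $\Root{J}{k}-\Root{I}{k}\in\R\cdot\Root{I}{i}$ with $\Root{I}{i}\in\ker f$, so all values $f(\Root{\cdot}{k})$ are preserved. The connectivity half, however, breaks at its central step: the one-step diamond property you plan to prove is false. Take $W=A_2$ with $\sq{Q}=\squ{\tau}_1\squ{\tau}_2\squ{\tau}_1\squ{\tau}_2\squ{\tau}_1$ and $f=0$ (or embed this $A_2$ as a parabolic of a larger group with $f$ vanishing exactly on its span). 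The facet $\{1,2\}$ has $\Roots{\{1,2\}}=\{\alpha_1,\alpha_2\}$ and admits two increasing $f$-preserving flips, to $\{2,3\}$ and to $\{1,5\}$. But $\Roots{\{2,3\}}=\{\alpha_1+\alpha_2,\,-\alpha_1\}$ and $\Roots{\{1,5\}}=\{\alpha_1,\,-\alpha_2\}$, so each admits exactly one further increasing flip, to $\{3,4\}$ and to $\{4,5\}$ respectively, and these differ: the two flips do not commute. (They reconverge at $\{4,5\}$ one step later, so \emph{weak} local confluence holds here, but that is precisely what you would still have to prove, and it is not visibly easier than the proposition itself.) The mechanism is the one you flag: $s_\beta$ for a non-simple positive $\beta$ does not preserve $\Phi^+\ssm\{\beta\}$, so flipping $i_1$ can turn the root at $i_2$ negative. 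Your fallback uniqueness argument is also circular: Remark~\ref{rem:reconstructFromRoots} reconstructs a facet from the full multiset $\Roots{I}\cap\Phi^-$, whereas a normal form is only constrained by $\Roots{I}\cap\ker f\subseteq\Phi^-$; for two normal forms not already known to be connected, neither the negative roots outside $\ker f$ nor the identity of the negative roots inside $\ker f$ are determined.

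For comparison, the paper shares your overall architecture --- push every facet by increasing $f$-preserving flips to one with $\Roots{I}\cap\ker f\subseteq\Phi^-$ --- but establishes uniqueness of the resulting facet directly, by a scanning reconstruction of the word $\sq{Q}$ (left to right or right to left), organized as an induction on the set $P$ of positions with $f(\Root{I}{\cdot})>0$; the inductive step introduces an auxiliary functional $f'$ vanishing on $\Roots{I}\cap f^{-1}(0)$ and on one more root, which is where the hypothesis that $\Roots{I}$ is a basis enters, and the connectivity of $\subwordComplex(\sq{Q}_{[\sizeQ]\ssm P})$ finishes the argument. That reconstruction is the missing confluence proof in your rewriting-system formulation; without it, the connectivity half of your proposal does not go through.
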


\begin{proof}
Assume that $I$~and~$J$ are two adjacent facets of~$\subwordComplex(\Q)$ related by an \mbox{$f$-pre}\-serving flip.
Write~$I \ssm i = J \ssm j$ with ${f(\Root{I}{i}) = 0}$.
By Lemma~\ref{lem:roots&flips}(\ref{lem:roots&flips:enum:update}), we have $\Root{J}{k} - \Root{I}{k} \in \R\cdot\Root{I}{i}$ and thus $f(\Root{J}{k}) = f(\Root{I}{k})$ for all~$k \in [\sizeQ]$.
Thus,~$\subwordComplex_f(\Q)$ is closed under $f$-preserving flips.

It remains to prove that~$\subwordComplex_f(\Q)$ is connected by $f$-preserving flips.
Fix a facet~$I$ of~$\subwordComplex_f(\Q)$, and define the set of positions~$P(I,f) \eqdef \set{i \in I}{f(\Root{I}{i}) > 0}$.
According to the previous paragraph, this set is constant on each connected component of~$\subwordComplex_f(\Q)$.
We will prove below that the set~$P_f \eqdef P(I,f)$ is in fact independent of the facet~$I \in \subwordComplex_f(\Q)$.
Therefore, the graph of $f$-preserving flips on~$\subwordComplex_f(\Q)$ can be seen as the graph of flips on the subword complex~$\subwordComplex(\Q_{[\sizeQ] \ssm P_f})$ of the word obtained from~$\Q$ by erasing the letters at the positions given by~$P_f$.
The latter is connected since $\subwordComplex(\Q_{[\sizeQ] \ssm P_f})$ is a ball or a sphere.

To prove that~$P(I,f) \eqdef \set{i \in I}{f(\Root{I}{i}) > 0}$ is independent of the particular facet~$I \in \subwordComplex_f(\Q)$, we use an inductive argument on the minimal cardinality~$\mu$ of a set~$P(I,f)$ for the facets~$I \in \subwordComplex_f(\Q)$.
Clearly, if~$\mu = 0$, then all flips are $f$-preserving and $\subwordComplex_f(\Q) = \subwordComplex(\Q)$ is connected.

Assume now that there is a facet~$I \in \subwordComplex_f(\Q)$ such that $P(I,f) = \{p\}$.
Assume moreover that~$\Root{I}{p} \in \Phi^-$.
Performing $f$-preserving flips, we can obtain a facet~$\bar I$ such that~$P(\bar I, f) = P(I,f) = \{p\}$, and~$\Root{\bar I}{i} \in \Phi^-$ for each~$i \in \bar I$ with~${f(\Root{\bar I}{i}) = 0}$.
For this facet~$\bar I$, observe that
$$j \in \bar I \quad \iff \quad \wordprod{Q}{[j-1] \ssm \bar I}(\alpha_{q_j}) \in f^{-1}(\R_{\ge 0}) \cap \Phi^-.$$
In other words, the facet~$\bar I$, and thus the set~$P(I,f)$, can be reconstructed scanning the word~$\Q$ from left to right.
If~$\Root{I}{p} \in \Phi^+$, we argue similarly scanning the word~$\Q$ from right to left.

Assume now that~$P(I,f) = P' \cup \{p\}$, with~$P' \ne \varnothing$.
Consider an auxiliary linear function~$f'$ vanishing on $\Roots{I} \cap f^{-1}(0)$ and on~$\Root{I}{p}$, but still positive on the other roots of~$P(I,f)$.
Note that this function exists because $\Q$ is root independent which means that $\Roots{I}$ is linearly independent.
The facet~$I$ belongs to~$\subwordComplex_{f'}(\Q)$, so that we can reconstruct the positions~$P' = P(I,f')$ by induction hypothesis.
Finally, since~$I \ssm P'$ is also a facet of~$\subwordComplex(\Q_{[\sizeQ] \ssm P'})$, we can recover the last position~$p$ of~$P(I,f)$ with a similar argument as before.
\end{proof}

\begin{corollary}
\label{coro:preservingFlips}
If~$\subwordComplex(\Q)$ is root independent, there exists a face~$P_f$ of~$\subwordComplex(\Q)$ such that~$\subwordComplex_f(\Q)$ is the set of all facets containing~$P_f$.
\end{corollary}

\begin{proof}
Define again the set of positions~$P_f \eqdef \set{k \in I}{f(\Root{I}{k}) > 0}$ for a given facet~$I$ of~$\subwordComplex_f(\Q)$.
As mentioned in the previous proof, the set~$P_f$ is independent of the choice of~$I \in \subwordComplex_f(\Q)$, since the set~$\subwordComplex_f(\Q)$ is connected by~$f$-preserving flips which preserve the value~$f(\Root{I}{k})$ for all~$k \in [\sizeQ]$.
Furthermore, any position~$i \in I \ssm P_f$ is an $f$-preserving flip.
Consequently, $\subwordComplex_f(\Q)$ is precisely the set of facets of~$\subwordComplex(\Q)$ which contains~$P_f$.
\end{proof}


\section{Generalized brick polytopes}
\label{sec:brickPolytope}


\subsection{Definition and basic properties}

We begin this section with generalizing the definition of brick polytopes of~\cite{PilaudSantos-brickPolytope} to any spherical subword complex~$\subwordComplex(\Q)$, for $\Q \eqdef \q_1 \q_2 \cdots \q_\sizeQ \in S^*$ with~$\DemazureProduct(\Q) = w_\circ$.

\begin{definition}
\label{def:weights}
To a facet~$I$ of~$\subwordComplex(\Q)$ and a position~$k \in [\sizeQ]$, associate~the~weight
$$\Weight{I}{k} \eqdef \wordprod{Q}{[k-1] \ssm I}(\omega_{q_k}),$$
where, as before, $\wordprod{\Q}{X}$ denotes the product of the reflections~$q_x \in \Q$, for~$x \in X$, in the order given by~$\Q$.
The \defn{brick vector} of the facet~$I$ is defined as
$$\brickVector(I) \eqdef \sum_{k \in [\sizeQ]} \Weight{I}{k}.$$
The \defn{brick polytope}~$\brickPolytope(\Q)$ of the word~$\Q$ is the convex hull of all the brick vectors,
$$\brickPolytope(\Q) \eqdef \conv \bigset{\brickVector(I)}{I \text{ facet of } \subwordComplex(\Q)}.$$
\end{definition}

\begin{example}
\label{exm:recurrent3}
In the subword complex~$\subwordComplex(\Qexm)$ of Example~\ref{exm:recurrent1}, we have for ex\-ample ${\Weight{\{2,3,9\}}{2} = \tau_2(e_4) = e_4}$ and ${\Weight{\{2,3,9\}}{7} = \tau_2\tau_3\tau_2\tau_1(e_3+e_4) = e_2+e_3}$.
The brick vector of~$\{2,3,9\}$ is~$\brickVector(\{2,3,9\}) = e_1 + 6e_2 + 5e_3 + 6e_4 = (1,6,5,6)$.
The brick polytope~$\brickPolytope(\Qexm)$ is a pentagonal prism, represented in \fref{fig:brickPolytope}.

\begin{figure}[h]
  \capstart
  \centerline{\includegraphics[scale=.75]{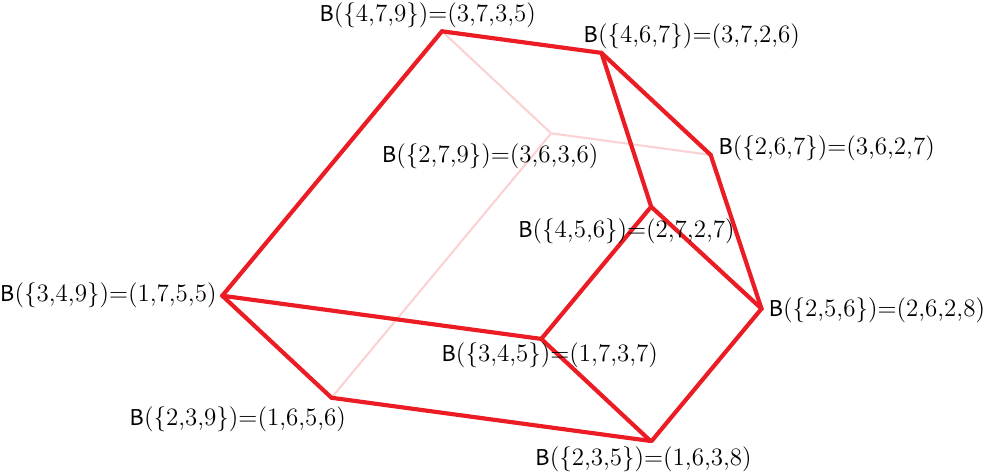}}
  \caption{The brick polytope~$\brickPolytope(\Qexm)$ of the word~$\Qexm$ of Example~\ref{exm:recurrent1}.}
  \label{fig:brickPolytope}
\end{figure}
\end{example}

\begin{typeA}[Counting bricks]
\label{rem:typeACountingBricks}
In type~$A$, we can read this definition on the sorting network interpretation of the classical situation~\ref{rem:typeASortingNetworks}.
For a facet~$I$ of~$\subwordComplex(\Q)$ and a position~${k \in [\sizeQ]}$, the weight~$\Weight{I}{k}$ is the characteristic vector of the pseudolines of~$\Lambda_I$ which pass above the $k$\ordinal{} brick of~$\cN_{\Q}$.
For any~$p \in [n+1]$, the $p$\ordinal{} coordinate of the brick vector~$\brickVector(I)$ is the number of bricks of~$\cN_{\Q}$ below the $p$\ordinal{} pseudoline of~$\Lambda_I$.
See \fref{fig:network} for an illustration.
This was the original definition used in type~$A$ in \cite{PilaudSantos-brickPolytope}, which explains the name \defn{brick polytope}.
\end{typeA}

\begin{example}[Duplicated word]
\label{exm:duplicatedWeights}
Consider the situation of Example~\ref{exm:duplicatedRoots}: the word~$\Qdup$ is obtained from the reduced expression~$\w_\circ \eqdef \w_1 \cdots \w_N$ of~$w_\circ$ by duplicating each letter located at a position in the chosen set~$P$.
For~$k \in [N]$, we define a weight~$\omega_k \eqdef w_1 \cdots w_{k-1}(\omega_{w_k})$.
Then the weights of a facet~$I_\varepsilon$ of~$\subwordComplex(\Qdup)$ are given by~$\Weight{I_\varepsilon}{k^*} = \omega_k$ for~$k \in [N]$ and~${\Weight{I_\varepsilon}{p^*+1} = \omega_p + \varepsilon_p \alpha_p}$ for~${p \in P}$.
Consequently, the brick vector of the facet~$I_\varepsilon$ is given by~$\brickVector(I_\varepsilon) = \Theta + \sum_{p \in P} \varepsilon_p \alpha_p$, where~$\Theta \eqdef \sum_{k \in [N]} \omega_k + \sum_{p \in P} \omega_p$.
It follows that the brick polytope~$\brickPolytope(\Qdup)$ is an $n$-dimensional parallelepiped, whose edges are directed by the basis~$\set{\alpha_p}{p \in P}$.
\end{example}

Although we define brick polytopes for any word~$\Q$, we only present in this paper their geometric and combinatorial properties when $\subwordComplex(\Q)$ is root independent.
We will see that this condition is sufficient for the brick polytope to realize its subword complex.
Reciprocally, the brick polytope can realize its subword complex only when the root configurations of the facets are linearly independent, and we can then assume that the subword complex $\subwordComplex(\Q)$ is root independent by Proposition~\ref{prop:restriction}.
Possible extensions of the results of this paper to all spherical subword complexes, root independent or not, are discussed in Section~\ref{sec:rootdependent}.

\medskip
The root function~$\Root{\cdot}{\cdot}$ and the weight function~$\Weight{\cdot}{\cdot}$ have similar definitions and are indeed closely related.
The following lemma underlines some of their similarities.

\begin{lemma}
\label{lem:weights&flips}
Let~$I$ be a facet of the subword complex~$\subwordComplex(\Q)$.
\begin{enumerate}[(1)]
\item
\label{lem:weights&flips:enum:roots&weights}
If~$\q_k = \q_{k+1}$, we have $\Weight{I}{k+1} = \begin{cases} \Weight{I}{k} & \text{if } k \in I, \\ \Weight{I}{k} - \Root{I}{k} & \text{if } k \notin I. \end{cases}$

\item
\label{lem:weights&flips:enum:update}
If $I$~and~$J$ are two adjacent facets of~$\subwordComplex(\Q)$, with~$I \ssm i = J \ssm j$, then~$\Weight{J}{\cdot}$ is obtained from~$\Weight{I}{\cdot}$~by:
$$\Weight{J}{k} = \begin{cases} s_{\Root{I}{i}}(\Weight{I}{k}) & \text{if } \min(i,j) < k \le \max(i,j), \\ \Weight{I}{k} & \text{otherwise}. \end{cases}$$

\item
\label{lem:weights&flips:enum:scalarProducts}
For~$j \notin I$, we have $\dotprod{\Root{I}{j}\,}{\,\Weight{I}{k}}$ is non-negative if~$j \ge k$,  and non-positive if~$j < k$.
\end{enumerate}
\end{lemma}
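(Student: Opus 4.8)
The plan is to read these three identities as the weight-analogues of Lemma~\ref{lem:roots&flips} and to reuse two standing facts throughout. First, the complement of any facet~$I$ spells a reduced word for~$w_\circ$, so the left-to-right product $\sigma_I^{[1,\sizeQ]}$ is reduced and hence every contiguous sub-product $\sigma_I^{[a,b]}$ is reduced as well; consequently appending (resp. prepending) a letter $q_k$ with $k \notin I$ is length-increasing, which forces the corresponding residual root into $\Phi^+$. Second, each fundamental weight $\omega_s$ is dominant: from $\dotprod{\alpha_t}{\omega_s} = \delta_{s=t}\dotprod{\alpha_s}{\alpha_s}/2 \ge 0$ and nonnegative linearity one gets $\dotprod{\beta}{\omega_s} \ge 0$ for $\beta \in \Phi^+$ and $\dotprod{\beta}{\omega_s} \le 0$ for $\beta \in \Phi^-$. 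The only group-theoretic inputs are the commutation rule $w s_v = s_{w(v)} w$ together with $q_k(\alpha_{q_k}) = -\alpha_{q_k}$ and $q_k(\omega_{q_k}) = \omega_{q_k} - \alpha_{q_k}$.

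For part~(\ref{lem:weights&flips:enum:roots&weights}) I would argue straight from the definition, comparing the partial product $\sigma_I^{[1,k]}$ with $\sigma_I^{[1,k-1]}$ underlying $\Weight{I}{k}$. If $k \in I$, position~$k$ contributes no reflection, so $\sigma_I^{[1,k]} = \sigma_I^{[1,k-1]}$ and the associated weights coincide. If $k \notin I$, then $\sigma_I^{[1,k]} = \sigma_I^{[1,k-1]} q_k$, and applying $q_k(\omega_{q_k}) = \omega_{q_k} - \alpha_{q_k}$ before translating back by $\sigma_I^{[1,k-1]}$ produces exactly the correction $-\sigma_I^{[1,k-1]}(\alpha_{q_k}) = -\Root{I}{k}$. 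This is the one-step recurrence behind~(\ref{lem:weights&flips:enum:roots&weights}).

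For part~(\ref{lem:weights&flips:enum:update}) I would mirror the root-update rule of Lemma~\ref{lem:roots&flips}(\ref{lem:roots&flips:enum:update}), assuming without loss of generality that $i<j$. Two observations trivialize the extreme ranges: $\Weight{I}{k} = \sigma_I^{[1,k-1]}(\omega_{q_k})$ depends on~$I$ only through $I \cap [1,k-1]$, and, writing $\sigma_I^{[1,k-1]} = w_\circ (\sigma_I^{[k,\sizeQ]})^{-1}$, only through $I \cap [k,\sizeQ]$. Since $I$ and~$J$ differ only by exchanging~$i$ for~$j$, their prefixes agree when $k \le i$ and their suffixes agree when $k > j$; in both cases $\Weight{J}{k} = \Weight{I}{k}$. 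For $i < k \le j$ the two facets agree on $[1,k-1]$ except that $J$ carries the extra reflection $q_i$, so $\sigma_J^{[1,k-1]} = \sigma_I^{[1,i-1]} q_i (\sigma_I^{[1,i-1]})^{-1}\,\sigma_I^{[1,k-1]} = s_{\Root{I}{i}}\,\sigma_I^{[1,k-1]}$ by the commutation rule, whence $\Weight{J}{k} = s_{\Root{I}{i}}(\Weight{I}{k})$. The symmetric regime $j<i$ produces $s_{\Root{I}{j}}$, which equals $s_{\Root{I}{i}}$ by Lemma~\ref{lem:roots&flips}(\ref{lem:roots&flips:enum:flip}).

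For part~(\ref{lem:weights&flips:enum:scalarProducts}) I would fix $j \notin I$ and split on the position of~$k$. When $j \ge k$, factor $\sigma_I^{[1,j-1]} = \sigma_I^{[1,k-1]} \sigma_I^{[k,j-1]}$ and cancel the orthogonal prefix to get $\dotprod{\Root{I}{j}}{\Weight{I}{k}} = \dotprod{\sigma_I^{[k,j-1]}(\alpha_{q_j})}{\omega_{q_k}}$; since $\sigma_I^{[k,j]} = \sigma_I^{[k,j-1]} q_j$ is a reduced sub-product, $\sigma_I^{[k,j-1]}(\alpha_{q_j}) \in \Phi^+$, and dominance of $\omega_{q_k}$ forces the value to be $\ge 0$. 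When $j < k$, factor instead $\sigma_I^{[1,k-1]} = \sigma_I^{[1,j-1]} q_j \sigma_I^{[j+1,k-1]}$ (using $j \notin I$), cancel the prefix, and move $q_j$ across with $q_j(\alpha_{q_j}) = -\alpha_{q_j}$ to obtain $\dotprod{\Root{I}{j}}{\Weight{I}{k}} = -\dotprod{(\sigma_I^{[j+1,k-1]})^{-1}(\alpha_{q_j})}{\omega_{q_k}}$; reducedness of $\sigma_I^{[j,k-1]} = q_j \sigma_I^{[j+1,k-1]}$ gives $(\sigma_I^{[j+1,k-1]})^{-1}(\alpha_{q_j}) \in \Phi^+$, so dominance forces the value to be $\le 0$. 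I expect this last part to be the main obstacle: the delicate point is choosing the correct factorization of $\sigma_I^{[1,k-1]}$ on either side of~$k$ and keeping track of whether the residual root lands in $\Phi^+$ or $\Phi^-$, the two reduced-factor facts being precisely what keeps that residual root positive in each regime.
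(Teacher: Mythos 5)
Your proof follows the paper's essentially verbatim: part~(1) is read off the definitions, part~(2) mirrors the update rule of Lemma~\ref{lem:roots&flips}(3) via the commutation $ws_v = s_{w(v)}w$ and the fact that $\Weight{I}{k}$ depends on~$I$ only through either $I \cap [1,k-1]$ or $I \cap [k,\sizeQ]$, and part~(3) is exactly the paper's two-case computation combining positivity of the residual root (from reducedness of the complement of~$I$) with dominance of the fundamental weights. The only caveat is that your one-step recurrence for part~(1) --- like the statement itself and the paper's ``immediate from the definitions'' --- silently identifies $\omega_{q_{k+1}}$ with $\omega_{q_k}$, which requires $q_{k+1}=q_k$; this is an artifact of the lemma as printed rather than of your argument.
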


\begin{proof}
(\ref{lem:weights&flips:enum:roots&weights}) is an immediate consequence of the definitions of~$\Root{I}{k}$ and~$\Weight{I}{k}$.
The proof of~(\ref{lem:weights&flips:enum:update}) is similar to the proof of Lemma~\ref{lem:roots&flips}(\ref{lem:weights&flips:enum:update}).
We now prove~(\ref{lem:weights&flips:enum:scalarProducts}).
If~$j \ge k$, then $\wordprod{Q}{[k,j-1] \ssm I}(\alpha_{q_j}) \in \Phi^+$ since~$j \notin I$.
Thus,
$${\dotprod{\Root{I}{j}\,}{\,\Weight{I}{k}} = \dotprod{\wordprod{Q}{[k,j-1] \ssm I}(\alpha_{q_j})\,}{\,\omega_{q_k}} \ge 0}.$$
Similarly, if~$j > k$, then~$\big(\wordprod{Q}{[j,k-1] \ssm I}\big)^{-1}(\alpha_{q_j}) \in \Phi^-$ since~$j \notin I$.
Consequently,
\[
\dotprod{\Root{I}{j}\,}{\,\Weight{I}{k}} = \bigdotprod{\big(\wordprod{Q}{[j,k-1] \ssm I}\big)^{-1}(\alpha_{q_j})\,}{\,\omega_{q_k}} \le 0. \qedhere
\]
\end{proof}


\subsection{Brick polytopes of root independent subword complexes}

The end of this section contains the main results of this paper, proving that brick polytopes provide polytopal realizations of the root independent subword complexes.

\begin{lemma}
\label{lem:bricks&flips}
If $I$~and~$J$ are two adjacent facets of~$\subwordComplex(\Q)$, with~$I \ssm i = J \ssm j$, then the difference of the brick vectors~$\brickVector(I) - \brickVector(J)$ is a positive multiple of~$\Root{I}{i}$.
\end{lemma}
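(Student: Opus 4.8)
The plan is to compute the difference $\brickVector(I) - \brickVector(J)$ directly, using the local update rule for weights that we already proved. By definition, $\brickVector(I) = \sum_{k \in [\sizeQ]} \Weight{I}{k}$ and likewise for $J$, so
$$\brickVector(I) - \brickVector(J) = \sum_{k \in [\sizeQ]} \big( \Weight{I}{k} - \Weight{J}{k} \big).$$
By Lemma~\ref{lem:weights&flips}(\ref{lem:weights&flips:enum:update}), the summand vanishes unless $\min(i,j) < k \le \max(i,j)$, and on that range $\Weight{J}{k} = s_{\Root{I}{i}}(\Weight{I}{k})$. Hence the sum collapses to a sum over that interval of the vectors $\Weight{I}{k} - s_{\Root{I}{i}}(\Weight{I}{k})$.

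The first key step is to recall how a reflection acts: for the reflection $s_{\beta}$ with $\beta \eqdef \Root{I}{i}$, one has $s_{\beta}(v) = v - \frac{2\dotprod{\beta}{v}}{\dotprod{\beta}{\beta}}\beta$, so that $v - s_{\beta}(v) = \frac{2\dotprod{\beta}{v}}{\dotprod{\beta}{\beta}}\beta$ is always a scalar multiple of $\beta$. Applying this to each $v = \Weight{I}{k}$ shows immediately that every term, and therefore the whole sum, is a scalar multiple of $\Root{I}{i}$. This settles the \emph{direction} of $\brickVector(I) - \brickVector(J)$ with essentially no computation.

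The second, and genuinely substantial, step is to pin down the \emph{sign} of that scalar, i.e.\ to show it is strictly positive. The scalar is
$$\sum_k \frac{2\dotprod{\beta}{\Weight{I}{k}}}{\dotprod{\beta}{\beta}},$$
the sum ranging over $\min(i,j) < k \le \max(i,j)$, and since $\dotprod{\beta}{\beta} > 0$ it suffices to control the signs of the scalar products $\dotprod{\Root{I}{i}}{\Weight{I}{k}}$. This is exactly the content of Lemma~\ref{lem:weights&flips}(\ref{lem:weights&flips:enum:scalarProducts}), provided I feed it the position $i$ correctly. Here I must split into the two cases governed by Lemma~\ref{lem:roots&flips}(\ref{lem:roots&flips:enum:flip}). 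If $i < j$, then $k$ ranges over $i < k \le j$; for $k$ in this range I compare against $\Root{I}{i}$ with $i < k$, and part~(\ref{lem:weights&flips:enum:scalarProducts}) (in its $j<k$ clause, applied to the index $i$) gives $\dotprod{\Root{I}{i}}{\Weight{I}{k}} \le 0$, so I would instead write the difference as $J$ relative to $I$ or equivalently track that $\brickVector(I) - \brickVector(J)$ comes out a nonnegative multiple of $\Root{I}{i}$. If $j < i$, the range is $j < k \le i$, and now $k \le i$ so the scalar products are nonnegative. In both cases the terms all share one sign, so no cancellation occurs.

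The main obstacle, then, is not the algebra but the careful bookkeeping of which inequality from Lemma~\ref{lem:weights&flips}(\ref{lem:weights&flips:enum:scalarProducts}) applies for each $k$ in the flip interval, and confirming that the combined sign yields a \emph{positive} (not merely nonnegative) multiple. Strict positivity should follow because at least one term is strictly nonzero: at $k$ adjacent to the flipped position the scalar product $\dotprod{\Root{I}{i}}{\Weight{I}{k}}$ cannot vanish identically for all $k$ in the range, since $\Root{I}{i}$ is a genuine root and the weights $\Weight{I}{k}$ change across the exchanged letter $q_i$; I would verify this by noting that the contribution at $k = i$ (using Lemma~\ref{lem:weights&flips}(\ref{lem:weights&flips:enum:roots&weights}) relating consecutive weights to $\Root{I}{i}$) is nonzero. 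This guarantees the multiple is strictly positive, completing the proof.
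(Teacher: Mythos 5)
Your overall strategy coincides with the paper's: expand $\brickVector(I)-\brickVector(J)$ via Lemma~\ref{lem:weights&flips}(\ref{lem:weights&flips:enum:update}), note that each nonzero term $\Weight{I}{k}-s_{\Root{I}{i}}(\Weight{I}{k})$ equals $\frac{2\dotprod{\Root{I}{i}}{\Weight{I}{k}}}{\dotprod{\Root{I}{i}}{\Root{I}{i}}}\,\Root{I}{i}$, and control the signs of the scalar products with Lemma~\ref{lem:weights&flips}(\ref{lem:weights&flips:enum:scalarProducts}). But the sign step --- which you yourself single out as the substantial one --- is carried out incorrectly. Part~(\ref{lem:weights&flips:enum:scalarProducts}) of Lemma~\ref{lem:weights&flips} is stated, and its proof is only valid, for a position \emph{not} in $I$ (it uses that $\sigma_I^{[k,j-1]}(\alpha_{q_j})\in\Phi^+$ because $j\notin I$); you apply it to the position $i\in I$, where the hypothesis fails. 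Worse, in the case $i<j$ your reading yields $\dotprod{\Root{I}{i}}{\Weight{I}{k}}\le 0$, i.e.\ that $\brickVector(I)-\brickVector(J)$ is a \emph{non-positive} multiple of $\Root{I}{i}$ --- the opposite of the claim --- and the remark that you would ``instead write the difference as $J$ relative to $I$'' does not repair this, since the statement is specifically about $\brickVector(I)-\brickVector(J)$ versus $\Root{I}{i}$. The missing ingredient is Lemma~\ref{lem:roots&flips}(\ref{lem:roots&flips:enum:flip}): assuming $i<j$ (legitimate since $I$ and $J$ play symmetric roles), one has $\Root{I}{j}=\Root{I}{i}$ with $j\notin I$, so the coefficient can be rewritten as $\sum_{i<k\le j}2\,\dotprod{\Root{I}{j}}{\Weight{I}{k}}/\dotprod{\Root{I}{j}}{\Root{I}{j}}$, to which part~(\ref{lem:weights&flips:enum:scalarProducts}) applies legitimately and gives that every summand is \emph{non-negative} (the clause $j\ge k$). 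This is exactly what the paper does.

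Your witness for strict positivity is also misplaced: when $i<j$ the index $k=i$ does not even lie in the summation range $i<k\le j$. The term that is genuinely positive is the one at $k=j$, where $\dotprod{\Root{I}{j}}{\Weight{I}{j}}=\dotprod{\alpha_{q_j}}{\omega_{q_j}}=\dotprod{\alpha_{q_j}}{\alpha_{q_j}}/2>0$, both vectors being images of $\alpha_{q_j}$ and $\omega_{q_j}$ under the same orthogonal transformation $\sigma_I^{[1,j-1]}$. With these two corrections your argument becomes the paper's proof.
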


\begin{proof}
Since~$I$ and~$J$ play symmetric roles, we can assume that~$i < j$.
Lemma~\ref{lem:weights&flips} implies that
$$\brickVector(I)-\brickVector(J) = \sum_{k \in [\sizeQ]} \Weight{I}{k}-\Weight{J}{k} = \bigg(\sum_{i < k \le j} \dotprod{\Root{I}{j}^\vee\,}{\,\Weight{I}{k}}\bigg)\cdot\Root{I}{i},$$
where the last sum is positive since all its summands are non-negative and the last one equals to~$1$.
\end{proof}

Let~$f: V \to \R$ be a linear functional.
We denote by~$\brickPolytope_f(\Q)$ the face of~$\brickPolytope(\Q)$ which maximizes~$f$.
Remember that~$\subwordComplex_f(\Q)$ denotes the set of facets of~$\subwordComplex(\Q)$ whose root configuration is included in the closed positive halfspace defined by~$f$.

\begin{lemma}
\label{lem:faces}
The faces of the brick polytope~$\brickPolytope(\Q)$ are faces of the subword complex~$\subwordComplex(\Q)$.
For any facet~$I$ of~$\subwordComplex(\Q)$, we have $\brickVector(I) \in \brickPolytope_f(\Q) \iff I \in \subwordComplex_f(\Q)$.
\end{lemma}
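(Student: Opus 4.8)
The statement has two parts: first that every face of $\brickPolytope(\sq{Q})$ is (the brick-vector image of) a face of $\subwordComplex(\sq{Q})$, and second the precise correspondence $\brickVector(I) \in \brickPolytope_f(\sq{Q}) \iff I \in \subwordComplex_f(\sq{Q})$ for any linear functional $f$ and any facet $I$. The plan is to prove the biconditional first, since the face statement is a consequence of it together with Corollary~\ref{coro:preservingFlips}. The key tool is Lemma~\ref{lem:bricks&flips}, which tells us that across an adjacent flip $I \ssm i = J \ssm j$ the difference $\brickVector(I) - \brickVector(J)$ is a positive multiple of $\Root{I}{i}$. This identifies the local geometry of the brick polytope with the root configuration.

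**Proving the biconditional.** Fix a linear functional $f$. The plan is to show that $\brickVector(I)$ maximizes $f$ over $\brickPolytope(\sq{Q})$ exactly when $I \in \subwordComplex_f(\sq{Q})$, \ie when $f(\Root{I}{i}) \ge 0$ for all $i \in I$. For the forward-flavored direction, suppose $I \in \subwordComplex_f(\sq{Q})$. For any facet $J$ adjacent to $I$ with $I \ssm i = J \ssm j$, Lemma~\ref{lem:bricks&flips} gives $f(\brickVector(J)) - f(\brickVector(I)) = -\lambda\, f(\Root{I}{i})$ with $\lambda > 0$; since $f(\Root{I}{i}) \ge 0$, we get $f(\brickVector(J)) \le f(\brickVector(I))$, so $\brickVector(I)$ is a local maximum along every flip. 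Because the flip graph of $\subwordComplex(\sq{Q})$ is connected, a local maximum over all neighbors is a global maximum, so $\brickVector(I) \in \brickPolytope_f(\sq{Q})$. Conversely, if $I \notin \subwordComplex_f(\sq{Q})$, there is some $i \in I$ with $f(\Root{I}{i}) < 0$; flipping $i$ to obtain $J$ produces $f(\brickVector(J)) > f(\brickVector(I))$, so $\brickVector(I)$ does not maximize $f$. This settles the equivalence.

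**Deducing the face statement.** Every nonempty face of $\brickPolytope(\sq{Q})$ is $\brickPolytope_f(\sq{Q})$ for some functional $f$, and its vertices are exactly the brick vectors $\brickVector(I)$ with $I \in \subwordComplex_f(\sq{Q})$, by the biconditional just proved. By Corollary~\ref{coro:preservingFlips}, $\subwordComplex_f(\sq{Q})$ is a face of the subword complex $\subwordComplex(\sq{Q})$, \ie the set of facets containing a fixed position set $P$. Thus the vertex set of any face of the brick polytope is the brick-vector image of a face of $\subwordComplex(\sq{Q})$, which is the assertion. I would also remark that Lemma~\ref{le:rootcharacterization} guarantees the map $I \mapsto \brickVector(I)$ is injective on the relevant facets (distinct facets have distinct root configurations, hence distinct local cones), so no two facets collapse to the same vertex.

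**Expected obstacle.** The delicate point is the passage from local maximality to global maximality, which relies essentially on the connectedness of the flip graph of $\subwordComplex(\sq{Q})$ together with the fact that the sign of $f(\Root{I}{i})$ controls the direction of each edge of $\brickPolytope(\sq{Q})$ consistently. One must be careful that ``local maximum implies global maximum'' is valid here: this is really the statement that the brick vectors, viewed through the flip graph, trace out a polytope whose edge directions at $\brickVector(I)$ are the $-\Root{I}{i}$, so that $f$ increases exactly along flips that decrease $f(\Root{I}{i})$ from positive to negative. Making this rigorous is precisely where Lemma~\ref{lem:bricks&flips} and the connectedness are both indispensable, and I expect that verifying the global-maximum step without circularity (not yet knowing the normal fan structure, which is established later) is the main subtlety.
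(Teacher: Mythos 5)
Your first direction (if $\brickVector(I)$ maximizes $f$ then $I \in \subwordComplex_f(\sq{Q})$) is exactly the paper's argument, and your deduction of the face statement from the biconditional plus Corollary~\ref{coro:preservingFlips} is fine. The problem is the converse: the step ``because the flip graph of $\subwordComplex(\sq{Q})$ is connected, a local maximum over all neighbors is a global maximum'' is not a valid inference. Connectedness of a graph does not make local maxima of a vertex function global (a path with values $1,0,2$ is connected and has a non-global local maximum at the first vertex). The fact that local maximality of $f(\brickVector(\cdot))$ along flips implies global maximality is true for polytopes when the flip edges are known to be the polytope edges, but that is precisely what is being established here, so invoking it would be circular --- as you yourself suspect in your last paragraph. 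You have correctly identified the gap but not closed it.

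The paper closes it without any local-to-global principle, and the key tool is Proposition~\ref{prop:preservingFlips}, which you never invoke for this step. The argument is: let $M$ be the set of facets $I$ with $\brickVector(I) \in \brickPolytope_f(\sq{Q})$. By your (and the paper's) easy direction, $M \subseteq \subwordComplex_f(\sq{Q})$. Moreover $M$ is nonempty and closed under $f$-preserving flips, since by Lemma~\ref{lem:bricks&flips} an $f$-preserving flip does not change the value of $f(\brickVector(\cdot))$. Since Proposition~\ref{prop:preservingFlips} says $\subwordComplex_f(\sq{Q})$ is a single connected component of the graph of $f$-preserving flips, a nonempty subset of it closed under such flips must be all of it, whence $M = \subwordComplex_f(\sq{Q})$. (Equivalently: any global maximizer $K$ lies in $\subwordComplex_f(\sq{Q})$ by the easy direction, and is joined to your given $I \in \subwordComplex_f(\sq{Q})$ by a path of $f$-preserving flips along which $f(\brickVector(\cdot))$ is constant, so $f(\brickVector(I)) = f(\brickVector(K))$.) Separately, your parenthetical claim that Lemma~\ref{le:rootcharacterization} gives injectivity of $I \mapsto \brickVector(I)$ does not follow at this stage either --- distinct root configurations do not by themselves give distinct brick vectors before Proposition~\ref{prop:cones} is available --- but the lemma does not require injectivity, so this is harmless.
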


\begin{proof}
Consider a facet~$I$ of~$\subwordComplex(\Q)$ such that~$\brickVector(I) \in \brickPolytope_f(\Q)$, and let~$i \in I$.
Let~$J$ be the facet obtained from~$I$ by flipping~$i$, and~$\lambda > 0$ such that ${\Root{I}{i} = \lambda(\brickVector(I)-\brickVector(J))}$ (by Lemma~\ref{lem:bricks&flips}).
Then~${f(\Root{I}{i}) = \lambda(f(\brickVector(I))-f(\brickVector(J))) \ge 0}$ since~$\brickVector(I)$ maximizes~$f$.
Thus, $I \in \subwordComplex_f(\Q)$.
We obtained that the set of facets~$I$ of~$\subwordComplex(\Q)$ such that~${\brickVector(I) \in \brickPolytope_f(\Q)}$ is contained in~$\subwordComplex_f(\Q)$.
Since the former is closed while the latter is connected under $f$-preserving flips (Proposition~\ref{prop:preservingFlips}), they coincide.
\end{proof}

\begin{proposition}
\label{prop:cones}
For any facet~$I$ of~$\subwordComplex(\Q)$, the cone of the brick polytope~$\brickPolytope(\Q)$ at the brick vector~$\brickVector(I)$ coincides with the cone generated by the negative of the root configuration~$\Roots{I}$ of~$I$.
We set
$$\rootCone(I) \eqdef \cone \bigset{\brickVector(J)-\brickVector(I)}{J \text{ facet of } \subwordComplex(\Q)} = \cone \bigset{-\Root{I}{i}}{i \in I}.$$
\end{proposition}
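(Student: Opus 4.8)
The plan is to establish the claimed equality of cones by proving two inclusions, using the flip structure of $\subwordComplex(\sq{Q})$ together with the local geometric information already extracted in Lemma~\ref{lem:bricks&flips}. The inclusion
$$\cone \bigset{-\Root{I}{i}}{i \in I} \subseteq \rootCone(I)$$
is the immediate one. For each $i \in I$, let $J$ be the facet obtained from $I$ by flipping $i$; Lemma~\ref{lem:bricks&flips} tells us that $\brickVector(J) - \brickVector(I)$ is a positive multiple of $-\Root{I}{i}$ (writing the difference the other way from the lemma statement). Hence each generator $-\Root{I}{i}$ lies, up to positive scaling, among the vectors $\brickVector(J)-\brickVector(I)$ ranging over facets $J$ adjacent to $I$, and so it lies in $\rootCone(I)$. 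This already uses only the facets adjacent to $I$, not all facets of $\subwordComplex(\sq{Q})$.

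The reverse inclusion $\rootCone(I) \subseteq \cone \bigset{-\Root{I}{i}}{i \in I}$ is the substantive direction, and I expect the argument through Lemma~\ref{lem:faces} to be the key device. The cleanest route is to show that for every linear functional $f$, the brick vector $\brickVector(I)$ maximizes $f$ on $\brickPolytope(\sq{Q})$ if and only if $f$ is non-negative on $-\Root{I}{i}$ for all $i \in I$, \ie if and only if $f(\Root{I}{i}) \le 0$ for all $i$. By definition of $\subwordComplex_f(\sq{Q})$ applied to $-f$ in place of $f$, the condition ``$f(\Root{I}{i}) \le 0$ for all $i \in I$'' is exactly the condition $I \in \subwordComplex_{-f}(\sq{Q})$, and Lemma~\ref{lem:faces} asserts precisely that $\brickVector(I) \in \brickPolytope_{-f}(\sq{Q})$ if and only if $I \in \subwordComplex_{-f}(\sq{Q})$, \ie that $\brickVector(I)$ maximizes $-f$, equivalently minimizes $f$. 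Matching this up correctly with the orientation of the cone is the step to carry out with care: the cone $\rootCone(I)$ of $\brickPolytope(\sq{Q})$ at $\brickVector(I)$ consists of all directions $\brickVector(J)-\brickVector(I)$, so its polar dual is the set of functionals minimized at $\brickVector(I)$, which we have just identified with $\set{f}{f(\Root{I}{i}) \le 0 \text{ for all } i}$. Since $\Roots{I}$ forms a linear basis of $V$ (as $\sq{Q}$ is realizing), the latter set is exactly the polar dual of $\cone \bigset{-\Root{I}{i}}{i \in I}$, which is a simplicial cone of full dimension $n$.

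The conclusion then follows by polar duality: two full-dimensional cones with the same polar dual coincide. Concretely, $\rootCone(I)$ and $\cone \bigset{-\Root{I}{i}}{i \in I}$ have the same polar, hence are equal, giving the displayed equation. The main obstacle I anticipate is purely one of bookkeeping with signs and the direction of inequalities: one must be consistent about whether $\brickVector(I)$ maximizes or minimizes $f$, and about whether $\rootCone(I)$ is the cone of outgoing edge directions or its polar. A secondary subtlety is that Lemma~\ref{lem:faces} describes the face $\brickPolytope_f(\sq{Q})$ only through the facets $I$ of $\subwordComplex(\sq{Q})$ with $\brickVector(I) \in \brickPolytope_f(\sq{Q})$; to invoke it for the cone description one uses that $\brickVector(I)$ is actually a vertex of $\brickPolytope(\sq{Q})$, which is guaranteed because the simplicial cone $\cone \bigset{-\Root{I}{i}}{i \in I}$ is pointed (its generators form a basis of $V$, so the full $\rootCone(I)$ cannot be a linear subspace). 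Once these orientation and nondegeneracy points are pinned down, the two inclusions close off the proof.
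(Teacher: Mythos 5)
Your proposal is correct and follows essentially the same route as the paper: the easy inclusion $\cone\set{-\Root{I}{i}}{i\in I}\subseteq\rootCone(I)$ comes from Lemma~\ref{lem:bricks&flips}, and the reverse inclusion is obtained by showing that a linear functional maximized at~$\brickVector(I)$ among adjacent facets is maximized globally, which is exactly the content of Lemma~\ref{lem:faces}. Your polar-duality phrasing of the second step is just a more explicit rendering of the paper's statement that every face of the root cone is a face of the vertex cone.
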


\begin{proof}
Let~$\rootCone_\sfB(I)$ be the cone of~$\brickPolytope(\Q)$ at~$\brickVector(I)$ and let~$\rootCone_\sfR(I)$ be the cone generated by~$-\Roots{I}$.
The inclusion~${\rootCone_\sfR(I) \subseteq \rootCone_\sfB(I)}$ is an immediate consequence of Lemma~\ref{lem:bricks&flips}.
For the other direction, we need to prove that any face of~$\rootCone_\sfR(I)$ is also a face of $\rootCone_\sfB(I)$.
That is to say that the brick vector~$\brickVector(I)$ maximizes any linear functional~$f$ among the brick vectors of all facets of~$\subwordComplex(\Q)$ as soon as it maximizes~$f$ among the brick vectors of the facets adjacent to~$I$.
This is ensured by Lemma~\ref{lem:faces}.
\end{proof}

\begin{theorem}
\label{theo:realization}
If $\subwordComplex(\Q)$ is root independent, it is realized by the polar of the brick polytope~$\brickPolytope(\Q)$.
\end{theorem}

\begin{proof}
Proposition~\ref{prop:cones} ensures that the brick vector of each facet of~$\subwordComplex(\Q)$ is a vertex of~$\brickPolytope(\Q)$ and that this polytope is full-dimensional and simple.
Since the faces of the brick polytope are faces of the subword complex by Lemma~\ref{lem:faces}, it immediately implies that the boundary complex of the brick polytope~$\brickPolytope(\Q)$ is isomorphic to the subword complex~$\subwordComplex(\Q)$.
\end{proof}

\begin{remark}
If $(W,S)$ is crystallographic (\ie stabilizes a lattice~$\Lambda$), then the brick polytope~$\brickPolytope(\Q)$ is furthermore a lattice polytope (\ie its vertices are elements of~$\Lambda$).
Indeed, the brick vector of any facet is a sum of weights of~$W$.
\end{remark}


\section{Further combinatorial and geometric properties}
\label{sec:properties}

In this section, we consider further relevant properties of the brick polytope~$\brickPolytope(\Q)$.
We first define a surjective map~$\projectionMap$ from the Coxeter group~$W$ to the facets of the subword complex~$\subwordComplex(\Q)$.
Using this map, we connect the normal fan of~$\brickPolytope(\Q)$ to the Coxeter fan, and the graph of~$\brickPolytope(\Q)$ to the Hasse diagram of the weak order.
We then study some properties of the fibers of~$\projectionMap$ with respect to the weak order.
Finally, we decompose~$\brickPolytope(\Q)$ into a Minkowski sum of $W$-matroid polytopes.
Again, although most of the results presented in this section seem to have natural extensions to all spherical subword complexes (see Section~\ref{sec:rootdependent}), our current presentation only considers the root independent situation.


\subsection{Surjective map}
\label{subsec:surjectiveMap}

A set~$U \subseteq \Phi$ is \defn{separable} if there is a hyperplane which separates~$U$ from its complement~${\Phi \ssm U}$.
For finite Coxeter systems, separable sets are precisely the sets~$w(\Phi^+)$, for~${w \in W}$.
According to Proposition~\ref{prop:preservingFlips}, there is a unique facet~$I$ of~$\subwordComplex(\Q)$ whose root configuration~$\Roots{I}$ is contained in a given separable set.
This defines a map~$\projectionMap$ from~$W$ to the facets of~$\subwordComplex(\Q)$ which associates to~$w \in W$ the unique facet~$\projectionMap(w)$ of~$\subwordComplex(\Q)$ such that~${\Roots{\projectionMap(w)} \subseteq w(\Phi^+)}$.
Furthermore, since we assumed that any root configuration forms a basis (and thus is pointed), this map~$\projectionMap$ is surjective.
However, note that this map is not injective.
In terms of brick polytopes, $\projectionMap(w)$ is the unique facet of~$\subwordComplex(\Q)$ whose brick vector maximizes the linear functional~$x \mapsto \dotprod{w(q)}{x}$ over all vertices of the brick polytope~$\brickPolytope(\Q)$, where~$q$ is any point in the interior of the fundamental chamber~$\fundamentalChamber$ of~$W$.

\begin{example}
\label{exm:recurrent4}

\begin{figure}[p]
  \capstart
  \centerline{\includegraphics[scale=.54]{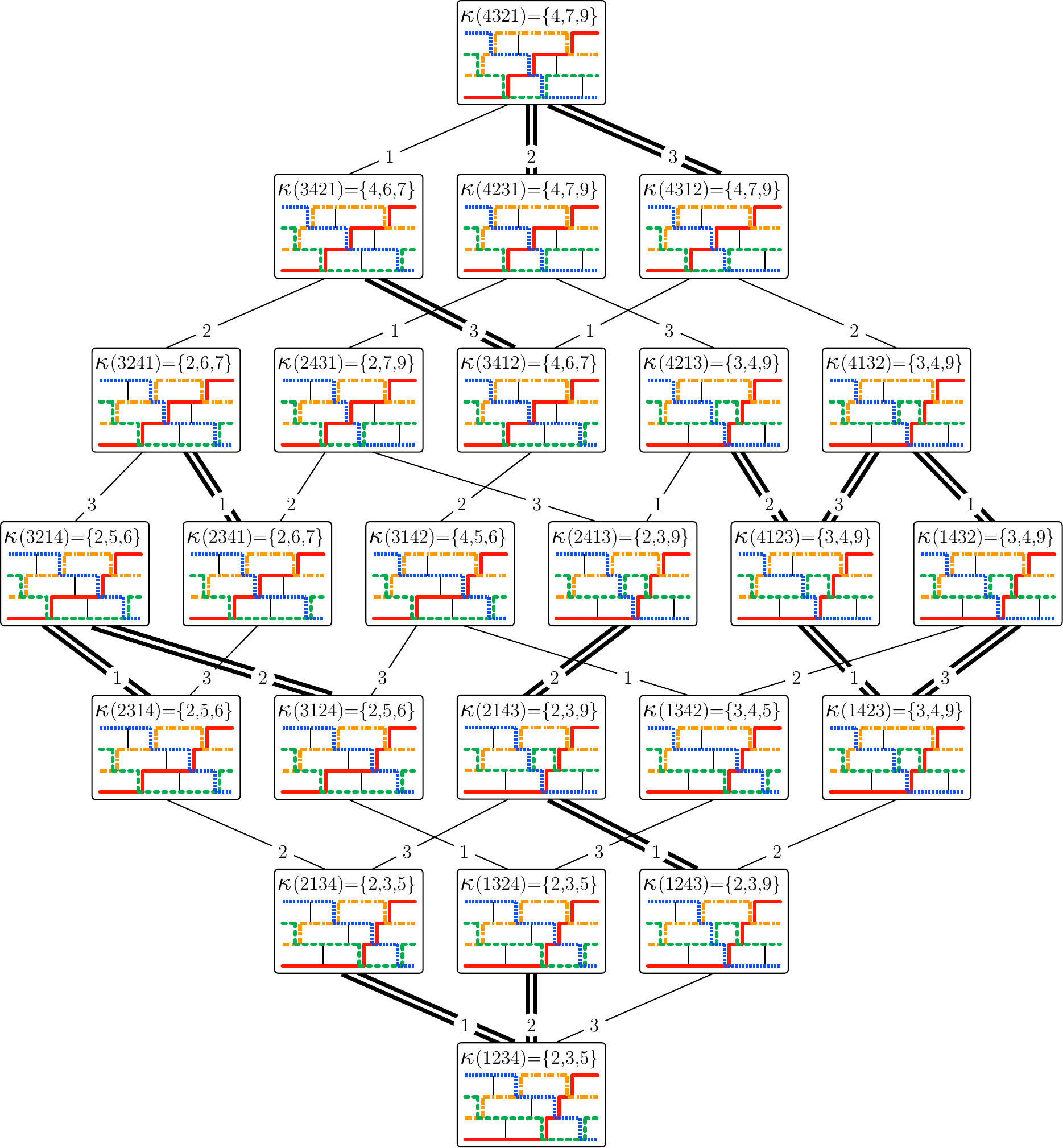}}
  \caption{The map~$\projectionMap$ from~$\fS_4$ to the facets of the subword complex~$\subwordComplex(\Qexm)$ of Example~\ref{exm:recurrent1}.}
  \label{fig:lattice}
\end{figure}

The map $\projectionMap$ for the word~$\Qexm$ of Example~\ref{exm:recurrent1} is presented in \fref{fig:lattice}.
For each permutation~$w \in \fS_4$, we have written the facet~$\projectionMap(w)$ of~$\subwordComplex(\Qexm)$, and we have represented the pseudoline arrangement~$\Lambda_{\projectionMap(w)}$ supported by~$\cN_{\Qexm}$.
The underlying diagram is the Hasse diagram of the weak order on~$\fS_4$: two permutations are related by an edge if they differ by the transposition of two adjacent letters.
To visualize the fibers of~$\projectionMap$, we draw a light edge when the images of the permutations under~$\projectionMap$ are distinct, and a double strong edge if the images are the same.
\end{example}

\begin{example}[Duplicated word]
\label{exm:duplicatedProjectionMap}
Consider the word~$\Qdup$ defined in Example~\ref{exm:duplicatedRoots}.
Two elements~$w,w'$ of~$W$ have the same image under the map~$\projectionMap$ if and only if the symmetric difference of~$w(\Phi^+)$ and~$w'(\Phi^+)$ is disjoint from the set~$\set{\pm\alpha_p}{p \in P}$, or equivalently the symmetric difference of the inversion sets of~$w$ and~$w'$ is disjoint from the set~$\set{\alpha_p}{p \in P}$.
\end{example}

\begin{lemma}
\label{lem:flipProjectiveMap}
Consider an element~$w \in W$ and a simple root~$\alpha \in \Delta$.
Then~$\projectionMap(ws_\alpha)$ is obtained from~$\projectionMap(w)$ as follows.
\begin{enumerate}[(i)]
\item If~$w(\alpha) \notin \Roots{\projectionMap(w)}$, then~$\projectionMap(ws_\alpha)$ equals~$\projectionMap(w)$.
\item Otherwise, $\projectionMap(ws_\alpha)$~is obtained from~$\projectionMap(w)$ by flipping the unique~$i \in \projectionMap(w)$ such that~${w(\alpha) = \Root{\projectionMap(w)}{i}}$.
\end{enumerate}
\end{lemma}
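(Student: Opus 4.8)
The plan is to reduce everything to the behaviour of the separable set $w(\Phi^+)$ under right-multiplication by $s_\alpha$, and to match it against the description of how the root configuration changes under a flip given in Lemma~\ref{lem:roots&flips}. The two elementary facts I will use are: first, that $s_\alpha$ permutes the positive roots except for sending $\alpha$ to $-\alpha$, so that $ws_\alpha(\Phi^+) = w(\Phi^+)\symdif\{\pm\beta\} = (w(\Phi^+)\ssm\{\beta\})\cup\{-\beta\}$, where I abbreviate $\beta \eqdef w(\alpha)$; and second, the commutation rule $ws_\alpha = s_{w(\alpha)}w$, which yields the alternative description $ws_\alpha(\Phi^+) = s_\beta\big(w(\Phi^+)\big)$. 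Writing $I \eqdef \projectionMap(w)$, I recall that $\projectionMap(ws_\alpha)$ is characterized, by the defining property of $\projectionMap$ and the uniqueness in Proposition~\ref{prop:preservingFlips}, as the unique facet whose root configuration lies inside the separable set $ws_\alpha(\Phi^+)$.

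For case (i), where $\beta \notin \Roots{I}$, the argument is immediate: since $\Roots{I} \subseteq w(\Phi^+)$ and $\beta \notin \Roots{I}$, deleting $\beta$ changes nothing, so $\Roots{I} \subseteq w(\Phi^+)\ssm\{\beta\} \subseteq ws_\alpha(\Phi^+)$. By uniqueness this forces $\projectionMap(ws_\alpha) = I = \projectionMap(w)$.

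For case (ii), where $\beta \in \Roots{I}$, the unique index $i \in I$ with $\Root{I}{i} = \beta$ exists because $\Roots{I}$ is a basis. Letting $J$ be the facet obtained by flipping $i$, with $I \ssm i = J \ssm j$, I will check directly that $\Roots{J} \subseteq ws_\alpha(\Phi^+)$, which by uniqueness gives $\projectionMap(ws_\alpha) = J$. Here I invoke Lemma~\ref{lem:roots&flips}(\ref{lem:roots&flips:enum:update}): with $s_\beta = s_{\Root{I}{i}}$, the root $\Root{J}{k}$ equals $s_\beta(\Root{I}{k})$ for $k$ in the interval $(\min(i,j),\max(i,j)]$ and equals $\Root{I}{k}$ otherwise. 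I then test membership position by position. The flipped position yields $-\beta = s_\beta(\beta)$, which lies in $ws_\alpha(\Phi^+)$ since $\beta \in w(\Phi^+)$. The remaining interior positions, lying in $I\ssm i$, contribute $s_\beta(\Root{I}{k})$ with $\Root{I}{k}\in\Roots{I}\subseteq w(\Phi^+)$, hence lie in $s_\beta(w(\Phi^+)) = ws_\alpha(\Phi^+)$. The exterior positions contribute the unchanged root $\Root{I}{k}\in w(\Phi^+)$, which differs from $\beta$ because $\beta$ occurs only once in the basis $\Roots{I}$, so it lies in $w(\Phi^+)\ssm\{\beta\}\subseteq ws_\alpha(\Phi^+)$.

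The step requiring the most care — and the main obstacle — is the bookkeeping of the flipped position in case (ii), since the two orderings $i<j$ and $j<i$ must both be handled: the sign of $\beta = w(\alpha)$ dictates which of $i,j$ is larger by Lemma~\ref{lem:roots&flips}(\ref{lem:roots&flips:enum:flip}), and one must verify that position $j$ produces $-\beta$ in either case (directly, via $s_\beta(\beta)$, when $j$ falls inside the interval, and via the identity $\Root{I}{j} = -\beta$ from Lemma~\ref{lem:roots&flips}(\ref{lem:roots&flips:enum:flip}) when it falls outside). Once the interval endpoints are pinned down correctly, the two descriptions of $ws_\alpha(\Phi^+)$ — the explicit swap $w(\Phi^+)\symdif\{\pm\beta\}$ and the reflected form $s_\beta(w(\Phi^+))$ — dovetail to cover all positions, completing the verification.
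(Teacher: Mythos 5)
Your proof is correct and follows essentially the same route as the paper's: case (i) via the observation that $ws_\alpha(\Phi^+)$ and $w(\Phi^+)$ differ only in $\pm w(\alpha)$, and case (ii) by applying the update rule of Lemma~\ref{lem:roots&flips} and checking $\Roots{J}\subseteq ws_\alpha(\Phi^+)$ position by position, concluding by the uniqueness in the definition of $\projectionMap$. If anything, your explicit treatment of the flipped position $j$ in both orderings $i<j$ and $j<i$ is slightly more careful than the paper's, which glosses over the case where $j$ falls outside the interval.
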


\begin{proof}
If~$w(\alpha) \notin \Roots{\projectionMap(w)}$, then $\Roots{\projectionMap(w)} \subseteq ws_\alpha(\Phi^+) = w(\Phi^+) \symdif \{\pm w(\alpha)\}$ (where~$\symdif$ denotes the symmetric difference).
Thus, $\projectionMap(ws_\alpha) = \projectionMap(w)$ by uniqueness in the definition of the map~$\projectionMap$.

Assume now that~$w(\alpha) = \Root{\projectionMap(w)}{i}$ for some~$i \in \projectionMap(w)$, and let~$J$ denote the facet of~$\subwordComplex(\Q)$ obtained from~$\projectionMap(w)$ by flipping~$i$, and $j$ be the position in~$J$ but not in~$\projectionMap(w)$.
According to Lemma~\ref{lem:roots&flips}(\ref{lem:weights&flips:enum:update}),
$$\Root{J}{k} = \begin{cases} s_{w(\alpha)}(\Root{\projectionMap(w)}{k}) = ws_\alpha w^{-1}(\Root{\projectionMap(w)}{k}) & \text{if } \min(i,j) < k \le \max(i,j), \\ \Root{\projectionMap(w)}{k} & \text{otherwise.} \end{cases}$$
If~$\min(i,j) < k \le \max(i,j)$, we have~$\Root{J}{k} \in ws_\alpha(\Phi^+)$ since~${w^{-1}(\Root{\projectionMap(w)}{k}) \in \Phi^+}$.
Otherwise, we have ${w^{-1}(\Root{\projectionMap(w)}{k}) \in \Phi^+ \ssm \alpha}$ so that $s_\alpha w^{-1}(\Root{\projectionMap(w)}{k}) \in \Phi^+$, and thus~${\Root{J}{k} = \Root{\projectionMap(w)}{k} \in ws_\alpha(\Phi^+)}$.
We obtain that $\Roots{J} \subseteq ws_\alpha(\Phi^+)$, and consequently~$J = \projectionMap(ws_\alpha)$ by uniqueness in the definition of the map~$\projectionMap$.
\end{proof}


\subsection{Normal fan description}
\label{subsec:normalFan}

Remember that the \defn{normal cone} of a face~$F$ of a polytope~${\Pi \subset V}$ is the cone of all vectors~$v \in V$ such that the linear function~${x \mapsto \dotprod{x}{v}}$ on~$\Pi$ is maximized by all points in~$F$.
The \defn{normal fan} of~$\Pi$ is the complete polyhedral fan formed by the normal cones of all faces of~$\Pi$ (see~\cite[Lecture~7]{Ziegler}).
For example, the normal fan of the $W$-permutahedron is the Coxeter fan of~$W$.
The following proposition relates the normal fan of the brick polytope with the Coxeter fan.

\begin{proposition}
\label{prop:normalCone}
Let~$I$ be a facet of~$\subwordComplex(\Q)$.
The normal cone~$\normalCone(I)$ of~$\brickVector(I)$ in~$\brickPolytope(\Q)$ is the union of the chambers~$w(\fundamentalChamber)$ of the Coxeter fan of~$W$ given by the elements~${w \in W}$ with~$\projectionMap(w)=I$.
\end{proposition}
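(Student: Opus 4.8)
The plan is to compute the normal cone $\normalCone(I)$ explicitly from Proposition~\ref{prop:cones}, and then identify it with the prescribed union of Coxeter chambers by a refinement argument between the two fans.

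First I would record the two relevant inequality descriptions. By Proposition~\ref{prop:cones}, the tangent cone of $\brickPolytope(\sq{Q})$ at $\brickVector(I)$ is $\rootCone(I) = \cone\bigset{-\Root{I}{i}}{i \in I}$, so its polar, the normal cone, is
$$\normalCone(I) = \set{v \in V}{\dotprod{v}{\Root{I}{i}} \ge 0 \text{ for all } i \in I},$$
where I identify $V$ with its dual through the inner product. On the other hand, since $\fundamentalChamber = \set{v \in V}{\dotprod{v}{\alpha} \ge 0 \text{ for all } \alpha \in \Phi^+}$ and $w$ acts orthogonally, the chamber $w(\fundamentalChamber) = \set{v \in V}{\dotprod{v}{\beta} \ge 0 \text{ for all } \beta \in w(\Phi^+)}$.

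The key observation, which I would establish next, is that the Coxeter fan refines the normal fan of $\brickPolytope(\sq{Q})$: namely $w(\fundamentalChamber) \subseteq \normalCone(\projectionMap(w))$ for every $w \in W$. Indeed, by definition of the map $\projectionMap$ we have $\Roots{\projectionMap(w)} \subset w(\Phi^+)$, so every $v \in w(\fundamentalChamber)$ satisfies $\dotprod{v}{\Root{\projectionMap(w)}{i}} \ge 0$ for all $i \in \projectionMap(w)$, which is exactly the inequality description of $\normalCone(\projectionMap(w))$ above. Applying this with an arbitrary $w \in \projectionMap^{-1}(I)$ immediately yields the inclusion $\bigcup_{\projectionMap(w)=I} w(\fundamentalChamber) \subseteq \normalCone(I)$.

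For the reverse inclusion I would exploit that both families form complete fans (the normal fan by \cite[Lecture~7]{Ziegler}, the Coxeter fan by construction) and that $\normalCone(I)$ is full-dimensional, since $\brickVector(I)$ is a vertex of the full-dimensional polytope $\brickPolytope(\sq{Q})$ by Theorem~\ref{theo:realization}. Completeness of the Coxeter fan lets me write the interior as $\normalCone(I)^\circ = \bigcup_{w \in W} \big( \normalCone(I)^\circ \cap w(\fundamentalChamber) \big)$. Whenever $\projectionMap(w) \neq I$, the refinement fact gives $w(\fundamentalChamber) \subseteq \normalCone(\projectionMap(w))$; since distinct maximal cones of a fan meet only along a common proper face, $\normalCone(I)^\circ$ is disjoint from $\normalCone(\projectionMap(w))$, hence from $w(\fundamentalChamber)$. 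Thus only the chambers with $\projectionMap(w) = I$ contribute, so $\normalCone(I)^\circ \subseteq \bigcup_{\projectionMap(w)=I} w(\fundamentalChamber)$; taking closures of this inclusion (the right-hand side being a finite union of closed cones, and $\normalCone(I)$ being the closure of its interior) gives $\normalCone(I) \subseteq \bigcup_{\projectionMap(w)=I} w(\fundamentalChamber)$, and combined with the previous inclusion this proves equality.

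The step I would treat most carefully is this passage from the easy inclusion to equality: one must rule out that a chamber $w(\fundamentalChamber)$ with $\projectionMap(w) \neq I$ still meets the interior of $\normalCone(I)$. This is exactly where the fan structure of the normal fan is essential — it forces distinct full-dimensional normal cones to have disjoint interiors — together with the full-dimensionality of $\normalCone(I)$ that lets the closure of its interior recover the whole cone. An equivalent route, which I would keep in reserve, replaces the fan bookkeeping by a genericity argument: for $v$ in the interior of $w(\fundamentalChamber)$ and off all reflecting hyperplanes one has $\set{\beta \in \Phi}{\dotprod{v}{\beta} > 0} = w(\Phi^+)$, so by Lemma~\ref{lem:faces} the functional $\dotprod{v}{\cdot}$ is maximized on $\brickPolytope(\sq{Q})$ precisely at the facet whose root configuration lies in $w(\Phi^+)$, namely $\brickVector(\projectionMap(w))$; the density of such generic $v$ in $\normalCone(I)$ then yields the same conclusion.
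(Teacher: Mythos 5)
Your proof is correct, and it takes a genuinely different route from the paper's. The paper proves the proposition by a single chain of polar-cone identities: starting from $\normalCone(I) = \rootCone(I)\polar = \cone(-\Roots{I})\polar$, it identifies $\cone(-\Roots{I})$ with the intersection $\bigcap_{w \in \projectionMap^{-1}(I)} w(\fundamentalChamber\polar)$ and then passes the polar through the intersection to obtain the union of the chambers $w(\fundamentalChamber)$. That argument is purely local at the vertex $\brickVector(I)$, but it silently uses that the polar of an intersection of cones is the \emph{union} (rather than merely the convex hull of the union) of the polars, which is only automatic once one already knows the union in question is convex. Your argument instead establishes the easy inclusion $\bigcup_{\projectionMap(w)=I} w(\fundamentalChamber) \subseteq \normalCone(I)$ directly from $\Roots{I} \subseteq w(\Phi^+)$, and gets the reverse inclusion globally: completeness of the Coxeter fan covers the interior of $\normalCone(I)$ by chambers, and the fan property of the normal fan (distinct maximal cones have disjoint interiors, available since Theorem~\ref{theo:realization} guarantees distinct facets give distinct vertices) excludes every chamber with $\projectionMap(w) \neq I$; full-dimensionality of $\normalCone(I)$ then lets you recover the whole cone as the closure of its interior. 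What your approach buys is that it avoids both delicate steps of the paper's computation (the identification of $\cone(-\Roots{I})$ with the intersection of separable cones, and the polar-of-intersection identity) at the cost of invoking the global structure of the two fans; your reserve argument via generic linear functionals and Lemma~\ref{lem:faces} is also valid and is perhaps the most elementary of the three. The only point you leave implicit is that $\projectionMap(w) \neq I$ forces $\brickVector(\projectionMap(w)) \neq \brickVector(I)$, but this is immediate from Theorem~\ref{theo:realization}, which you cite.
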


\begin{proof}
For a given cone~$X \subset V$, we denote by~$X\polar \eqdef \set{v \in V}{\forall x \in X, \; \dotprod{x}{v} \ge 0}$ its polar cone.
We have
$$
\normalCone(I) = \rootCone(I)\polar = \cone \big(-\Roots{I}\big)\polar = \bigg(\bigcap_{w \in \projectionMap^{-1}(I)} \!\!\!\!\!w(\fundamentalChamber\polar)\bigg)\polar = \!\!\!\bigcup_{w \in \projectionMap^{-1}(I)} \!\!\!\!\!w(\fundamentalChamber\polar)\polar = \!\!\! \bigcup_{w \in \projectionMap^{-1}(I)} \!\!\!\!w(\fundamentalChamber),
$$
because~$\fundamentalChamber\polar$ is the cone generated by~$\Phi^+$.
\end{proof}

\begin{corollary}
\label{coro:refinedNormalFans}
The Coxeter fan refines the normal fan of the brick polytope.
\end{corollary}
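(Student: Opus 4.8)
The plan is to deduce this directly from Proposition~\ref{prop:normalCone}, which already contains all the geometric content. Recall that a complete fan $\mathcal{F}$ refines a complete fan $\mathcal{G}$ (both with support $V$) precisely when every cone of $\mathcal{F}$ is contained in some cone of $\mathcal{G}$. Both fans here are complete: the Coxeter fan of $W$ is complete by definition, while the normal fan of $\brickPolytope(\sq{Q})$ is complete because Theorem~\ref{theo:realization} guarantees that $\brickPolytope(\sq{Q})$ is full-dimensional.

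First I would reduce the statement to maximal cones. If $\tau$ is any cone of the Coxeter fan, it is a face of some chamber $w(\fundamentalChamber)$; hence once we know that $w(\fundamentalChamber)$ lies inside a cone of the normal fan, the same cone contains $\tau$. So it suffices to check that each Coxeter chamber $w(\fundamentalChamber)$ is contained in a maximal normal cone of $\brickPolytope(\sq{Q})$.

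This last inclusion is immediate from Proposition~\ref{prop:normalCone}: setting $I \eqdef \projectionMap(w)$, that proposition identifies the normal cone $\normalCone(I)$ with the union of the chambers $w'(\fundamentalChamber)$ over all $w' \in \projectionMap^{-1}(I)$. Since $w \in \projectionMap^{-1}(I)$, we get $w(\fundamentalChamber) \subseteq \normalCone(\projectionMap(w))$. Thus every Coxeter chamber, and therefore every cone of the Coxeter fan, lies in a cone of the normal fan, which is exactly the assertion that the Coxeter fan refines the normal fan of $\brickPolytope(\sq{Q})$.

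I do not expect any genuine obstacle here; the only points requiring care are confirming the completeness of both fans (so that the containment-of-cones criterion for refinement applies) and the routine observation that containment of maximal cones propagates to all their faces. Everything else is already packaged in Proposition~\ref{prop:normalCone}.
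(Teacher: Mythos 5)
Your argument is correct and matches the paper, which states this corollary without proof as an immediate consequence of Proposition~\ref{prop:normalCone}; you have simply written out the (standard) verification that chamber-by-chamber containment, together with completeness of both fans, yields refinement. No issues.
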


\begin{example}
\label{exm:recurrent5}
\fref{fig:normalFan} illustrates Corollary~\ref{coro:refinedNormalFans} for the subword complex~$\subwordComplex(\Qexm)$ of Example~\ref{exm:recurrent1}.
The normal fan of the brick polytope~$\brickPolytope(\Qexm)$ (middle) is obtained by coarsening the chambers of the Coxeter fan (left) according to the fibers of~$\projectionMap$ (represented on the permutahedron on the right).

\begin{figure}
  \capstart
  \centerline{\includegraphics[width=\textwidth]{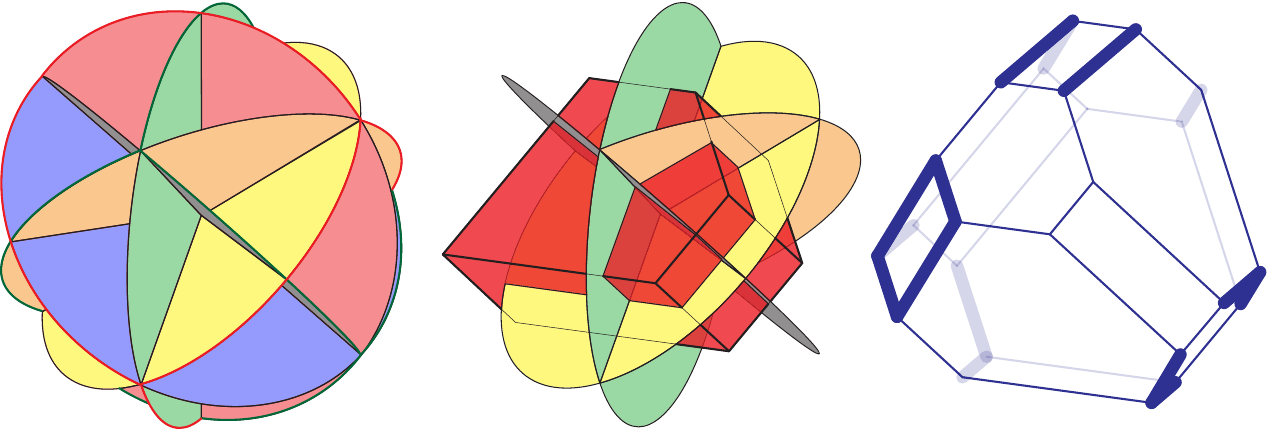}}
  \caption{The Coxeter fan (left) refines the normal fan of the brick polytope~$\brickPolytope(\Qexm)$ (middle) according to the fibers of~$\projectionMap$ (right).}
  \label{fig:normalFan}
\end{figure}
\end{example}

\begin{example}[Duplicated word]
\label{exm:duplicatedNormalFan}
Consider the word~$\Qdup$ defined in Example~\ref{exm:duplicatedRoots}.
The normal fan of its brick polytope~$\brickPolytope(\Qdup)$ is the fan formed by the hyperplanes orthogonal to the roots~$\set{\alpha_p}{p \in P}$.
The vectors of the dual basis of~$\set{\alpha_p}{p \in P}$ are normal vectors for the facets of the brick polytope~$\brickPolytope(\Qdup)$.
Observe that the normal fan of the brick polytope~$\brickPolytope(\Q)$ is not in general defined by a subarrangement of the Coxeter arrangement (see \eg \fref{fig:normalFan}).
\end{example}

\begin{typeA}
In type~$A$, the root configuration~$\Roots{I}$ is the incidence configuration of the contact graph~$\Lambda_I\contact$ of~$\Lambda_I$ \cite{PilaudSantos-brickPolytope}.
The normal cone~$\normalCone(I)$ has one inequality~$v_i \le v_j$ for each arc~$(i,j)$ of~$\Lambda_I\contact$.
Therefore, an element~${w \in W}$ lies in the fiber~$\projectionMap^{-1}(I)$ if and only if it is a linear extension of the transitive closure of~$\Lambda_I\contact$.
\end{typeA}


\subsection{Increasing flips and the weak order}
\label{subsec:weakOrder}

When $I$~and~$J$ are adjacent facets of~$\subwordComplex(\Q)$ with~${I \ssm i = J \ssm j}$, we say that the flip from $I$~to~$J$ is \defn{increasing} if~${i < j}$.
By Lemma~\ref{lem:roots&flips}\eqref{lem:roots&flips:enum:flip}, this condition is equivalent to~$\Root{I}{i} \in \Phi^+$, and by Lemma~\ref{lem:bricks&flips} to ${f(\brickVector(I)) < f(\brickVector(J))}$, where $f$ is a linear function positive on~$\Phi^+$.
The graph of increasing flips is thus clearly acyclic, and we call \defn{increasing flip order} its transitive closure~$\prec$.
This order generalizes the Tamari order on triangulations.

\begin{remark}
\label{rem:increasing flip graph}
The increasing flip graph was already briefly discussed by A.~Knutson and E.~Miller in~\cite[Remark~4.5]{KnutsonMiller-subwordComplex}.
In particular, they observe that one can directly obtain the $h$-vector of $\subwordComplex(\Q)$ from its increasing flip graph.
The lexicographic order on the facets of $\subwordComplex(\Q)$ is the shelling order induced by the vertex-decomposability.
It is moreover a linear extension of the increasing flip graph.
Therefore, a facet $I$ of $\subwordComplex(\Q)$ contributes to the entry $h_k$ of the $h$-vector $\hvector\big(\subwordComplex(\Q)\big) \eqdef (h_0,\ldots,h_{d+1})$ if there are $k$ incoming edges into $I$ in the increasing flip graph, \ie if~$|\Roots{I} \cap \Phi^+| = k$.
\end{remark}

Our next statement connects the increasing flip order on~$\subwordComplex(\Q)$ to the weak order on~$W$.

\begin{proposition}
\label{prop:quotient}
A facet~$I$ is covered by a facet~$J$ in increasing flip order if and only if there exist $w_I \in \projectionMap^{-1}(I)$~and~$w_J \in \projectionMap^{-1}(J)$ such that~$w_I$ is covered by~$w_J$ in weak~order.
In other words, the increasing flip graph on~$\subwordComplex(\Q)$ is the quotient of the Hasse diagram of the weak order by the fibers of the map~$\projectionMap$.
\end{proposition}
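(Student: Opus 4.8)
The plan is to establish both directions of the equivalence using Lemma~\ref{lem:flipProjectiveMap}, which precisely describes how $\projectionMap$ behaves under right multiplication by a simple reflection, together with the characterization of weak order covers recorded in the background section (namely that $w$ is covered by $ws$ iff $w(\alpha_s) \in \Phi^+$). The key observation linking the two orders is that an increasing flip in $\subwordComplex(\sq{Q})$ corresponds exactly to the non-trivial case (ii) of Lemma~\ref{lem:flipProjectiveMap} when $w(\alpha)$ is a \emph{positive} root lying in $\Roots{\projectionMap(w)}$.

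\emph{The ``if'' direction.} Suppose $w_I \in \projectionMap^{-1}(I)$ is covered by $w_J = w_I s_\alpha$ in weak order, so that $w_I(\alpha) \in \Phi^+$. If $w_I(\alpha) \notin \Roots{I}$, then by Lemma~\ref{lem:flipProjectiveMap}(i) we would have $\projectionMap(w_J) = \projectionMap(w_I) = I$, contradicting $I \neq J$. Hence $w_I(\alpha) = \Root{I}{i} \in \Roots{I}$ for a unique $i \in I$, and $J = \projectionMap(w_J)$ is obtained from $I$ by flipping $i$, by Lemma~\ref{lem:flipProjectiveMap}(ii). Since $w_I(\alpha) \in \Phi^+$, we have $\Root{I}{i} \in \Phi^+$, and by Lemma~\ref{lem:roots&flips}(\ref{lem:roots&flips:enum:flip}) this means the flip from $I$ to $J$ is increasing. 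It remains to check that $I$ is in fact \emph{covered} by $J$ (not merely below it) in increasing flip order; this is automatic because a single flip of subword complexes produces an edge of the flip graph, and an increasing edge is a cover in the transitive closure $\prec$ by definition.

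\emph{The ``only if'' direction.} Conversely, suppose $I$ is covered by $J$ in increasing flip order, with $I \ssm i = J \ssm j$ and $i < j$, so $\beta \eqdef \Root{I}{i} \in \Phi^+$ by Lemma~\ref{lem:roots&flips}(\ref{lem:roots&flips:enum:flip}). Since $\beta \in \Roots{I} \subset w(\Phi^+)$ for every $w \in \projectionMap^{-1}(I)$, the plan is to select $w_I \in \projectionMap^{-1}(I)$ for which $w_I^{-1}(\beta)$ is a \emph{simple} root $\alpha \in \Delta$; then setting $w_J \eqdef w_I s_\alpha$, Lemma~\ref{lem:flipProjectiveMap}(ii) gives $\projectionMap(w_J) = J$, and $w_I(\alpha) = \beta \in \Phi^+$ yields that $w_I$ is covered by $w_J$ in weak order, as desired. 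The existence of such a $w_I$ is the crux: one wants an element of the fiber $\projectionMap^{-1}(I)$ whose fundamental chamber $w_I(\fundamentalChamber)$ has the wall orthogonal to $\beta$ on its ``lower'' side. Concretely, by Proposition~\ref{prop:normalCone} the fiber $\projectionMap^{-1}(I)$ indexes exactly the chambers tiling the normal cone $\normalCone(I)$; since $\beta$ is an extreme ray of the dual root cone $\rootCone(I)$ (it spans the edge of $\brickPolytope(\sq{Q})$ toward $\brickVector(J)$ by Proposition~\ref{prop:cones} and Lemma~\ref{lem:bricks&flips}), the facet of $\normalCone(I)$ orthogonal to $\beta$ is a genuine wall, and one chooses $w_I$ to be a chamber of $\projectionMap^{-1}(I)$ adjacent to that wall with $w_I^{-1}(\beta) = \alpha \in \Delta$.

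\emph{Main obstacle.} The delicate point is precisely this last selection of $w_I$ within the fiber so that $w_I^{-1}(\beta)$ is simple. I expect the cleanest route is to argue via the normal fan: the chambers $w(\fundamentalChamber)$ with $\projectionMap(w) = I$ tile $\normalCone(I)$ (Proposition~\ref{prop:normalCone}), and since $-\beta$ is a ray of $\rootCone(I) = \normalCone(I)\polar$, the hyperplane $\beta^\perp$ supports a facet of $\normalCone(I)$; a chamber $w_I(\fundamentalChamber)$ lying against this facet has $\beta$ as an \emph{outer} simple-root direction, i.e.\ $w_I^{-1}(\beta) \in \Delta$. Making this ``chamber against a specified wall has the corresponding root simple'' argument fully rigorous — ensuring such a chamber both exists and lies in $\projectionMap^{-1}(I)$ rather than across the wall — is the step requiring the most care, and is where the pointedness of $\Roots{I}$ (guaranteed since $\sq{Q}$ is realizing) is genuinely used.
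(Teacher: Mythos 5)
Your proposal is correct and follows essentially the same route as the paper: the ``if'' direction via Lemma~\ref{lem:flipProjectiveMap} applied to a weak order cover $w < ws$, and the ``only if'' direction by locating a chamber $w_I(\fundamentalChamber) \subseteq \normalCone(I)$ adjacent to the wall $\Root{I}{i}^\perp$ (the paper then reflects this chamber across the wall to produce $w_J$, which is the same as your $w_J = w_I s_\alpha$ since $w_I s_\alpha = s_{\Root{I}{i}} w_I$). The ``delicate point'' you flag is handled in the paper exactly as you suggest, with no additional machinery.
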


\begin{proof}
Let~$w \in W$ and~$s \in S$ such that~$w < ws$ in weak order.
Since~$w(\alpha_s) \in \Phi^+$, Lemma~\ref{lem:flipProjectiveMap} ensures that either~$\projectionMap(ws) = \projectionMap(w)$ or~$\projectionMap(ws)$ is obtained from~$\projectionMap(w)$ by an increasing flip.

Reciprocally, consider two facets~$I$ and~$J$ of~$\subwordComplex(\Q)$ related by an increasing flip.
Write $I \ssm i = J \ssm j$, with~$i<j$.
Their normal cones~$\normalCone(I)$ and~$\normalCone(J)$ are adjacent along a codimension~$1$ cone which belongs to the hyperplane~$H$ orthogonal to~$\Root{I}{i} = \Root{J}{j}$.
Let~$\fundamentalChamber_I$ be a chamber of the Coxeter arrangement of~$W$ contained in~$\normalCone(I)$ and incident to~$H$, and let~$\fundamentalChamber_J \eqdef s_{\Root{I}{i}}(\fundamentalChamber_I)$ denote the symmetric chamber with respect to~$H$.
Let~$w_I$ and~$w_J$ be such that~$\fundamentalChamber_I = w_I(\fundamentalChamber)$ and~$\fundamentalChamber_J = w_J(\fundamentalChamber)$.
Then~$w_I \in \projectionMap^{-1}(I)$ and~$w_J \in \projectionMap^{-1}(J)$, and~$w_J$ covers~$w_I$ in weak order.
\end{proof}

It immediately follows from Proposition~\ref{prop:quotient} that $I \prec J$ in increasing flip order if there exist $w_I \in \projectionMap^{-1}(I)$~and~$w_J \in \projectionMap^{-1}(J)$ such that~$w_I < w_J$ in weak order.
We conjecture that the converse also holds in general.

\begin{example}
\label{exm:recurrent6}
\fref{fig:latticeQuotient} represents the Hasse diagram of the increasing flip order on the facets of the subword complex~$\subwordComplex(\Qexm)$ of Example~\ref{exm:recurrent1}.
It is the quotient of the Hasse diagram of the weak order by the fibers of the map~$\projectionMap$ of \fref{fig:lattice}.

\begin{figure}
  \capstart
  \centerline{\includegraphics[scale=.54]{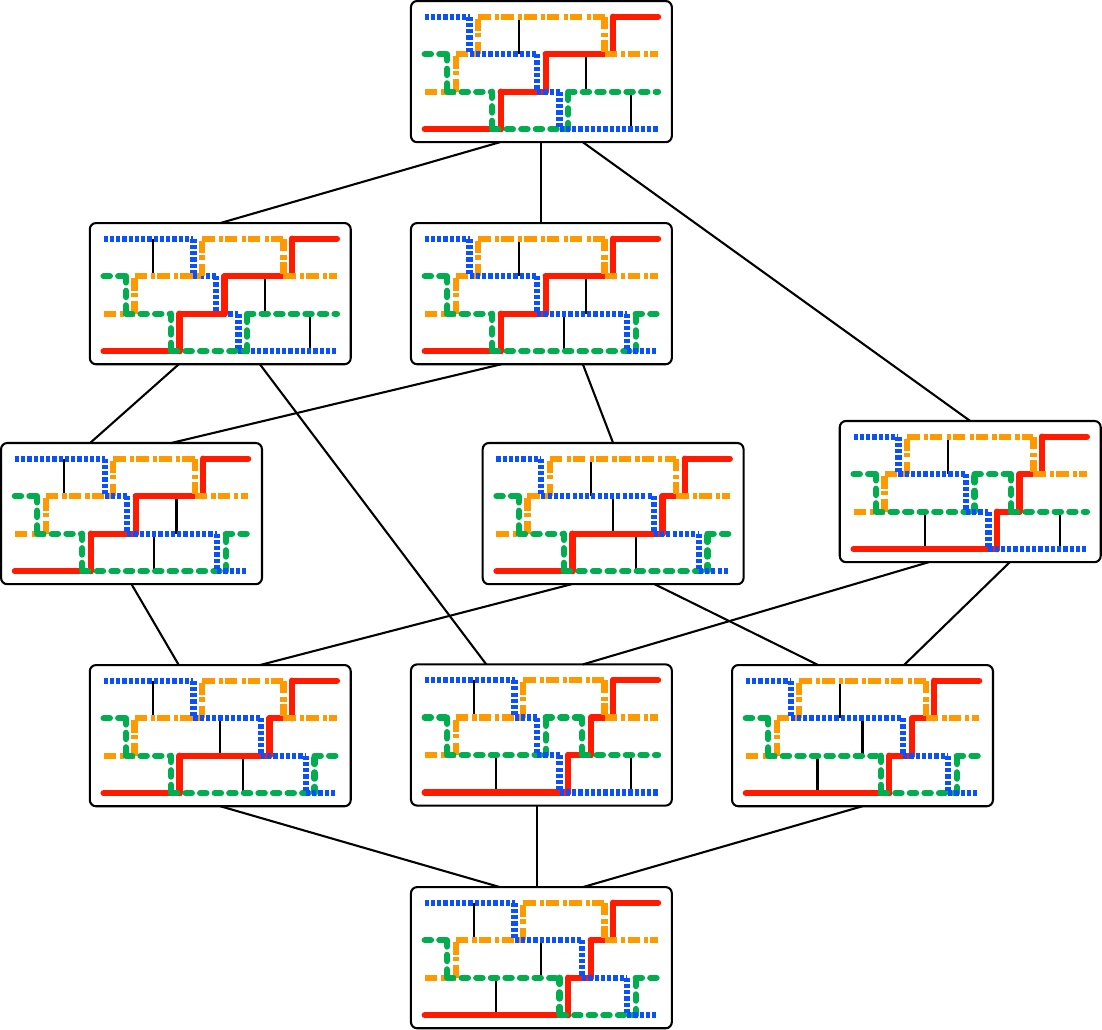}}
  \caption{The Hasse diagram of the increasing flip order on the facets of the subword complex~$\subwordComplex(\Qexm)$ of Example~\ref{exm:recurrent1} is the quotient of the Hasse diagram of the weak order on~$\fS_4$ by the fibers of the map~$\projectionMap$ represented in \fref{fig:lattice}.}
  \label{fig:latticeQuotient}
\end{figure}
\end{example}

\begin{example}[Duplicated word]
\label{exm:duplicatedLattice}
Consider the word~$\Qdup$ defined in Example~\ref{exm:duplicatedRoots}.
Then the quotient of the weak order by the fibers of~$\projectionMap$ is the boolean lattice.
\end{example}

\begin{remark}
\label{rem:latticeCounterExample}
In his work on Cambrian lattices \cite{Reading-latticeCongruences, Reading-CambrianLattices, Reading-sortableElements}, N.~Reading considers quotients of the weak order under lattice congruences, which fulfill sufficient conditions for the quotient to be a lattice.
In the context of brick polytopes, the fibers of~$\projectionMap$ induce different partitions of~$W$ which are not lattice congruences in general (the fibers are not necessarily intervals of the weak order, but they contain all intervals between two of their elements --- see Section~\ref{subsec:fibers}).
N.~Reading's construction raises the question if increasing flip orders are always lattices.
It turns out that this does not hold in general.
For the word~$\Q = \tau_1\tau_2\tau_3\tau_2\tau_1\tau_2\tau_3\tau_2\tau_1$ in the adjacent transpositions in $\fS_4$, the flip graph of the root independent subword complex~$\subwordComplex(\Q)$ is not a lattice.
Indeed, consider the four facets
$$I_1 = \{1,2,9\}, \quad I_2 = \{1,3,4\}, \quad J_1 = \{1,8,9\}, \quad J_2 = \{6,7,9\}$$
of $\subwordComplex(\Q)$.
For $i,j \in \{1,2\}$, we then have that $I_i < J_j$ in the increasing flip poset, but $J_1$ and $J_2$ are not comparable.
Thus, $I_1$ and $I_2$ do not have a unique join.
\end{remark}

Further combinatorial properties of increasing flip graphs and orders for any subword complexes, with applications to generation and order theoretic properties of subword complexes, can be found in~\cite{PilaudStump-ELlabeling}.


\subsection{Fibers of \texorpdfstring{$\projectionMap$}{the projection} and the weak order}
\label{subsec:fibers}

In this section, we discuss properties of the fibers of the map~$\projectionMap$ with respect to the right weak order.
First, although these fibers are not always intervals (see Example~\ref{exm:recurrent4}), they are order-convex.

\begin{lemma}
If two elements~$\underline{w} \le \overline{w}$ of~$W$ lie in the same fiber of~$\projectionMap$, then the complete interval~$\set{w \in W}{\underline{w} \le w \le \overline{w}}$ lies in the same fiber of~$\projectionMap$.
\end{lemma}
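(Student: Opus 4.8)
The plan is to show that the fibers of~$\projectionMap$ are ``order-convex'' in the weak order by relating membership in a fiber to a \emph{containment condition on root sets}, and then exploiting the fact that weak-order intervals interpolate inversion sets monotonically. Recall from Section~\ref{subsec:surjectiveMap} that~$\projectionMap(w) = I$ precisely when~$\Roots{I} \subset w(\Phi^+)$. So the statement to prove is: if~$\Roots{I} \subset \underline{w}(\Phi^+)$ and~$\Roots{I} \subset \overline{w}(\Phi^+)$, with~$\underline{w} \le \overline{w}$, then~$\Roots{I} \subset w(\Phi^+)$ for every~$w$ in the interval~$[\underline{w}, \overline{w}]$.

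The key combinatorial fact I would use is the characterization of the weak order by inversion sets: for~$u \le v$ in (right) weak order, the inversion set~$N(u) \eqdef u(\Phi^-) \cap \Phi^+$ (equivalently, the set of positive roots sent to negative roots) satisfies~$N(u) \subseteq N(v)$, and conversely any~$w$ with~$N(\underline{w}) \subseteq N(w) \subseteq N(\overline{w})$ lies in the interval~$[\underline{w}, \overline{w}]$. The condition~$\Roots{I} \subset w(\Phi^+)$ can be rephrased using the partition of~$\Roots{I}$ into its positive and negative parts~$\RootsPlus{I} \eqdef \Roots{I} \cap \Phi^+$ and~$\RootsMinus{I} \eqdef \Roots{I} \cap \Phi^-$: a positive root~$\beta$ lies in~$w(\Phi^+)$ iff~$\beta \notin N(w)$, while a negative root~$-\gamma$ (with~$\gamma \in \Phi^+$) lies in~$w(\Phi^+)$ iff~$\gamma \in N(w)$. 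Thus~$\Roots{I} \subset w(\Phi^+)$ is equivalent to the two conditions that~$N(w)$ is disjoint from~$\RootsPlus{I}$ and contains~$-\RootsMinus{I} \eqdef \set{-\gamma}{-\gamma \in \RootsMinus{I}}$.

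Now I would carry out the core argument. Suppose~$\underline{w} \le w \le \overline{w}$, so that~$N(\underline{w}) \subseteq N(w) \subseteq N(\overline{w})$. Since~$\projectionMap(\overline{w}) = I$, the set~$N(\overline{w})$ is disjoint from~$\RootsPlus{I}$; as~$N(w) \subseteq N(\overline{w})$, the smaller set~$N(w)$ is also disjoint from~$\RootsPlus{I}$. Symmetrically, since~$\projectionMap(\underline{w}) = I$, the set~$N(\underline{w})$ contains~$-\RootsMinus{I}$; as~$N(\underline{w}) \subseteq N(w)$, the larger set~$N(w)$ also contains~$-\RootsMinus{I}$. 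The two displayed conditions therefore hold for~$w$, which gives~$\Roots{I} \subset w(\Phi^+)$, and hence~$\projectionMap(w) = I$ by uniqueness in the definition of~$\projectionMap$. This completes the argument.

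I expect the only genuine obstacle to be making the inversion-set characterization of weak-order intervals clean and correctly orienting the inclusions: the upper bound~$\overline{w}$ controls the \emph{positive} part of~$\Roots{I}$ (via the disjointness-from-above direction) while the lower bound~$\underline{w}$ controls the \emph{negative} part (via containment-from-below), and getting these two halves matched to the right endpoint is where a sign error could creep in. Everything else is a direct unwinding of the definitions, using only the standard description of weak order by inclusion of inversion sets recalled in Section~\ref{subsec:coxeterGroups} and the defining property~$\Roots{\projectionMap(w)} \subset w(\Phi^+)$ of~$\projectionMap$.
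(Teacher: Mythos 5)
Your proof is correct, but it takes a genuinely different route from the one the paper writes down. The paper's proof leans on the monotonicity already established in the proof of Proposition~\ref{prop:quotient}: for any $w \in W$ and $s \in S$ with $w < ws$ one has $\projectionMap(w) \preceq \projectionMap(ws)$ in the increasing flip order, so walking along a saturated chain from $\underline{w}$ through $w$ up to $\overline{w}$ and using antisymmetry of the increasing flip order forces all intermediate images to coincide. You instead unwind the definition of $\projectionMap$ directly, translating $\Roots{I} \subset w(\Phi^+)$ into the two-sided condition $\RootsMinus{I} \subseteq \inv(w) \subseteq \Phi^+ \ssm \RootsPlus{I}$ (in the paper's notation, where $\RootsMinus{I}$ denotes the set of \emph{positive} roots whose negatives lie in $\Roots{I}$; your $\RootsMinus{I}$ is its negative, but the content is the same) and then applying the characterization of the weak order by inclusion of inversion sets. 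Your sign bookkeeping is right: the upper endpoint $\overline{w}$ controls disjointness from $\RootsPlus{I}$ and the lower endpoint $\underline{w}$ controls containment of $\RootsMinus{I}$. In fact the paper explicitly flags your argument as ``another way to see this property'' and records the inversion-set description of the fibers immediately after the lemma, so you have in effect supplied the details of the paper's second, unwritten proof. What your route buys is self-containedness: it needs only the standard fact that $u \le v$ if and only if $\inv(u) \subseteq \inv(v)$, together with the uniqueness in the definition of $\projectionMap$ (Proposition~\ref{prop:preservingFlips}), and in particular it does not rely on the increasing flip order being antisymmetric. What the paper's route buys is brevity, since Lemma~\ref{lem:flipProjectiveMap} and Proposition~\ref{prop:quotient} are already in place. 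The only cosmetic slip is calling $\inv(w) = w(\Phi^-) \cap \Phi^+$ ``the set of positive roots sent to negative roots'' --- it is the set of positive roots sent to negative roots by $w^{-1}$ --- but your formula and all subsequent uses are correct.
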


\begin{proof}
As already observed in the proof of Proposition~\ref{prop:quotient}, $w < ws$ in weak order implies $\projectionMap(w) \le \projectionMap(ws)$ in increasing flip order, for any~$w \in W$.
Consequently, if~$\underline{w}$ and~$\overline{w}$ lie in the same fiber, then so does any element~$w$ on a chain between~$\underline{w}$ and~$\overline{w}$, and thus in the interval~$\set{w \in W}{\underline{w} \le w \le \overline{w}}$.
Another way to see this property will be given below by an alternative description of the map~$\projectionMap$.
\end{proof}

Next, we give an explicit description of the fibers of $\projectionMap$ in terms of inversion sets of elements of~$W$.
Remember from Section~\ref{subsec:coxeterGroups} that the inversion set of an element~$w \in W$ with reduced expression~$w = w_1w_2 \cdots w_p$ is the set
$$\inv(w) \eqdef \Phi^+ \, \cap \, w(\Phi^-) = \big\{\alpha_{w_1}, w_1(\alpha_{w_2}), \dots, w_1w_2 \cdots w_{p-1}(\alpha_{w_p})\big\} \subseteq \Phi^+,$$
and that the weak order coincides with the inclusion order on inversion sets: $w \leq w'$ if and only if $\inv(w) \subseteq \inv(w')$.

For finite Coxeter groups, the inversion sets of the elements of~$W$ are precisely the biclosed subsets of~$\Phi^+$.
A set~$U \subseteq \Phi^+$ is \defn{biclosed} if and only if both~$U$ and its complement~$\Phi^+ \ssm U$ are closed under non-negative linear combinations in~$\Phi^+$.
In other words, $U$ is biclosed if and only if~$U \cup -(\Phi^+ \ssm U)$ is separable.

For a facet~$I$ of the subword complex~$\subwordComplex(\Q)$, we define~$\RootsPlus{I} \eqdef \Roots{I} \, \cap \, \Phi^+$ and $\RootsMinus{I} \eqdef -(\Roots{I} \, \cap \, \Phi^-)$.
In other words,~$\RootsPlus{I}$ and~$\RootsMinus{I}$ are the subsets of~$\Phi^+$ such that~$\Roots{I} = \RootsPlus{I} \, \cup \, -\RootsMinus{I}$.
With these notations, the preimage of~$I$ under~$\projectionMap$ is given by all~$w \in W$ such that
$$\RootsMinus{I} \, \subseteq \, \inv(w) \, \subseteq \, \Phi^+ \ssm \RootsPlus{I}.$$

While the definition of the map $\projectionMap$ provides a direct geometric relation between separable sets and root configurations, this alternative description provides a relation to inversion sets, and thus to the weak order.
It moreover gives a criterion to characterize the fibers of~$\projectionMap$ which have a meet or a join.
For a facet~$I$ of~$\subwordComplex(\Q)$, define $\wedge(I)$ and $\vee(I)$ by
$$\wedge(I) \eqdef \!\!\!\! \bigcap_{w \in \projectionMap^{-1}(I)} \!\!\! \inv(w) \quad \text{and} \quad \vee(I) \eqdef \!\!\!\! \bigcup_{w \in \projectionMap^{-1}(I)} \!\!\! \inv(w).$$

\begin{remark}
\label{rem:computationPreimage}
The subsets~$\wedge(I)$ and~$\vee(I)$ can be computed from $\Roots{I}$ by the following iterative procedure.
Define two sequences~$(\wedge_p(I))_{p \in \N}$ and~$(\overline{\vee}_p(I))_{p \in \N}$ of collections of positive roots by
\begin{enumerate}[(i)]
\item $\wedge_0(I) \eqdef \RootsMinus{I}$ and~$\overline{\vee}_0(I) \eqdef \RootsPlus{I}$;
\item the set $\wedge_{p+1}(I)$ is the set of positive roots~$\alpha$ such that $\alpha = \lambda_1 \alpha_1 + \lambda_2 \alpha_2$ for some $\alpha_1, \alpha_2 \in \wedge_p(I)$ and some $\lambda_1, \lambda_2 \ge 0$, or such that $\lambda\alpha + \mu\beta \in \wedge_p(I)$ for some $\beta \in \overline{\vee}_p(I)$ and some $\lambda, \mu \ge 0$;
\item the set $\overline{\vee}_{p+1}(I)$ is the set of positive roots~$\beta$ such that $\beta = \mu_1 \beta_1 + \mu_2 \beta_2$ for some $\beta_1, \beta_2 \in \overline{\vee}_p(I)$ and some $\mu_1, \mu_2 \ge 0$, or such that $\lambda\alpha + \mu\beta \in \overline{\vee}_p(I)$ for some $\alpha \in \wedge_p(I)$ and some $\lambda, \mu \ge 0$.
\end{enumerate}
The sequences~$(\wedge_p(I))_{p \in \N}$ and~$(\overline{\vee}_p(I))_{p \in \N}$ are then stationary with limits~$\wedge(I)$ and~$\Phi^+ \ssm \vee(I)$.
\end{remark}

\begin{proposition}
\label{prop:fibers meet and join}
The fiber~$\projectionMap^{-1}(I)$ has a meet if and only if~$\wedge(I)$ is biclosed and a join if and only if~$\vee(I)$ is biclosed.
\end{proposition}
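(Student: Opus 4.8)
The plan is to transport the problem entirely into the language of inversion sets, using the two facts recalled just above: the map $w \mapsto \inv(w)$ is an order isomorphism from the weak order on $W$ to the poset of biclosed subsets of $\Phi^+$ ordered by inclusion, and the fiber $\projectionMap^{-1}(I)$ is precisely the set of $w \in W$ whose inversion set satisfies $\RootsMinus{I} \subseteq \inv(w) \subseteq \Phi^+ \ssm \RootsPlus{I}$. Since $\projectionMap$ is surjective, this fiber is nonempty. Under the isomorphism, asking whether $\projectionMap^{-1}(I)$ has a meet amounts to asking whether the family of biclosed sets $\set{\inv(w)}{w \in \projectionMap^{-1}(I)}$ admits a greatest lower bound that is again one of these inversion sets, i.e.\ whether the fiber possesses a minimum element.

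First I would treat the meet. For the easy direction, suppose $\wedge(I) = \bigcap_{w} \inv(w)$ is biclosed, so that $\wedge(I) = \inv(m)$ for a unique $m \in W$. The inclusions defining the fiber are stable under intersection: each $\inv(w)$ satisfies $\RootsMinus{I} \subseteq \inv(w) \subseteq \Phi^+ \ssm \RootsPlus{I}$, hence so does their intersection $\wedge(I) = \inv(m)$, which shows $m \in \projectionMap^{-1}(I)$. As $\inv(m) \subseteq \inv(w)$ for every $w$ in the fiber, the element $m$ is below all of them in weak order, so it is the minimum of the fiber and hence its meet. For the converse, assume the fiber has a minimum element $m$. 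Then $\inv(m) \subseteq \inv(w)$ for every $w \in \projectionMap^{-1}(I)$, giving $\inv(m) \subseteq \wedge(I)$; but $m$ is itself one of these $w$, so $\wedge(I) \subseteq \inv(m)$ as well. Thus $\wedge(I) = \inv(m)$ is an inversion set, hence biclosed.

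The join statement is entirely dual: replacing intersections by unions, $\vee(I) = \bigcup_w \inv(w)$ again lies between $\RootsMinus{I}$ and $\Phi^+ \ssm \RootsPlus{I}$, and the same reasoning shows that $\projectionMap^{-1}(I)$ has a maximum element, namely its join, exactly when $\vee(I)$ is biclosed.

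The only genuinely delicate point, and the one I would highlight, is verifying that when $\wedge(I)$ (respectively $\vee(I)$) happens to be biclosed, the corresponding group element automatically belongs to the fiber rather than merely being a lower (respectively upper) bound for it; this is exactly where the explicit bracketing of the fiber between $\RootsMinus{I}$ and $\Phi^+ \ssm \RootsPlus{I}$ is used, together with the observation that both of these bounds are preserved under intersection and under union. Everything else is a formal consequence of the order isomorphism $w \mapsto \inv(w)$ and the identification of inversion sets with biclosed subsets of $\Phi^+$.
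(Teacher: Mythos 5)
Your argument is correct and follows essentially the same route as the paper: both reduce the question to the order isomorphism $w \mapsto \inv(w)$ between the weak order and biclosed sets, and both observe that the fiber is the set of $w$ with $\inv(w)$ bracketed between $\RootsMinus{I}$ and $\Phi^+ \ssm \RootsPlus{I}$ (equivalently between $\wedge(I)$ and $\vee(I)$), so that a biclosed $\wedge(I)$ yields an element of the fiber that is minimal in it, and conversely a minimum of the fiber forces $\wedge(I)$ to be its inversion set. The only point worth making explicit, which you and the paper both adopt implicitly, is that ``has a meet'' here means that the greatest lower bound in the weak order (which always exists, the weak order being a lattice) actually belongs to the fiber, i.e.\ the fiber has a minimum element.
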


\begin{proof}
We first observe that,~$\projectionMap(w) = I$ if and only if~$\wedge(I) \subseteq \inv(w) \subseteq \vee(I)$.
If~$\wedge(I)$ is biclosed, it is the inversion set of an element~$\underline{w} \in W$.
Then $\underline{w}$ is in~$\projectionMap^{-1}(I)$ and it is smaller than all the elements of~$\projectionMap^{-1}(I)$ in weak order.
It is thus the meet of~$\projectionMap^{-1}(I)$.
Reciprocally, if the fiber~$\projectionMap^{-1}(I)$ has a meet, its inversion set is precisely~$\wedge(I)$, which is thus biclosed.
The proof is similar for the join.
\end{proof}

\begin{example}
\label{exm:recurrent7}
Consider the facet~$\{2,5,6\}$ of the subword complex~$\subwordComplex(\Qexm)$ of Example~\ref{exm:recurrent1}.
Its root configuration is~$\Roots{\{2,5,6\}} = \{e_4-e_2, e_4-e_1, e_1-e_3\}$, thus $\RootsPlus{\{2,5,6\}} = \{e_4-e_2, e_4-e_1\}$ and~$\RootsMinus{\{2,5,6\}} = \{e_3-e_1\}$.
Its fiber is $\projectionMap^{-1}(\{2,5,6\}) = \{2314, 3124, 3214\}$ (see \fref{fig:lattice}).
We have ${\wedge(\{2,5,6\}) = \{e_3-e_1\}}$ and $\vee(\{2,5,6\}) = \{e_2-e_1, e_3-e_2, e_3-e_1\}$.
The fiber~$\projectionMap^{-1}(\{2,5,6\})$ has no meet since $\wedge(\{2,5,6\})$ is not biclosed.
However, its join is~$3214$ since $\vee(\{2,5,6\}) = \inv(3214)$.
\end{example}


\subsection{Minkowski sum decompositions into Coxeter matroid polytopes}
\label{subsec:minkowskiSum}

Remember that the Minkowski sum of two polytopes~$\Pi_1, \Pi_2 \subset V$ is the polytope
$$\Pi_1 + \Pi_2 \eqdef \bigset{v_1 + v_2}{v_1 \in \Pi_1, v_2 \in \Pi_2} \subset V.$$
A vertex~$v$ of~$\Pi_1 + \Pi_2$ maximizing a linear functional~$f : V \to \R$ on~$\Pi_1 + \Pi_2$ is a sum~$v_1 + v_2$ of vertices~$v_1 \in \Pi_1$ and~$v_2 \in \Pi_2$ maximizing~$f$ on~$\Pi_1$ and~$\Pi_2$ respectively.

\begin{proposition}
\label{prop:minkowskiSum}
The brick polytope~$\brickPolytope(\Q)$ is the Minkowski sum of the polytopes
$$\brickPolytope(\Q,k) \eqdef \conv \bigset{\Weight{I}{k}}{I \text{ facet of } \subwordComplex(\Q)}$$
over all positions~${k \in [\sizeQ]}$.
\end{proposition}

In other words, this proposition affirms that the sum and convex hull operators in the definition of the brick polytope commute:
$$\brickPolytope(\Q) \eqdef \conv_I \sum\nolimits_k \Weight{I}{k} \,=\, \sum\nolimits_k \conv_I\Weight{I}{k} \eqfed \sum\nolimits_k \brickPolytope(\Q,k),$$
where the index~$k$ of the sums ranges over the positions~$[\sizeQ]$ and the index~$I$ of the convex hulls ranges over the facets of the subword complex~$\subwordComplex(\Q)$.

\begin{proof}[Proof of Proposition~\ref{prop:minkowskiSum}]
The proof is identical to that of~\cite[Proposition~3.26]{PilaudSantos-brickPolytope}.
We repeat it here for the convenience of the reader.

First, by definition of the brick vector, the brick polytope~$\brickPolytope(\Q)$ is clearly a subset of the Minkowski sum~$\sum_k \brickPolytope(\Q,k)$.
To prove equality, we only need to show that any vertex of~$\sum_k \brickPolytope(\Q,k)$ is also a vertex of~$\brickPolytope(\Q)$.

Consider a linear functional~$f : V \to \R$.
For any two adjacent facets~$I,J$ of~$\subwordComplex(\Q)$ with~$I \ssm i = J \ssm j$, and any~$k \in [\sizeQ]$, we obtain that
\begin{itemize}
\item if $\Weight{I}{k} \notin \{\pm\Weight{I}{i}\}$, then $f(\Weight{I}{k}) = f(\Weight{J}{k})$;
\item otherwise, $f(\Weight{I}{k}) - f(\Weight{J}{k})$ has the same sign as~$f(\brickVector(I)) - f(\brickVector(J))$ by Lemma~\ref{lem:bricks&flips}.
\end{itemize}
Consequently, a facet~$I_f$ maximizes~$f(\brickVector(\cdot))$ among all facets of~$\subwordComplex(\Q)$ if and only if it maximizes~$f(\Weight{\cdot}{k})$ for any~$k \in [\sizeQ]$.

Now let~$v$ be any vertex of~$\sum_k \brickPolytope(\Q,k)$, and let~$f_v : V \to \R$ denote a linear functional maximized by~$v$ on~$\sum_k \brickPolytope(\Q,k)$.
Then~$v$ is the sum of the vertices in~$\brickPolytope(\Q,k)$ which maximize~$f_v$.
Thus,~$v = \sum_k \Weight{I_f}{k} = \brickVector(I_f)$ is a vertex of~$\brickPolytope(\Q)$.
\end{proof}

Moreover, we observe that each summand~$\brickPolytope(\Q,k)$ is a $W$-matroid polytope as defined in \cite{BorovikGelfandWhite2}.
A \defn{$W$-matroid polytope} is a polytope such that the group generated by the mirror symmetries of its edges is a subgroup of~$W$.
Equivalently, a $W$-matroid polytope is a polytope whose edges are directed by the roots of~$\Phi$, and whose vertex set lies on a sphere centered at the origin.
The edges of each summand~$\brickPolytope(\Q,k)$ are parallel to the roots since the edges of their Minkowski sum~$\brickPolytope(\Q)$ are, and the vertices of~$\brickPolytope(\Q,k)$ are equidistant from the origin since they are all images of~$w_{q_k}$ under the orthogonal group~$W$.
Thus, each summand~$\brickPolytope(\Q,k)$ is indeed a $W$-matroid polytope.

\begin{example}
\label{exm:recurrent9}
For the word~$\Qexm$ of Example~\ref{exm:recurrent1}, the Minkowski sum is formed by five points $\brickPolytope(\Qexm,k)$ for~$k \in \{1, 2, 3, 5, 8\}$, three segments $\brickPolytope(\Qexm,4) = \conv \{e_2,e_4\}$, $\brickPolytope(\Qexm,6) = \conv \{\one-e_1,\one-e_3\}$ and~$\brickPolytope(\Qexm,9) = \conv \{\one-e_3, \one-e_4\}$, and one triangle $\brickPolytope(\Qexm,7) = \conv \{e_1+e_2, e_2+e_3, e_2+e_4\}$.
\end{example}

\begin{example}[Duplicated word]
\label{exm:duplicatedMinkowski}
Consider the word~$\Qdup$ defined in Example~\ref{exm:duplicatedRoots}.
According to our description of the weight function~$\Weight{I_\varepsilon}{\cdot}$ given in Example~\ref{exm:duplicatedWeights}, we obtain that
\begin{itemize}
\item for~$k \in [N]$, the polytope~$\brickPolytope(\Qdup,k^*)$ is the single point~$\omega_k$;
\item for~$p \in P$, the polytope~$\brickPolytope(\Qdup,p^*+1)$ is the segment from~$\omega_p$ to~$\omega_p + \alpha_p$.
\end{itemize}
Thus, the polytope~$\brickPolytope(\Qdup)$ is the Minkowski sum of with the roots~$\set{\alpha_p}{p \in P}$ (considered as $1$-dimensional polytopes) with the point ${\Theta \eqdef \sum_{k \in [N]} \omega_k + \sum_{p \in P} \omega_p}$, that is, a parallelepiped whose edges are the roots~$\set{\alpha_p}{p \in P}$.
\end{example}


\section{Cluster complexes and generalized associahedra revisited}
\label{sec:generalizedAssociahedra}

The \defn{cluster complex} was defined in~\cite{FominZelevinsky-ClusterAlgebrasII} to encode the exchange graph of clusters in cluster algebras.
Its first polytopal realization was constructed by F.~Chapoton, S.~Fomin, and A.~Zelevinsky~\cite{ChapotonFominZelevinsky}.
A more general realization of $c$-cluster complexes for general finite Coxeter groups was later obtained by C.~Hohlweg, C.~Lange, and H.~Thomas~\cite{HohlwegLangeThomas}, based on the $c$-Cambrian fans of N.~Reading and D.~Speyer~\cite{Reading-CambrianLattices, Reading-sortableElements, ReadingSpeyer}.
Recently S.~Stella generalized in~\cite{Stella} the approach of~\cite{ChapotonFominZelevinsky} and showed that the resulting realizations of the associahedra coincide with that of~\cite{HohlwegLangeThomas}.

We explore in this section a completely different approach to construct generalized associahedra, based on subword complexes and brick polytopes.
In~\cite{CeballosLabbeStump}, C.~Ceballos, J.-P.~Labb\'e and the second author proved that the $c$-cluster complex is isomorphic to the subword complex~$\clustercomplex$, where~$\cwo{c}$ is the $\sq{c}$-sorting word for~$w_\circ \in W$.
In this section, we first study the brick polytope of these specific subword complexes, show that the polar of the brick polytope indeed realizes the cluster complex, and provide an explicit combinatorial description of its vertices, its normal vectors, and its facets.
Using these descriptions, we show that the brick polytope and the generalized associahedra in~\cite{HohlwegLangeThomas} only differ by an affine translation.
This opens new perspectives on the latter realization, and provides new proofs of several constructions therein, in particular their connections with the Cambrian lattices and fans studied in \cite{Reading-CambrianLattices, Reading-sortableElements, ReadingSpeyer, ReadingSpeyerInfinite}.
Finally, we show that one can directly compute the exchange matrix corresponding to a given facet of the $c$-cluster complex using the geometry of the subword complex.

\subsection{Cluster complexes as subword complexes and their inductive structure}
\label{subsec:CLS}

We first briefly recall some results from~\cite{CeballosLabbeStump} that we will use all along this section.
Consider a finite Coxeter system~$(W,S)$ and let~$c$ be a \defn{Coxeter element}, \ie the product of all simple reflections of~$S$ in a given order.
Following~\cite{Reading-sortableElements}, we choose an arbitrary reduced expression~$\sq{c}$ of~$c$ and let~$\sw{w}{c}$ denote the \defn{$\sq{c}$-sorting word} of~$w$, \ie the lexicographically first (as a sequence of positions) reduced subword of~$\sq{c}^\infty$ for~$w$.
In particular, let~$\cwo{c}$ denote the $\sq{c}$-sorting word of the longest element~$w_\circ \in W$.
Let~$N \eqdef \length(w_\circ) = |\Phi^+|$, and let~${m \eqdef n + N}$ be the length of~$\cw{c}$.
The following statement was proven in~\cite{CeballosLabbeStump}.

\begin{theorem}[\cite{CeballosLabbeStump}]
The subword complex $\clustercomplex$ is isomorphic to the $c$-cluster complex.
\end{theorem}

In Sections~\ref{sec:families} to \ref{sec:exchangematrix} we will need the following two properties of $\clustercomplex$ exhibited in~\cite{CeballosLabbeStump}.
These properties do not hold for general spherical subword complexes, and describe an important inductive structure of cluster complexes.
First, the rearrangement in Lemma~\ref{lem:commutationlemma} together with the rotation in Lemma~\ref{lem:rotationlemma} provides an isomorphism between the $c$-cluster complex and the $c'$-cluster complex for different Coxeter elements~$c$ and~$c'$.

\begin{lemma}[{\cite[Proposition~4.3]{CeballosLabbeStump}}]
\label{lem:isomorphismlemma}
Let~$s$ be an initial letter in~$c$ and set $c' \eqdef scs$ with a reduced expression~$\sq{c'}$.
The rotation operator in Lemma~\ref{lem:rotationlemma} provides an isomorphism between the subword complexes~$\clustercomplex$ and $\subwordComplex(\cw{c'})$.
\end{lemma}

The second lemma shows how a cluster complex contains cluster complexes for standard parabolic subgroups.

\begin{lemma}[{\cite[Lemma~5.2]{CeballosLabbeStump}}]
\label{lem:restrictionlemma}
Let $s$ be the initial letter in $\sq{c}$, and let~$I$ be a facet of $\clustercomplex$ for which $1 \in I$.
Then $\Root{I}{j} \in \Phi_{\langle \alpha_s \rangle}$ for all $j \in I$ with $j \neq 1$.
\end{lemma}

These two lemmas translate the following inductive structure of cluster complexes.
Let~$s$ be initial in~$c$, let~$\sq{c}$ be a word for~$c$ starting with~$s$, and let~$I$ be a facet of~$\clustercomplex$.
Then there are two cases:
\begin{enumerate}[(i)]
\item $1 \notin I$. We have by Lemma~\ref{lem:isomorphismlemma} that~$I$, shifted by one, is a facet of the subword complex for the word~$\cw{c'}$ with $c' \eqdef scs$.
\item $1 \in I$. Lemma~\ref{lem:restrictionlemma} ensures that $k \notin I$ for all $k$ such that $\Root{I}{k} \in \Phi \ssm \Phi_{\langle \alpha_s \rangle}$. By Proposition~\ref{prop:restriction}, we therefore obtain that $I$ can also be considered as a facet of the subword complex for the word $\cw{c'}$ with~$c' \eqdef sc$ (see also~\cite[Lemma~5.4]{CeballosLabbeStump}).
\end{enumerate}

The first $n$ positions of~$[\sizeQ]$ form a facet $I_e$ of the subword complex~$\clustercomplex$ since its complement~$\cwo{c}$ is a reduced expression for~$w_\circ$.
For convenience, we call this facet the \defn{initial facet}, and observe that it is the unique source in the increasing flip graph.
The notation~$I_e$ is due to the fact that $\kappa^{-1}(I_e) = \{e\}$ where $e \in W$ is the identity element.


\subsection{Realizing cluster complexes}

We now construct geometric realizations of cluster complexes, based on their interpretation in terms of subword complexes.
The following theorem is an easy consequence of the results on brick polytopes developed in this paper.

\begin{theorem}
The polar of the brick polytope~$\brickPolytope(\cw{c})$ realizes the subword complex~$\clustercomplex$.
\end{theorem}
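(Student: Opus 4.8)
The plan is to deduce this theorem directly from Theorem~\ref{theo:realization}: it suffices to show that the word~$\cw{c}$ is \emph{realizing}, that is, that the root configuration of one (equivalently every) facet of~$\clustercomplex$ forms a linear basis of~$V$. By the lemma asserting that this basis property holds for all facets of a subword complex simultaneously or for none of them, I only need to exhibit one convenient facet and compute its root configuration. I will also record that~$\DemazureProduct(\cw{c}) = w_\circ$, so that~$\clustercomplex$ is indeed a spherical subword complex; this is already guaranteed by the isomorphism with the cluster complex recalled above, and in any case follows from the fact that the reduced word~$\cwo{c}$ for~$w_\circ$ occurs as the suffix of~$\cw{c}$.

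The natural choice is the facet~$I \eqdef [n]$ consisting of the first~$n$ positions of~$\cw{c} = \sq{c}\cwo{c}$. Its complement is exactly the suffix~$\cwo{c}$, which by definition is a reduced expression of~$w_\circ$; since~$m = n + N$ and reduced words for~$w_\circ$ have length~$N$, this complement has the correct length, so~$[n]$ is genuinely a facet of~$\clustercomplex$. For each~$k \in [n]$, the set~$[1,k-1] \ssm I$ is empty, whence~$\sigma_I^{[1,k-1]} = e$ and therefore~$\Root{I}{k} = \alpha_{q_k}$. As~$\sq{c}$ is a reduced expression of a Coxeter element, its letters~$q_1, \dots, q_n$ range over all of~$S$, each appearing exactly once, so that~$\set{\alpha_{q_1}, \dots, \alpha_{q_n}}{} = \Delta$. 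Thus~$\Roots{[n]} = \Delta$ is precisely the set of simple roots, which is a basis of~$V$.

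This shows that~$\cw{c}$ is realizing, and Theorem~\ref{theo:realization} then yields that the polar of the brick polytope~$\brickPolytope(\cw{c})$ realizes~$\subwordComplex(\cw{c}) = \clustercomplex$. I expect essentially no obstacle here: the statement is a genuine corollary of the machinery already set up. The only points requiring a modicum of care are the verification that~$[n]$ really is a facet and that the reflection products~$\sigma_I^{[1,k-1]}$ degenerate to the identity along this initial segment, after which the identification~$\Roots{[n]} = \Delta$ is immediate and the basis condition is transparent.
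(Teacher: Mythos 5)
Your proof is correct and follows exactly the same route as the paper: exhibit the facet~$[n]$ of~$\clustercomplex$ (whose complement is the reduced expression~$\cwo{c}$ of~$w_\circ$), observe that its root configuration is~$\Delta$ and hence a basis of~$V$, conclude that~$\cw{c}$ is realizing, and invoke Theorem~\ref{theo:realization}. The extra details you supply (the vanishing of the prefix products~$\sigma_I^{[1,k-1]}$ and the fact that the letters of~$\sq{c}$ exhaust~$S$) are exactly the computations the paper leaves implicit.
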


\begin{proof}
The root configuration~$\Roots{[n]}$ of the initial facet $I_e = [n]$ of the subword complex~$\clustercomplex$ coincides with the root basis~$\Delta$.
Thus, the subword complex $\subwordComplex(\cw{c})$ is root independent, and Theorem~\ref{theo:realization} ensures that it is isomorphic to the boundary complex of the polar of the brick polytope~$\brickPolytope(\cw{c})$.
\end{proof}

\begin{example}
\label{exm:associahedron1}
We illustrate the results of this section with the Coxeter element~$\cexm \eqdef \tau_1\tau_2\tau_3$ of~$A_3$.
The $\sq{\cexm}$-sorting word of~$w_\circ$ is ${\cwo{\cexm} = \sq{\tau}_1\sq{\tau}_2\sq{\tau}_3\sq{\tau}_1\sq{\tau}_2\sq{\tau}_1}$.
We thus consider the word~$\cw{\cexm} \eqdef \sq{\tau}_1\sq{\tau}_2\sq{\tau}_3\sq{\tau}_1\sq{\tau}_2\sq{\tau}_3\sq{\tau}_1\sq{\tau}_2\sq{\tau}_1$.
The facets of the subword complex~$\subwordComplex(\cw{\cexm})$ are~$\{1, 2, 3\}$, $\{2, 3, 4\}$, $\{3, 4, 5\}$, $\{4, 5, 6\}$, $\{5, 6, 7\}$, $\{6, 7, 8\}$, $\{6, 8, 9\}$, $\{4, 6, 9\}$, $\{3, 5, 7\}$, $\{2, 4, 9\}$, $\{1, 3, 7\}$, $\{1, 7, 8\}$, $\{1, 8, 9\}$, and~$\{1, 2, 9\}$.
Their brick vectors and the brick polytope~$\brickPolytope(\cw{\cexm})$ are represented in \fref{fig:associahedron1}.

\begin{figure}
  \capstart
  \centerline{\includegraphics[scale=.75]{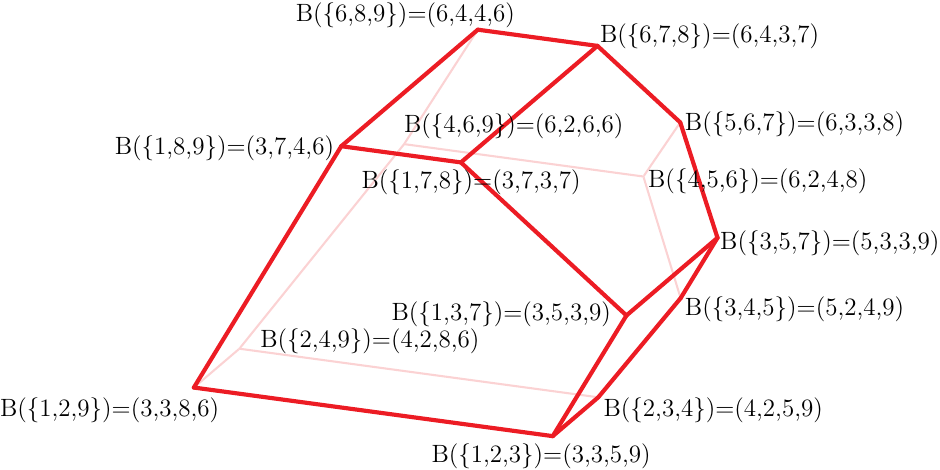}}
  \caption{The brick polytope~$\brickPolytope(\cw{\cexm})$, for the Coxeter element~$\cexm = \tau_1\tau_2\tau_3$, is J.-L.~Loday's $3$-dimensional associahedron.}
  \label{fig:associahedron1}
\end{figure}
\end{example}


\subsection{Normal vectors and facets}

We define the \defn{weight configuration} of a facet~$I$ of~$\clustercomplex$ to be the set
$$\Weights{I} \eqdef \bigset{\Weight{I}{i}}{i \in I}$$
of all weights associated to the elements of~$I$.
Although not relevant for general words, the weight configuration is interesting for~$\cw{c}$ for the following reason.

\begin{proposition}
\label{prop:normalVectorsAssociahedra}
For any facet~$I$ of~$\clustercomplex$, and any position~$i \in I$, the weight $\Weight{I}{i}$ is a normal vector of the facet~$\cone  \set{\Root{I}{j}}{j \in I \ssm i}$ of the cone~$\rootCone(I)$.
Thus, the normal cone~$\normalCone(I)$ of~$\brickVector(I)$ in~$\brickPolytope(\cw{c})$ is generated by the weight configuration~$\Weights{I}$ of~$I$.
\end{proposition}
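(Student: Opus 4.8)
The plan is to reduce the statement to a single orthogonality relation between the root and weight functions. Since $\cw{c}$ is realizing, Proposition~\ref{prop:cones} shows that $\rootCone(I) = \cone\set{-\Root{I}{i}}{i \in I}$ is a full-dimensional simplicial cone, whose maximal faces are spanned by the sets $\set{-\Root{I}{j}}{j \in I \ssm i}$. A vector is normal to such a face precisely when it is orthogonal to every $\Root{I}{j}$ with $j \in I \ssm i$, so it suffices to prove
$$\dotprod{\Root{I}{j}}{\Weight{I}{i}} = 0 \quad (i \neq j) \qquad\text{and}\qquad \dotprod{\Root{I}{i}}{\Weight{I}{i}} > 0$$
for all $i,j \in I$. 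The second relation is immediate for any word, since $\sigma_I^{[1,i-1]}$ is orthogonal and hence $\dotprod{\Root{I}{i}}{\Weight{I}{i}} = \dotprod{\alpha_{q_i}}{\omega_{q_i}} = \dotprod{\alpha_{q_i}}{\alpha_{q_i}}/2 > 0$. Granting the first relation, each $\Weight{I}{i}$ is the facet normal of $\rootCone(I)$ opposite to $-\Root{I}{i}$, with the orientation fixed by the second relation; the standard duality between a simplicial cone and its polar then gives $\normalCone(I) = \rootCone(I)\polar = \cone(\Weights{I})$, which is the final assertion.

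First I would establish the orthogonality relation by induction on the number of increasing flips separating $I$ from the base facet $[n]$, which is the unique source of the increasing flip graph. For the base case, $\sigma_{[n]}^{[1,i-1]}$ is trivial and $\squ{q}_1 \cdots \squ{q}_n$ is the reduced word $\sq{c}$, so $\set{\Root{[n]}{i}}{i \in [n]} = \Delta$ and $\set{\Weight{[n]}{i}}{i \in [n]} = \nabla$, and the relation is exactly the duality $\dotprod{\alpha_s}{\omega_t} = \delta_{s=t}\dotprod{\alpha_s}{\alpha_s}/2$. For the inductive step, let $J$ arise from a facet $I$ satisfying the relation by an increasing flip $I \ssm i = J \ssm j$ with $i<j$. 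Combining Lemma~\ref{lem:roots&flips}(\ref{lem:roots&flips:enum:update}) and Lemma~\ref{lem:weights&flips}(\ref{lem:weights&flips:enum:update}), the useful observation is that $s_{\Root{I}{i}}$ \emph{fixes} every weight $\Weight{I}{l}$ with $l \in I \ssm i$ (by induction these are orthogonal to $\Root{I}{i}$) and merely shears the roots along $\R\,\Root{I}{i}$; consequently all scalar products among the common positions $I \ssm i = J \ssm j$ are unchanged. Since moreover $\Root{J}{j} = -\Root{I}{i}$ and $\Weight{J}{l} = \Weight{I}{l}$ for $l \in I \ssm i$, the pairings $\dotprod{\Root{J}{j}}{\Weight{J}{l}}$ also vanish. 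Thus every instance of the relation for $J$ involving a common position is automatic.

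The hard part will be the one genuinely new family of pairings, $\dotprod{\Root{J}{k}}{\Weight{J}{j}}$ for $k \in J \ssm j$, which tests the single new vector $\Weight{J}{j}$. Using $\Weight{J}{j} = \Weight{I}{j} - \Root{I}{i}$ (itself a consequence of $\dotprod{\Weight{I}{j}}{\Root{I}{i}} = \dotprod{\Root{I}{i}}{\Root{I}{i}}/2$), these reduce to the \emph{mixed} products $\dotprod{\Root{I}{k}}{\Weight{I}{j}}$ with $k \in I$ and $j \notin I$. These are not the products controlled by Lemma~\ref{lem:weights&flips}(\ref{lem:weights&flips:enum:scalarProducts}) --- that lemma governs the transposed $\dotprod{\Root{I}{j}}{\Weight{I}{k}}$ --- and they are genuinely nonzero in general, so the cancellation that makes $\dotprod{\Root{J}{k}}{\Weight{J}{j}}$ vanish is exactly where the special shape of $\cw{c}$, namely that it is a prefix of $\sq{c}^\infty$, must intervene. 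Indeed the relation fails for generic realizing words: in $\subwordComplex(\Qexm)$ one already has $\dotprod{\Root{\{2,5,6\}}{5}}{\Weight{\{2,5,6\}}{2}} \neq 0$. I expect the clean route is to strengthen the induction hypothesis so that it records the values of all mixed products $\dotprod{\Root{I}{k}}{\Weight{I}{j}}$, $k \in I$, $j \in [\sizeQ]$, in a flip-stable closed form readable off the base facet $[n]$, with the desired orthogonality being the case $j \in I$; verifying the stability of this refined invariant under the reflection update is the technical core of the argument.
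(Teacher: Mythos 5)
Your reduction of the statement to the orthogonality relations $\dotprod{\Root{I}{j}}{\Weight{I}{i}} = 0$ for $i \neq j$ in $I$ and $\dotprod{\Root{I}{i}}{\Weight{I}{i}} > 0$ is correct, the base case at the facet $[n]$ is right, and your analysis of which pairings survive a flip untouched is sound (including the identity $\Weight{J}{j} = \Weight{I}{j} - \Root{I}{i}$). Your counterexample in $\subwordComplex(\Qexm)$ is also correct and usefully pinpoints that the statement is special to $\cw{c}$. But the proposal has a genuine gap: the one family of pairings you isolate as ``the hard part'' --- $\dotprod{\Root{J}{k}}{\Weight{J}{j}}$ for the newly entered position $j$ --- is exactly the content of the proposition, and you do not prove it. You only conjecture that a strengthened flip-stable invariant recording all mixed products $\dotprod{\Root{I}{k}}{\Weight{I}{j}}$ exists; no such invariant is exhibited, and since (as your own example shows) these mixed products are not controlled by any of the flip lemmas for general realizing words, there is no reason the proposed strengthening closes under the reflection update without importing new structural input about $\cw{c}$. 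As written, the argument is a correct framing plus an unproven core.

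The paper closes precisely this gap by a different mechanism: instead of inducting on flips, it reduces to the case $i = 1$ using the rotation operation of \cite[Proposition~3.9]{CeballosLabbeStump}, which cyclically rotates $\cw{c}$ while preserving the subword complex together with its root and weight functions (up to conjugation). Once $1 \in I$, \cite[Lemma~5.2]{CeballosLabbeStump} guarantees that every other root $\Root{I}{j}$, $j \in I \ssm 1$, lies in the parabolic subsystem $\Phi_{\langle \alpha_{q_1} \rangle}$, and since $\Weight{I}{1} = \omega_{q_1}$ is orthogonal to $\vect(\Delta \ssm \alpha_{q_1})$, the orthogonality is immediate. This is where the ``special shape of $\cw{c}$'' that you correctly flag actually enters: not through a flip-stable closed form for mixed products, but through the rotation-invariance of $\clustercomplex$ and the parabolic confinement of root configurations of facets containing the first letter. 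If you want to salvage your induction, you would essentially need to reprove these two facts, at which point the rotation argument is shorter.
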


\begin{proof}
We derive this result from Lemmas~\ref{lem:isomorphismlemma} and~\ref{lem:restrictionlemma}.
If $1 \notin I$, we can apply Lemma~\ref{lem:isomorphismlemma} to rotate the first letter~$s$ in $\cw{c}$.
This rotation results in applying the reflection~$s$ both to the normal cone~$\rootCone(I)$ and to the weight configuration~$\Weights{I}$, and we are again in the same situation for the subword complex $\subwordComplex(\cw{c'})$ for the Coxeter element~${c' = scs}$.
We can thus assume that $1 \in I$.
Now using Lemma~\ref{lem:restrictionlemma}, the facet~$I$ contains, except for the first position~$1 \in I$, only letters~$j$ for which~$\Root{I}{j}$ belongs to~$\Phi_{\langle \alpha_{\sq{c}_1} \rangle}$.
Thus, $\Weight{I}{1}$ is orthogonal to all the roots of~$\Roots{I} \ssm \Root{I}{1}$.
\end{proof}

To obtain the inequality description of the brick polytope~$\brickPolytope(\cw{c})$, we study carefully certain facets of~$\clustercomplex$.
We need to introduce the following notations.
Let~$\w_1,\dots,\w_N$ denote the letters of the $\sq{c}$-sorting word of~$w_\circ$.
For~$\position \in [N]$, define ${\rho_\position \eqdef w_1 \cdots w_\position \in W}$ to be the product of the first~$\position$ letters of~$\cwo{c}$, and set by convention~$\rho_0 \eqdef e$.
Moreover, let
$$\alpha_\position \eqdef \rho_{\position-1}(\alpha_{w_\position}), \qquad \omega_\position \eqdef \rho_{\position-1}(\omega_{w_\position}), \qquad\text{and}\qquad \translation \eqdef \sum_{\position \in [N]} \omega_\position.$$
Remember that~$\Delta \eqdef \set{\alpha_s}{s \in S}$ denotes the set of simple roots of~$W$, while $\nabla \eqdef \set{\omega_s}{s \in S}$ denotes the set of fundamental weights of~$W$.

\begin{proposition}
\label{prop:structurePrefix}
For any~$\position \in \{0,\dots,N\}$, the root configuration, the weight configuration, and the brick vector of the facet~$\projectionMap(\rho_\position)$ of~$\clustercomplex$ are given by
$$\Roots{\projectionMap(\rho_\position)} = \rho_\position(\Delta), \quad \Weights{\projectionMap(\rho_\position)} = \rho_\position(\nabla) \quad\text{and}\quad \brickVector(\projectionMap(\rho_\position)) = \translation + \rho_\position \left( \sum_{s \in S} \omega_s \right).$$
\end{proposition}

\begin{proof}
For this proof, we need some additional definitions.
In the word~$\cw{c}$, we say that a position~$i$ is \defn{terminal} if there is no position~$j > i$ such that the~$i$\ordinal{} and $j$\ordinal{} letters of~$\cw{c}$ coincide.
Let~$\terminal$ denote the set of terminal positions.
For~$\position \in [N]$, let~$j_\position = \position+n+1$, and let~$i_\position$ denote the position in~$\cw{c}$ of the last occurrence of the $j_\position$\ordinal{} letter of~$\cw{c}$ before~$j_\position$.
Note that~${\position+1 \le i_\position \le \position+n}$, that~$i_\position + 1 \le i_{\position+1}$, and that all positions~$i$ with~$i_\position + 1 \le i < i_{\position+1}$ are terminal.

We first prove by induction on~$\position$ that $\projectionMap(\rho_\position) = (\terminal \cap [i_\position-1]) \cup \{i_\position, \dots, j_\position-1\}$ and that~$\Root{\projectionMap(\rho_\position)}{i_\position} = \Root{\projectionMap(\rho_\position)}{j_\position} = \alpha_{\position+1}$.
The result is clear for~$\position=0$, for which $\projectionMap(e) = [n]$, $i_0 = 1$ and~$j_0 = n+1$.
Assume that it holds for a given~$\position \in [N]$.
Since $\rho_{\position+1} = \rho_\position w_{(\position + 1)}$, we derive from Lemma~\ref{lem:flipProjectiveMap} that
\begin{align*}
\projectionMap(\rho_{\position+1}) & = (\projectionMap(\rho_\position) \ssm i_\position) \cup j_\position = (\terminal \cap [i_\position-1]) \cup \{i_\position+1, \dots, j_\position\} \\
& = (\terminal \cap [i_{\position+1}-1]) \cup \{i_{\position+1}, \dots, j_{\position+1}-1\}.
\end{align*}
By~$\{i_{\position+1}, \dots, j_{\position+1}-1\} \subseteq \projectionMap(\rho_{\position+1})$, we get~$\wordprod{\Q}{[i_{\position+1}-1] \ssm \projectionMap(\rho_{\position+1})} = \wordprod{\Q}{[j_{\position+1}-1] \ssm \projectionMap(\rho_{\position+1})}$, and thus~$\Root{\projectionMap(\rho_{\position+1})}{i_{\position+1}} = \Root{\projectionMap(\rho_{\position+1})}{j_{\position+1}}$.
Moreover, since~$\projectionMap(\rho_{\position+1}) \subseteq [j_\position]$, we get $\wordprod{\Q}{[j_\position] \ssm \projectionMap(\rho_{\position+1})} = \rho_{\position+1}$, and thus~$\Root{\projectionMap(\rho_{\position+1})}{j_{\position+1}} = \rho_{\position+1}(\alpha_{w_{(\position+2)}}) = \alpha_{\position+2}$.
This proves that the induction hypothesis holds for~$\position+1$.

From this description of the facet~$\projectionMap(\rho_\position)$, we can now derive its weight configuration~$\Weights{\rho_\position}$.
Consider a position~$i \in \projectionMap(\rho_\position)$, and let~$s$ denote the $i$\ordinal{} letter of~$\cw{c}$.
If~$i$ is terminal, then ${\Weight{\projectionMap(\rho_\position)}{i} = w_\circ(\omega_s) = \rho_\position(\omega_s)}$.
If~$i \in \{i_\position, \dots, j_\position-1\}$, then $\Weight{\projectionMap(\rho_\position)}{i} = \wordprod{\Q}{[i_\position-1] \ssm \projectionMap(\rho_{\position})} = \rho_\position(\omega_s)$ since ${\{i_\position, \dots, j_\position-1\} \subseteq \projectionMap(\rho_\position) \subseteq [j_\position-1]}$.
In both cases, we have~$\Weight{\projectionMap(\rho_\position)}{i} = \rho_\position(\omega_s)$ for all~${i \in \projectionMap(\rho_\position)}$.
Since each simple reflection~$s \in S$ appears in the word~$\cw{c}$ either at a terminal position before~$i_\position$, or at a position in~$\{i_\position, \dots, j_\position-1\}$, we obtain that~$\Weights{\projectionMap(\rho_\position)} = \rho_\position(\nabla)$.

We can now compute the brick vector~$\brickVector(\projectionMap(\rho_\position))$.
First, we have that
\[
\sum_{i \in \projectionMap(\rho_\position)} \Weight{\projectionMap(\rho_\position)}{i} = \rho_\position \left( \sum_{s \in S} \omega_s \right).
\]
Second, using that~$\projectionMap(\rho_\position) \ssm i_\position = \projectionMap(\rho_{\position+1}) \ssm j_\position$, that $\{i_\position, \dots, j_\position-1\} \subseteq \projectionMap(\rho_\position)$, and that the~$i_\position$\ordinal{} and~$j_\position$\ordinal{} letters of~$\cw{c}$ coincide, we observe that the reduced expression of~$w_\circ$ given by the complement of~$\projectionMap(\rho_\position)$ is always~$\cwo{c}$.
This ensures that
\[
\sum_{i \notin \projectionMap(\rho_\position)} \Weight{\projectionMap(\rho_\position)}{i} = \sum_{\position \in [N]} \omega_{\position} \eqfed \translation.
\]
Summing the two contributions, we derive the brick vector
\[
\brickVector(\projectionMap(\rho_\position)) \eqdef \sum_{i \in [\sizeQ]} \Weight{\projectionMap(\rho_\position)}{i} = \translation + \rho_\position \left( \sum_{s \in S} \omega_s \right).
\]

Finally, according to Lemma~\ref{lem:bricks&flips}, the root configuration~$\Roots{\projectionMap(\rho_\position)}$ is the set of roots orthogonal to the facet defining hyperplanes of the normal cone of~$\brickVector(\projectionMap(\rho_\position))$ in~$\brickPolytope(\cw{c})$.
Since this cone is generated by~$\Weights{\projectionMap(\rho_\position)} = \rho_\position(\nabla)$, we obtain that~$\Roots{\projectionMap(\rho_\position)} = \rho_\position(\Delta)$.
\end{proof}

\begin{corollary}
\label{coro:normalVectorsAssociahedra}
The normal vectors of the facets of the brick polytope~$\brickPolytope(\cw{c})$ are given by~$-\nabla \cup \set{\omega_\position}{\position \in [N]}$.
\end{corollary}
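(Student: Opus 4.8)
The plan is to read the facet normals of $\brickPolytope(\cw{c})$ straight off Proposition~\ref{prop:normalVectorsAssociahedra}. Since $\cw{c}$ is realizing, Theorem~\ref{theo:realization} tells us that $\brickPolytope(\cw{c})$ is simple and full-dimensional, so its facets are in bijection with the rays of its normal fan and each facet normal is one such ray. Proposition~\ref{prop:normalVectorsAssociahedra} asserts that $\normalCone(I)$ is generated by the weight configuration $\Weights{I}=\set{\Weight{I}{i}}{i\in I}$, and simplicity forces these $n$ linearly independent weights to be exactly the rays of $\normalCone(I)$. Hence the set of facet normals is precisely $\bigset{\Weight{I}{i}}{i\in I,\ I\text{ facet of }\clustercomplex}$, and the whole statement reduces to computing this union of weights.

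First I would record that, for the cluster subword complex, the roots $\Root{I}{i}$ with $i\in I$ range exactly over the cluster variables, that is, over the almost positive roots $\Phi^+\cup(-\Delta)$, so it suffices to attach a weight to each such root. The technical heart is then to show that $\Weight{I}{i}$ depends only on the root $\Root{I}{i}$ and not on the facet $I$. I would prove this by flip propagation: by Lemma~\ref{lem:roots&flips}(3) and Lemma~\ref{lem:weights&flips}(2), any flip transforms the pair $\big(\Root{I}{k},\Weight{I}{k}\big)$ jointly by one and the same reflection (or leaves it fixed), so along the connected flip graph the correspondence $\Root{I}{i}\mapsto\Weight{I}{i}$ is coherent; to compare occurrences of a given root at different positions I would invoke the cyclic rotation of $\cw{c}$ used in the proof of Proposition~\ref{prop:normalVectorsAssociahedra}.

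It remains to evaluate this correspondence, which I would do at the two extreme facets. At $[n]=\projectionMap(e)$, whose complement is the reduced word $\cwo{c}$, one reads $\Root{[n]}{n+k}=\rho_{k-1}(\alpha_{w_k})$ together with $\Weight{[n]}{n+k}=\omega_k$ for every $k\in[N]$ (and the positions $1,\dots,n$ recover $\nabla=\set{\omega_k}{k\in[n]}$); since each positive root $\rho_{k-1}(\alpha_{w_k})$ is a cluster variable, $\omega_k$ is a genuine facet normal. At the opposite facet $\projectionMap(w_\circ)$, the root configuration is $-\Delta$ and the weight configuration is correspondingly $-\nabla=\set{-\omega_s}{s\in S}$, so each negative simple root $-\alpha_s$ is paired with $-\omega_s$. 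Assembling the two families identifies the facet normals with $\set{\omega_k}{k\in[N]}$ together with $-\nabla$. The main obstacle is precisely the well-definedness step $\Root{I}{i}\mapsto\Weight{I}{i}$: one must ensure that the joint reflection behaviour of roots and weights, together with the rotation invariance and the connectivity of the flip graph, really forces a single weight per cluster variable. Once this is secured, the evaluation at the two extreme facets is routine, using that $k\mapsto\rho_{k-1}(\alpha_{w_k})$ is the standard enumeration of $\Phi^+$ carried by the $\sq{c}$-sorting word of $w_\circ$.
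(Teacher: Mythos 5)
Your reduction of the corollary to computing $\bigcup_I \Weights{I}$ (via Theorem~\ref{theo:realization} and Proposition~\ref{prop:normalVectorsAssociahedra}) is fine and agrees with the paper's, but the computation itself rests on two claims that are false. First, the roots $\Root{I}{i}$ with $i \in I$ do \emph{not} range over the almost positive roots: for $\cexm=\tau_1\tau_2\tau_3$ and the facet $\{4,6,9\}$ the paper itself records $\Roots{\{4,6,9\}}=\{e_3-e_2,\,e_1-e_3,\,e_3-e_4\}$, and $e_1-e_3$ is a non-simple negative root. You are conflating the root function $\Root{I}{i}$ with the cluster-variable label $\facetCluster{c}(i)$; these are related only through the product of reflections in Proposition~\ref{pr:cluster product}. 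Second, and fatally, the assignment $\Root{I}{i}\mapsto\Weight{I}{i}$ is not well defined. Take $I=\{6,8,9\}$ with $i=6$ and $J=\{2,4,9\}$ with $j=4$: both give the root $e_1-e_2$, yet $\Weight{I}{6}=e_1$ while $\Weight{J}{4}=e_1+e_3+e_4$, and these are normal vectors of two \emph{distinct} facets of the associahedron. No flip propagation or rotation can repair this, because Proposition~\ref{prop:normalVectorsAssociahedra} shows that $\Weight{I}{i}$ is determined by the hyperplane spanned by $\Roots{I}\ssm\Root{I}{i}$, i.e.\ by the whole facet, not by the single root $\Root{I}{i}$; Lemmas~\ref{lem:roots&flips}(3) and~\ref{lem:weights&flips}(2) only couple root and weight at a \emph{fixed position} under a flip and say nothing about occurrences of the same root at different positions.

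Two further points. Your evaluation at $\projectionMap(e)=[n]$ reads off $\Weight{[n]}{n+k}=\omega_k$ at positions $n+k\notin[n]$; these are complement positions, so they contribute nothing to $\Weights{[n]}$ nor to any normal cone, and cannot by themselves certify that $\omega_k$ is a facet normal. And the set you finally produce, $\set{\omega_k}{k\in[N]}\cup(-\nabla)$, is not the set appearing in the statement: since $\nabla=\set{\omega_k}{k\in[n]}$, the stated set $\nabla\cup\set{\omega_\position}{\position\in[N]}$ equals $\set{\omega_\position}{\position\in[N]}$ and does not visibly contain $w_\circ(\nabla)=-\nabla$, whereas $w_\circ(\nabla)$ does occur as $\Weights{\projectionMap(w_\circ)}$. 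You would at least have to flag and resolve this mismatch. For comparison, the paper's proof is a single observation with no root-to-weight correspondence: for every facet $I$ and every $i\in I$, the weight $\Weight{I}{i}$ is already of the prescribed form.
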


\begin{proof}
By Proposition~\ref{prop:normalVectorsAssociahedra}, the normal vectors of the facets of the brick polytope~$\brickPolytope(\cw{c})$ are the weights~$\Weight{I}{i}$ for~$I$ facet of~$\clustercomplex$ and~$i \in I$.
Moreover, the weight~$\Weight{I}{i}$ is independent of the facet~$I$ of~$\clustercomplex$ containing~$i$.
The result thus follows from Proposition~\ref{prop:structurePrefix} since the facets~$\projectionMap(\rho_\position)$ cover all positions of~$[m]$, and the union of their weight configurations is precisely~$-\nabla \cup \set{\omega_\position}{\position \in [N]}$.
\end{proof}

Let~$q \eqdef \sum_{s \in S} \omega_s$, and let~$\Perm \eqdef \conv \set{w(q)}{w \in W}$ be the balanced \mbox{$W$-per}\-mutahedron.

\begin{theorem}
\label{theo:removingFacets}
The brick polytope~$\brickPolytope(\cw{c})$ is obtained from the balanced permutahedron~$\Perm$ removing all facets which do not intersect~$\set{\rho_\position(q)}{0 \le \position \le N\}}$, and then translating by the vector~$\translation \eqdef \sum_{\position \in [N]} \omega_\position$.
\end{theorem}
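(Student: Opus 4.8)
The plan is to build everything on the special facets $\projectionMap(\rho_\position)$, $0 \le \position \le N$, whose combinatorics and geometry are pinned down by Lemma~\ref{lem:structurePrefix}, and to show that $\brickPolytope(\cw{c})$ and a translate of $\perm(W)$ coincide in a neighborhood of each of these facets. First I would record the local picture at a special vertex. By Lemma~\ref{lem:structurePrefix} the facet $\projectionMap(\rho_\position)$ has brick vector $\brickVector(\projectionMap(\rho_\position)) = \translation + \rho_\position(q)$ and root configuration $\rho_\position(\Delta)$, and by Proposition~\ref{prop:normalVectorsAssociahedra} its normal cone $\normalCone(\projectionMap(\rho_\position))$ is generated by the weight configuration $\Weights{\projectionMap(\rho_\position)} = \rho_\position(\nabla)$. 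Since $\nabla$ generates $\fundamentalChamber$, this normal cone is exactly the Coxeter chamber $\rho_\position(\fundamentalChamber)$. Thus, after translating $\brickPolytope(\cw{c})$ by $-\translation$, the point $\rho_\position(q)$ is a vertex of $\brickPolytope(\cw{c}) - \translation$ whose normal cone equals the normal cone $\rho_\position(\fundamentalChamber)$ of $\rho_\position(q)$ in $\perm(W)$.

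Next I would match the facet hyperplanes. Let $G$ be any facet of $\brickPolytope(\cw{c})$ with weight normal $\nu$. By Corollary~\ref{coro:normalVectorsAssociahedra}, $\nu = \rho_\position(\omega_s)$ for some $\position$ and some $s \in S$, so $\nu$ is a ray of the chamber $\rho_\position(\fundamentalChamber) = \normalCone(\projectionMap(\rho_\position))$; hence the special vertex $v \eqdef \translation + \rho_\position(q)$ lies on $G$. Therefore $G$ lies in the hyperplane $\set{x}{\dotprod{\nu}{x} = \dotprod{\nu}{v}}$, and because $W$ acts orthogonally, $\dotprod{\nu}{v} = \dotprod{\nu}{\translation} + \dotprod{\rho_\position(\omega_s)}{\rho_\position(q)} = \dotprod{\nu}{\translation} + \dotprod{\omega_s}{q}$. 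Consequently $G - \translation$ lies on $\set{x}{\dotprod{\nu}{x} = \dotprod{\omega_s}{q}}$, which is precisely the supporting hyperplane of the facet of $\perm(W)$ with the same normal $\nu = \rho_\position(\omega_s)$ through $\rho_\position(q)$. Every facet inequality of $\brickPolytope(\cw{c}) - \translation$ is thus a facet inequality of $\perm(W)$, which gives the inclusion $\perm(W) \subseteq \brickPolytope(\cw{c}) - \translation$.

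It then remains to identify which facets survive. Since $\brickPolytope(\cw{c})$ is simple (Theorem~\ref{theo:realization}), each special vertex $\translation + \rho_\position(q)$ is incident to exactly the $n$ facets with normals $\rho_\position(\omega_s)$, $s \in S$; as $\position$ runs over $\{0, \dots, N\}$ these normals exhaust $\bigcup_\position \rho_\position(\nabla)$, which is exactly the set of normals of the facets of $\perm(W)$ that meet $\set{\rho_\position(q)}{0 \le \position \le N}$. Combined with the hyperplane matching, this yields a normal-preserving bijection between the facets of $\brickPolytope(\cw{c}) - \translation$ and the facets of $\perm(W)$ intersecting $\set{\rho_\position(q)}{0 \le \position \le N}$. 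Writing $\perm(W)$ as the intersection of its facet halfspaces, I conclude that $\brickPolytope(\cw{c}) - \translation$ is the intersection of exactly those halfspaces whose facets meet $\set{\rho_\position(q)}{0 \le \position \le N}$; equivalently, $\brickPolytope(\cw{c})$ is obtained from $\perm(W)$ by deleting all facets disjoint from $\set{\rho_\position(q)}{0 \le \position \le N}$ and translating by $\translation$.

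The hardest point is the completeness behind Corollary~\ref{coro:normalVectorsAssociahedra}: one must guarantee that \emph{every} facet of $\brickPolytope(\cw{c})$ is incident to one of the special vertices $\translation + \rho_\position(q)$, so that no facet normal falls outside $\bigcup_\position \rho_\position(\nabla)$. Granting this, deleting the remaining facets of $\perm(W)$ is harmless: the inclusion $\perm(W) \subseteq \brickPolytope(\cw{c}) - \translation$ shows that their inequalities are already implied by the retained ones, so the two descriptions define the same polytope.
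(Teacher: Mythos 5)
Your proposal is correct and follows essentially the same route as the paper's proof: it identifies the special vertices $\translation+\rho_\position(q)$ and their chamber normal cones via Lemma~\ref{lem:structurePrefix}, matches the facet-defining inequalities of $\brickPolytope(\cw{c})-\translation$ and $\perm(W)$ at those vertices, and invokes Corollary~\ref{coro:normalVectorsAssociahedra} to see that these inequalities exhaust all facets of the brick polytope. Your write-up merely makes the hyperplane-matching computation more explicit than the paper does.
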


\begin{proof}
Let~$\brickPolytope'$ denote the translate of~$\brickPolytope(\cw{c})$ by the vector~$-\translation$.
By Proposition~\ref{prop:structurePrefix}, the vertices of~$\brickPolytope'$ and~$\Perm$ corresponding to the prefixes~$\rho_\position$ of~$\cwo{c}$ coincide, as well as their normal cones.
Thus, the facets of~$\brickPolytope'$ and~$\Perm$ containing these vertices are defined by the same inequalities.
The result thus follows from Corollary~\ref{coro:normalVectorsAssociahedra} which affirms that these inequalities exhaust all facets of~$\brickPolytope'$.
\end{proof}

\begin{corollary}
\label{coro:HohlwegLangeThomas}
Up to the translation by the vector~$\translation$, the brick polytope~$\brickPolytope(\cw{c})$ coincides with the \mbox{$c$-asso}\-ciahedron $\Asso$ in~\cite{HohlwegLangeThomas}.
\end{corollary}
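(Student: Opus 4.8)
The plan is to compare the two polytopes through their facet descriptions, since both are obtained from the balanced permutahedron $\perm(W)$ by deleting facets and intersecting the surviving closed halfspaces. By construction in~\cite{HohlwegLangeThomas}, the $c$-associahedron $\mathsf{Asso}_c^q(W)$ is exactly the polytope obtained from $\perm(W)$ by keeping those facets that contain at least one vertex $w(q)$ with $w$ a $c$-singleton. On the other hand, Theorem~\ref{theo:removingFacets} asserts that the translate $\brickPolytope(\cw{c}) - \translation$ is obtained from the same permutahedron $\perm(W)$ by removing all facets that do not meet $\set{\rho_\position(q)}{0 \le \position \le N}$. In both cases the surviving facets are supported by hyperplanes of $\perm(W)$ itself, and the resulting polytope is the intersection of the surviving halfspaces. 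It therefore suffices to prove that the two families of surviving facets coincide; equivalently, that the two distinguished vertex sets of $\perm(W)$ agree, namely
\[
\set{\rho_\position(q)}{0 \le \position \le N} = \set{w(q)}{w \text{ is a } c\text{-singleton}}.
\]

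First I would establish this equality of vertex sets. Our characterization of singletons shows that $w \in W$ is a singleton for the map $\projectionMap$ if and only if some reduced expression of $w$ is a prefix of $\cwo{c}$, which is precisely to say that $w = \rho_\position$ for some $0 \le \position \le N$; hence the left-hand set equals $\set{w(q)}{w \text{ singleton for } \projectionMap}$. It then remains to match our $\projectionMap$-singletons with the $c$-singletons of~\cite{HohlwegLangeThomas}. For this I would invoke the theory of $c$-sortable elements~\cite{Reading-sortableElements}: the singleton classes of the $c$-Cambrian congruence are exactly the elements that are simultaneously $c$-sortable and $c$-antisortable, and these are precisely the elements admitting a reduced word that is a prefix of $\cwo{c}$ (see also~\cite{HohlwegLangeThomas}). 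Combining the two descriptions yields the desired equality of vertex sets, independently of the chosen reduced expression $\sq{c}$ of $c$.

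With the vertex sets identified, the conclusion follows: both $\mathsf{Asso}_c^q(W)$ and $\brickPolytope(\cw{c}) - \translation$ are the intersection of the same subfamily of facet-defining halfspaces of $\perm(W)$, so they are equal as polytopes, whence $\brickPolytope(\cw{c}) = \translation + \mathsf{Asso}_c^q(W)$. Here Corollary~\ref{coro:normalVectorsAssociahedra}, which pins down the normal vectors $\nabla \cup \set{\omega_\position}{\position \in [N]}$ of the brick polytope, is what guarantees that no spurious facet survives on our side, so that the two facet families really do exhaust the same inequalities.

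The main obstacle, and the only genuinely non-formal step, is reconciling the two notions of singleton across the two frameworks. Our singletons are defined through the fibers of the map $\projectionMap$ attached to the realizing word $\cw{c}$, whereas the $c$-singletons of~\cite{HohlwegLangeThomas} are defined through the $c$-Cambrian congruence, which depends only on the Coxeter element $c$ and on the orientation of the Coxeter diagram used there to normalize it. One must therefore check that these conventions are compatible and that the resulting sets of distinguished permutahedron vertices are literally the same, so that the surviving facets are cut out by identical hyperplanes of $\perm(W)$ and not merely by combinatorially isomorphic fans. Once this identification is secured through the singleton characterization above, the equality of the two polytopes up to the translation $\translation$ is immediate.
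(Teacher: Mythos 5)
Your overall strategy is the right one and is essentially the paper's: Corollary~\ref{coro:HohlwegLangeThomas} is meant to follow directly from Theorem~\ref{theo:removingFacets} together with the defining construction of ${\sf Asso}_c^q(W)$ in~\cite{HohlwegLangeThomas} as the permutahedron with the facets not containing a $c$-singleton vertex removed. However, there is a concrete false step in your argument: you claim that the $\projectionMap$-singletons are exactly the prefixes $\rho_\position$ of the fixed word $\cwo{c}$, and hence that
$\set{\rho_\position(q)}{0 \le \position \le N} = \set{w(q)}{w \text{ a } c\text{-singleton}}$
as sets of vertices. This is not what Proposition~\ref{prop:singletons}(v) says: the characterization reads ``there exist reduced expressions $\sq{w}$ of $w$ \emph{and $\sq{c}$ of $c$} such that $\sq{w}$ is a prefix of $\cwo{c}$,'' i.e.\ the reduced word of $c$ is allowed to vary, so the singletons are the prefixes of $\cwo{c}$ \emph{up to commutations}, matching the definition in~\cite{HohlwegLangeThomas}. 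In general this set is strictly larger than $\{\rho_0,\dots,\rho_N\}$: for $W=A_3$ and $c = \tau_1\tau_2\tau_3$ one has $\cwo{c} = \squ{\tau}_1\squ{\tau}_2\squ{\tau}_3\squ{\tau}_1\squ{\tau}_2\squ{\tau}_1$, and $\tau_1\tau_2\tau_1 = 3214$ is a singleton (it is a prefix of $\squ{\tau}_1\squ{\tau}_2\squ{\tau}_1\squ{\tau}_3\squ{\tau}_2\squ{\tau}_1$, obtained by one commutation) but is not any $\rho_\position$. So the two vertex sets you want to identify are genuinely different, and the ``matching of singletons'' step as you phrase it would fail.

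The gap is repairable because what you actually need is not equality of the vertex sets but equality of the two families of surviving facets, and for that the inclusion $\set{\rho_\position}{0\le\position\le N}\subseteq\set{c\text{-singletons}}$ gives one direction for free, while the other direction follows from the commutation-invariance the paper records in the proof of Proposition~\ref{prop:singletons}: any two reduced words $\sq{c}, \sq{c}'$ of $c$ yield isomorphic complexes with the same functions $\Roots{\projectionMap(\cdot)}$ and $\brickVector(\projectionMap(\cdot))$, so the facet-normal set $\nabla \cup \set{\omega_\position}{\position\in[N]}$ of Corollary~\ref{coro:normalVectorsAssociahedra} is independent of the choice of $\sq{c}$; equivalently, by (v)$\Rightarrow$(vii) of Proposition~\ref{prop:singletons} the normal cone at a singleton $w$ is the full chamber $w(\fundamentalChamber)$, so every permutahedron facet through $w(q)$ already survives in $\brickPolytope(\cw{c})-\translation$ and is therefore among the facets through the points $\rho_\position(q)$. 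With that correction your comparison of halfspace descriptions goes through and the corollary follows.
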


\begin{remark}
By a recent work of S.~Stella~\cite{Stella}, the \mbox{$c$-asso}\-ciahedron $\Asso$ also coincides, up to an affine translation, with the first realization of the cluster complex by F.~Chapoton, S.~Fomin, and A.~Zelevinsky~\cite{ChapotonFominZelevinsky} and its generalization by S.~Stella~\cite{Stella}.
We thus obtained in the present paper the vertex description of both realizations (by Definition~\ref{def:weights}) and a natural Minkowski sum decomposition into $W$-matroid polytopes (by Proposition~\ref{prop:minkowskiSum}).

In a subsequent paper, we will use this new approach to generalized associahedra to show that the vertex barycenters of all $c$-associahedra $\Asso$ coincide with those of the corresponding $W$-permutahedron $\Perm[q][W]$~\cite{PilaudStump-barycenter}.
This generalizes results by C.~Hohlweg and J.~Lortie and A.~Raymond in the case of the symmetric group~\cite{HohlwegLortieRaymond} and answers several questions asked by C.~Hohlweg in~\cite{Hohlweg}.
\end{remark}

\begin{example}
\label{exm:associahedron2}
For the Coxeter element~$\cexm \eqdef \tau_1\tau_2\tau_3$ of Example~\ref{exm:associahedron1}, the translation vector is~$\translation = (3,2,3,6)$.
We have represented in \fref{fig:associahedron2} the balanced \mbox{$3$-di}\-mensional permutahedron~$\Perm[][A_3] \eqdef \conv \set{\sigma(0,1,2,3)}{\sigma \in \fS_4}$ together with the $3$-dimensional associahedron~${\brickPolytope(\cw{\cexm})-\translation}$.
The set~$\set{\rho_\position(q)}{0 \le \position \le 6}$ are all the vertices located on the leftmost path from the bottommost vertex to the topmost vertex in the graphs of these polytopes (they are marked with strong red dots).
The facets of the permutahedron~$\Perm[][A_3]$ containing these points, which give precisely the facets of the associahedron~${\brickPolytope(\cw{\cexm})-\translation}$, are shaded.

\begin{figure}
  \capstart
  \centerline{\includegraphics[scale=.75]{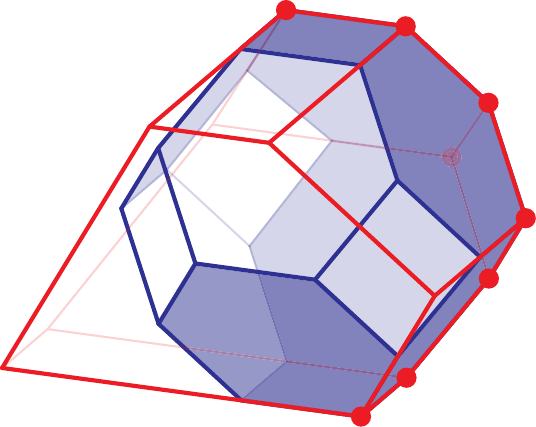}}
  \caption{The brick polytope~$\subwordComplex(\cw{\cexm})$ is obtained removing facets of the balanced permutahedron~$\Perm[][A_3]$. The remaining facets of the permutahedron are shaded.}
  \label{fig:associahedron2}
\end{figure}
\end{example}

In the end of this paper, we revisit various results contained in~\cite{HohlwegLangeThomas}.
The approach developed in this paper, based on subword complexes and brick polytopes, enables us to simplify certain proofs and connections.

First, we provide a new way to see that the $c$-singletons as defined in~\cite{HohlwegLangeThomas} can be used to characterize the vertices common to the brick polytope~$\brickPolytope(\cw{c})$ and to the balanced $W$-permutahedron~$\Perm$.
In the following list, immediate reformulations of the same characterization are indicated by a --- symbol.

\begin{proposition}
\label{prop:singletons}
The following properties are equivalent for an element~$w \in W$:
\begin{enumerate}[(i)]
\item
\label{prop:singletons:enum:singleton}
The element $w$ is a \defn{singleton} for the map~$\projectionMap$, \ie $\projectionMap^{-1}(\projectionMap(w)) = \{w\}$.

\item
\label{prop:singletons:enum:roots}
The root configuration of~$\projectionMap(w)$ is given by~$\Roots{\projectionMap(w)} = w(\Delta)$.

\item[---\;]
\label{prop:singletons:enum:simpleSystem}
The root configuration~$\Roots{\projectionMap(w)}$ of~$\projectionMap(w)$ is a \defn{simple system} for~$W$, that is, the set of extremal roots of a separable subset of~$\Phi$.

\item[---\;]
\label{prop:singletons:enum:rootCone}
The cone~$\rootCone(\projectionMap(w))$ of~$\brickPolytope(\cw{c})$ at~$\brickVector(\projectionMap(w))$ is the polar cone of the chamber~$w(\fundamentalChamber)$ of the Coxeter arrangement of~$W$, \ie ${\rootCone(\projectionMap(w)) = w(\fundamentalChamber)\polar = w(\fundamentalChamber\polar)}$.

\item
\label{prop:singletons:enum:weights}
The weight configuration~$\Weights{\projectionMap(w)}$ of~$\projectionMap(w)$ is given by~$\Weights{\projectionMap(w)} = w(\nabla)$.

\item[---\;]
\label{prop:singletons:enum:chamber}
The weight configuration~$\Weights{\projectionMap(w)}$ of~$\projectionMap(w)$ generates a chamber of the Coxeter arrangement of~$W$.

\item[---\;]
\label{prop:singletons:enum:normalCone}
The normal cone~$\normalCone(\projectionMap(w))$ of~$\brickVector(\projectionMap(w)) \in \brickPolytope(\cw{c})$ coincides with the chamber~$w(\fundamentalChamber)$ of the Coxeter arrangement of~$W$, \ie ${\normalCone(\projectionMap(w)) = w(\fundamentalChamber)}$.

\item
\label{prop:singletons:enum:vertices}
The vertices~$\brickVector(\projectionMap(w))$ of the brick polytope~$\brickPolytope(\cw{c})$ and~$w(q)$ of the balanced $W$-permutahedron~$\Perm$ coincide up to~$\translation$, \ie~${\brickVector(\projectionMap(w)) = \translation + w(q)}$.

\item
\label{prop:singletons:enum:prefix}
There exist reduced expressions~$\w$ of~$w$ and~$\sq{c}$ of~$c$ such that~$\w$ is a prefix of~$\cwo{c}$.

\item
\label{prop:singletons:enum:reducedExpression}
The complement of~$\projectionMap(w)$ in~$\cw{c}$ is~$\cwo{c}$.
\end{enumerate}
\end{proposition}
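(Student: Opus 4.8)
**The plan is to establish the cycle of implications among the ten conditions, using Lemma~\ref{lem:structurePrefix} as the anchor that ties the combinatorics of prefixes to the geometry.** The conditions come in three natural clusters: the root-side statements (\ref{prop:singletons:enum:roots}) together with its two reformulations, the weight-side statements (\ref{prop:singletons:enum:weights}) and its reformulations, and the permutahedron/prefix statements (\ref{prop:singletons:enum:vertices}), (\ref{prop:singletons:enum:prefix}), (\ref{prop:singletons:enum:reducedExpression}); the singleton condition (\ref{prop:singletons:enum:singleton}) is the definitional starting point. Since the ``---'' reformulations are flagged as immediate, I would dispatch those first. The equivalence of (\ref{prop:singletons:enum:roots}) with its simple-system reformulation is just the fact that $w(\Delta)$ is precisely the set of extremal roots of the separable set $w(\Phi^+)$; the equivalence with the root-cone reformulation follows from Proposition~\ref{prop:cones}, which identifies $\rootCone(\projectionMap(w))$ with $\cone(-\Roots{\projectionMap(w)})$, together with the observation that $\cone(-w(\Delta)) = w(\fundamentalChamber\polar)$. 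Dually, the weight reformulations follow from Proposition~\ref{prop:normalVectorsAssociahedra}, which shows $\normalCone(\projectionMap(w))$ is generated by $\Weights{\projectionMap(w)}$.

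\textbf{Next I would prove the equivalence (\ref{prop:singletons:enum:roots}) $\Leftrightarrow$ (\ref{prop:singletons:enum:weights}) using the duality between roots and weights.} By Proposition~\ref{prop:normalVectorsAssociahedra}, each $\Weight{\projectionMap(w)}{i}$ is the normal vector of the facet of $\rootCone(\projectionMap(w))$ spanned by the remaining roots; hence $\Roots{\projectionMap(w)}$ and $\Weights{\projectionMap(w)}$ are mutually dual bases (up to the normalization relating $\Delta$ and $\nabla$). Since the weight basis $\nabla$ is dual to $\Delta$, and $w$ acts orthogonally, $\Roots{\projectionMap(w)} = w(\Delta)$ holds if and only if $\Weights{\projectionMap(w)} = w(\nabla)$. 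The equivalence (\ref{prop:singletons:enum:weights}) $\Leftrightarrow$ (\ref{prop:singletons:enum:vertices}) then comes from summing: the vertex $\brickVector(\projectionMap(w))$ and its normal cone determine each other, and the normal cone of $w(q)$ in $\perm(W)$ is exactly $w(\fundamentalChamber)$, so matching normal cones (via the weight reformulation) forces the vertices to coincide up to the translation $\translation$.

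\textbf{The central link is (\ref{prop:singletons:enum:reducedExpression}) $\Leftrightarrow$ (\ref{prop:singletons:enum:roots}), which is where Lemma~\ref{lem:structurePrefix} does the heavy lifting.} If the complement of $\projectionMap(w)$ in $\cw{c}$ equals $\cwo{c}$, then $\projectionMap(w)$ must be one of the facets $\projectionMap(\rho_\position)$ analyzed in Lemma~\ref{lem:structurePrefix}, whose root configuration is $\rho_\position(\Delta)$; since $\projectionMap(w) = \projectionMap(\rho_\position)$ forces $\Roots{\projectionMap(w)} = \rho_\position(\Delta)$, and the definition of $\projectionMap$ gives $\Roots{\projectionMap(w)} \subset w(\Phi^+)$, a short argument shows $w = \rho_\position$, giving (\ref{prop:singletons:enum:roots}) with $w(\Delta) = \rho_\position(\Delta)$. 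Conversely, if $\Roots{\projectionMap(w)} = w(\Delta)$, then $w(\Phi^+)$ is the unique separable set whose extremal roots are $w(\Delta)$, so $w$ is determined as the unique element with $\projectionMap(w)$ having this root configuration; comparing with Lemma~\ref{lem:structurePrefix} identifies $w$ with some $\rho_\position$, whence the complement is $\cwo{c}$. The equivalence (\ref{prop:singletons:enum:prefix}) $\Leftrightarrow$ (\ref{prop:singletons:enum:reducedExpression}) is then combinatorial: $\sq{w}$ is a prefix of $\cwo{c}$ (after choosing compatible reduced expressions) exactly when deleting the positions of $\projectionMap(w)$ from $\cw{c}$ leaves $\cwo{c}$, since the prefix property of the $\sq{c}$-sorting word translates directly into the complement condition.

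\textbf{Finally, I would close the loop through the singleton condition (\ref{prop:singletons:enum:singleton}), which I expect to be the main obstacle.} The easy direction is that (\ref{prop:singletons:enum:weights}) or the normal-cone reformulation implies (\ref{prop:singletons:enum:singleton}): if $\normalCone(\projectionMap(w)) = w(\fundamentalChamber)$ is a single chamber, then by Proposition~\ref{prop:normalCone} the fiber $\projectionMap^{-1}(\projectionMap(w))$ consists of exactly those $w'$ with $w'(\fundamentalChamber) \subset \normalCone(\projectionMap(w))$, forcing $w' = w$. The delicate direction is (\ref{prop:singletons:enum:singleton}) $\Rightarrow$ (\ref{prop:singletons:enum:roots}): here I must show that if the fiber is a singleton, then the normal cone $\normalCone(\projectionMap(w))$, which by Proposition~\ref{prop:normalCone} is the union of chambers $w'(\fundamentalChamber)$ over the fiber, reduces to the single chamber $w(\fundamentalChamber)$, and hence is simplicial of the full dimension with exactly $n$ walls. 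Since $\Roots{\projectionMap(w)}$ always forms a basis contained in $w(\Phi^+)$, the singleton hypothesis should force this basis to be the extremal one, namely $w(\Delta)$; the argument needs to rule out the possibility that a single chamber's worth of fiber is compatible with a non-simple root configuration, which I would handle by counting walls of $\normalCone(\projectionMap(w))$ against the $n$ facets of a chamber. This dimension-counting step, reconciling the combinatorial fiber condition with the geometric simpliciality of the normal cone, is the crux of the proposition.
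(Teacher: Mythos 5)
Your overall architecture matches the paper's: the ``---'' reformulations are dispatched by Propositions~\ref{prop:cones}, \ref{prop:normalCone} and~\ref{prop:normalVectorsAssociahedra}, and Lemma~\ref{lem:structurePrefix} carries the weight of tying the prefix conditions to the geometry. But two of your links are asserted where real work is needed. The central one is your step from the complement condition (vi) to the root condition (ii), which rests on the unproved claim that a facet whose complement in $\cw{c}$ is $\cwo{c}$ must be one of the facets $\projectionMap(\rho_\position)$ of Lemma~\ref{lem:structurePrefix}. This is delicate because the singletons are \emph{not} exhausted by the prefixes of the single word $\cwo{c}$: already for $W=A_3$ and $c=\tau_1\tau_2\tau_3$, the element $\tau_1\tau_2\tau_1=3214$ is a singleton with $\projectionMap(3214)=\{3,5,7\}$, whose complement $\squ{\tau}_1\squ{\tau}_2\squ{\tau}_1\squ{\tau}_3\squ{\tau}_2\squ{\tau}_1$ agrees with $\cwo{c}=\squ{\tau}_1\squ{\tau}_2\squ{\tau}_3\squ{\tau}_1\squ{\tau}_2\squ{\tau}_1$ only up to a commutation, and no reduced word of $3214$ is a literal prefix of $\cwo{c}$. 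So (v) and (vi) must be read up to commutations of consecutive commuting letters, and the exhaustion of such facets by prefixes is essentially the implication (vi)$\Rightarrow$(v) you are trying to establish. The paper sidesteps this by proving (vi)$\Rightarrow$(normal-cone variant of the weight condition) directly: if the complement is $\cwo{c}$, every weight $\Weight{\projectionMap(w)}{i}$ for $i\in\projectionMap(w)$ lies in $\nabla\cup\set{\omega_\position}{\position\in[N]}$, i.e.\ among the normals of the facets that survive in Theorem~\ref{theo:removingFacets}, so none of the facets through $\brickVector(\projectionMap(w))$ is removed and the normal cone is a full chamber; and it handles the commutation issue explicitly by noting that commutations of $\sq{c}$ induce isomorphisms of subword complexes commuting with $\Roots{\cdot}$, $\brickVector(\cdot)$ and $\projectionMap$.

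The second gap is in your equivalence of the weight condition with the vertex condition (iv): ``the vertex and its normal cone determine each other'' is not a valid principle for comparing two different polytopes, since vertices of two polytopes can have identical normal cones while sitting at different points. The direction (weights)$\Rightarrow$(iv) needs Theorem~\ref{theo:removingFacets} to guarantee that the supporting hyperplanes with normals $w(\nabla)$ are the \emph{same} hyperplanes (up to $\translation$) for $\brickPolytope(\cw{c})$ and $\perm(W)$, and the converse (iv)$\Rightarrow$(normal cone $=w(\fundamentalChamber)$) needs the simplicity of the permutahedron: a vertex of $\perm(W)$ can survive the facet removal only if none of its $n$ incident facets is removed, whence its normal cone is unchanged. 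These are precisely the two sentences the paper devotes to this step. By contrast, your worry about (i)$\Rightarrow$(ii) is overcautious: once Proposition~\ref{prop:normalCone} identifies $\normalCone(\projectionMap(w))$ with the union of chambers over the fiber, a singleton fiber yields a single chamber, and since $\Roots{\projectionMap(w)}$ is a basis, its elements span the extremal rays of $\rootCone(\projectionMap(w))$ and must coincide with $w(\Delta)$; no separate wall-counting argument is required.
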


\begin{figure}[p]
  \capstart
  \centerline{\includegraphics[scale=.63]{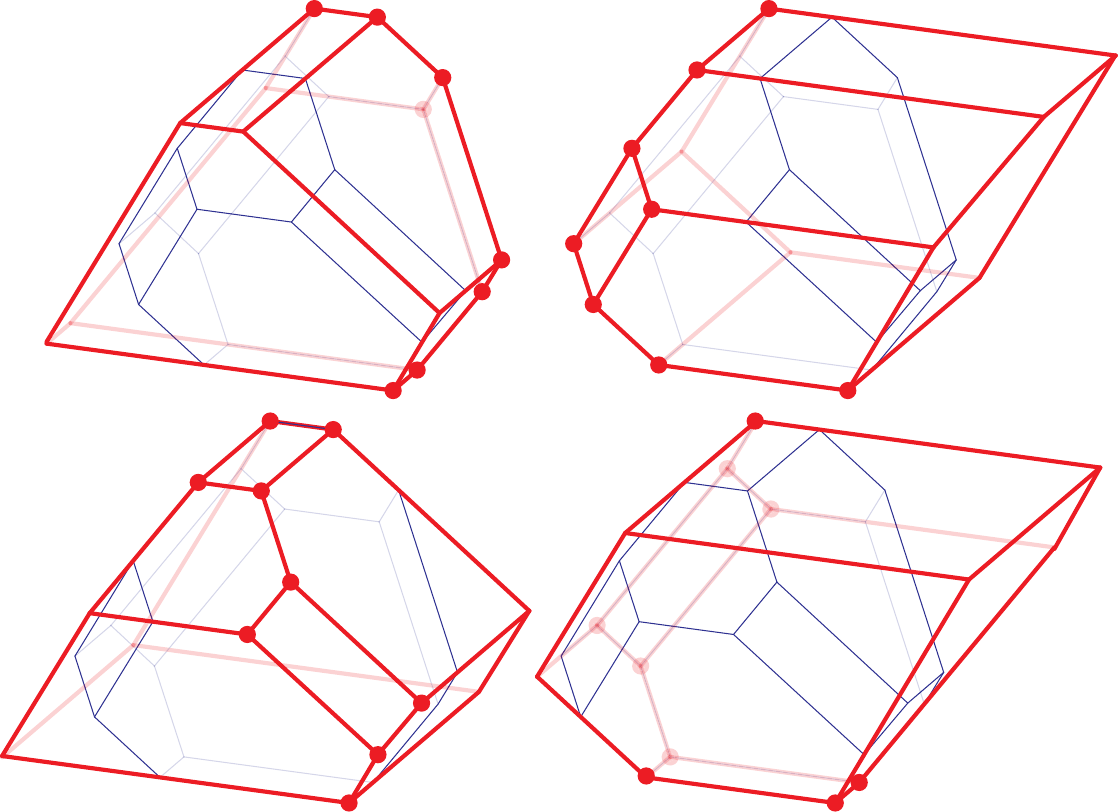}}
  \caption{The $c$-associahedron of type~$A_3$ for the Coxeter element~$c$ being $\tau_1\tau_2\tau_3$ (top left), $\tau_3\tau_2\tau_1$ (top right), $\tau_2\tau_1\tau_3$ (bottom left), and $\tau_3\tau_1\tau_2$ (bottom right).}
  \label{fig:typeAassociahedra}
\end{figure}
\begin{figure}[p]
  \capstart
  \centerline{\includegraphics[scale=.63]{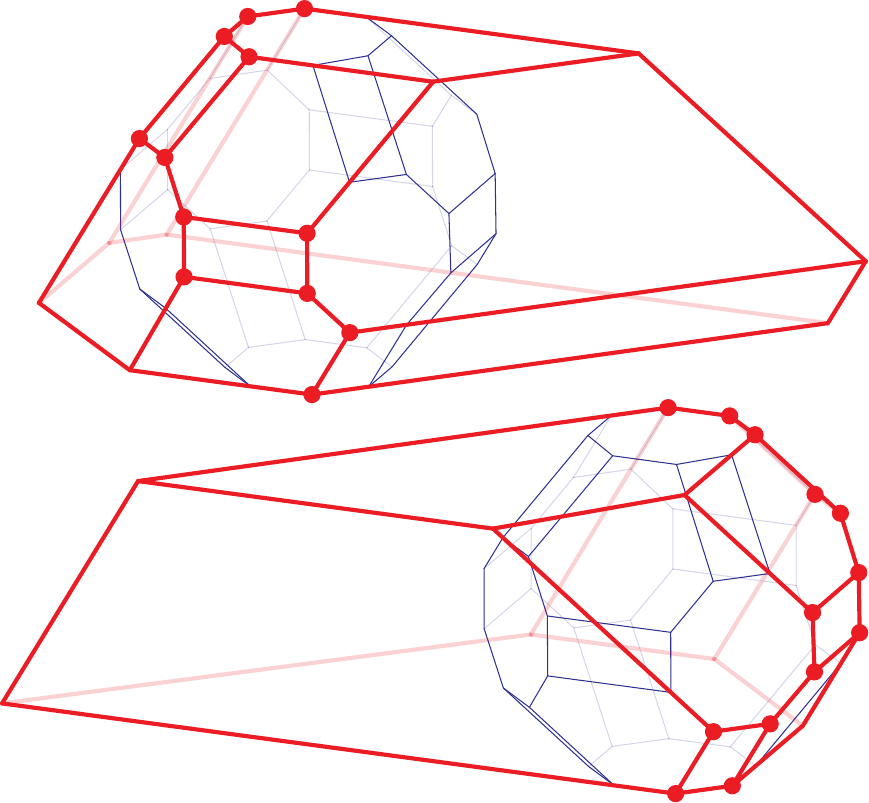}}
  \caption{The $c$-associahedron of type~$B_3$ for the Coxeter element~$c$ being $\tau_2\tau_1\chi$ (top) and $\tau_2\chi\tau_1$ (bottom).}
  \label{fig:typeBassociahedra}
\end{figure}
\begin{figure}
  \capstart
  \centerline{\includegraphics[scale=.63]{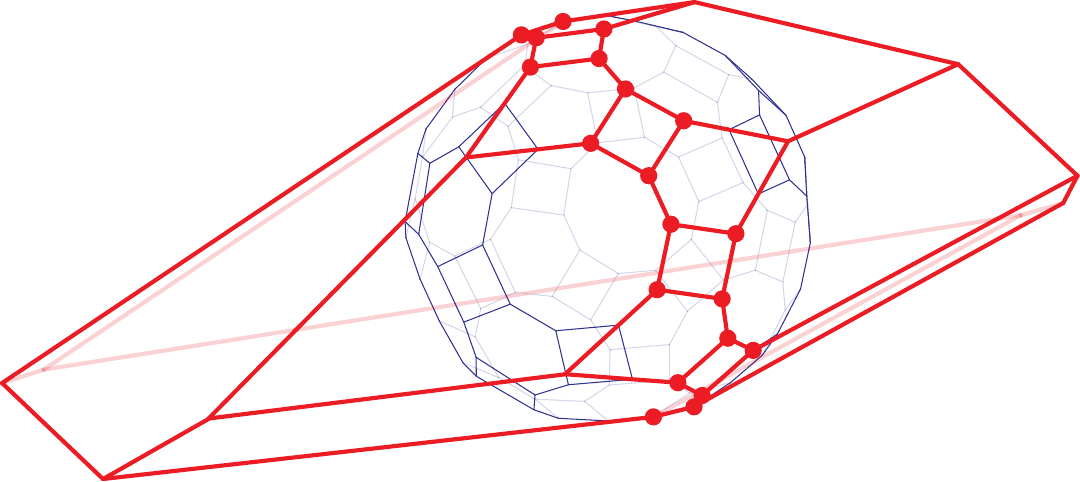}}
  \caption{An $H_3$-associahedron.}
  \label{fig:typeHassociahedra}
\end{figure}

\vspace{-.2cm}

\begin{proof}
Properties~(\ref{prop:singletons:enum:singleton}), (\ref{prop:singletons:enum:roots}), and (\ref{prop:singletons:enum:weights}), as well as their respective variants, are equivalent according to Lemma~\ref{lem:flipProjectiveMap}, Proposition~\ref{prop:normalCone}, and Proposition~\ref{prop:normalVectorsAssociahedra}.

If~$w$ violates~(\ref{prop:singletons:enum:prefix}), then at least one of the weights of~$w(\nabla)$ does not belong to~${-\nabla \cup \set{\omega_\position}{\position \in [N]}}$.
Proposition~\ref{prop:normalVectorsAssociahedra} then affirms that $w$~also violates~(\ref{prop:singletons:enum:normalCone}).
Thus, (\ref{prop:singletons:enum:normalCone}) implies~(\ref{prop:singletons:enum:prefix}).

We proved in Proposition~\ref{prop:structurePrefix} that any prefix of~$\cwo{c}$ satisfies~(\ref{prop:singletons:enum:vertices}).
It is now simple to see that the isomorphism between~$\clustercomplex$ and~$\subwordComplex(\cw{c'})$ given in Lemma~\ref{lem:isomorphismlemma} commutes with the root configuration~$\Roots{\cdot}$, the brick vector~$\brickVector(\cdot)$ and the map~$\projectionMap$.
In particular, the functions~$\Roots{\projectionMap(\cdot)}$ and~$\brickVector(\projectionMap(\cdot))$ are independent of the choice of the reduced expression~$\sq{c}$ of~$c$.
Consequently, Proposition~\ref{prop:structurePrefix} ensures that (\ref{prop:singletons:enum:prefix}) implies~(\ref{prop:singletons:enum:vertices}).

Similarly, we have seen in the proof of Proposition~\ref{prop:structurePrefix} that (\ref{prop:singletons:enum:reducedExpression}) holds when $w$ is a prefix of~$\cwo{c}$.
Up to commutations, we thus have that (\ref{prop:singletons:enum:prefix}) implies (\ref{prop:singletons:enum:reducedExpression}).

Since the permutahedron~$\Perm$ is a simple polytope, the only vertices which can survive when removing a set~$\cF$ of facets of~$\Perm$ are those which do not belong to any facet of~$\cF$.
Consequently, Theorem~\ref{theo:removingFacets} ensures that (\ref{prop:singletons:enum:vertices}) implies~(\ref{prop:singletons:enum:normalCone}).

Finally, if the complement of~$\projectionMap(w)$ in~$\cw{c}$ is precisely~$\cwo{c}$, then the weights of~$\projectionMap(w)$ all lie in~${-\nabla \cup \set{\omega_\position}{\position \in [N]}}$.
Thus, all facets incident to~$\projectionMap(w)$ when deleting the facets of the permutahedron, and (\ref{prop:singletons:enum:reducedExpression}) implies (\ref{prop:singletons:enum:normalCone}).
\end{proof}

\begin{remark}
\label{rem:changeBasePoint}
For completeness, we want to point out that all definitions and results of Sections~\ref{sec:rootConfiguration} and~\ref{sec:brickPolytope} easily generalize if we replace~$\Root{I}{k}$ by~$\lambda(q_k)\Root{I}{k}$ and~$\Weight{I}{k}$ by~$\lambda(q_k)\Weight{I}{k}$ for an arbitrary function~$\lambda: S \to \R_{>0}$.
The resulting polytope~$\brickPolytope_\lambda(\Q)$ has the same combinatorial properties for any~$\lambda$, but its geometry is different.
We decided to present our construction and results with~$\lambda \equiv 1$ to simplify the exposition.
In the special case of the subword complex~$\subwordComplex{\cw{c}}$, the resulting brick polytope~$\brickPolytope_\lambda(\cw{c})$ is a translate of the $c$-associahedron~$\Asso[x]$ in~\cite{HohlwegLangeThomas}, where $x = \sum_{s \in S} \lambda(s)\omega_s$.
This polytope is obtained from the permutahedron~$\Perm[x][W]$ removing all facets which do not intersect~$\set{\rho_\position(x)}{0 \le \position \le N}$.
\end{remark}

\begin{example}
\label{exm:associahedra}
Figures~\ref{fig:typeAassociahedra}, \ref{fig:typeBassociahedra}, and \ref{fig:typeHassociahedra} represent type~$A_3$, $B_3$, and~$H_3$ associahedra obtained by removing facets of the type~$A_3$, $B_3$, and~$H_3$ permutahedra of Figures~\ref{fig:typeAarrangement}, \ref{fig:typeBarrangement}, and~\ref{fig:typeHarrangement}.
They are all obtained from the corresponding permutahedron removing facets not containing $c$-singletons, which are marked with dots.
\end{example}

\begin{remark}
In regard of the results of this section, it is tempting to try to generalize the geometric description of the generalized associahedra to all brick polytopes.
We observe however that there exist root independent subword complexes $\subwordComplex(\Q)$ for which:
\begin{itemize}
\item The weight configuration~$\Weights{I} = \set{\Weight{I}{i}}{i \in I}$ of a facet~$I$ does not necessarily generate the normal cone of~$\brickVector(I)$ in~$\brickPolytope(\Q)$.

\item No element~$w \in W$ is a singleton for the map~$\projectionMap$.
In other words, no chamber of the Coxeter fan is the normal cone of a vertex of the brick polytope~$\brickPolytope(\Q)$.
In particular, the chambers~$\fundamentalChamber$ and~$w_\circ(\fundamentalChamber)$ are not necessarily normal cones of the brick polytope.

\item No translate of the brick polytope~$\brickPolytope(\Q)$ is obtained from a $W$-permuta\-hedron by deleting facets.
\end{itemize}

The word~$\Qexm = \sq{\tau}_2\sq{\tau}_3\sq{\tau}_1\sq{\tau}_3\sq{\tau}_2\sq{\tau}_1\sq{\tau}_2\sq{\tau}_3\sq{\tau}_1$ from Example~\ref{exm:recurrent1} provides an interesting example.
First, the normal cone of~$\brickVector({\{2,3,9\}})$ is not generated by the weight configuration~$\Weights{\{2,3,9\}}$ since~$\Weight{\{2,3,9\}}{2} = e_4  \not\perp e_3-e_4 = \Root{\{2,3,9\}}{9}$.
Second, neither~$\fundamentalChamber$ nor~$w_\circ(\fundamentalChamber)$ are normal cones of~$\brickPolytope(\Qexm)$, see Figures~\ref{fig:lattice} and~\ref{fig:normalFan}.
Finally, the brick polytope~$\brickPolytope(\Qexm)$ cannot be obtained from a permutahedron by removing facets.
Indeed, one can see in \fref{fig:brickPolytope} that the distance between the two pentagonal facets of~$\brickPolytope(\Qexm)$ (orthogonal to the vector~$(1,-3,1,1)$) is half the distance between the topmost and bottommost rectangular facets of~$\brickPolytope(\Qexm)$ (orthogonal to~$(-3,1,1,1)$), while the corresponding facets of any $A_3$-permutahedron are at equal distance.
The duplicated word~$\sq{\tau}_1\sq{\tau}_1\sq{\tau}_2\sq{\tau}_2\sq{\tau}_3\sq{\tau}_3\sq{\tau}_1\sq{\tau}_2\sq{\tau}_1$ in type~$A_3$ even gives an example where the map~$\projectionMap$ has no singleton since all fibers have~$2$~or~$6$ elements.
\end{remark}


\subsection{Five Coxeter-Catalan families}
\label{sec:families}

In this section, we temporally interrupt our geometric study of the cluster complex to describe and relate the combinatorics of several Coxeter-Catalan families.
We will need these relations in the following section to connect brick polytopes to Cambrian lattices and fans.
We revisit here the bijective connections between
\begin{enumerate}[(i)]
\item the set~$\cluster c W$ of $c$-clusters,
\item the set~$\sortable c W$ of $c$-sortable elements,
\item the set~$\ncp c W$ of $c$-noncrossing partitions,
\item the set~$\ncs c W$ of $c$-noncrossing subspaces,
\end{enumerate}
and describe their connection to the set of facets of~$\clustercomplex$ which we denote for convenience in this section by~$\facet{c}{W}$.
The bijective connections are summarized in \fref{fig:bijections}.

\begin{figure}[ht]
  \capstart
  \begin{tikzpicture}[description/.style={fill=white, inner sep=3pt}]
    \matrix (m) [matrix of math nodes, row sep=5em, column sep=0em]
      { \cluster c W & \phantom{\facet{c}{W}} & \sortable c W & & \ncp c W & \phantom{\facet{c}{W}} & \ncs c W \\
      & & & \facet{c}{W} \\ };
    \path[->, font=\normalsize]
      (m-1-3) edge node [above] {$\sortableCluster{c}$} (m-1-1)
      (m-1-1) edge [bend left=20, transform canvas={yshift=.1cm}] node [above] {$\clusterNcp{c}$} (m-1-5)
      (m-1-3) edge node [above] {$\sortableNcp{c}$} (m-1-5)
      (m-2-4) edge [-right to] node [above] {$\facetCluster{c}$} (m-1-1)
      (m-1-1) edge [-right to, transform canvas={yshift=-.05cm,xshift=-.05cm}] (m-2-4)
      (m-2-4) edge node [right] {$\facetSortable{c}$} (m-1-3)
      (m-2-4) edge node [left] {$\facetNcp{c}$} (m-1-5)
      (m-1-5) edge [-right to, transform canvas={yshift=-.05cm,xshift=-.05cm}] (m-1-7)
      (m-1-7) edge [-right to] node [above] {$\ncpNcs$} (m-1-5);
  \end{tikzpicture}
  \caption{Coxeter-Catalan families and their connections.}
  \label{fig:bijections}
\end{figure}

All connections between these Coxeter-Catalan families are based on their analogous inductive structure, described in Section~\ref{subsec:CLS}.
Although the proofs heavily depend on this inductive structure, most maps can also be described directly.

\subsubsection{Facets of the subword complex and clusters}
\label{sec:facetCluster}

Since this connection is the most straightforward, we start with the identification between~$c$-clusters and facets of~$\clustercomplex$ as given in~\cite{CeballosLabbeStump}.
To this end, let~$\Phi_{\ge -1} \eqdef -\Delta \cup \Phi^+$ be the set of \defn{almost positive roots}.
Denote by $\sw{w}{c} \eqdef \w_1 \cdots \w_{\length(w)}$ the \defn{$\sq{c}$-sorting word} of some element $w \in W$, \ie the lexicographically first reduced subword of~$\sq{c}^\infty$ for~$w$, and let~${\sizeQ \eqdef n + \length(w)}$ be the length of the word $\sq{c} \sw{w}{c}$.
Associate to each position~$i \in [\sizeQ]$ in this word an almost positive root~$\facetCluster{c}(w, i)$ by
$$\facetCluster{c}(w, i) \eqdef \begin{cases} -\alpha_{c_i} & \text{if } i \le n, \\ w_{1} \cdots w_{i-n-1}(\alpha_{w_{i-n}}) & \text{otherwise}. \end{cases}$$
In the case where~$w$ is the longest element~$w_\circ$, it is shown in~\cite[Theorem~2.2]{CeballosLabbeStump} that the map~$\facetCluster{c}(\cdot) \eqdef \facetCluster{c}(w_\circ, \cdot)$ induces a bijection~$\facetCluster{c} : \facet{c}{W} \longrightarrow \cluster{c}{W}$ between facets of~$\clustercomplex$ and $c$-clusters,
$$\facetCluster{c} : \begin{array}{ccc} \facet{c}{W} & \longrightarrow & \cluster{c}{W} \\ I & \longmapsto & \set{\facetCluster{c}(i)}{i \in I} \end{array}.$$
In other words, write once and for all the set of almost positive roots~$\Phi_{\ge -1}$ as the list $\facetCluster{c}(1), \dots, \facetCluster{c}(\sizeQ)$.
Then the cluster~$\facetCluster{c}(I)$ associated to a facet~$I$ in~$\facet{c}{W}$ is given by its sublist at positions in $I$, while the facet~$\facetCluster{c}^{-1}(X)$ associated to a $c$-cluster $X$ is the set of positions of $X$ within this list.
Note in particular that the elements in the $c$-cluster~$X$ are naturally ordered by their position in this list.
We here use this description of $c$-clusters as the definition of the set $\cluster c W$ and refer the reader to~\cite{Reading-coxeterSortable} and~\cite{CeballosLabbeStump} for further background.

\begin{example}
\label{exm:associahedron3}
Consider the Coxeter element~$\cexm \eqdef \tau_1\tau_2\tau_3$ of~$A_3$ already discussed in Example~\ref{exm:associahedron1} .
The $\sq{\cexm}$-sorting word for~$w_\circ$ is~$\cw{\cexm} = \sq{\tau}_1\sq{\tau}_2\sq{\tau}_3\sq{\tau}_1\sq{\tau}_2\sq{\tau}_1$.
Therefore, the list~$\facetCluster{\cexm}(1), \dots, \facetCluster{\cexm}(\sizeQ)$ is given by
$$e_1-e_2, \; e_2-e_3, \; e_3-e_4, \; e_2-e_1, \; e_3-e_1, \; e_4-e_1, \; e_3-e_2, \; e_4-e_2, \; e_4-e_3.$$
Consider now the facet~$\Fexm \eqdef \{4, 6, 9\} \in \facet{\cexm}{A_3}$.
Its associated cluster is given by~$\facetCluster{\cexm}(\Fexm) = \{e_2-e_1, e_4-e_1, e_4-e_3\}$, while its root configuration is given by~$\Roots{\Fexm} = \{e_3-e_2, e_1-e_3, e_3-e_4\}$.
The following proposition gives a direct relation between these two sets of roots.
\end{example}

The next proposition connects the root configuration~$\Roots{I}$ to the $c$-cluster~$\facetCluster{c}(I)$ associated to a facet~$I \in \facet{c}{W}$.
This connection enables us to compute~$\Roots{I}$ from~$\facetCluster{c}(I)$ and \viceversa.
We will need it later in the description of the bijection ${\clusterNcp{c}:\cluster{c}{W} \longrightarrow \ncp{c}{W}}$.

\begin{proposition}
\label{pr:cluster product}
Let~$I \eqdef \{i_1 < \dots < i_n\}$ be a facet in~$\facet{c}{W}$.
Consider its root configuration~$\Roots{I} = \{\alpha_1, \dots, \alpha_n\}$, where $\alpha_p \eqdef \Root{I}{i_p}$, and the associated $c$-cluster~$\facetCluster{c}(I) = \{\beta_1,\dots,\beta_n\}$, where~$\beta_p \eqdef \facetCluster{c}(i_p)$.
Moreover, let $j$ be such that~$i_j \le n$ and~$i_{j+1} > n$, or in other words, ${\{\beta_1,\ldots,\beta_j\} \subseteq -\Delta}$, ${\{\beta_{j+1},\ldots,\beta_n\} \subseteq \Phi^+_{\langle \beta_1,\ldots,\beta_j\rangle}}$.
For any~$p$ with~$j < p \leq n$, we have
\begin{align}
\alpha_p = - s_{\beta_n} \cdots s_{\beta_{p+1}}(\beta_p) \quad \text{and} \quad \beta_p = - s_{\alpha_n} \cdots s_{\alpha_{p+1}}(\alpha_p). \label{eq:alphas and betas}
\end{align}
Moreover, we have
\begin{align}
  s_{\alpha_{j+1}} \cdots s_{\alpha_{n}} = s_{\beta_n} \cdots s_{\beta_{j+1}} &= c', \label{eq:reordering}
\end{align}
where $c'$ is obtained from $c$ by removing all letters $s_{\beta_1},\ldots,s_{\beta_j}$.
Finally, the reflections $s_{\alpha_{j+1}},\ldots,s_{\alpha_n}$ can be reordered by transpositions of consecutive commuting reflections such that all positive roots in $\big\{\alpha_{j+1},\ldots,\alpha_n\big\}$ appear first in this product, followed by the negatives.
\end{proposition}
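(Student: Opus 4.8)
The plan is to prove the whole proposition by a single bookkeeping computation, \emph{without} induction, assembling three elementary ingredients. Throughout, write $\sigma^{[x,y]} \eqdef q_x q_{x+1} \cdots q_y$ for the product of \emph{all} letters in a range, so that $\sigma_I^{[x,y]}$ is the same product with the positions lying in $I$ deleted, and set $\position_n \eqdef i_n - n$. I will also use repeatedly the conjugation rule $w s_v w^{-1} = s_{w(v)}$ and the fact that, for a facet $I$, the complement of $I$ in $\cw{c}$ is the reduced expression $\cwo{c}$ of $w_\circ$, so that $\sigma_I^{[1,m]} = w_\circ$.

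\textbf{Ingredient 1 (identifying $c'$).} For $p \le j$ we have $i_p \le n$, hence $\beta_p = \facetCluster{c}(i_p) = -\alpha_{q_{i_p}}$ and $s_{\beta_p} = q_{i_p}$. Since $\sq{c} = q_1 \cdots q_n$ is reduced, each simple reflection occurs exactly once, so deleting $s_{\beta_1}, \dots, s_{\beta_j}$ from $c$ deletes precisely the positions $i_1, \dots, i_j$; therefore $c' = \sigma_I^{[1,n]}$. \textbf{Ingredient 2 (telescoping).} From $\alpha_p = \Root{I}{i_p} = \sigma_I^{[1,i_p-1]}(\alpha_{q_{i_p}})$ I get the conjugation formula $s_{\alpha_p} = \sigma_I^{[1,i_p-1]} \, q_{i_p} \, (\sigma_I^{[1,i_p-1]})^{-1}$. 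Because consecutive elements of $I$ enclose no $I$-position, $(\sigma_I^{[1,i_p-1]})^{-1} \sigma_I^{[1,i_{p+1}-1]} = \sigma^{[i_p+1,i_{p+1}-1]}$, and the conjugations telescope to $s_{\alpha_a} \cdots s_{\alpha_b} = \sigma_I^{[1,i_a-1]} \, \sigma^{[i_a,i_b]} \, (\sigma_I^{[1,i_b-1]})^{-1}$. \textbf{Ingredient 3 (key lemma).} Since $i_n$ is the largest element of $I$, all positions $> i_n$ lie in the complement of $I$, so $\sigma_I^{[i_n+1,m]} = \sigma^{[i_n+1,m]} = \rho_{\position_n}^{-1} w_\circ$; combining this with $\sigma_I^{[1,m]} = w_\circ$ yields $\sigma_I^{[1,i_n-1]} = \sigma_I^{[1,i_n]} = \rho_{\position_n} = \sigma^{[n+1,i_n]}$.

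With these in hand, both displayed identities with $\alpha$ and $\beta$ on the correct side drop out. For the first equality of~\eqref{eq:reordering} I take $a=j+1$, $b=n$ in the telescoping formula and split the range: over $[1,n]$ the product $\sigma_I^{[1,i_n]}$-with-only-$\{i_1,\dots,i_j\}$-deleted equals $c' \cdot \sigma^{[n+1,i_n]}$, and the key lemma lets me cancel the tail $\sigma^{[n+1,i_n]} = \sigma_I^{[1,i_n-1]}$, leaving $s_{\alpha_{j+1}} \cdots s_{\alpha_n} = c'$. For the second half of~\eqref{eq:alphas and betas} I compute $-s_{\alpha_n} \cdots s_{\alpha_{p+1}}(\alpha_p)$ via the (inverted) telescoping formula; after cancelling the gap factors it becomes $-\sigma_I^{[1,i_n-1]} (\sigma^{[i_p+1,i_n]})^{-1}(\alpha_{q_{i_p}})$, and substituting $\sigma_I^{[1,i_n-1]} = \sigma^{[n+1,i_n]}$ collapses this to $-\sigma^{[n+1,i_p]}(\alpha_{q_{i_p}}) = \sigma^{[n+1,i_p-1]}(\alpha_{q_{i_p}}) = \beta_p$, using $q_{i_p}(\alpha_{q_{i_p}}) = -\alpha_{q_{i_p}}$ and the definition of $\facetCluster{c}(i_p)$.

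The remaining two identities are then \emph{purely formal} and require no further geometry. From the definition $\beta_p \eqdef -s_{\alpha_n} \cdots s_{\alpha_{p+1}}(\alpha_p)$ one has $s_{\beta_p} = (s_{\alpha_n} \cdots s_{\alpha_{p+1}}) s_{\alpha_p} (s_{\alpha_{p+1}} \cdots s_{\alpha_n})$, and a one-line induction on the product gives $s_{\beta_n} \cdots s_{\beta_p} = s_{\alpha_p} \cdots s_{\alpha_n}$ for every $p$; taking $p = j+1$ proves $s_{\beta_n} \cdots s_{\beta_{j+1}} = c'$, while for the reverse direction $-s_{\beta_n} \cdots s_{\beta_{p+1}}(\beta_p) = -s_{\alpha_{p+1}} \cdots s_{\alpha_n}(\beta_p) = \alpha_p$. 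Note this formal step uses neither reducedness nor any property of $c'$, so it closes the argument once the first equalities are established.

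The main obstacle is recognizing and proving the key lemma $\sigma_I^{[1,i_n-1]} = \rho_{\position_n}$, which is exactly what makes the two tails cancel in both computations; once it is available, the rest is a controlled manipulation of the $\sigma$ and $\sigma_I$ products. The secondary difficulty is purely notational: keeping the half-open ranges, the deleted index sets, and the order of the reflections consistent through the telescoping, since an off-by-one in a range or a reversed product would spoil the cancellations. I expect no conceptual gap beyond these, so the proof should be short and self-contained, relying only on Definition~\ref{def:roots}, the definition of $\facetCluster{c}$, and the facet/complement description of $\subwordComplex(\cw{c})$.
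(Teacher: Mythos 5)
Your proof is correct, and every step checks out: the conjugation formula $s_{\alpha_p} = \sigma_I^{[1,i_p-1]}\, q_{i_p}\, (\sigma_I^{[1,i_p-1]})^{-1}$, the telescoping $s_{\alpha_a}\cdots s_{\alpha_b} = \sigma_I^{[1,i_a-1]}\,\sigma^{[i_a,i_b]}\,(\sigma_I^{[1,i_b-1]})^{-1}$, the identity $\sigma_I^{[1,i_n]} = \rho_{i_n-n} = \sigma^{[n+1,i_n]}$ (from $\sigma_I^{[1,\sizeQ]} = w_\circ$ and the fact that no position beyond $i_n$ lies in $I$), and the purely formal derivation of the remaining identities from $\beta_p = -s_{\alpha_n}\cdots s_{\alpha_{p+1}}(\alpha_p)$ all hold. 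The underlying computation is essentially the same as the paper's, which also proves only the second half of~\eqref{eq:alphas and betas} directly and gets the first half and the first equality of~\eqref{eq:reordering} formally; the difference is one of anchoring and of self-containedness. The paper writes both $\alpha_p$ and $\beta_p$ as $w_\circ$ times a right-to-left suffix product applied to $\alpha_{q_{i_p}}$ and observes that conjugating by the $s_{\beta_k}$ successively ``inserts'' the omitted letters $q_{i_k}$, whereas you anchor everything at the left end of the word via your key lemma. More substantively, for the equality $s_{\alpha_{j+1}}\cdots s_{\alpha_n} = c'$ the paper simply cites \cite[Proposition~2.8]{CeballosLabbeStump} with $k=1$, while your range-splitting argument ($\sigma_I^{[1,i_{j+1}-1]}\sigma^{[i_{j+1},i_n]} = \sigma_I^{[1,n]}\cdot\sigma^{[n+1,i_n]} = c'\cdot\sigma_I^{[1,i_n-1]}$, so the tail cancels) proves it from scratch; this makes your argument fully self-contained at the cost of slightly more bookkeeping, and is a genuine, if modest, improvement in transparency over the published proof.
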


\begin{proof}
Starting with $\alpha_n = - \beta_n$, it is straightforward to check that both statements in \eqref{eq:alphas and betas} are equivalent.
We thus only show the second equality.
We write $\cw{c} \eqdef \q_1 \cdots \q_\sizeQ$, and observe that by definition,
\begin{align}
\beta_p &=  w_\circ q_\sizeQ q_{\sizeQ-1} \hspace{0.95cm} \cdots \hspace{1.28cm} q_{i_p}(\alpha_{q_{i_p}}), \notag \\
\alpha_p &= w_\circ q_\sizeQ q_{\sizeQ-1} \cdots \hat q_{i_n} \cdots \hat q_{i_{p+1}} \cdots q_{i_p+1}(\alpha_{q_{i_p}}), \tag{$*$} \label{eq:alphasinbetas}
\end{align}
where the hats on the~$\hat q_{i_n},\dots,\hat q_{i_{p+1}}$ mean that they are omitted in the expression for~$\alpha_p$.
Thus applying successively $s_{\beta_n},\dots,s_{\beta_{p+1}}$ to $\alpha_p$ inserts successively $q_{i_n},\dots,q_{i_{p+1}}$ into the righthand side of~\eqref{eq:alphasinbetas}, statement~\eqref{eq:alphas and betas} follows.
To prove~\eqref{eq:reordering}, observe that the first equality follows from~\eqref{eq:alphas and betas} with $p = j+1$.
The second equality is directly deduced from~\cite[Proposition~2.8]{CeballosLabbeStump} with $k=1$.
To see the last part of this proposition, observe that one can directly check that this statement holds if $W$ is a dihedral group.
In general, if two reflections $s_{\alpha_k}$ and $s_{\alpha_{k'}}$ in this product for which one root is positive while the other is negative, do not commute, the positions $k$ and $k'$ form a facet of the subword complex reduced to the parabolic subgroup of $W$ generated by $s_{\alpha_k}$ and $s_{\alpha_{k'}}$, and thus $s_{\alpha_k}$ must be positive, while $s_{\alpha_{k'}}$ must be negative.
Compare Proposition~\ref{prop:restriction} for the construction of the subword complex reduced to a parabolic subgroup.
\end{proof}

\subsubsection{Facets of the subword complex and sortable elements}
\label{subsubsec:facetSortable}

\enlargethispage{.4cm}
Recall the definition of $c$-sortable elements from~\cite{Reading-sortableElements}.
Let~$w$ be an element of~$W$.
Its $\sq{c}$-sorting word can be written as~$\sw{w}{c} = \sq{c}_{K_1}\sq{c}_{K_2}\cdots\sq{c}_{K_p}$, where~$\sq{c}_{K}$ denotes the subword of~$\sq{c}$ only taking the simple reflections in~$K \subseteq S$ into account.
The element~$w$ is called \defn{$c$-sortable} if its $\sq{c}$-sorting word~$\sw{w}{c} = \sq{c}_{K_1}\sq{c}_{K_2}\cdots\sq{c}_{K_p}$ is nested, \ie if ${K_1\supseteq K_2\supseteq\cdots\supseteq K_p}$.
Observe that the property of being $c$-sortable does not depend on the particular reduced expression~$\sq{c}$ but only on~$c$.
We denote by~$\sortable{c}{W}$ the set of $c$-sortable elements in~$W$.

N.~Reading defines the bijection $\sortableCluster{c} : \sortable{c}{W} \longrightarrow \cluster{c}{W}$ between $c$-sortable elements and $c$-clusters in~\cite[Theorem~8.1]{Reading-sortableElements} by
$$\sortableCluster{c}(w) \eqdef \bigset{\facetCluster{c}(i)}{w_i \text{ is the last occurrence of a letter in $\sq{c}\sw{w}{c}$}}.$$
He also provides the following inductive description of $c$-sortable elements.
Let $w \in W$, and let $s$ be \defn{initial} in $c$, \ie $\length(sc) < \length(c)$.
Then
$$w \in \sortable c W \iff
\begin{cases}
  sw \in \sortable{scs}{W} &\text{if } \length(sw) < \length(w), \\
  w \in \sortable{sc}{W_{\langle s \rangle}} &\text{if } \length(sw) > \length(w).
\end{cases}$$
The first case yields an induction on the length $\length(w)$, while the second case is an induction on the rank of $W$.
This inductive structure is analogous to that of the subword complex~$\clustercomplex$ presented in Section~\ref{subsec:CLS}, and thus yield a bijection
$$\facetSortable{c} \eqdef \sortableCluster{c}^{-1} \circ \facetCluster{c} : \facet{c}{W} \longrightarrow \sortable c W.$$
In this paper, we have already seen two ways of computing $\facetSortable{c}$:
\begin{enumerate}
\item The first uses the inductive description: the $\sq{c}$-sorting word $\w$ of $w = \facetSortable{c}(I)$ is obtained by looking at the first letter $s$ of $\cw{c}$. If $1 \notin I$, this letter is appended to $\w$, and the first letter is rotated. Otherwise, it is not appended to $\w$ and all positions~$k$ for which $\Root{I}{k} \in \Phi \ssm \Phi_{\langle \alpha_s \rangle}$ are deleted.
\item The second uses the iterative procedure described in Remark~\ref{rem:computationPreimage} and Proposition~\ref{prop:fibers meet and join}.
\end{enumerate}
Observe that the first follows the same lines as the description in~\cite[Remark~6.9]{Reading-sortableElements}, see also Section~\ref{sec:sortableNcp} below.

\begin{example}
\label{exm:associahedron4}
Consider the facet~$\Fexm \eqdef \{4, 6, 9\} \in \facet{\cexm}{A_3}$ already mentioned in Example~\ref{exm:associahedron3}.
Its root configuration is~${\Roots{\Fexm} = \{e_3-e_2, e_1-e_3, e_3-e_4\}}$ so that~${\RootsPlus{\Fexm} = \{e_3-e_2\}}$ and~${\RootsMinus{\Fexm} = \{e_3-e_1, e_4-e_3\}}$.
Consequently, we have ${\wedge(\Fexm) = \{e_3-e_1, e_4-e_3, e_4-e_1, e_2-e_1\}}$, which is the inversion set of the $\cexm$-sortable element~$\facetSortable{c}(\Fexm) = 2431 = \tau_1\tau_2\tau_3|\tau_2$.
\fref{fig:CambrianLattice} represents all facets of~$\facet{\cexm}{A_3}$ and their corresponding $\cexm$-sortable elements by the map~$\facetSortable{\cexm}$.
\end{example}

In the following, we use the inductive structure of $\subwordComplex(\cwo{c})$ to see that the root configuration was already present in~\cite{ReadingSpeyerInfinite}.
Therein, N.~Reading and D.~Speyer defined inductively a set~$\Skips{c}{w}$ associated to a $\sq{c}$-sortable element $w$,
$$\Skips{c}{w} \eqdef
\begin{cases}
  s\Skips{scs}{sw} &\text{if } \length(sw) < \length(w), \\
  \Skips{sc}{w} \cup \{\alpha_s\} &\text{if } \length(sw) > \length(w).
\end{cases}$$
This set is intimately related to the root configuration.
\begin{proposition}
\label{prop:thesetC}
For any facet $I \in \facet{c}{W}$, the root configuration $\Roots{I}$ and the set $\Skips{c}{\facetSortable{c}(I)}$ coincide.
\end{proposition}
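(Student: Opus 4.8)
The plan is to prove the identity $\Roots{I} = \Skips{c}{\facetSortable{c}(I)}$ by induction, exploiting the fact that both sides satisfy the same recursion. Writing $w \eqdef \facetSortable{c}(I)$, I would induct on $\length(w) + \operatorname{rank}(W)$, the base case being $W$ trivial (where the word is empty, the unique facet is $\emptyset$, $w = e$, and both sides are empty). For the inductive step, choose a simple reflection $s$ initial in $c$ together with a reduced expression $\sq{c}$ starting with $s$, so that $s$ is the first letter of $\cw{c}$. The construction of $\facetSortable{c} = \sortableCluster{c}^{-1} \circ \facetCluster{c}$ through the two analogous inductive structures recalled before the statement matches the two branches governing both recursions: $s \in I$ corresponds to situation~(ii), $c' = sc$, and $\length(sw) > \length(w)$; while $s \notin I$ corresponds to situation~(i), $c' = scs$, and $\length(sw) < \length(w)$. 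In either branch the quantity $\length(w) + \operatorname{rank}(W)$ strictly decreases, so the induction is well founded.

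In the parabolic case $s \in I$, I would first note that $\Root{I}{1} = \alpha_s$, since $\sigma_I^{[1,0]}$ is the identity. By~\cite[Lemma~5.2]{CeballosLabbeStump}, every other position $i \in I$ satisfies $\Root{I}{i} \in \Phi_{\langle \alpha_s \rangle}$; as $\alpha_s \notin \Phi_{\langle \alpha_s \rangle}$, the root $\alpha_s$ occurs exactly once in $\Roots{I}$, namely at position~$1$. Passing to the word $\cw{c'}$ with $c' = sc$ for the parabolic subgroup $W_{\langle s \rangle}$ via Proposition~\ref{prop:fulldim} deletes position~$1$ and preserves the root function at the surviving positions, so the resulting facet $I'$ satisfies $\Roots{I'} = \Roots{I} \ssm \{\alpha_s\}$ and $\facetSortable{sc}(I') = w$. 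The induction hypothesis gives $\Roots{I'} = \Skips{sc}{w}$, whence $\Roots{I} = \{\alpha_s\} \cup \Skips{sc}{w} = \Skips{c}{w}$, exactly the second branch of the recursion defining $\Skips{c}{\cdot}$.

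In the rotation case $s \notin I$, let $I'$ be the facet of $\subwordComplex(\cw{c'})$ with $c' = scs$ obtained from $I$ by the left shift $i \mapsto i-1$, using~\cite[Proposition~4.3]{CeballosLabbeStump} (the rotated word $(\cw{c})_\rotated$ agrees with $\cw{c'}$ up to commutations of consecutive commuting letters, which affect neither the subword complex nor the root function). The key computation is that, since $q_1 = s \notin I$, for every $k \in I$ one has $\sigma_I^{[1,k-1]} = s\,\sigma_I^{[2,k-1]}$, so that $\Root{I'}{k-1} = \sigma_I^{[2,k-1]}(\alpha_{q_k}) = s(\Root{I}{k})$; the conjugated letter $\psi(s)$ appended at the end lies outside $I'$ and is irrelevant. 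Hence $\Roots{I'} = s(\Roots{I})$, and since $s$ is an involution, $\Roots{I} = s(\Roots{I'})$. Combined with $\facetSortable{scs}(I') = sw$ and the induction hypothesis $\Roots{I'} = \Skips{scs}{sw}$, this yields $\Roots{I} = s\,\Skips{scs}{sw} = \Skips{c}{w}$, matching the first branch of the recursion.

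The step I expect to be the main obstacle is the bookkeeping in the rotation case: one must check carefully that the left shift carries $I$ to a genuine facet $I'$ of the complex for $\cw{c'}$, that the appended conjugated letter never belongs to $I'$, and that the root function transforms exactly by the single reflection $s$. This must be combined with a verification that the facet dichotomy $s \in I$ versus $s \notin I$ really corresponds to the two length conditions on $sw$ under $\facetSortable{c}$, which is precisely where the compatibility of $\facetSortable{c} = \sortableCluster{c}^{-1} \circ \facetCluster{c}$ with both inductive structures is invoked.
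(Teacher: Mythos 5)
Your proof is correct and follows essentially the same route as the paper, which simply declares the proposition ``a direct consequence of the inductive description of $c$-sortable elements and of the facets of $\clustercomplex$'' --- precisely the parallel two-branch induction (parabolic descent for $s \in I$, rotation for $s \notin I$) that you carry out in detail. Your computations $\Roots{I} = \{\alpha_s\} \cup \Roots{I'}$ and $\Roots{I'} = s(\Roots{I})$ in the two cases are exactly the verifications the paper leaves implicit, and they match the two branches of the recursion defining $\Skips{c}{w}$.
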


\begin{proof}
This is a direct consequence of the inductive description of $c$-sortable elements and of the facets of $\clustercomplex$ presented above.
\end{proof}

Proposition~\ref{prop:thesetC} yields the following description of~$\Skips{c}{w}$.
Let~${w \in W}$ be $c$-sortable with $\sq{c}$-sorting word $\w \eqdef \w_1 \cdots \w_{\length(w)}$, let $s \in S$, and let~$k \in [\length(w)]$.
We say that~$w$ \defn{skips} $s$ in position $k+1$ if the leftmost instance of $s$ in $\sq{c}^\infty$ not used in $\w$ occurs after position~$k$.
A skip $s$ at position $k+1$ is moreover called \defn{forced} if $w_1 \cdots w_k(\alpha_s) \in \Phi^-$, and \defn{unforced} otherwise.

\begin{corollary}[{\cite[Proposition~5.1]{ReadingSpeyerInfinite}}]
\label{prop:skipunskip}
We have
\begin{align*}
  \Skips{c}{w} &= \bigset{w_1 \cdots w_k(\alpha_s)}{ w \text{ skips } s \text{ at position } k+1}
\end{align*}
and the forced and unforced skips are given by the intersection of $\Skips{c}{w}$ with $\Phi^-$ and with $\Phi^+$, respectively.
\end{corollary}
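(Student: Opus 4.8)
The plan is to reduce the statement to its single main equality, namely
$$\Skips{c}{w} = \bigset{w_1 \cdots w_k(\alpha_s)}{w \text{ skips } s \text{ at position } k+1}.$$
Once this is established, the assertion about forced and unforced skips is immediate: by definition a skip of $s$ at position $k+1$ is forced exactly when $w_1 \cdots w_k(\alpha_s) \in \Phi^-$ and unforced exactly when $w_1 \cdots w_k(\alpha_s) \in \Phi^+$, so the two kinds of skips correspond precisely to $\Skips{c}{w} \cap \Phi^-$ and $\Skips{c}{w} \cap \Phi^+$. To see that the map sending a skip to its root is a genuine bijection, I would first record that each simple reflection $s \in S$ has a unique leftmost instance in $\sq{c}^\infty$ not used by $\sw{w}{c}$, so that there are exactly $n$ skips; combined with $|\Skips{c}{w}| = |\Roots{\facetSortable{c}^{-1}(w)}| = n$ from Proposition~\ref{prop:thesetC}, the correspondence is a bijection onto $\Skips{c}{w}$.

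I would prove the main equality by the same double induction used to define $\Skips{c}{w}$: induction on $\length(w)$ together with the rank of $W$, after choosing an initial letter $s$ of $c$. The base cases (the identity element, and groups of rank one) are checked by hand. For the inductive step I split according to the two defining cases. When $\length(sw) < \length(w)$, the factorisation $\sq{c}^\infty = \squ{s} \cdot (scs)^\infty$ yields $\sw{w}{c} = \squ{s}\,\sw{sw}{scs}$, so $w_1 = s$ and $w_{j+1}$ is the $j$\ordinal{} letter of the $\sq{scs}$-sorting word of $sw$; the leading letter is used rather than skipped, and a skip of $w$ at position $k+1$ corresponds bijectively to a skip of $sw$ at position $k$, with roots related by $w_1 \cdots w_k(\alpha_t) = s\big((sw)_1 \cdots (sw)_{k-1}(\alpha_t)\big)$. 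Feeding the induction hypothesis for $\Skips{scs}{sw}$ into the recursion $\Skips{c}{w} = s\,\Skips{scs}{sw}$ reproduces the formula. When $\length(sw) > \length(w)$, the element $w$ lies in $W_{\langle s \rangle}$ and never uses the leading letter $s$, so it skips $s$ already at position $1$ with root $\alpha_s$ (the empty product), while its skips of the remaining reflections $t \ne s$ coincide with its skips as an $sc$-sortable element of $W_{\langle s \rangle}$, carrying the same roots in $\Phi_{\langle \alpha_s \rangle}$; this matches the recursion $\Skips{c}{w} = \Skips{sc}{w} \cup \{\alpha_s\}$ termwise.

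I expect the main obstacle to be the careful bookkeeping of positions under the passage from $c$ to $scs$ and to $sc$: one must verify that the used-count $k$ appearing in ``skips at position $k+1$'' is preserved when deleting the leading letter $\squ{s}$ (respectively when ignoring the skipped occurrences of $s$ in $\sq{c}^\infty$), and that the leftmost unused instance of each reflection is exactly the one transported by these reductions. Once the indexing is pinned down, the two cases align with the recursive definition, and the identification $\Skips{c}{w} = \Roots{\facetSortable{c}^{-1}(w)}$ of Proposition~\ref{prop:thesetC} both certifies that the resulting collection is a set of size $n$ and ties this combinatorial description back to the root configuration of the brick polytope.
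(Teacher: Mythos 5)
Your proposal is correct, but it takes a genuinely different route from the paper: the paper offers no proof of this corollary at all --- it is quoted from N.~Reading and D.~Speyer (their Proposition~5.1), with the surrounding text merely asserting that Proposition~\ref{prop:thesetC} ``yields'' the description. You instead reconstruct a self-contained proof by unwinding the recursive definition of $\Skips{c}{w}$ through the standard double induction on $\length(w)$ and on the rank of $W$, matching the case $\length(sw)<\length(w)$ (where $\sw{w}{c}=\squ{s}\,\sw{sw}{scs}$ shifts every skip position by one and conjugates the roots by $s$, reproducing $\Skips{c}{w}=s\Skips{scs}{sw}$) against the case $\length(sw)>\length(w)$ (where $w\in W_{\langle s\rangle}$ skips $s$ at position $1$ with root $\alpha_s$ and the remaining skips descend to $(sc)^\infty$ inside $\Phi_{\langle\alpha_s\rangle}$, reproducing $\Skips{c}{w}=\Skips{sc}{w}\cup\{\alpha_s\}$). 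Both cases check out, and the positional bookkeeping you flag as the main obstacle does go through, since deleting the leading letter of $\sq{c}^\infty$ (respectively all occurrences of $s$) transports the leftmost unused instance of each letter correctly. What your approach buys is independence from the external reference; what it costs is redoing Reading--Speyer's argument. Note also that your appeal to Proposition~\ref{prop:thesetC} for the cardinality $|\Skips{c}{w}|=n$ is a pleasant sanity check but not logically necessary: the termwise matching of the recursion with the skip structure already gives the set equality, and the forced/unforced statement is then definitional, as you say.
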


\begin{remark}
  Remark~\ref{rem:reconstructFromRoots} yields a shortcut for $\facetSortable{c}^{-1} = \facetCluster{c}^{-1} \circ \sortableCluster{c}: \sortable{c}{W} \longrightarrow \facet{c}{W}$ using Proposition~\ref{prop:thesetC} and its Corollary~\ref{prop:skipunskip}.
  Observe moreover that the distinction between forced and unforced skips naturally arises when we sweep the word~$\cw{c}$ from left to right as described in Remark~\ref{rem:reconstructFromRoots}.
\end{remark}

\subsubsection{Facets of the subword complex and noncrossing partitions}
\label{sec:sortableNcp}

We start with recalling the definition of noncrossing partitions as given for example in~\cite[Section~5]{Reading-sortableElements} (see also Lemma~5.2 therein for an inductive description).
Define the \defn{reflection length} $\reflength(w)$ for $w \in W$ to be the length of the shortest factorization of $w$ into reflections.
Observe that in particular, the reflection length of any Coxeter element $c$ is $n$.
An element $w \in W$ is a \defn{noncrossing partition} if~${\reflength(w) + \reflength(w^{-1}c) = n}$.
The set of all noncrossing partitions is denoted by $\ncp c W$.
The term \emph{partition} has historical reasons and comes from the connection to noncrossing set partitions in the classical types.
The $\sq{c}$-sorting word $\sw{w}{c} \eqdef \w_1 \dots \w_{\ell(w)}$ of some element $w \in W$ induces a total order on its inversion set
$${\inv(w) \eqdef \big\{ \alpha_{w_1}, w_1(\alpha_{w_2}), \dots, w_1w_2 \cdots w_{\ell(w)-1}(\alpha_{w_{\ell(w)}})\big\} \subseteq \Phi^+}.$$
The (ordered) set of \defn{cover reflections} of~$w$ is given by
$$\covers{w} \eqdef \bigset{t \in R}{tw = ws < w \text{ for some } s \in S} \subseteq \bigset{s_\beta}{\beta \in \inv(w)}.$$
In~\cite[Theorem~6.1]{Reading-sortableElements}, N.~Reading showed that the map
$$
  \sortableNcp{c} :
    \begin{array}{ccc}
      \sortable c W & \longrightarrow & \ncp c W \\
      w & \longmapsto & t_1 \cdots t_k
    \end{array}
$$
is a bijection, where $\{t_1, \dots, t_k\} = \covers{w}$ is the ordered set of cover reflections~of~$w$.

Using this result and the description of $\Skips{c}{w}$ in the previous section, we are now in the situation to provide a direct connection between facets in $\facet{c}{W}$ and noncrossing partitions.
A position~$i$ in a facet~$I \in \facet{c}{W}$ is an \defn{upper position} in~$I$ if~$\Root I i \in \Phi^-$.
Moreover, its corresponding almost positive root $\facetCluster{c}(i)$ is an \defn{upper root} in~$\facetCluster{c}(I)$.
Compare the original equivalent definition of upper roots before Corollary~8.6 in~\cite{ReadingSpeyer}.

\begin{theorem}
\label{thm:facetNcp}
  The bijection $\facetNcp{c} = \sortableNcp{c} \circ \facetSortable{c}$ is given by
  $$
    \facetNcp{c} :
      \begin{array}{ccc}
         \facet{c}{W} & \longrightarrow & \ncp c W \\
         I & \longmapsto & s_{\Root{I}{j_1}} \cdots s_{\Root{I}{j_k}},
      \end{array}
   $$
   where $\{ j_1 < \dots < j_k \}$ is the collection of upper positions in $I$.
\end{theorem}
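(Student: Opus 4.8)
The plan is to unfold the composite $\facetNcp{c} = \sortableNcp{c} \circ \facetSortable{c}$ and to reconcile the two ways of writing the resulting noncrossing partition. Set $w \eqdef \facetSortable{c}(I)$, so that $\facetNcp{c}(I) = \sortableNcp{c}(w)$. By the definition of $\sortableNcp{c}$ recalled above, $\sortableNcp{c}(w) = t_1 \cdots t_k$ is the product of the cover reflections $\covers{w} = \{t_1, \dots, t_k\}$, listed in the order in which their roots occur in the total order that the $\sq{c}$-sorting word of $w$ induces on $\inv(w)$. The statement thus reduces to identifying, as an \emph{ordered} list of reflections, the cover reflections $(t_1, \dots, t_k)$ with the list $\big( s_{\Root{I}{j_1}}, \dots, s_{\Root{I}{j_k}} \big)$ indexed by the upper positions $j_1 < \dots < j_k$ of $I$.

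First I would pin down the underlying set of reflections. Proposition~\ref{prop:thesetC} gives $\Roots{I} = \Skips{c}{w}$, and Corollary~\ref{prop:skipunskip} identifies the forced skips of $w$ with $\Skips{c}{w} \cap \Phi^-$. Since an upper position of $I$ is by definition a position $j \in I$ with $\Root{I}{j} \in \Phi^-$, these two facts show that $\set{\Root{I}{j}}{j \text{ upper position of } I}$ is exactly the set of forced skips of $w$, and hence that the reflections $s_{\Root{I}{j}}$ are precisely the reflections $s_\beta$ attached to the forced skips $\beta$. It then remains to verify two assertions: that the reflections of the forced skips coincide as a set with the cover reflections $\covers{w}$, and that ordering them by the position $j$ in $\cw{c}$ reproduces the order in which the cover reflections are multiplied in $\sortableNcp{c}(w)$. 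The set equality is the correspondence between forced skips and cover reflections established by N.~Reading and D.~Speyer around \cite[Corollary~8.6]{ReadingSpeyer}; it can also be obtained directly from the common inductive structure shared by $\Skips{c}{\cdot}$, by $\covers{\cdot}$, and by the facets of $\clustercomplex$ under $\facetSortable{c}$, governed by an initial letter $s$ of $c$, exactly as in the proof of Proposition~\ref{prop:thesetC}. In the case $\length(sw) > \length(w)$ one passes to the parabolic subgroup $W_{\langle s \rangle}$ and $\Skips{c}{w} = \Skips{sc}{w} \cup \{\alpha_s\}$ merely gains the unforced positive skip $\alpha_s$, so neither the forced skips nor the cover reflections change; in the case $\length(sw) < \length(w)$ the word $\cw{c}$ is rotated and $\Skips{c}{w} = s\,\Skips{scs}{sw}$, where conjugation by $s$ preserves the sign of every root other than $\pm\alpha_s$ and thus carries the forced skips of $sw$ onto those of $w$, up to the possible toggling of the $\alpha_s$-component that records the change in the descent set.

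The remaining and most delicate point is the order-matching. Here I would use the explicit form of $\facetSortable{c}$: reading $\cw{c}$ from left to right, the inductive construction of $w = \facetSortable{c}(I)$ appends the letters of the $\sq{c}$-sorting word of $w$ in the order dictated by the positions of $\cw{c}$, and the place at which each simple reflection is skipped is recorded at a well-defined position $j \in I$, increasing with the skip. Consequently the $\sq{c}$-sorting-word order on $\inv(w)$, restricted to the cover reflections, is carried by $\facetSortable{c}$ onto the increasing order of the upper positions $j_1 < \dots < j_k$. I expect this bookkeeping --- reconciling the sorting-word order on $\inv(w)$ used to sequence the cover reflections with the position order in the rotated and parabolic words produced by the induction --- to be the main obstacle; once it is settled, the two ordered products agree, yielding $\facetNcp{c}(I) = s_{\Root{I}{j_1}} \cdots s_{\Root{I}{j_k}}$ and completing the proof.
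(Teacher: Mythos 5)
Your proposal follows essentially the same route as the paper: both reduce the statement to the identification of the (ordered) set of cover reflections of $\facetSortable{c}(I)$ with the reflections $s_\beta$ for $\beta \in \Roots{I} \cap \Phi^-$, obtained either from the shared inductive structure behind $\facetSortable{c}$ and $\Skips{c}{\cdot}$ (Proposition~\ref{prop:thesetC} and Corollary~\ref{prop:skipunskip}) or from the Reading--Speyer correspondence between forced skips and cover reflections, and then conclude from the definition of $\sortableNcp{c}$. The order-matching step you flag as delicate is exactly the point the paper also settles only by appeal to the construction of $\facetSortable{c}$, so your treatment is no less complete than the published one.
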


\begin{proof}
The construction of $\facetSortable{c} : \facet{c}{W} \longrightarrow \sortable{c}{W}$ implies that the sorted set of cover reflections of $\facetSortable{c}(I)$ is given by $\Roots{I} \cap \Phi^-$.
This can as well be deduced from~\cite[Proposition~5.2]{ReadingSpeyerInfinite} together with Proposition~\ref{prop:thesetC}.
The statement then follows from the above description of the map $\sortableNcp{c} : \sortable{c}{W} \longrightarrow \ncp{c}{W}$.
\end{proof}

\begin{example}
\label{exm:associahedron5}
Consider the facet~$\Fexm \eqdef \{4, 6, 9\} \in \facet{\cexm}{A_3}$ mentioned in Examples~\ref{exm:associahedron3} and~\ref{exm:associahedron4}.
Its root configuration is ${\Roots{\Fexm} = \{e_3-e_2, e_1-e_3, e_3-e_4\}}$.
Consequently, its upper positions are~$6$ and~$9$ and its associated $c$-noncrossing partition is~$\facetNcp{\cexm}(\Fexm) = (13)(34) = 3241$.
\end{example}

Using Theorem~\ref{thm:facetNcp} together with the description of the $h$-vector in Remark~\ref{rem:increasing flip graph}, we immediately obtain the following corollary for the number of noncrossing partitions of a given reflection length.
\begin{corollary}
\label{coro:ncphvector}
  Let $\hvector\big(\clustercomplex\big) = (h_0,\ldots,h_n)$ be the $h$-vector of the subword complex $\clustercomplex$. Then
  \begin{align}
    \big| \set{w \in \ncp c W}{\reflength(w) = i} \big| = h_{n-i}. \label{eq:ncphvector}
  \end{align}
\end{corollary}

\begin{theorem}
\label{th:clusterNcp}
  The bijection $\clusterNcp{c} = \facetNcp{c} \circ \facetCluster{c}^{-1}$ is given by
  $$
    \clusterNcp{c} :
      \begin{array}{ccc}
         \cluster c W & \longrightarrow & \ncp c W \\
         \{ \beta_1,\ldots,\beta_n\} & \longmapsto & s_{\beta_{j_k}} \cdots s_{\beta_{j_1}},
      \end{array}
   $$
   where the product is taken over all upper roots from right to left (recall here that a $c$-cluster is naturally ordered, see Section~\ref{sec:facetCluster}).
\end{theorem}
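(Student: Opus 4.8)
The plan is to exploit that, by definition, $\clusterNcp{c} = \facetNcp{c} \circ \facetCluster{c}^{-1}$, so that the theorem is just a translation of Theorem~\ref{thm:facetNcp} from the root configuration $\Roots{I}$ to the cluster $\facetCluster{c}(I)$. Concretely, given an ordered $c$-cluster $X = \{\beta_1,\dots,\beta_n\}$, I would set $I \eqdef \facetCluster{c}^{-1}(X) = \{i_1 < \dots < i_n\}$ and adopt the notation of Proposition~\ref{pr:cluster product}, writing $\alpha_p \eqdef \Root{I}{i_p}$ and $\beta_p \eqdef \facetCluster{c}(i_p)$. Let $p_1 < \dots < p_k$ be the cluster indices of the upper positions, i.e.\ those with $\alpha_{p_\ell} \in \Phi^-$; these are exactly the forced skips of Corollary~\ref{prop:skipunskip}. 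Theorem~\ref{thm:facetNcp} then immediately gives $\facetNcp{c}(I) = s_{\alpha_{p_1}} \cdots s_{\alpha_{p_k}}$, so the entire content of the statement reduces to the reflection-factorization identity
$$s_{\alpha_{p_1}} s_{\alpha_{p_2}} \cdots s_{\alpha_{p_k}} = s_{\beta_{p_k}} s_{\beta_{p_{k-1}}} \cdots s_{\beta_{p_1}},$$
which relates the two factorizations of the same noncrossing partition coming from $\Roots{I}$ and from $X$.

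To attack this identity I would first record that every upper position satisfies $p_\ell > j$: a negative simple root $\beta_p \in -\Delta$ sits at a position $i_p \le n$, and one checks directly (or via Corollary~\ref{prop:skipunskip}, since such roots are unforced skips) that $\Root{I}{i_p} \in \Phi^+$, so it is never an upper position. Hence all the relevant indices lie in the range $j < p \le n$ where Proposition~\ref{pr:cluster product} applies, and I may use the conjugation form of \eqref{eq:alphas and betas}, namely $s_{\alpha_p} = v_p\, s_{\beta_p}\, v_p^{-1}$ with $v_p \eqdef s_{\beta_n} \cdots s_{\beta_{p+1}}$. Substituting this into the left-hand side turns the identity into a telescoping problem, and the ``full'' case in which every $p \in \{j+1,\dots,n\}$ is upper is precisely equation~\eqref{eq:reordering}, which already equates the increasing $\alpha$-product with the decreasing $\beta$-product (both equal to $c'$). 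This is the skeleton I would like the general case to follow.

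The hard part will be the telescoping for a proper subset of upper indices. The naive cancellation fails: the cross terms $v_{p_\ell}^{-1} v_{p_{\ell+1}}$ do not reduce to the identity, so the formal manipulation alone does not collapse the product, and the identity genuinely uses the placement of the \emph{non}-upper indices rather than being a purely algebraic rearrangement. The structural input I would invoke to control them is $c$-sortability: the nested supports $K_1 \supseteq K_2 \supseteq \cdots$ of $\facetSortable{c}(I)$ force the upper and non-upper indices to interleave in the sorted pattern that makes the intermediate reflections drop out compatibly with \eqref{eq:reordering}. Rather than push this through by hand, I expect the cleanest completion to be an induction on the rank of $W$ and the length of $w = \facetSortable{c}(I)$, using the two recursive cases for an initial letter $s$ of $c$ described just before the theorem (the letter $s$ either leaves the subword complex after a rotation to $scs$, or drops the rank to a parabolic with $sc$). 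Since $\facetCluster{c}$, $\facetSortable{c}$, $\sortableNcp{c}$, and the cover-reflection order all respect this recursion, both sides of the displayed identity transform in the same way, and the base case (the identity element, with an empty product of cover reflections) is immediate. This keeps the argument within the inductive framework that governs all the bijections of this section.
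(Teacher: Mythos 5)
Your reduction is the same as the paper's: both proofs invoke Theorem~\ref{thm:facetNcp} to write $\facetNcp{c}(I)$ as the increasing product of reflections $s_{\Root{I}{j_\ell}}$ over upper positions, observe that upper positions never occur among the first $n$ letters, and thereby reduce the theorem to the single identity
$s_{\alpha_{p_1}}\cdots s_{\alpha_{p_k}} = s_{\beta_{p_k}}\cdots s_{\beta_{p_1}}$
relating the root-configuration factorization to the cluster factorization. The problem is that you stop exactly there. You correctly diagnose that the naive telescoping via $s_{\alpha_p}=v_p\,s_{\beta_p}\,v_p^{-1}$ fails for a proper subset of upper indices, but the replacement you offer --- an induction on rank and length along the sortable recursion, justified by the assertion that ``both sides of the displayed identity transform in the same way'' --- is not carried out, and that assertion is precisely where the difficulty sits. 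It is not true at face value: under the rotation step $c\mapsto scs$ the set of upper roots is \emph{not} stable (an upper root equal to $\alpha_s$ is sent to the negative simple root $-\alpha_s$, which is never upper, so the number of factors in the product changes), and on the noncrossing-partition side the recursion is not plain conjugation but absorbs the factor $s$ when $s$ is a cover reflection (e.g.\ for $c=\tau_1\tau_2\tau_3$ and $I=\{2,4,9\}$ one has $\sortableNcp{c}(2143)=(12)(34)$ while $\sortableNcp{scs}(1243)=(34)$). Making the two sides match through these case distinctions is the actual content of the theorem, and your proposal does not address it.

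For comparison, the paper closes this gap by a non-inductive argument: it uses the identity $s_{\alpha_{j+1}}\cdots s_{\alpha_n}=s_{\beta_n}\cdots s_{\beta_{j+1}}=c'$ of Proposition~\ref{pr:cluster product}\eqref{eq:reordering} and shows that in the left-hand product the factors indexed by upper positions can be pushed to the end by commutations (this is checked by reducing to facets with $I\cap[n]=\emptyset$), so that $s_{\alpha_{j_1}}\cdots s_{\alpha_{j_k}}$ appears as a suffix of $c'$; combining this with \eqref{eq:alphas and betas} then yields the desired factorization identity directly. If you want to salvage your inductive route you would have to prove, in each of the two recursive cases, a precise transformation rule for the ordered set of upper roots and verify compatibility with the product formula --- a substantially longer argument than the commutation one. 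As written, the proposal identifies the right target but leaves its proof as a conjecture.
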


\begin{proof}
  Observe that Proposition~\ref{pr:cluster product} immediately implies that the bijection $\clusterNcp{c}$ as given in Theorem~\ref{th:clusterNcp} is well-defined.
  Let $I = \{ i_1 < \cdots < i_n \}$ be the facet in $\facet{c}{W}$ such that $\facetCluster{c}(I)$ is given by $\{ \beta_1,\ldots,\beta_n\}$.
  The set of upper positions in $I$ is then given by $\{ j_1 < \cdots < j_k \} \subseteq I$.
  We have seen in Proposition~\ref{pr:cluster product}, that the reflections corresponding to upper positions appear as a suffix of the product $s_{\alpha_{j+1}} \cdots s_{\alpha_{n}}$ on the left-hand side of Proposition~\ref{pr:cluster product}\eqref{eq:reordering}.
  Together with Proposition~\ref{pr:cluster product}\,\eqref{eq:alphas and betas}, we thus obtain that
  $$s_{\beta_{i_k}} \cdots s_{\beta_{i_1}} = s_{\facetCluster{c}(i_k)} \cdots s_{\facetCluster{c}(i_1)} = s_{\Root{I}{i_1}} \cdots s_{\Root{I}{i_k}}.$$
  The statement follows.
\end{proof}

\begin{example}
\label{exm:associahedron6}
Consider the $\cexm$-cluster~$\Xexm \eqdef \{e_2-e_1, e_4-e_1, e_4-e_3\}$.
Its upper roots are~$e_4-e_1$ and~$e_4-e_3$.
Consequently, its associated noncrossing partition is~$\clusterNcp{c}(\Xexm) = (34)(14) = 3241$.
\end{example}

\begin{remark}
  The composition of $\clusterNcp{c}$ with the map $\ncpNcs : \ncp c W \longrightarrow \ncs c W$ was described in~\cite[Theorem~11.1]{ReadingSpeyer} using some further recursive notions, see also the paragraph thereafter.
  We here provide a direct map, not using the recursive structure.
  Theorem~\ref{th:clusterNcp} provides as well new proofs of~\cite[Conjectures~11.2~and~11.3]{ReadingSpeyer} which were as well proven in~\cite[Theorem~4.10]{IngallsThomas}.
\end{remark}

Finally, we sketch an inductive description of~$\ncsFacet{c} : \ncs c W \longrightarrow \facet{c}{W}$.
To construct the image~$\ncsFacet{c}(L)$ of a $c$-noncrossing subspace~$L \in \ncs c W$, consider two cases depending on the last letter~$\q_\sizeQ$ of~$\cw{c}$.
We set $s \eqdef \psi(\q_\sizeQ)$, which we can assume to be final in $c$, and we define $c' \eqdef scs$ and~$c'' \eqdef cs = sc'$.
The two cases depend on the inductive description of the facets describing how to embed $\facet{c'}{W}$ and $\facet{c''}{W_{\langle s \rangle}}$ into $\facet{c}{W}$.

\begin{enumerate}[(i)]
\item If~$L$ is not contained in the reflection hyperplane of~$s$, $\ncsFacet{c}(L)$ is given by the facet~$\ncsFacet{c'}(sL)$ inside $\facet{c'}{W}$.
\item If~$L$ is contained in the reflection hyperplane of~$s$, we obtain~$\ncsFacet{c}(L)$ by adding the position~$\sizeQ$ to the facet~$\ncsFacet{c''}(L)$ inside $\facet{c''}{W_{\langle s \rangle}}$.
\end{enumerate}
Observe here, that this procedure considers a final letter in $c$ while the procedure for computing $\facetSortable{c} : \facet{c}{W} \longrightarrow \sortable{c}{W}$ considers an initial letter.

\begin{example}
\label{ex:ncpFacet}
\label{exm:associahedron7}
Consider the Coxeter element~$\cexm \eqdef \tau_1\tau_2\tau_3$ of Example~\ref{exm:associahedron1}, whose corresponding word is~$\cw{\cexm} = \sq{\tau}_1\sq{\tau}_2\sq{\tau}_3\sq{\tau}_1\sq{\tau}_2\sq{\tau}_3\sq{\tau}_1\sq{\tau}_2\sq{\tau}_1$.
Let~$\Lexm$ denote the fixed space of~$3241 = (13)(34) = (34)(14)$.
It is a $\cexm$-noncrossing subspace.
To compute~$\ncsFacet{\cexm}(\Lexm)$, we set~$s \eqdef \psi(\tau_1) = \tau_3$, $c' \eqdef scs = \tau_3\tau_1\tau_2$ and~${c'' \eqdef cs = sc'=\tau_1\tau_2}$.
Since~$\Lexm$ is contained in the reflection hyperplane of~$s$, we compute by induction~$\ncsFacet{c''}(\Lexm) = \{3,4\}$.
Embedding this facet in~$\cw{\cexm}$ and adding position~$9$, we obtain that~$\ncsFacet{\cexm}(\Lexm) = \{4, 6, 9\}$.
\end{example}

To conclude this section, we summarize in \fref{fig:bijectionsExamples} all correspondences obtained in Examples~\ref{exm:associahedron3},~\ref{exm:associahedron4},~\ref{exm:associahedron5},~\ref{exm:associahedron6}, and~\ref{exm:associahedron7}.
\begin{figure}[ht]
  \capstart
  \begin{tikzpicture}[description/.style={fill=white, inner sep=3pt}]
    \matrix (m) [matrix of math nodes, row sep=5em, column sep=0em]
      {{\mbox{\footnotesize $\left\{\!\!\!\!\begin{array}{ccc} e_2-e_1 \\ \boldsymbol{\darkblue e_4-e_1} \\ \boldsymbol{\darkblue e_4-e_3} \end{array}\!\!\!\!\right\}$}}
      & \phantom{\big\{4, \boldsymbol{6}, \boldsymbol{9} \big\}} & \tau_1\tau_2\tau_3|\tau_2 & &
        {\mbox{\footnotesize $\begin{array}{ccc} (34)(14) \\ \rotatebox[origin=c]{90}{$=$} \\ (13)(34)\end{array}$}}
        & \phantom{\big\{4, \boldsymbol{6}, \boldsymbol{9} \big\}} & {\mbox{\footnotesize$\{ x_1 = x_3 = x_4 \}$}} \\
      & & & \ \big\{4, \boldsymbol{\darkblue 6}, \boldsymbol{\darkblue 9} \big\} & & & \\[-1.7cm]
      & & & \ {\mbox{\footnotesize $\left\{\!\!\!\!\begin{array}{ccc} e_3-e_2 \\ \boldsymbol{\darkblue e_1-e_3} \\ \boldsymbol{\darkblue e_3-e_4} \end{array}\!\!\!\!\right\}$}} \\
          };
    \path[|->, font=\normalsize]
      (m-1-3) edge node [above] {$\sortableCluster{\cexm}$} (m-1-1)
      (m-1-1) edge [bend left=20, transform canvas={yshift=.1cm}] node [above] {$\clusterNcp{\cexm}$} (m-1-5)
      (m-1-3) edge node [above] {$\sortableNcp{\cexm}$} (m-1-5)
      (m-1-5) edge node [above] {$\ncpNcs$} (m-1-7)
      (m-2-4) edge node [above] {$\facetCluster{\cexm}$} (m-1-1)
      (m-2-4) edge node [above right] {$\facetSortable{\cexm}$} (m-1-3)
      (m-2-4) edge node [left] {$\facetNcp{\cexm}$} (m-1-5)
      (m-1-7) edge node [above] {$\ncsFacet{\cexm}$} (m-2-4);
  \end{tikzpicture}
  \caption{The complete picture of the ongoing example. The upper positions in the facet are drawn in {\darkblue{\bf bold blue}}, and below its root configuration is shown.}
  \label{fig:bijectionsExamples}
\end{figure}


\subsection{Cambrian lattices and fans}
\label{sec:cambrian}

Using the connections established in the previous section, we relate in this section the Cambrian lattice with the increasing flip order and the Cambrian fan with the normal fan of the brick polytope.

The $c$-sortable elements of~$W$ form a sublattice and a quotient lattice of the weak order on~$W$, see~\cite{Reading-CambrianLattices}.
This lattice is called \defn{$c$-Cambrian lattice}.
The downward projection used to construct the quotient associates to any element~$w \in W$ the maximal $c$-sortable element below~$w$.
It is denoted by~$\pidown$, see~\cite[Corollary~6.2]{ReadingSpeyerInfinite}.
In~\cite[Theorem~6.3]{ReadingSpeyerInfinite}, N.~Reading and D.~Speyer give a geometric description of the downward projection $\pidown$.
This implies first that the bijection $\facetSortable{c}^{-1}$ between $c$-sortable elements and facets of the subword complex $\clustercomplex$ is the restriction to $c$-sortable elements of the surjective map $\projectionMap$ from $W$ to facets as defined in Section~\ref{subsec:surjectiveMap}.
It moreover yields the connection between~$\projectionMap$ and the map~$\pidown$.

\begin{theorem}
\label{theo:pi=kappa}
Let $v,w \in W$ and let $v$ be $c$-sortable.
Then
$$\pidown(w) = v \iff \projectionMap(w) = \projectionMap(v).$$
\end{theorem}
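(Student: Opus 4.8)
The plan is to reduce the stated biconditional to the single identity $\projectionMap(w) = \projectionMap(\pidown(w))$ for every $w \in W$, and then to establish this identity through the inversion-set description of the fibers of $\projectionMap$. First I would record two consequences of the material developed above. Since $\pidown(w)$ is always $c$-sortable and $\pidown(v) = v$ for $c$-sortable $v$, and since the restriction $\projectionMap|_{\sortable{c}{W}} = \facetSortable{c}^{-1}$ is a \emph{bijection} onto $\facet{c}{W}$, the identity $\projectionMap(w) = \projectionMap(\pidown(w))$ immediately yields both directions: if $\pidown(w) = v$ then $\projectionMap(w) = \projectionMap(v)$, while if $\projectionMap(w) = \projectionMap(v)$ then $\projectionMap(\pidown(w)) = \projectionMap(v)$ forces $\pidown(w) = v$ by injectivity of $\projectionMap$ on $c$-sortable elements. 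Thus the whole theorem rests on showing that $w$ and $\pidown(w)$ have the same image under $\projectionMap$.

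To prove $\projectionMap(w) = \projectionMap(\pidown(w))$, I would work with inversion sets. Set $v \eqdef \pidown(w)$ and $I \eqdef \projectionMap(v) = \facetSortable{c}^{-1}(v)$. By the description of the fibers of $\projectionMap$ in Section~\ref{subsec:fibers}, it suffices to verify $\RootsMinus{I} \subset \inv(w) \subset \Phi^+ \ssm \RootsPlus{I}$. Now Proposition~\ref{prop:thesetC} identifies $\Roots{I}$ with the Reading--Speyer set $\Skips{c}{v}$, and Corollary~\ref{prop:skipunskip} splits this set into its forced skips $\Roots{I} \cap \Phi^- = -\RootsMinus{I}$ and its unforced skips $\Roots{I} \cap \Phi^+ = \RootsPlus{I}$. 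The geometric description of the downward projection in~\cite[Theorem~6.3]{ReadingSpeyerInfinite} states precisely that $\pidown(w) = v$ if and only if $w$ lies in the maximal cone of the $c$-Cambrian fan attached to $v$, a cone cut out by the hyperplanes orthogonal to the skips of $v$, with the half-space on each side dictated by whether the skip is forced or unforced. Translating this membership condition into inversion sets gives exactly $\RootsMinus{I} \subset \inv(w) \subset \Phi^+ \ssm \RootsPlus{I}$, which is the desired fiber condition. This is the same reinterpretation already invoked (for $c$-sortable arguments) to obtain $\facetSortable{c}^{-1} = \projectionMap|_{\sortable{c}{W}}$, now applied to an arbitrary $w$.

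The main obstacle I anticipate is the bookkeeping of sign and ordering conventions when matching the Reading--Speyer geometric statement with the inversion-set formalism of this paper: one must check that a ``forced skip'' in~\cite{ReadingSpeyerInfinite} corresponds to a root of $\RootsMinus{I}$ lying in $\inv(w)$ and an ``unforced skip'' to a root of $\RootsPlus{I}$ lying outside $\inv(w)$, with no off-by-one in the positions $k+1$ defining the skips. A secondary point to guard against is circularity: I would make sure to use only the geometric description of $\pidown$ from~\cite{ReadingSpeyerInfinite} together with Proposition~\ref{prop:normalCone}, and \emph{not} the statement that the normal fan of $\brickPolytope(\cw{c})$ is the $c$-Cambrian fan (Corollary~\ref{coro:mainCambrianFan}), since the latter is deduced from the present theorem. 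Once the translation of conventions is pinned down, the remaining verification is purely formal.
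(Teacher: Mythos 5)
Your proposal is correct and follows essentially the same route as the paper: both arguments rest on the Reading--Speyer geometric description of $\pidown$ (\cite[Theorem~6.3]{ReadingSpeyerInfinite}), on Proposition~\ref{prop:thesetC} identifying $\Skips{c}{v}$ with $\Roots{\projectionMap(v)}$, and on the defining property that $\projectionMap(w)=\projectionMap(v)$ exactly when $\Roots{\projectionMap(v)}\subseteq w(\Phi^+)$. The paper's proof is just a more compact version of yours --- it reads the condition directly as $\Skips{c}{v}\subseteq w(\Phi^+)$ rather than detouring through the inversion-set form $\RootsMinus{I}\subset\inv(w)\subset\Phi^+\ssm\RootsPlus{I}$ and the injectivity of $\projectionMap$ on $c$-sortable elements, but these are equivalent reformulations.
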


\begin{proof}
By \cite[Proposition~5.1, Theorem~6.3]{ReadingSpeyerInfinite} and polarity,~$\pidown(w) = v$ if and only if~$\Skips{c}{v} \subseteq w(\Phi^+)$.
As $v$ is $c$-sortable, Proposition~\ref{prop:thesetC} gives us that $\Skips{c}{v} = \Roots{\projectionMap(v)}$.
The statement follows with the observation that $\Roots{\projectionMap(v)} \subseteq w(\Phi^+)$ if and only if $\projectionMap(w) = \projectionMap(v)$ by definition.
\end{proof}

\begin{example}
\label{exm:associahedron8}
\fref{fig:CambrianLattice} shows the map~$\projectionMap : A_3 \to \clustercomplex$, for the Coxeter element~$\cexm \eqdef \tau_1\tau_2\tau_3$ of Example~\ref{exm:associahedron1}.

\newlength{\lskip}\setlength{\lskip}{2.5cm} 
\newlength{\cskipA}\setlength{\cskipA}{3.2cm} 
\newlength{\cskipB}\setlength{\cskipB}{2.7cm} 
\newlength{\cskipC}\setlength{\cskipC}{2.7cm} 
\newlength{\cskipD}\setlength{\cskipD}{2.7cm} 
\newlength{\cskipE}\setlength{\cskipE}{3.2cm} 

\begin{figure}
  \capstart
  \centerline{
  \tikzstyle{every node}=[rectangle split, rectangle split parts=3, draw]
  \begin{tikzpicture}
    [
      inner sep=4pt, minimum width=2.3cm, align=center,
      spanningTree/.style={line width=.5mm, line cap=round, blue},
      sortable/.style={postaction={rectangle, line width=.5mm, draw=blue}}
    ]
%
    \node [sortable] (e)      at (          0,      0) {$1234$ \nodepart{second} $\phantom{|}e\phantom{|}$ \nodepart{third} $\{1,2,3\}$};
    \node [sortable] (1)      at (   -\cskipA, \lskip) {$2134$ \nodepart{second} $\phantom{|}\tau_1\phantom{|}$ \nodepart{third} $\{2,3,4\}$};
    \node [sortable] (2)      at (          0, \lskip) {$1324$ \nodepart{second} $\phantom{|}\tau_2\phantom{|}$ \nodepart{third} $\{1,3,7\}$};
    \node [sortable] (3)      at (    \cskipA, \lskip) {$1243$ \nodepart{second} $\phantom{|}\tau_3\phantom{|}$ \nodepart{third} $\{1,2,9\}$};
    \node [sortable] (12)     at (  -2\cskipB,2\lskip) {$2314$ \nodepart{second} $\phantom{|}\tau_1\tau_2\phantom{|}$ \nodepart{third} $\{3,4,5\}$};
    \node            (21)     at (   -\cskipB,2\lskip) {$3124$ \nodepart{second} $\tau_2|\tau_1$ \nodepart{third} $\{1,3,7\}$};
    \node [sortable] (13)     at (          0,2\lskip) {$2143$ \nodepart{second} $\phantom{|}\tau_1\tau_3\phantom{|}$ \nodepart{third} $\{2,4,9\}$};
    \node [sortable] (23)     at (    \cskipB,2\lskip) {$1342$ \nodepart{second} $\phantom{|}\tau_2\tau_3\phantom{|}$ \nodepart{third} $\{1,7,8\}$};
    \node            (32)     at (   2\cskipB,2\lskip) {$1423$ \nodepart{second} $\tau_3|\tau_2$ \nodepart{third} $\{1,2,9\}$};
    \node [sortable] (121)    at (-2.5\cskipC,3\lskip) {$3214$ \nodepart{second} $\tau_1\tau_2|\tau_1$ \nodepart{third} $\{3,5,7\}$};
    \node [sortable] (123)    at (-1.5\cskipC,3\lskip) {$2341$ \nodepart{second} $\phantom{|}\tau_1\tau_2\tau_3\phantom{|}$ \nodepart{third} $\{4,5,6\}$};
    \node            (231)    at ( -.5\cskipC,3\lskip) {$3142$ \nodepart{second} $\tau_2\tau_3|\tau_1$ \nodepart{third} $\{1,7,8\}$};
    \node            (132)    at (  .5\cskipC,3\lskip) {$2413$ \nodepart{second} $\tau_1\tau_3|\tau_2$ \nodepart{third} $\{2,4,9\}$};
    \node            (321)    at ( 1.5\cskipC,3\lskip) {$4123$ \nodepart{second} $\tau_3|\tau_2|\tau_1$ \nodepart{third} $\{1,2,9\}$};
    \node [sortable] (232)    at ( 2.5\cskipC,3\lskip) {$1432$ \nodepart{second} $\tau_2\tau_3|\tau_2$ \nodepart{third} $\{1,8,9\}$};
    \node [sortable] (1231)   at (  -2\cskipD,4\lskip) {$3241$ \nodepart{second} $\tau_1\tau_2\tau_3|\tau_1$ \nodepart{third} $\{5,6,7\}$};
    \node [sortable] (1232)   at (   -\cskipD,4\lskip) {$2431$ \nodepart{second} $\tau_1\tau_2\tau_3|\tau_2$ \nodepart{third} $\{4,6,9\}$};
    \node            (2312)   at (          0,4\lskip) {$3412$ \nodepart{second} $\tau_2\tau_3|\tau_1\tau_2$ \nodepart{third} $\{1,7,8\}$};
    \node            (1321)   at (    \cskipD,4\lskip) {$4213$ \nodepart{second} $\tau_1\tau_3|\tau_2|\tau_1$ \nodepart{third} $\{2,4,9\}$};
    \node            (2321)   at (   2\cskipD,4\lskip) {$4132$ \nodepart{second} $\tau_2\tau_3|\tau_2|\tau_1$ \nodepart{third} $\{1,8,9\}$};
    \node [sortable] (12312)  at (   -\cskipE,5\lskip) {$3421$ \nodepart{second} $\tau_1\tau_2\tau_3|\tau_1\tau_2$ \nodepart{third} $\{6,7,8\}$};
    \node            (12321)  at (          0,5\lskip) {$4231$ \nodepart{second} $\tau_1\tau_2\tau_3|\tau_2|\tau_1$ \nodepart{third} $\{4,6,9\}$};
    \node            (23121)  at (    \cskipE,5\lskip) {$4312$ \nodepart{second} $\tau_2\tau_3|\tau_1\tau_2|\tau_1$ \nodepart{third} $\{1,8,9\}$};
    \node [sortable] (123121) at (          0,6\lskip) {$4321$ \nodepart{second} $\tau_1\tau_2\tau_3|\tau_1\tau_2|\tau_1$ \nodepart{third} $\{6,8,9\}$};
%
    \draw
      (e.north)     edge (1.south)
      (e.north)     edge (2.south)
      (e.north)     edge (3.south)
      (1.north)     edge (12.south)
      (1.north)     edge (13.south)
      (2.north)     edge (21.south)
      (2.north)     edge (23.south)
      (3.north)     edge (13.south)
      (3.north)     edge (32.south)
      (12.north)    edge (123.south)
      (12.north)    edge (121.south)
      (21.north)    edge (121.south)
      (21.north)    edge (231.south)
      (13.north)    edge (132.south)
      (23.north)    edge (231.south)
      (23.north)    edge (232.south)
      (32.north)    edge (232.south)
      (32.north)    edge (321.south)
      (123.north)   edge (1232.south)
      (123.north)   edge (1231.south)
      (121.north)   edge (1231.south)
      (231.north)   edge (2312.south)
      (132.north)   edge (1232.south)
      (132.north)   edge (1321.south)
      (232.north)   edge (2321.south)
      (321.north)   edge (1321.south)
      (321.north)   edge (2321.south)
      (1231.north)  edge (12312.south)
      (1232.north)  edge (12321.south)
      (2312.north)  edge (12312.south)
      (2312.north)  edge (23121.south)
      (1321.north)  edge (12321.south)
      (2321.north)  edge (23121.south)
      (12312.north) edge (123121.south)
      (12321.north) edge (123121.south)
      (23121.north) edge (123121.south)
    ;
%
    \draw [opacity=0.2, fill=blue]
      (2.south east) -- (2.north east) -- (21.south east) -- (21.north east) -- (21.north west) -- (21.south west) -- (2.north west) -- (2.south west) -- (2.south east)
      (3.south east) -- (3.north east) -- (32.south east) -- (32.north east) -- (321.south east) -- (321.north east) -- (321.north west) -- (321.south west) -- (32.north west) -- (32.south west) -- (3.north west) -- (3.south west) -- (3.south east)
      (13.south east) -- (13.north east) -- (132.south east) -- (132.north east) -- (1321.south east) -- (1321.north east) -- (1321.north west) -- (1321.south west) -- (132.north west) -- (132.south west) -- (13.north west) -- (13.south west) -- (13.south east)
      (23.south east) -- (23.north east) -- (231.south east) -- (231.north east) -- (2312.south east) -- (2312.north east) -- (2312.north west) -- (2312.south west) -- (231.north west) -- (231.south west) -- (23.north west) -- (23.south west) -- (23.south east)
      (232.south east) -- (232.north east) -- (2321.south east) -- (2321.north east) -- (23121.south east) -- (23121.north east) -- (23121.north west) -- (23121.south west) -- (2321.north west) -- (2321.south west) -- (232.north west) -- (232.south west) -- (232.south east)
      (1232.south east) -- (1232.north east) -- (12321.south east) -- (12321.north east) -- (12321.north west) -- (12321.south west) -- (1232.north west) -- (1232.south west) -- (1232.south east)
    ;
  \end{tikzpicture}}
  \caption{The map~$\projectionMap$ from~$\fS_4$ to the facets of the subword complex~$\subwordComplex(\cw{\cexm})$ for the Coxeter element~$\cexm = \tau_1\tau_2\tau_3$ of Example~\ref{exm:associahedron1}. Each node corresponds to a permutation of~$\fS_4$ and contains its one-line notation, its $\sq{\cexm}$-sorting word and its image under~$\projectionMap$. The \mbox{$\cexm$-sortable} elements are surrounded by strong boxes. The fibers of~$\projectionMap$ are represented by shaded regions. The quotient of the weak order by these fibers is the $\cexm$-Cambrian lattice and also the increasing flip order on the facets of~$\subwordComplex(\cw{\cexm})$.
  }
  \label{fig:CambrianLattice}
\end{figure}

\end{example}

The first consequence of Theorem~\ref{theo:pi=kappa} is the connection between the Cambrian lattice and the increasing flip order.

\begin{corollary}
\label{coro:CambrianLattice}
The bijection~$\facetSortable{c} : \facet c W \longrightarrow \sortable c W$ is an isomorphism between the increasing flip order on the facets of $\clustercomplex$ and the Cambrian lattice on $c$-sortable elements~in~$W$.
\end{corollary}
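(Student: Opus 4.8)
The plan is to recognize both partial orders as quotients of the right weak order on $W$ by one and the same partition into fibers, and then transport the resulting isomorphism through the bijections already established. On the facet side, Proposition~\ref{prop:quotient} identifies the increasing flip order with the quotient of the weak order by the fibers of $\projectionMap$: a facet $I$ is covered by $J$ exactly when some $w_I\in\projectionMap^{-1}(I)$ is covered by some $w_J\in\projectionMap^{-1}(J)$ in weak order. On the sortable side, the $c$-Cambrian lattice is by Reading's construction the quotient of the weak order by the lattice congruence whose downward projection is $\pidown$, the $c$-sortable elements serving as the minimal representatives of the congruence classes; equivalently, it is the weak order restricted to the sublattice $\sortable c W$. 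The bridge between the two descriptions is Theorem~\ref{theo:pi=kappa}: for $c$-sortable $v$ one has $\pidown(w)=v \iff \projectionMap(w)=\projectionMap(v)$, so the fibers of $\pidown$ and of $\projectionMap$ coincide. Combined with the identity $\facetSortable{c}^{-1}=\projectionMap|_{\sortable c W}$, this yields the key relation $\facetSortable{c}(\projectionMap(w))=\pidown(w)$ for every $w\in W$.

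With this relation in hand I would verify that $\facetSortable{c}$ is order preserving in both directions. For the easy direction, suppose $\facetSortable{c}(I)\le\facetSortable{c}(J)$ in the Cambrian lattice, and set $v_I\eqdef\facetSortable{c}(I)$ and $v_J\eqdef\facetSortable{c}(J)$. Since $\facetSortable{c}^{-1}$ is the restriction of $\projectionMap$, we have $v_I\in\projectionMap^{-1}(I)$ and $v_J\in\projectionMap^{-1}(J)$, while $v_I\le v_J$ in weak order; the remark following Proposition~\ref{prop:quotient} then gives $I\preceq J$ in increasing flip order. For the reverse direction it suffices to treat a single increasing flip cover $I\lessdot J$ and chain. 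By Proposition~\ref{prop:quotient} pick $w_I\in\projectionMap^{-1}(I)$ and $w_J\in\projectionMap^{-1}(J)$ with $w_I\lessdot w_J$ in weak order; applying the order preserving downward projection $\pidown$ and using $\facetSortable{c}(\projectionMap(w))=\pidown(w)$ gives $\facetSortable{c}(I)=\pidown(w_I)\le\pidown(w_J)=\facetSortable{c}(J)$. Chaining along a maximal chain of increasing flips from $I$ to $J$ finishes this direction. As $\facetSortable{c}$ is a bijection that is monotone both ways, it is the desired poset isomorphism, and since the Cambrian lattice is a lattice, so is the increasing flip order on the facets of~$\clustercomplex$.

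The genuinely delicate point is the reverse direction, since the converse of the remark after Proposition~\ref{prop:quotient} --- that $I\preceq J$ forces the existence of comparable representatives --- is only conjectured for arbitrary realizing words. What rescues the argument for the word $\cw c$ is that the fibers of $\projectionMap$ are exactly the fibers of $\pidown$ by Theorem~\ref{theo:pi=kappa}, and therefore form a genuine lattice congruence with canonical representatives the $c$-sortable elements; this is precisely what allows a weak order cover to be pushed through $\pidown$ without losing comparability. I would therefore invoke the order preservation of $\pidown$ as the downward projection of a lattice congruence (from Reading and Speyer's work) rather than attempt the open converse, and take care to deduce the identity $\facetSortable{c}\circ\projectionMap=\pidown$ cleanly from Theorem~\ref{theo:pi=kappa} before running either direction.
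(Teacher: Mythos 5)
Your proposal is correct and follows the paper's route exactly: the paper derives this corollary directly from Theorem~\ref{theo:pi=kappa} (the fibers of $\projectionMap$ and of $\pidown$ coincide), combined with Proposition~\ref{prop:quotient} identifying the increasing flip order as the quotient of the weak order by the fibers of $\projectionMap$ and Reading's description of the Cambrian lattice as the quotient by the fibers of $\pidown$. You merely spell out the order-preservation checks that the paper leaves implicit, and your observation that the lattice-congruence structure of the $\pidown$-fibers is what circumvents the conjectural converse of Proposition~\ref{prop:quotient} is exactly the right point of care.
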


\begin{remark}
In fact, Corollary~\ref{coro:CambrianLattice} was already known for simply-laced types.
As pointed out in~\cite{CeballosLabbeStump}, the structure of the cluster complex as a subword complex can also be deduced from work by~K.~Igusa and R.~Schiffler~\cite[Theorem~2.8]{IgusaSchiffler}.
Together with the work of C.~Ingalls and H.~Thomas~\cite[Proposition~4.4]{IngallsThomas}, one can deduce the previous corollary in simply-laced types.
\end{remark}

\begin{remark}
One might be tempted to hope (and we actually did in a former version of this paper) that the increasing flip order of the facets of a root independent subword complex $\subwordComplex(\Q)$ is a lattice in general.
This does not hold, the smallest counterexample we are aware of is the subword complex $\subwordComplex(\tau_1 \tau_2 \tau_3 \tau_2 \tau_1 \tau_2 \tau_3 \tau_2 \tau_1)$ in type $A_3$.
See Remark~\ref{rem:latticeCounterExample}.
\end{remark}

\begin{remark}
In a subsequent paper, we extend the notion of ``Coxeter sorting trees'' on Cambrian lattices, and construct natural spanning trees of the increasing flip graph~\cite{PilaudStump-ELlabeling}.
\end{remark}

The second consequence of Theorem~\ref{theo:pi=kappa} is the connection between the Cambrian fan and the normal fan of the brick polytope, which was the heart of the construction of~\cite{HohlwegLangeThomas}.

\begin{corollary}
\label{coro:CambrianFan}
The normal fan of the brick polytope~$\brickPolytope(\cw{c})$ coincides with the $c$-Cambrian fan.
\end{corollary}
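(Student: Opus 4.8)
The plan is to show that both fans are coarsenings of the Coxeter fan, obtained by gluing together the chambers $w(\fundamentalChamber)$ according to the fibers of a projection map, and then to observe that the two gluing patterns coincide. The decisive input is Theorem~\ref{theo:pi=kappa}, which identifies the fibers of the downward Cambrian projection $\pidown$ with those of the map~$\projectionMap$. First I would set up the two cone-by-cone descriptions, and then match them.

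On the one hand, I would recall from~\cite{ReadingSpeyer, ReadingSpeyerInfinite} that the $c$-Cambrian fan is a coarsening of the Coxeter fan whose maximal cones are indexed by the $c$-sortable elements of~$W$: the maximal cone attached to a $c$-sortable element~$v$ is the union $\bigcup_{\pidown(w) = v} w(\fundamentalChamber)$ of the chambers of the Coxeter fan whose downward projection equals~$v$. On the other hand, Proposition~\ref{prop:normalCone} gives exactly the parallel description for the normal fan of $\brickPolytope(\cw{c})$: for a facet~$I$ of~$\clustercomplex$, the normal cone $\normalCone(I)$ is the union $\bigcup_{\projectionMap(w) = I} w(\fundamentalChamber)$ over the fiber~$\projectionMap^{-1}(I)$. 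Thus both fans are coarsenings of the same Coxeter fan, and it remains only to check that their maximal cones agree. Since $\facetSortable{c}^{-1}$ is the restriction of~$\projectionMap$ to $c$-sortable elements, the facet attached to a $c$-sortable~$v$ is $\projectionMap(v)$, and I would compute, for each $c$-sortable~$v$,
$$
\bigcup_{\pidown(w) = v} w(\fundamentalChamber) \;=\; \bigcup_{\projectionMap(w) = \projectionMap(v)} w(\fundamentalChamber) \;=\; \normalCone(\projectionMap(v)),
$$
where the first equality is Theorem~\ref{theo:pi=kappa} and the second is Proposition~\ref{prop:normalCone}. As $v$ ranges over all $c$-sortable elements, $\projectionMap(v)$ ranges over all facets of~$\clustercomplex$ (because $\facetSortable{c}$ is a bijection), so the maximal cones of the Cambrian fan are precisely the normal cones~$\normalCone(I)$ of $\brickPolytope(\cw{c})$.

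To conclude, I would invoke that a complete polyhedral fan is determined by its collection of maximal cones, since every cone of a fan is a face of some full-dimensional cone. As both fans are complete (they tile~$V$) and share the same maximal cones, they coincide, which is the claim. The main obstacle is not computational but bibliographic and definitional: the whole argument hinges on having the $c$-Cambrian fan described exactly as the gluing of Coxeter chambers along the fibers of~$\pidown$, so the one delicate point is to quote this characterization from~\cite{ReadingSpeyer, ReadingSpeyerInfinite} in precisely the form needed to align it with Proposition~\ref{prop:normalCone}. Once that description is in hand, the identification of fibers provided by Theorem~\ref{theo:pi=kappa} makes the rest a direct translation.
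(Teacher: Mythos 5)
Your argument is correct and is essentially the paper's own: the corollary is stated there as an immediate consequence of Theorem~\ref{theo:pi=kappa}, which identifies the fibers of~$\pidown$ with those of~$\projectionMap$, combined with Proposition~\ref{prop:normalCone} and the Reading--Speyer description of the $c$-Cambrian fan as the gluing of Coxeter chambers along the fibers of~$\pidown$. Nothing further is needed.
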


\begin{example}
\label{exm:associahedron9}
\fref{fig:CambrianFan} shows the $\cexm$-Cambrian fan as the normal fan of the brick polytope~$\brickPolytope(\cw{\cexm})$, for the Coxeter element~$\cexm \eqdef \tau_1\tau_2\tau_3$ of Example~\ref{exm:associahedron1}.

\begin{figure}
  \capstart
  \centerline{\includegraphics[scale=.7]{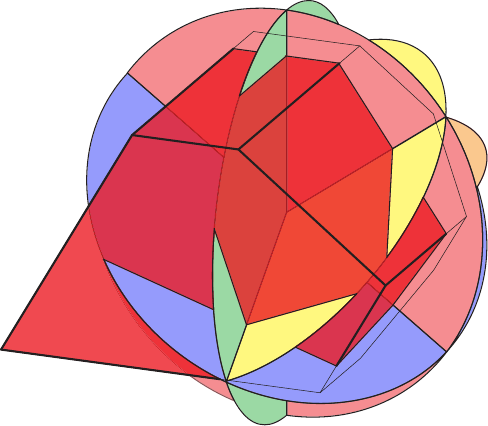}}
  \caption{The normal fan of~$\brickPolytope(\cw{\cexm})$ is the $\cexm$-Cambrian fan.}
  \label{fig:CambrianFan}
\end{figure}
\end{example}

Finally, we use Corollary~\ref{coro:ncphvector} to reobtain the following main result of~\cite{ABMCW2006} by C.~A.~Athanasiadis, T.~Brady, J.~McCammond, and C.~Watt.
As mentioned before, the language the statement is given here was exhibited in~\cite[Theorem~6.1]{Reading-sortableElements}, see also Theorem~\ref{thm:facetNcp}.
\begin{corollary}
  The rank generating function on the noncrossing partition lattice $\ncp c W$ coincides with the $h$-vector of the corresponding Cambrian lattice on $c$-sortable elements in~$W$.
\end{corollary}

\begin{proof}
  This is a direct consequence of Equation~\eqref{eq:ncphvector} and Corollary~\ref{coro:CambrianLattice}.
\end{proof}


\subsection{The exchange matrix for facets of c-cluster complexes} \label{sec:exchangematrix}

To complete our study of $c$-cluster complexes as subword complexes, we now provide a way to compute the exchange matrix corresponding to a given facet of a $c$-cluster complex.
We first recall the needed definitions of skew-symmetrizable matrices and their mutation, and  again refer to~\cite{FominZelevinsky-ClusterAlgebrasII} and to~\cite{ReadingSpeyer} for further details.

A \defn{skew-symmetrizable matrix}~$B \eqdef (b_{uv})$ is a real $(n \times n)$-matrix for which there is a real diagonal $(n \times n)$-matrix~$D$ with positive diagonal entries~$d_1, \dots, d_n$ such that the product~$DB$ is skew-symmetric, \ie $d_ub_{uv} = -d_vb_{vu}$ for all~$u,v \in [n]$.
Often, one assumes as well that~$B$ and~$D$ are integer matrices.
Here, we relax this condition since we want to exhibit combinatorial matrix mutations for all finite Coxeter groups, including those of non-crystallographic types.
For an index~$w \in [n]$, the \defn{matrix mutation}~$\mutation{k}$ is the matrix operation $B \mapsto \mutation{w}(B) \eqdef B' \eqdef (b'_{uv})$ defined~by
\begin{align}
b'_{uv} \eqdef 
\begin{cases}
  -b_{uv} & \text{if } w \in \{u,v\}, \\
  b_{uv} + b_{uw}b_{wv} & \text{if } w \notin \{u,v\}, b_{uw} > 0, \text{ and } b_{wv} > 0, \\
  b_{uv} - b_{uw}b_{wv} & \text{if } w \notin \{u,v\}, b_{uw} < 0, \text{ and } b_{wv} < 0, \\
  b_{uv} & \text{otherwise.}
\end{cases}\label{eq:matrixmutation}
\end{align}
One can check that matrix mutations are involutions, \ie $\mutation{w}(\mutation{w}(B)) = B$ for all $(n \times n)$-matrix~$B$ and all~$w \in [n]$.
We moreover have the following lemma.

\begin{lemma}[{\cite[Proposition~4.5]{FominZelevinsky-ClusterAlgebrasI}}]
\label{lem:FominZelevinsky}
For any skew-symmetrizable $(n \times n)$-matrix~$B$ and any~$w \in [n]$, the matrix $\mutation{w}(B)$ is again skew-symmetrizable.
\end{lemma}

Matrix mutations play an essential role in the definition of cluster algebras~\cite{FominZelevinsky-ClusterAlgebrasI}: they drive the dynamics of the exchange relations constructing all clusters and cluster variables from an initial cluster seed.
Precise definitions can be found in the original papers~\cite{FominZelevinsky-ClusterAlgebrasI, FominZelevinsky-ClusterAlgebrasII}.
In this section, we only consider the exchange matrices and their combinatorial dynamics, and we voluntarily hide all algebraic aspects.

\medskip
Let~$(W,S)$ be a finite Coxeter system of rank~$n \eqdef |S|$ with generalized Cartan matrix~$A \eqdef (a_{st})_{s,t \in S}$ as defined in Section~\ref{sec:rootsandweights}.
Let~$\sq{c}$ be a reduced expression for a Coxeter element~$c \in W$.
Since $\sq{c}$ induces a (fixed, but not necessarily unique) linear ordering of the elements in~$S$, we consider $A$ to be indexed by positions in~$\sq{c}$ rather than by elements in~$S$.
The following procedure associates to a facet~$I$ of the $c$-cluster complex~$\clustercomplex$ a skew-symmetrizable matrix~$\mutmatrixold{I}$ with rows and columns indexed by the positions in~$I$.
First, one produces a skew-symmetrizable version of the generalized Cartan matrix by setting~$B^A = (b^A_{uv})$ to be the $(n \times n)$-matrix given by
\[
b^A_{uv} =
\begin{cases}
  -a_{uv} & \text{if } u < v, \\
  \phantom{-}a_{uv} & \text{if } u > v, \\
  \phantom{-}0 & \text{if } u = v. \\
\end{cases}
\]
To the initial facet~$I_e = [n]$ of~$\clustercomplex$, we associate~$\mutmatrixold{I_e} = B^A$.
Now, assume that~$I$ and~$J$ are two adjacent facets of~$\clustercomplex$ with~$I \ssm i = J \ssm j$, and that we know the matrix~$\mutmatrixold{I}$.
Then we define~$\mutmatrixold{J}$ as the matrix~$\mutation{i}(\mutmatrixold{I})$ where the row and column indexed by~$i$ in~$\mutmatrixold{I}$ become indexed by~$j$ in $\mutmatrixold{J}$.
We thus slightly changed the behavior of the mutation operator in the sense that we now have $\mutation{j}(\mutation{i}(\mutmatrixold{I})) = \mutmatrixold{I}$.
This corresponds to the fact that flipping position~$i \in I$ yields~$J$, and flipping then position~$j \in J$ yields again~$I$.
In other words, we index the rows and columns of~$\mutmatrixold{I}$ by the positions in~$I$.

A priori, it is unclear if this procedure is well defined, since it is not at all obvious that two sequences of flips from the initial facet~$I_e$ to a given facet~$I$ result in the same matrix~$\mutmatrixold{I}$.
Nevertheless, for crystallographic types, this is ensured by the fact that $\clustercomplex$ is isomorphic to the $c$-cluster complex and thus to the cluster complex for $(W,S)$.
The following theorem provides an alternative proof thereof for all finite Coxeter systems by giving an explicit, non-inductive description of~$\mutmatrixold{I}$.

\begin{theorem}
\label{thm:b-matrix}
For any facet~$I$ of the cluster complex~$\clustercomplex$, the exchange matrix~$\mutmatrixold{I}$ coincides with the matrix~$\mutmatrix{I}$ defined by
\begin{align*}
  \mutmatrix{I}_{uv} \eqdef
  \begin{cases}
      -\bigdotprod{\alpha_{q_u}^\vee}{\wordprod{\Q}{[u,v] \ssm I}(\alpha_{q_v})} & \text{ if } u < v \\[1pt]
      \phantom{-}\bigdotprod{\alpha_{q_v}}{\wordprod{\Q}{[v,u] \ssm I}(\alpha_{q_u}^\vee)} & \text{ if } u > v \\[1pt]
      \phantom{-}0 & \text{ if } u = v
  \end{cases}
\end{align*}
where~$\Q \eqdef \q_1 \cdots \q_\sizeQ = \cw{c}$.
In particular, $\mutmatrixold{I}$ is well-defined.
\end{theorem}

In the remainder of this section, we prove this statement by observing that $\mutmatrix{I_e} = B^A$ for the initial facet~$I_e$, and by showing that $\mutation{i}(\mutmatrix{I}) = \mutmatrix{J}$ for any two adjacent facets~$I,J$ of~$\clustercomplex$ with~$I \ssm i = J \ssm j$.

First, we record that~$\mutmatrix{I}$ is skew-symmetrizable by construction.
Although we do not necessarily need this statement and would finally obtain it from Lemma~\ref{lem:FominZelevinsky}, it will simplify some computations in the subsequent proof of Theorem~\ref{thm:b-matrix}.

\begin{lemma}
\label{lem:skew-symmetric}
For any facet~$I$ of~$\clustercomplex$, the matrix~$\mutmatrix{I}$ is skew-symmetri\-zable.
More precisely, for the diagonal matrix~$\diagonal$ defined by~$\diagonal_u \eqdef \dotprod{\alpha_{q_u}}{\alpha_{q_u}}$, the matrix~$\diagonal\mutmatrix{I}$ is skew-symmetric.
\end{lemma}

The following lemma ensures that the matrix~$\mutmatrix{I}$ is preserved under the isomorphism given in Lemma~\ref{lem:isomorphismlemma}.

\begin{lemma}
\label{lem:matrixrotation}
Let~$s$ be initial in~$c$, and let $c' = scs$ with a fixed reduced expression~$\sq{c'}$.
For any facet~$I$ of~$\clustercomplex$ and any positions~$u,v \in I$, sent by the isomorphism of Lemma~\ref{lem:isomorphismlemma} to the facet~$I'$ of~$\subwordComplex(\cw{c'})$ and to the positions~$u',v' \in I'$, we have~$\mutmatrix{I}_{uv} = \mutmatrix{I'}_{u' v'}$.
\end{lemma}

\begin{proof}
We only consider the situation $u < v$ in~$I$, the situation for $v < u$ being analogous.
If $1 \notin I$, the equality is ensured by the fact that $\dotprod{s(\beta)}{s(\beta')} = \dotprod{\beta}{\beta'}$.
If $1 \in I$ and $u \neq 1$, then $\mutmatrix{I}_{uv}$ and $\mutmatrix{I'}_{u'v'}$ are equal by definition.
Finally for~$u = 1 \in I$, we have
\[
\diagonal_1 \mutmatrix{I}_{1,v} = -\diagonal_v \mutmatrix{I}_{v,1} = -\diagonal_v \mutmatrix{I'}_{v-1,m} = \diagonal_1 \mutmatrix{I'}_{m,v-1},
\]
and therefore~$\mutmatrix{I}_{1v} = \mutmatrix{I'}_{u' v'}$.
\end{proof}

The following lemma ensures that the different cases in the matrix mutation given in~\eqref{eq:matrixmutation} are controlled by the relative order of the positions in the facets of~$\clustercomplex$.

\begin{lemma}
\label{lem:signCases}
Let~$I$ be a facet of~$\clustercomplex$, let~$u,i \in I$ and let $j$ be the unique position in $[m] \ssm I$ such that~$\Root{I}{i} = \pm \Root{I}{j}$.
Then
\begin{itemize}
\item $\mutmatrix{I}_{ui} \ge 0$ if~$u \le i < j$, or~$i < j < u$, or~$j < u \le i$,
\item $\mutmatrix{I}_{ui} \le 0$ if~$u < j < i$, or~$j < i \le u$, or~$i \le u < j$.
\end{itemize}
\end{lemma}

\begin{proof}
By Lemma~\ref{lem:matrixrotation}, we can assume~$u = 1$.
Lemma~\ref{lem:restrictionlemma} then yields~$\Root{I}{i} \in \Phi_{\langle \alpha_s \rangle}$ where~$s$ is the initial letter in~$\sq{c}$.
Since~$\Root{I}{i} \in \Phi^+$ if~$i < j$ and~$\Root{I}{i} \in \Phi^-$ if~$i > j$, the statement follows from the observation that~$\dotprod{\alpha_s}{\alpha} < 0$ for all~$\alpha \in \Delta \ssm \{\alpha_s\}$, and thus~$\dotprod{\alpha_s}{\beta} < 0$ for all~$\beta \in \Phi^+_{\langle \alpha_s \rangle}$.
\end{proof}

\begin{proposition}
\label{prop:matrixMutations}
Let~$I,J$ be two facets of~$\clustercomplex$ with~${I \ssm i = J \ssm j}$, and let~$u,v \in I \ssm i$.
Then $\mutmatrix{J}_{uj} = -\mutmatrix{I}_{ui}$, and $\mutmatrix{J}_{jv} = -\mutmatrix{I}_{iv}$, and
\[
\mutmatrix{J}_{uv} = 
\begin{cases}
  \mutmatrix{I}_{uv} + \mutmatrix{I}_{ui} \cdot \mutmatrix{I}_{iv} & \text{if } \mutmatrix{I}_{ui} \ge 0$ and~$\mutmatrix{I}_{iv} \ge 0, \\
  \mutmatrix{I}_{uv} - \mutmatrix{I}_{ui} \cdot \mutmatrix{I}_{iv} & \text{if } \mutmatrix{I}_{ui} \le 0$ and~$\mutmatrix{I}_{iv} \le 0, \\
  \mutmatrix{I}_{uv} & \text{otherwise}.
\end{cases}
\]
\end{proposition}

\begin{proof}
Using Lemma~\ref{lem:matrixrotation}, we assume all throughout the proof that $1 = u \leq v,i,j$.
We first show that $\mutmatrix{J}_{uj} = -\mutmatrix{I}_{ui}$ if $i<j$.
Indeed,
\begin{align*}
  \mutmatrix{J}_{uj}
  &= - \bigdotprod{\alpha_{q_u}^\vee}{\wordprod{\Q}{[u, j] \ssm J}(\alpha_{q_j})} \\
  &= - \bigdotprod{\alpha_{q_u}^\vee}{\wordprod{\Q}{[u, i] \ssm I} \cdot q_i \cdot \wordprod{\Q}{[i, j] \ssm I} (\alpha_{q_j})} \\
  &= - \bigdotprod{\alpha_{q_u}^\vee}{\wordprod{\Q}{[u, i] \ssm I} \cdot q_i (\alpha_{q_i})} \\
  &= \phantom{-} \bigdotprod{\alpha_{q_u}^\vee}{\wordprod{\Q}{[u, i] \ssm I} (\alpha_{q_i})} \\
  &= - \mutmatrix{I}_{ui}.
\end{align*}
By symmetry, an analogous computation solves the case of~$j < i$.
We could prove similarly that~$\mutmatrix{J}_{jv} = -\mutmatrix{I}_{iv}$, or equivalently, use Lemma~\ref{lem:skew-symmetric} to write
\[
\diagonal_j\mutmatrix{J}_{jv} = - \diagonal_v\mutmatrix{J}_{vj} = \diagonal_v\mutmatrix{I}_{vi} = -\diagonal_i\mutmatrix{I}_{iv}.
\]
The simple roots $\alpha_{q_i}$ and~$\alpha_{q_j}$ are obtained from each other by the orthogonal transformation $\wordprod{(\cw{c})}{[i,j] \ssm I}$ if~$i < j$ and~$\wordprod{(\cw{c})}{[j,i] \ssm I}$ if~$i > j$.
Therefore, we have~$\diagonal_j = \dotprod{\alpha_{q_j}}{\alpha_{q_j}} =  \dotprod{\alpha_{q_i}}{\alpha_{q_i}} = \diagonal_i$, and thus~$\mutmatrix{J}_{jv} = -\mutmatrix{I}_{iv}$.

\medskip
It thus only remains to treat the three different cases for $\mutmatrix{J}_{uv}$.
We first observe that we can use Lemma~\ref{lem:signCases} to write the conditions on the signs of~$\mutmatrix{I}_{ui}$ and~$\mutmatrix{I}_{iv}$ in terms of the positions of~$u$ and~$v$ with respect to~$i$.
We therefore distinguish the following three cases.

\para{Case 1}
If~$i < v < j$, we obtain
\begin{align*}
\mutmatrix{J}_{uv}
  &= -\dotprod{\alpha_{q_u}^\vee}{\wordprod{\Q}{[u, v] \ssm J}(\alpha_{q_v})} \\
  &= -\dotprod{\alpha_{q_u}^\vee}{\wordprod{\Q}{[u, i] \ssm I} \cdot q_i \cdot \wordprod{\Q}{[i, v] \ssm I}(\alpha_{q_v})} \\
  &= -\bigdotprod{\alpha_{q_u}^\vee}{\wordprod{\Q}{[u, i] \ssm I} \bigg( \wordprod{\Q}{[i, v] \ssm I}(\alpha_{q_v}) - \smalldotprod{\alpha_{q_i}^\vee}{\wordprod{\Q}{[i, v] \ssm I}(\alpha_{q_v})} \, \alpha_{q_i} \bigg)} \\
  &= \big(\mutmatrix{I}_{uv} + \mutmatrix{I}_{ui} \cdot \mutmatrix{I}_{iv} \big).
\end{align*}
In this computation, the first equality is the definition of~$\mutmatrix{J}_{uv}$, the second one holds since~$I \ssm i = J \ssm j$, the third one is obtained developing $q_i \big( \wordprod{\Q}{[i, v] \ssm I}(\alpha_{q_v}) \big)$, and the last one holds by bi-linearity of the dot product.

\para{Case 2}
If~$j < v < i$, we can make a similar proof as in Case~1, or simply use Lemma~\ref{lem:skew-symmetric} to write
\begin{align*}
\diagonal_u\mutmatrix{J}_{uv}
& = - \diagonal_v \mutmatrix{J}_{vu} = -\diagonal_v \big( \mutmatrix{I}_{vu} + \mutmatrix{I}_{vi} \cdot \mutmatrix{I}_{iu} \big) \\
& = -\diagonal_v\mutmatrix{I}_{vu} - \frac{1}{\diagonal_i} \cdot \diagonal_v\mutmatrix{I}_{vi} \cdot \diagonal_i\mutmatrix{I}_{iu} \\
& =  \diagonal_u\mutmatrix{I}_{uv} - \frac{1}{\diagonal_i} \cdot (-\diagonal_i\mutmatrix{I}_{iv}) \cdot (-\diagonal_u\mutmatrix{I}_{ui}) \\
& = \diagonal_u \big(\mutmatrix{I}_{uv} - \mutmatrix{I}_{ui} \cdot \mutmatrix{I}_{iv} \big).
\end{align*}

\vspace*{-.1cm}
\para{Case 3}
If~$v < i, j$ or~$i, j < v$, then we have ${\wordprod{\Q}{[u, v] \ssm I} = \wordprod{\Q}{[u, v] \ssm J}}$ and therefore~$\mutmatrix{I}_{uv} = \mutmatrix{J}_{uv}$.
\end{proof}

We can now deduce Theorem~\ref{thm:b-matrix} from the similarity between Proposition~\ref{prop:matrixMutations} and the definition of matrix mutation in Equation~\eqref{eq:matrixmutation}.

\begin{proof}[Proof of Theorem~\ref{thm:b-matrix}]
For the initial facet~$I_e$, the equality $\mutmatrix{I_e} = \mutmatrixold{I_e}$ holds by definition.
For two adjacent facets~$I,J$ of~$\clustercomplex$ with~$I \ssm i = J \ssm j$, Proposition~\ref{prop:matrixMutations} shows that~$\mutmatrix{I} = \mutmatrixold{I}$ implies~$\mutmatrix{J} = \mutmatrixold{J}$.
Observe here that the weak inequalities in the cases can as well be replaced by strict inequalities without changing them.
The statement thus directly follows.
\end{proof}

\begin{typeA}[Quivers and pseudoline arrangements]
In type~$A_n$, the exchange matrix of a cluster is just the adjacency matrix of a directed graph associated to the cluster, called its \defn{quiver}.
This quiver can be directly visualized
\begin{enumerate}[(i)]
\item either on the corresponding triangulation of the $(n+3)$-gon: rotate counter-clockwise around each triangle and connect the middles of consecutive edges;
\item or on the corresponding pseudoline arrangement: orient all pseudolines from left to right and quotient the resulting oriented graph by the contacts.
\end{enumerate}
The connection between these two perspectives is given by the duality developed in~\cite{PilaudPocchiola}.
This observation motivated the statements presented in this section.
\end{typeA}


\section{Root dependent subword complexes}
\label{sec:rootdependent}

We proposed in this paper a new approach to the construction of generalized associahedra of finite types, based on the interpretation of cluster complexes in terms of subword complexes~\cite{CeballosLabbeStump}.
This approach generalizes to all root independent subword complexes, thus providing polytopal realizations for a larger class of subword complexes.
However, this approach seems to reach its limits for root dependent subword complexes.
Indeed, when a subword complex $\subwordComplex(\Q)$ is root dependent, it cannot be realized by its brick polytope since the dimension of the brick polytope is given by the rank of the underlying Coxeter group (after a possible restriction to a parabolic subgroup, see Section~\ref{sec:parabolicrestriction}), while the dimension of a polytope realizing a given spherical subword complex $\subwordComplex(\Q)$ is given by the size of any facet of $\subwordComplex(\Q)$.
These dimensions do match if and only if $\subwordComplex(\Q)$ is root independent.
Even worst, it was actually shown in~\cite[Proposition~5.9]{PilaudSantos-brickPolytope} that the brick polytope may not even be a possible projection of a hypothetic realization of~$\subwordComplex(\Q)$.
Thus, realizations of all spherical subword complexes (or equivalently of all generalized multi-cluster complexes~\cite{CeballosLabbeStump}) cannot be achieved using our present approach.

\medskip
Nonetheless, the brick polytopes of root dependent subword complexes are still interesting polytopes, even if they do not provide polytopal realizations of their subword complexes.
It seems that their geometric and combinatorial properties are still driven by the root configurations of their facets. 
More precisely, the following conjecture and its corollary are known to hold in type~$A$, see~\cite{PilaudSantos-brickPolytope}.

\begin{conjecture}
\label{conj:rootDependent}
One can relax the condition for $\subwordComplex(\Q)$ to be root independent in Proposition~\ref{prop:preservingFlips}.
This is, for any subword complex $\subwordComplex(\Q)$ and a linear functional $f : V \to \R$, the set $\subwordComplex_f(\Q) \eqdef \bigset{I \text{ facet of } \subwordComplex(\Q)}{\forall i \in I, f(\Root{I}{i}) \ge 0}$ forms a connected component of the graph of $f$-preserving flips on $\subwordComplex(\Q)$.
\end{conjecture}

\begin{corollary}
If Conjecture~\ref{conj:rootDependent} holds, for any facet~$I$ of~$\subwordComplex(\Q)$, the cone of the brick polytope~$\brickPolytope(\Q)$ at the brick vector~$\brickVector(I)$ coincides with the cone generated by the negative of the root configuration~$\Roots{I}$ of~$I$.
In particular, the brick vector~$\brickVector(I)$ for a facet~$I$ of~$\subwordComplex(\Q)$ is a vertex of the brick polytope~$\brickPolytope(\Q)$ if and only if the cone~$\rootCone(I)$ is pointed.
\end{corollary}

Finally, we remark that in~\cite{PilaudStump-ELlabeling}, the authors study geometric, combinatorial, and algorithmic properties of the increasing flip order.
In that context, it is shown that these properties can naturally be derived for a strictly larger class of subword complexes than root independent subword complexes, namely~\defn{double root free} subword complexes.


\section*{Acknowledgements}
\label{sec:acknowledgements}

The first author thanks Francisco Santos and Michel Pocchiola for their inspiration and collaboration on the premises of brick polytopes in type~$A$.
He is also grateful to LaCIM for the visit opportunity where this work started, to the Fields Institute of Toronto for his postdoctoral stay in Canada, and to Antoine Deza for his supervision.
The second author wants to thank Pavlo Pylyavskyy for pointing him to~\cite{BerensteinZelevinsky} which was the starting point for the present paper.
We thank Cesar Ceballos, Fr\'ed\'eric Chapoton, Christophe Hohlweg, Jean-Philippe Labb\'e, Carsten Lange, Nathan Reading, and Hugh Thomas for fruitful discussions on the topic.
Finally, we are grateful to an anonymous referee for relevant suggestions on the submitted version.


\bibliographystyle{alpha}
\bibliography{PilaudStump}

\begin{thebibliography}{ABMW06}

\bibitem[ABMW06]{ABMCW2006}
Christos~A. Athanasiadis, Thomas Brady, Jon McCammond, and Colum Watt.
\newblock $h$-vectors of generalized associahedra and noncrossing partitions.
\newblock {\em Int. Math. Res. Not.}, 69705, 2006.

\bibitem[AST14]{AST2010}
D.~Armstrong, C.~Stump, and H.~Thomas.
\newblock A uniform bijection between noncrossing and nonnesting partitions.
\newblock {\em Trans. Amer. Math. Soc.}, 365(8):4121--4151, 2014.

\bibitem[BGW97]{BorovikGelfandWhite2}
Alexandre~V. Borovik, Israel~M. Gelfand, and Neil White.
\newblock Coxeter matroid polytopes.
\newblock {\em Ann. Comb.}, 1(2):123--134, 1997.

\bibitem[BZ97]{BerensteinZelevinsky}
Arkady Berenstein and Andrei Zelevinsky.
\newblock Total positivity in {S}chubert varieties.
\newblock {\em Comment. Math. Helv.}, 72(1):128--166, 1997.

\bibitem[CFZ02]{ChapotonFominZelevinsky}
Fr{\'e}d{\'e}ric Chapoton, Sergey Fomin, and Andrei Zelevinsky.
\newblock Polytopal realizations of generalized associahedra.
\newblock {\em Canad.~Math.~Bull.}, 45(4):537--566, 2002.

\bibitem[CLS14]{CeballosLabbeStump}
Cesar Ceballos, Jean-Philippe Labb\'e, and Christian Stump.
\newblock Subword complexes, cluster complexes, and generalized
  multi-associahedra.
\newblock {\em J. Algebraic Combin.}, 39(1):17--51, 2014.

\bibitem[CP15]{CeballosPilaud}
Cesar Ceballos and Vincent Pilaud.
\newblock Denominator vectors and compatibility degrees in cluster algebras of
  finite type.
\newblock {\em Trans. Amer. Math. Soc.}, 367(2):1421--1439, 2015.

\bibitem[CSZ11]{CeballosSantosZiegler}
Cesar Ceballos, Francisco Santos, and G\"unter~M. Ziegler.
\newblock Many non-equivalent realizations of the associahedron.
\newblock Preprint,
  \href{http://arxiv.org/abs/1109.5544}{\texttt{arXiv:1109.5544}}, to appear in
  \emph{Combinatorica}, 2011.

\bibitem[FZ02]{FominZelevinsky-ClusterAlgebrasI}
Sergey Fomin and Andrei Zelevinsky.
\newblock Cluster algebras. {I}. {F}oundations.
\newblock {\em J. Amer. Math. Soc.}, 15(2):497--529, 2002.

\bibitem[FZ03]{FominZelevinsky-ClusterAlgebrasII}
Sergey Fomin and Andrei Zelevinsky.
\newblock Cluster algebras. {II}. {F}inite type classification.
\newblock {\em Invent. Math.}, 154(1):63--121, 2003.

\bibitem[HL07]{HohlwegLange}
Christophe Hohlweg and Carsten~E.~M.~C. Lange.
\newblock Realizations of the associahedron and cyclohedron.
\newblock {\em Discrete Comput.~Geom.}, 37(4):517--543, 2007.

\bibitem[HLR10]{HohlwegLortieRaymond}
Christophe Hohlweg, Jonathan Lortie, and Annie Raymond.
\newblock The centers of gravity of the associahedron and of the permutahedron
  are the same.
\newblock {\em Electron. J. Combin.}, 17(1):Research Paper 72, 14, 2010.

\bibitem[HLT11]{HohlwegLangeThomas}
Christophe Hohlweg, Carsten~E.~M.~C. Lange, and Hugh Thomas.
\newblock Permutahedra and generalized associahedra.
\newblock {\em Adv.~Math.}, 226(1):608--640, 2011.

\bibitem[Hoh12]{Hohlweg}
Christophe Hohlweg.
\newblock Permutahedra and associahedra.
\newblock In Folkert M{\"u}ller-Hoissen, Jean Pallo, and Jim Stasheff, editors,
  {\em Associahedra, Tamari Lattices and Related Structures~--~Tamari Memorial
  Festschrift}, volume 299 of {\em Progress in Mathematics}, pages 129--159.
  Birkh{\"a}user, 2012.

\bibitem[Hum78]{Humphreys1978}
James~E. Humphreys.
\newblock {\em Introduction to {L}ie algebras and representation theory},
  volume~9 of {\em Graduate Texts in Mathematics}.
\newblock Springer-Verlag, New York, 1978.
\newblock Second printing, revised.

\bibitem[Hum90]{Humphreys}
James~E. Humphreys.
\newblock {\em Reflection groups and {C}oxeter groups}, volume~29 of {\em
  Cambridge Studies in Advanced Mathematics}.
\newblock Cambridge University Press, Cambridge, 1990.

\bibitem[IS10]{IgusaSchiffler}
Kiyoshi Igusa and Ralf Schiffler.
\newblock Exceptional sequences and clusters.
\newblock {\em J. Algebra}, 323(8):2183--2202, 2010.

\bibitem[IT09]{IngallsThomas}
Colin Ingalls and Hugh Thomas.
\newblock Noncrossing partitions and representations of quivers.
\newblock {\em Compos. Math.}, 145(6):1533--1562, 2009.

\bibitem[KM04]{KnutsonMiller-subwordComplex}
Allen Knutson and Ezra Miller.
\newblock Subword complexes in {C}oxeter groups.
\newblock {\em Adv.~Math.}, 184(1):161--176, 2004.

\bibitem[KM05]{KnutsonMiller-GroebnerGeometry}
Allen Knutson and Ezra Miller.
\newblock Gr\"obner geometry of {S}chubert polynomials.
\newblock {\em Ann. of Math. (2)}, 161(3):1245--1318, 2005.

\bibitem[KM13]{KallipolitiMuhle}
Myrto Kallipoliti and Henri M{\"u}hle.
\newblock On the topology of the {C}ambrian semilattices.
\newblock {\em Electron. J. Combin.}, 20(2):Paper 48, 21, 2013.

\bibitem[Lee89]{Lee}
Carl~W. Lee.
\newblock The associahedron and triangulations of the {$n$}-gon.
\newblock {\em European J. Combin.}, 10(6):551--560, 1989.

\bibitem[Lod04]{Loday}
Jean-Louis Loday.
\newblock Realization of the {S}tasheff polytope.
\newblock {\em Arch.~Math.~(Basel)}, 83(3):267--278, 2004.

\bibitem[LP13]{LangePilaud}
Carsten Lange and Vincent Pilaud.
\newblock Using spines to revisit a construction of the associahedron.
\newblock Preprint,
  \href{http://arxiv.org/abs/1307.4391}{\texttt{arXiv:1307.4391}}, to appear in
  \emph{Combinatorica}, 2013.

\bibitem[MHPS12]{TamariFestschrift}
Folkert M\"uller-Hoissen, Jean~Marcel Pallo, and Jim Stasheff, editors.
\newblock {\em Associahedra, {T}amari Lattices and Related Structures. Tamari
  Memorial Festschrift}, volume 299 of {\em Progress in Mathematics}.
\newblock Springer, New York, 2012.

\bibitem[PP12]{PilaudPocchiola}
Vincent Pilaud and Michel Pocchiola.
\newblock Multitriangulations, pseudotriangulations and primitive sorting
  networks.
\newblock {\em Discrete Comput. Geom.}, 48(1):142--191, 2012.

\bibitem[PS09]{PilaudSantos-multitriangulations}
Vincent Pilaud and Francisco Santos.
\newblock Multitriangulations as complexes of star polygons.
\newblock {\em Discrete~Comput.~Geom.}, 41(2):284--317, 2009.

\bibitem[PS12]{PilaudSantos-brickPolytope}
Vincent Pilaud and Francisco Santos.
\newblock The brick polytope of a sorting network.
\newblock {\em European~J.~Combin.}, 33(4):632--662, 2012.

\bibitem[PS13]{PilaudStump-ELlabeling}
Vincent Pilaud and Christian Stump.
\newblock {EL}-labelings and canonical spanning trees for subword complexes.
\newblock In Karoly Bezdek, Antoine Deza, and Yinyu Ye, editors, {\em Discrete
  Geometry and Optimization}, volume~69 of {\em Fields Institute Communications
  Series}, pages 213--248. Springer, 2013.

\bibitem[PS15]{PilaudStump-barycenter}
Vincent Pilaud and Christian Stump.
\newblock Vertex barycenter of generalized associahedra.
\newblock {\em Proc. Amer. Math. Soc.}, 143(6):2623--2636, 2015.

\bibitem[PV96]{PocchiolaVegter}
Michel Pocchiola and Gert Vegter.
\newblock Topologically sweeping visibility complexes via pseudotriangulations.
\newblock {\em Discrete Comput.~Geom.}, 16(4):419--453, 1996.

\bibitem[Rea04]{Reading-latticeCongruences}
Nathan Reading.
\newblock Lattice congruences of the weak order.
\newblock {\em Order}, 21(4):315--344, 2004.

\bibitem[Rea06]{Reading-CambrianLattices}
Nathan Reading.
\newblock Cambrian lattices.
\newblock {\em Adv.~Math.}, 205(2):313--353, 2006.

\bibitem[Rea07a]{Reading-coxeterSortable}
Nathan Reading.
\newblock Clusters, {C}oxeter-sortable elements and noncrossing partitions.
\newblock {\em Trans. Amer. Math. Soc.}, 359(12):5931--5958, 2007.

\bibitem[Rea07b]{Reading-sortableElements}
Nathan Reading.
\newblock Sortable elements and {C}ambrian lattices.
\newblock {\em Algebra Universalis}, 56(3-4):411--437, 2007.

\bibitem[RS09]{ReadingSpeyer}
Nathan Reading and David~E. Speyer.
\newblock Cambrian fans.
\newblock {\em J.~Eur.~Math.~Soc.~(JEMS)}, 11(2):407--447, 2009.

\bibitem[RS11]{ReadingSpeyerInfinite}
Nathan Reading and David~E. Speyer.
\newblock Sortable elements in infinite {C}oxeter groups.
\newblock {\em Trans. Amer. Math. Soc.}, 363(2):699--761, 2011.

\bibitem[RSS03]{RoteSantosStreinu-polytope}
G{\"u}nter Rote, Francisco Santos, and Ileana Streinu.
\newblock Expansive motions and the polytope of pointed pseudo-triangulations.
\newblock In {\em Discrete and computational geometry}, volume~25 of {\em
  Algorithms Combin.}, pages 699--736. Springer, Berlin, 2003.

\bibitem[RSS08]{RoteSantosStreinu-survey}
G{\"u}nter Rote, Francisco Santos, and Ileana Streinu.
\newblock Pseudo-triangulations~---~a survey.
\newblock In {\em Surveys on discrete and computational geometry}, volume 453
  of {\em Contemp.~Math.}, pages 343--410. Amer. Math. Soc., Providence, RI,
  2008.

\bibitem[SS12]{SerranoStump}
Luis Serrano and Christian Stump.
\newblock Maximal fillings of moon polyominoes, simplicial complexes, and
  {S}chubert polynomials.
\newblock {\em Electron. J. Combin.}, 19, 2012.

\bibitem[Ste13]{Stella}
Salvatore Stella.
\newblock Polyhedral models for generalized associahedra via {C}oxeter
  elements.
\newblock {\em J. Algebraic Combin.}, 38(1):121--158, 2013.

\bibitem[Stu11]{Stump}
Christian Stump.
\newblock A new perspective on $k$-triangulations.
\newblock {\em J. Combin. Theory Ser. A}, 118(6):1794--1800, 2011.

\bibitem[Wil13]{Williams}
Nathan Williams.
\newblock Cataland.
\newblock PhD thesis, available at
  \url{http://conservancy.umn.edu/handle/11299/159973}, 2013.

\bibitem[Zie95]{Ziegler}
G{\"u}nter~M. Ziegler.
\newblock {\em Lectures on polytopes}, volume 152 of {\em Graduate Texts in
  Mathematics}.
\newblock Springer-Verlag, New York, 1995.

\end{thebibliography}
\label{sec:biblio}

\end{document}